\documentclass[aop]{imsart}

%% Packages
\RequirePackage{amsthm,amsmath,amsfonts,amssymb,mathrsfs}

\RequirePackage[numbers]{natbib}
\RequirePackage[colorlinks,citecolor=blue,urlcolor=blue]{hyperref}
\RequirePackage{graphicx}
\usepackage[inline]{enumitem} 
\usepackage[usenames,dvipsnames]{xcolor}
\usepackage[percent]{overpic}
\usepackage{comment}
\usepackage{stmaryrd}
\usepackage{algorithm}
\usepackage{algorithmic}
\usepackage{pgfplots}
\usepackage{tikz}

\usepackage{hyperref}
\usepackage{graphicx,color,colortbl}
\usepackage{upgreek}
\usepackage[mathscr]{euscript}
\usepackage{hhline}
\usepackage{mathbbol,mathtools,calc}
\usepackage{bm}
\usepackage{framed,comment}
\usepackage{musicography}
\usepackage{tikz-cd}

\usepackage{blkarray}

\DeclareMathAlphabet{\mathpzc}{OT1}{pzc}{m}{it}
\DeclareMathAlphabet{\mathdutchcal}{U}{dutchcal}{m}{n}
\SetMathAlphabet{\mathdutchcal}{bold}{U}{dutchcal}{b}{n}

\newcommand{\matteo}[1]{{\color{brown} #1}}

\renewcommand{\Pr}{\mathbb{P}}
\newcommand{\Ex}{\mathbb{E}}
\newcommand{\Con}{C}
\newcommand{\mc}{\mathfrak{M}}
\renewcommand{\d}{\diff}
\newcommand{\logq}{\eta_q}

\allowdisplaybreaks
\numberwithin{equation}{section}
\usepackage{ytableau}
\usepackage{mathdots}
\setcounter{MaxMatrixCols}{20}

%tikz
\usepackage{tikz}
\usetikzlibrary{
	shapes,
	arrows,
	positioning,
	decorations.markings,
	decorations.pathmorphing,
	circuits.logic.US,
	circuits.logic.IEC,
	fit,
	calc,
	plotmarks,
	matrix
}

\tikzset{
%Define standard arrow tip
>=stealth',
%Define style for different line styles
help lines/.style={dashed, thick},
axis/.style={<->},
important line/.style={thick},
connection/.style={thick, dotted},
punkt/.style={
rectangle,
rounded corners,
draw=black, thick,
text width=4.5em,
minimum height=2em,
text centered,
},
pil/.style={
->,
thick,
gray,
shorten <=2pt,
shorten >=2pt,}
}

%conveniences
\usepackage{array}
\usepackage{adjustbox}
\usepackage{cleveref}
\usepackage{enumitem}

\usetikzlibrary{calc}
\usetikzlibrary{arrows.meta,backgrounds}
\usetikzlibrary{arrows}
\usetikzlibrary{decorations.pathmorphing}
\usepackage[multiple]{footmisc}
\DeclareMathAlphabet{\mathpzc}{OT1}{pzc}{m}{it}
\usepackage{bbm}

\startlocaldefs
%%%%%%%%%%%%%%%%%%%%%%%%%%%%%%%%%%%%%%%%%%%%%%
%%                                          %%
%% Uncomment next line to change            %%
%% the type of equation numbering           %%
%%                                          %%
%%%%%%%%%%%%%%%%%%%%%%%%%%%%%%%%%%%%%%%%%%%%%%
\numberwithin{equation}{section}
%%%%%%%%%%%%%%%%%%%%%%%%%%%%%%%%%%%%%%%%%%%%%%
%%                                          %%
%% For Axiom, Claim, Corollary, Hypothesis, %%
%% Lemma, Theorem, Proposition              %%
%% use \theoremstyle{plain}                 %%
%%                                          %%
%%%%%%%%%%%%%%%%%%%%%%%%%%%%%%%%%%%%%%%%%%%%%%
\theoremstyle{plain}
\newtheorem{theorem}{Theorem}[section]
\newtheorem{corollary}[theorem]{Corollary}
\newtheorem{proposition}[theorem]{Proposition}
\newtheorem{lemma}[theorem]{Lemma}
%%%%%%%%%%%%%%%%%%%%%%%%%%%%%%%%%%%%%%%%%%%%%%
%%                                          %%
%% For Assumption, Definition, Example,     %%
%% Notation, Property, Remark, Fact         %%
%% use \theoremstyle{remark}                %%
%%                                          %%
%%%%%%%%%%%%%%%%%%%%%%%%%%%%%%%%%%%%%%%%%%%%%%
\theoremstyle{remark}
\newtheorem{definition}[theorem]{Definition}

\newtheorem{remark}[theorem]{Remark}
%%%%%%%%%%%%%%%%%%%%%%%%%%%%%%%%%%%%%%%%%%%%%%
%% Please put your definitions here:        %%
%%%%%%%%%%%%%%%%%%%%%%%%%%%%%%%%%%%%%%%%%%%%%%

\newcommand{\R}{\mathbb{R}}
\renewcommand{\hat}{\widehat}

\renewcommand{\bar}{\overline}

\renewcommand{\Pr}{\mathbb{P}}

\newcommand{\e}{\varepsilon}

\newcommand{\ind}{\mathbf{1}}
\newcommand{\N}{\mathbb{N}}
\newcommand{\h}{\mathfrak{h}}

\newcommand{\agamma}{\tau}

\renewcommand{\i}{\infty}

\newcommand{\arccosh}{\mathrm{arccosh}}

\newcommand{\Z}{\mathbb{Z}}
\newcommand*\diff{\mathop{}\!\mathrm{d}}

\newcommand{\norm}[1]{\Vert#1\Vert}
\newcommand{\tr}{\operatorname{tr}}

\graphicspath{{./figs/}} 

\endlocaldefs

\begin{document}

\begin{frontmatter}
\title{Large deviations for the $\lowercase{q}$-deformed polynuclear growth}
\runtitle{Large deviations for $\lowercase{q}$-PNG}

\begin{aug}
\author[A]{\fnms{Sayan}~\snm{Das}\ead[label=e1]{sayan.das@columbia.edu}\orcid{0000-0002-8098-9992}},
\author[B]{\fnms{Yuchen}~\snm{Liao}\ead[label=e2]{ ycliao@ustc.edu.cn}\orcid{0000-0003-4972-3376}}
\and
\author[C]{\fnms{Matteo}~\snm{Mucciconi}\ead[label=e3]{matteomucciconi@gmail.com}\orcid{0000-0003-2349-7614}}

\address[A]{Department of Mathematics,
Columbia University\printead[presep={,\ }]{e1}}

\address[B]{School of Mathematical Sciences, University of Science and Technology of China\printead[presep={,\ }]{e2}}

\address[C]{Department of Mathematics, National University of Singapore\printead[presep={,\ }]{e3}}
\end{aug}

\begin{abstract}
    In this paper, we study large time large deviations for the height function $\h(x,t)$ of the $q$-deformed polynuclear growth introduced in \cite{aggarwal_borodin_wheeler_tPNG}. 
    We show that the upper-tail deviations have speed $t$ and derive an explicit formula for the rate function $\Phi_+(\mu)$. On the other hand, we show that the lower-tail deviations have speed $t^2$ and express the corresponding rate function $\Phi_-(\mu)$ in terms of a variational problem. 

    Our analysis relies on distributional identities between the height function $\mathfrak{h}$ and two important measures on the set of integer partitions: the Poissonized Plancherel measure and the cylindric Plancherel measure. Following a scheme developed in \cite{dt21} we analyze a Fredholm determinant representation for the $q$-Laplace transform of $\h(x,t)$, from which we extract exact Lyapunov exponents and through inversion the upper-tail rate function $\Phi_+$. The proof of the lower-tail Large Deviation Principle is more subtle and requires several novel ideas which combine classical asymptotic results for the Plancherel measure and log-concavity properties of Schur polynomials. The techniques we develop to characterize the lower-tail are rather flexible and have the potential to generalize to other solvable growth models.
\end{abstract}

\begin{keyword}[class=MSC]
\kwd[Primary ]{60F10}
\kwd[; secondary ]{82C22}
\end{keyword}

\begin{keyword}
\kwd{Random growth model}
\kwd{Large deviations}
\kwd{Lyapunov exponents}
\kwd{Schur log concavity}
\end{keyword}

\end{frontmatter}
%%%%%%%%%%%%%%%%%%%%%%%%%%%%%%%%%%%%%%%%%%%%%%
%% Please use \tableofcontents for articles %%
%% with 50 pages and more                   %%
%%%%%%%%%%%%%%%%%%%%%%%%%%%%%%%%%%%%%%%%%%%%%%
\setcounter{tocdepth}{1}
\tableofcontents

%%%%%%%%%%%%%%%%%%%%%%%%%%%%%%%%%%%%%%%%%%%

\section{Introduction}

\subsection{The Model and Main Results}

In this paper, we consider the $q$-Deformed Polynuclear Growth, or $q$-PNG in short, which is a growth process introduced rather recently \cite{aggarwal_borodin_wheeler_tPNG}. The $q$-PNG is a \emph{solvable} one-parameter deformation of the more well-studied Polynuclear Growth model (PNG), famously analyzed by Pr{\"a}hofer and Spohn \cite{PhahoferSpohn2002} to characterize universal processes of growing interfaces (i.e.~the Airy$_2$ process) and also more recently by Mateski-Quastel-Remenik \cite{matetski2022polynuclear} for its connection to integrable systems. The literature on the PNG is somewhat broad, given its well understood connections with Ulam's problem, determinantal point processes, free fermions, Bethe Ansatz, and last passage percolation to name a few 
%(see \cite{hammersley1972few,aldous1995hammersley,seppalainen_burgers_1996,seppalainen_98_increasing,baik1999distribution,johansson2003discrete,ferrari2004polynuclear,prahofer2004exact,imamura_sasamoto_half_space_png,imamura2005polynuclear,dauvergne2018directed,matetski2021kpz,dauvergne2021scaling,johansson_rahman_multitime,johansson_rahman_inhomogeneous,bisi2023non}).
(see \cite{hammersley1972few,aldous1995hammersley,seppalainen_burgers_1996,seppalainen_98_increasing,baik1999distribution,johansson2003discrete,ferrari2004polynuclear,prahofer2004exact,imamura_sasamoto_half_space_png,imamura2005polynuclear,dauvergne2021scaling,johansson_rahman_multitime,johansson_rahman_inhomogeneous}).
On the other hand, the solvability structures of the $q$-deformation of the PNG we consider here are only recently starting to emerge: this paper makes advancements in this direction, while in parallel establishing its probabilistic properties.

\subsubsection{The model} 
In the $q$-PNG the height function $\mathfrak{h}(x,t)$ is a piecewise constant function taking integer values and with jump discontinuities of unit size. Such height function can be seen as the profile created stacking on top of each other islands one unit size thick. As time moves forward, islands expand laterally at a constant speed, which we assume to be unitary. This lateral expansion might lead two islands to collide: in this situation, the two colliding islands merge and with probability $q\in(0,1)$, on top of the contact point a new island of infinitesimal width gets created. Moreover, new islands of infinitesimal width randomly appear on the surface and these additional ``nucleation" events follow a Poisson point process in space and time of intensity $\Lambda >0$. A snapshot of the evolution of the height function $\mathfrak{h}$ is given in \Cref{fig:qPNG}. A special choice of the initial condition, which is the one we focus on in this paper, is given setting 
\begin{equation*}
    \mathfrak{h}(x,t=0) = 
    \begin{cases}
        0 \qquad & \text{if } x=0,
        \\
        -\infty & \text{if } x \neq 0.
    \end{cases}
\end{equation*}
This is referred to as \emph{droplet initial condition}. 

\begin{figure}
    \centering
    \includegraphics{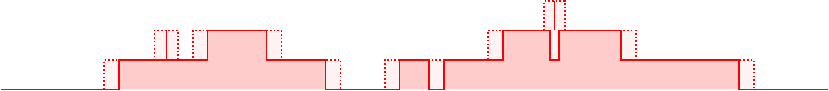}
    \caption{Illustration of the dynamics of the $q$-PNG. The solid line denotes the height profile at a time $t$, while the dotted broken line denotes the height functions at a slightly later time $t'$. Thin vertical lines denote nucleations that took place in the time interval $(t,t')$.}
    \label{fig:qPNG}
\end{figure}

In place of this rather informal description of the process, we take a different perspective and draw in the $(x,t)$ plane the trajectories of up and down unit jumps of the height $\mathfrak{h}$.

\begin{definition}\label{def:qpng}
Consider the half plane $\mathbb{R}\times\mathbb{R}_+$ and on the cone $\mathcal{C}=\{ (x,t) : |x|\le t\}$ sample a Poisson process $\mathcal{P} = \{ (x_i,t_i) \}_{i \ge 0}$ of intensity $\Lambda>0$. {From every point $p\in \mathcal{P}$ emanates} two rays, one directed north-eastward and the other directed north-westward respectively with angles $45^\circ$ and $135^\circ$. Whenever two rays emanating from different vertices intersect, with probability $1-q$ they terminate at the intersection point, and with probability $q$ they cross each other and continue along their trajectory. To any point $(x,t)\in \mathcal{C}$ we associate the height $\mathfrak{h}(x,t)$ as the number of rays that intersect with a straight segment from $(x,t)$ to $(0,0)$. By agreement we set $\mathfrak{h}(x,t)=-\infty$ if $(x,t) \notin \mathcal{{C}}$. This procedure is depicted in \Cref{fig:qPNG_lines}.
\end{definition}

\begin{figure}[h]
    \centering
    \includegraphics{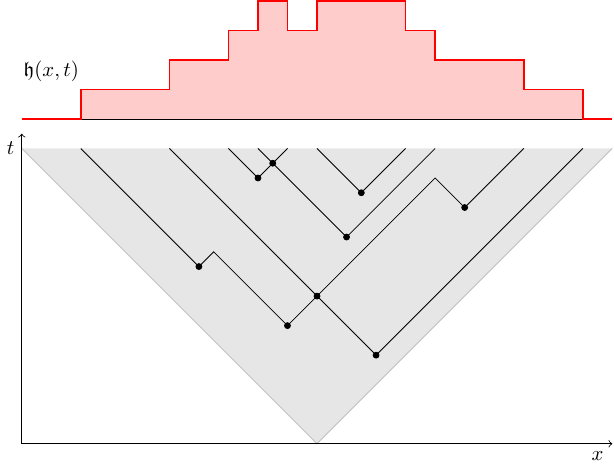}
    \caption{Above we see a snapshot of the height $\mathfrak{h}(x,t)$ of the $q$-PNG. Below we see a representation in space-time coordinates of a possible evolution leading to the profile $\mathfrak{h}$. }
    \label{fig:qPNG_lines}
\end{figure}

\subsubsection{Main results}
Recently several properties of the $q$-PNG with droplet initial condition have been established. In the original paper \cite{aggarwal_borodin_wheeler_tPNG}, Aggarwal, Borodin, and Wheeler characterized the large time fluctuations of the height around the expected value, which obey the Tracy-Widom GUE law \cite{tracy1993level} as
\begin{equation} \label{eq:Tracy_Widom}
    \lim_{t^2-x^2\to +\infty} \mathbb{P} \left( \frac{\mathfrak{h}(x,t) - v_{\Lambda,q} \, (t^2-x^2)^{1/2}}{ \sigma_{\Lambda,q}\, (t^2-x^2)^{1/6}} \le s \right) = F_{\operatorname{GUE}}(s),
\end{equation}
for $v_{\Lambda,q} = \Lambda/(1-q)$ and $\sigma_{\Lambda,q} = \sqrt[3]{\Lambda/2(1-q) }$. In \cite{aggarwal_borodin_wheeler_tPNG}, the authors also showed that, under a proper scaling in space and time, the height function $\mathfrak{h}$ converges, as $q\to 1$, to the solution of the KPZ equation \cite{KPZ1986} in the sense of 1-point distribution. Later, Drillick and Lin \cite{Drillick_Lin_t_PNG} proved a strong law of large numbers for $\h$ by constructing a colored version of the model\footnote{Both in \cite{aggarwal_borodin_wheeler_tPNG} and \cite{Drillick_Lin_t_PNG} authors denote the deformation parameter by $t$, while we use the letter $q$.}.

In this paper, we consider another important question, from the probabilistic standpoint, which is that of characterizing the probability of rare events for the growth of the height function. While restricting ourselves to the particular case of droplet initial condition, we focus on two particular, although very natural, questions: determining the decay of large deviation events where the height function assumes values, larger or smaller (of order $t$) respectively, than those prescribed by the law of large numbers. Intuitively, upper- and lower-tail events \emph{should} possess different decay rates. In fact, for the height $\mathfrak{h}(x,t)$ function to assume values larger than the expected value it is sufficient to require that nucleation events at location $x$ take place at an unusually high rate along the time window $[0,t]$: this suggests that $\mathbb{P}(\mathfrak{h}(x,t) > (v_{\Lambda,q}+\xi)t ) =e^{-O(t)}$. On the other hand for the height function $\mathfrak{h}(x,t)$ to assume values smaller, of order $t$, than the expected value, we need to require that through the entire backward light cone $\{ (y,s) : |x-y| < t-s \}$ nucleations have taken place with unusually slow rate, suggesting that $\mathbb{P}(\mathfrak{h}(x,t) < (v_{\Lambda,q}-\xi)t ) = e^{-O(t^2)}$, where $t^2$ signifies the area of the backward light cone. The following theorems confirm these qualitative heuristics, quantifying explicitly the decay rate functions.

{For convenience, we shall work with intensity $\Lambda=2(1-q)$ so that the law of large numbers $v_{\Lambda,q}=2$. In this case, upper tail will correspond to $\{\h(0,t)\ge \mu t\}$ events for $\mu\ge 2$, whereas lower tail will correspond to $\{\h(0,t)\le \mu t\}$ events for $\mu\in [0,2]$.}

\begin{theorem}[Upper-tail Large Deviation Principle]\label{thm:upper_tail}
    Let $\mathfrak{h}$ be the height function of the $q$-PNG with intensity $\Lambda=2(1-q)$ and droplet initial condition. Then, for $\mu \ge 2$ we have
    \begin{equation}
        -\lim_{t\to \infty} \frac1t \log \Pr(\h(0,t) \ge \mu t) =  \Phi_+(\mu),
    \end{equation}
    where the rate function is 
    \begin{equation} \label{eq:Phi+}
        \Phi_+(\mu) = 2\mu \, \arccosh(\mu/2)-2\sqrt{\mu^2-4}.
    \end{equation}
\end{theorem}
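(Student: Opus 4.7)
The plan is to follow the Das--Tsai scheme \cite{dt21}, adapted to the $q$-PNG via its Fredholm determinant structure. The starting point is a $q$-Laplace transform identity, derived from the distributional identity between $\h(0,t)$ and the Poissonized Plancherel measure mentioned in the abstract, of the shape
\begin{equation*}
    \Ex\left[\frac{1}{(-q^{\h(0,t)-s};q)_\infty}\right] = \det(I+K_{t,s})_{L^2(\Gamma)},
\end{equation*}
where $K_{t,s}$ is an explicit contour-integral kernel. The approximate-indicator property
\begin{equation*}
    \frac{1}{(-q^{\h-s};q)_\infty} \approx \ind_{\h \ge s}
\end{equation*}
(valid for $q\in(0,1)$, since the factors $1+q^{\h-s+k}$ explode for $\h<s$ and collapse to $1$ for $\h>s$) means that scaling $s=\mu t$ yields a quantity whose leading exponential behaviour matches $\Pr(\h(0,t)\ge\mu t)$.

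First, I would extract the Lyapunov exponent
\begin{equation*}
    L(\mu) := \lim_{t\to \infty} \frac{1}{t} \log \Ex\left[\frac{1}{(-q^{\h(0,t)-\mu t};q)_\infty}\right]
\end{equation*}
via steepest-descent analysis of $\log\det(I+K_{t,\mu t}) = \sum_{n\ge 1}\frac{(-1)^{n+1}}{n}\tr K_{t,\mu t}^n$. After suitable rescaling the kernel entries take the form $e^{t\,\Psi_\mu(z,w)}/(z-w)$ for an explicit action $\Psi_\mu$, so the leading contribution comes from the single-trace $n=1$ term and is governed by the critical points of $z\mapsto \Psi_\mu(z,z)$. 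Solving the saddle-point equation and evaluating the action at the critical point yields the closed form for $L(\mu)$.

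Second, I would invert the Lyapunov exponent into an LDP for $\h(0,t)$. The upper bound $\Pr(\h(0,t)\ge\mu t)\le e^{-t L(\mu)+o(t)}$ is a Chernoff-type inequality: since $1/(-q^{\h-\mu t};q)_\infty \ge c>0$ whenever $\h\ge\mu t$, Markov's inequality applied to the $q$-Laplace transform gives the bound. For the matching lower bound, one chops the expectation according to $\{\h(0,t) \in [\mu' t,(\mu'+\diff\mu')t]\}$, integrates against the tail density, and localises around the minimising $\mu'$ using continuity and monotonicity of $L$; this identifies $\Phi_+(\mu)=L(\mu)$ for $\mu\ge 2$. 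The explicit form \eqref{eq:Phi+} is then checked via the elementary antiderivative identity $\int_2^\mu 2\,\arccosh(s/2)\,\diff s = 2\mu\,\arccosh(\mu/2) - 2\sqrt{\mu^2-4}$, so it suffices to verify $L'(\mu)=2\,\arccosh(\mu/2)$ and $L(2)=0$, both of which drop out of the saddle-point computation.

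The main technical obstacle is the saddle-point analysis of the Fredholm determinant: one must justify the contour deformations bringing the integral into standard steepest-descent form uniformly in $\mu\ge 2$, control the cubic or higher corrections near the critical point, and show that the multi-trace terms $\tr K^n_{t,\mu t}$ with $n\ge 2$ contribute only $o(t)$ to the log of the determinant. A secondary subtlety is the behaviour at the threshold $\mu=2$, where $\Phi_+(2)=0$ and the saddle coalesces with a singularity of the kernel: this corresponds precisely to the edge of the Tracy--Widom regime \eqref{eq:Tracy_Widom}, so the analysis there needs a separate, more delicate treatment to match the two asymptotic regimes continuously.
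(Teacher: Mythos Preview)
Your plan has a genuine gap at the very first step. The Fredholm determinant identity available for this model (see Theorem~\ref{l:fred}) is
\[
\Ex\big[\mathsf{F}_q(\zeta q^{-\h(0,t)})\big]=\det(I-\mathsf{K}_{\zeta/\sqrt{q},t(1-q)})_{\ell^2(\mathbb{N}')},
\qquad \mathsf{F}_q(z)=\prod_{k\ge 0}\frac{1}{1+zq^k},
\]
and the observable $\mathsf{F}_q(\zeta q^{-\h})$ is a smoothed version of $\ind_{\h\le s}$, not of $\ind_{\h\ge s}$. There is no formula in the paper (or, to my knowledge, in the literature) for your quantity $\Ex[1/(-q^{\h-s};q)_\infty]$; the integrable structure produces $q^{-\h}$--type observables, not $q^{\h}$--type ones. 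So the identity you take as your starting point does not exist in the form you need.

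If instead you work with $1-\det(I-\mathsf{K})$ and try to run the Chernoff/chopping argument you describe, you hit exactly the obstruction the paper isolates in Section~1.2.2: the smoothed indicator $1-\mathsf{F}_q(\zeta q^{-\h})$ is, up to constants, $\Pr(\chi+S_\zeta>s-\h)$ with $\chi\sim q\text{-Geo}(q)$. Because $\chi$ has an \emph{exponential} right tail, the left tail of this approximate indicator is only exponential (rate $\eta_q=\log q^{-1}$), not super-exponential. Your chopping step then localises not at $\mu'=\mu$ but at the minimiser of $\Phi_+(\mu')+\eta_q(\mu-\mu')_+$, and the resulting limit is
\[
\Psi_+(\mu)=
\begin{cases}
\Phi_+(\mu), & 2\le\mu\le q^{1/2}+q^{-1/2},\\
\mu\,\eta_q+2q^{-1/2}-2q^{1/2}, & \mu\ge q^{1/2}+q^{-1/2}.
\end{cases}
\]
Since $\Psi_+$ is linear with slope $\eta_q$ beyond a point, the infimal deconvolution $\Psi_+(\mu)=\inf_p\{\widetilde\Phi_+(\mu-p)+\eta_q[p]_+\}$ has \emph{infinitely many} convex solutions $\widetilde\Phi_+$, so you cannot recover $\Phi_+$ from $\Psi_+$ alone. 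This is precisely why the paper does \emph{not} analyse the Fredholm determinant directly.

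The paper's actual route is to invert the $q$-Laplace transform: using the integral identity \eqref{eta0} it expresses $\Ex[e^{p\h(0,t)}]$ for every $p>0$ as an integral of $\zeta^{-\alpha}\partial_\zeta^n\det(I-K_{\zeta,t})$, then shows (Propositions~\ref{p:trace} and~\ref{p:ho}) that the trace term dominates this integral and yields the Lyapunov exponent $\Upsilon(p)=4\sinh(p/2)$. The upper-tail rate function is then the Legendre transform of $\Upsilon$, which is $\Phi_+$. The crucial difference from your plan is that the paper accesses the genuine moment generating function for \emph{all} $p>0$ (so arbitrarily steep exponential tilts), whereas the single smoothed-indicator approach is effectively capped at tilt $\eta_q$ by the tail of $\chi$.
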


%The second question we answer concerns the decay rate of probability of rare events where the height function assumes at large times, values significantly lower than those prescribed by the law of large numbers. 

\begin{theorem}[Lower-tail Large Deviation Principle]\label{thm:lower_tail_intro}
    Let $\mathfrak{h}$ be the height function of the $q$-PNG with intensity $\Lambda=2(1-q)$ and droplet initial condition. Then, for $\mu \in [0,2]$, we have
    \begin{equation} \label{eq:LDP_h_lower_tail}
        -\lim_{t \to \infty} \frac{1}{t^2} \log \mathbb{P} \left( \mathfrak{h}(0,t) \le \mu t \right) = \Phi_-(\mu),
    \end{equation}
     where the rate function $\Phi_-: \R \to \R\cup \{+\infty\}$ is a decreasing, convex, continuous on $[0,\infty)$ function with $\Phi_-(\mu)=+\infty$ for $\mu<0$, $\Phi_-(0)=1-q$, and $\Phi_-(\mu)=0$ for $\mu\ge 2$. Furthermore, it 
     %satisfies the following variational problem 
     possesses the following expression,
    \begin{equation} \label{eq:Phi_deconvolution}
        \Phi_-(\mu) = \sup_{y\in \mathbb{R}} \left\{ \mathcal{F}(y) - \frac{\log q^{-1}}{2} (\mu-y)^2 \right\}.
    \end{equation}
    where the function $\mathcal{F}$ is described in \Cref{sec:phi-}.
\end{theorem}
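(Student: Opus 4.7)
\emph{Strategy.} My plan follows the route suggested by the shape of the formula $\Phi_-(\mu) = \sup_y\{\mathcal{F}(y) - \tfrac{\log q^{-1}}{2}(\mu - y)^2\}$: it is the Legendre-type signature of an LDP for a sum (or signed combination) of two independent quantities, one carrying the full Plancherel information and the other behaving Gaussianly on the $t^2$ scale with ``variance'' coming from the $q$-deformation. The first step is therefore to establish a distributional identity of the form $\h(0,t) = X_t \pm Y_t$ in law, with $X_t$ the first row of a Poissonized Plancherel partition at parameter proportional to $t$ and $Y_t$ an independent random variable built from $q$-geometric ingredients. Such an identity should follow from the identification of $\h(0,t)$ with the first row under the cylindric Plancherel measure announced in the abstract, together with a Cauchy-type factorization of the Schur weights that splits the cylindric partition function into a Plancherel factor times a $q$-Pochhammer.

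\emph{Main steps.} Granted the identity, the analysis separates into (i) establishing the lower-tail LDP for $X_t/t$ on $[0,2]$ at speed $t^2$ with rate identified with $\mathcal{F}$; (ii) deriving the Gaussian lower tail for $Y_t/t$ with rate $\tfrac{\log q^{-1}}{2} z^2$; and (iii) combining the two via Laplace's method (equivalently the convolution / Varadhan principle for LDPs) to extract the variational formula. Step (ii) is a Cram\'er-type computation once $Y_t$ is recognized as a sum of independent $q$-geometric increments, whose log-Laplace transform involves $\log q^{-1}$ as the natural inverse temperature. The combined LDP then yields the stated $\sup$-convolution after a change of variables, and the qualitative properties listed (convexity, monotonicity, continuity on $[0,\infty)$, the value $\Phi_-(0)=1-q$, and $\Phi_-(\mu)=0$ for $\mu\ge 2$) descend from the corresponding properties of $\mathcal{F}$ and from the convolution structure. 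The boundary value $\Phi_-(0)=1-q$ admits an independent sanity check: the event $\{\h(0,t)=0\}$ under droplet initial condition coincides with absence of Poisson nucleations in the intersection of the backward light-cone of $(0,t)$ with $\mathcal{C}$, which has area $t^2/2$, yielding $\Pr(\h(0,t)=0)=e^{-\Lambda t^2/2}=e^{-(1-q)t^2}$ for $\Lambda=2(1-q)$.

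\emph{Main obstacle.} By far the hardest step is (i), the lower-tail LDP for the Plancherel first row at speed $t^2$. The upper bound is accessible from classical material --- the Logan--Shepp--Vershik--Kerov limit shape, hook-length dimension estimates, and entropy-type bounds on the number of partitions with a prescribed first row. The matching lower bound, in contrast, demands sharp exponential control on the combined probability of all partition shapes with $\lambda_1 \approx yt$, not merely on a single test shape. This is precisely where the log-concavity of Schur polynomials flagged in the abstract enters: it furnishes monotonicity and comparison inequalities between $s_\lambda$ and $s_\mu$ for neighbouring Young diagrams, allowing one to reduce the combinatorial sum to a single dominant contribution around the optimizing limit shape without losing the correct exponential prefactor. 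Converting this algebraic input into a probabilistic lower bound that pairs cleanly with the Plancherel upper bound --- and then verifying that the resulting rate function is exactly the $\mathcal{F}$ defined in \cref{sec:phi-} --- is, I expect, the technical core of the argument.
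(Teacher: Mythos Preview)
Your proposal has the right general shape --- rate function as an infimal/supremal convolution --- but misidentifies both the direction of the argument and the meaning of $\mathcal{F}$, and misplaces the role of log-concavity.

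\textbf{What $\mathcal{F}$ actually is.} The function $\mathcal{F}$ is \emph{not} the lower-tail rate function for the Poissonized Plancherel first row. In the paper, $\mathcal{F}$ is the rate function for the \emph{doubly shifted} height $\h(0,t)+\chi+S$, where $\chi\sim q\text{-Geo}(q)$ and $S\sim\mathrm{Theta}(q,1)$ are independent of $\h$. It is obtained by writing $\mathbb{P}(\h+\chi+S\le xt)$ as a multiplicative functional of the Poissonized Plancherel measure (eq.~\eqref{eq:rel_intro2}) and applying Laplace's method; the resulting variational problem \eqref{eq:W_intro}--\eqref{def:f_intro} contains the hook integral \emph{plus} a $q$-dependent term $\mathcal{V}^{(q)}$ coming from the multiplicative weight $\prod_i(1+q^{s+i-\lambda_i})^{-1}$. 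There is no Plancherel-only LDP with rate $\mathcal{F}$.

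\textbf{Deconvolution, not convolution.} Because the accessible object is $\h+\chi+S$, the passage to $\h$ requires \emph{removing} a Gaussian-tailed shift $S$ (whose rate is $g(y)=\tfrac{\eta_q}{2}y^2$), i.e.\ solving $\mathcal{F}=g\oplus\Phi_-$ for $\Phi_-$. This infimal deconvolution is not automatic: without convexity and closedness of $\Phi_-$ it need not have a unique solution, and without knowing a priori that the limit defining $\Phi_-$ exists one cannot even pose it. So your step (iii) runs in the wrong direction and hides the real difficulty.

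\textbf{Where log-concavity enters.} The Okounkov--Lam--Postnikov--Pylyavskyy inequality is not used to produce a matching lower bound in a Plancherel LDP. It is applied to the \emph{cylindric} Plancherel measure (recall $\h+\chi\stackrel{d}{=}\lambda_1$ under $\mathbb{P}_{\mathsf{cPlan}}$) to show that the pre-limit functions $\mathcal{T}_t(a):=-t^{-2}\log\sup_{\lambda_1\le at}\mathbb{P}_{\mathsf{cPlan}}(\lambda/\rho)$ are asymptotically midpoint convex. This yields convexity of $\mathcal{F}$ and of any subsequential limit of $\mathcal{T}_t$, which is precisely what makes the deconvolution well-posed and gives the formula $\Phi_-=\mathcal{F}\ominus g$.

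\textbf{The genuine technical core.} The hardest step is not a lower bound for Plancherel, but proving that $\lim_t \mathcal{T}_t(\mu)$ \emph{exists}. The paper extracts from the explicit parabolic behavior $\mathcal{F}(x)=(1-q)+\tfrac{\eta_q}{2}x^2$ for $x\ll0$ a uniform equicontinuity bound on $\{\mathcal{T}_t\}$ on $[0,2]$, then runs an Arzel\`a--Ascoli/compactness argument: every subsequential limit is convex, satisfies $g\oplus h=\mathcal{F}$, and is therefore uniquely determined as $\mathcal{F}\ominus g$. Removing $\chi$ afterwards is easy (it has $O(1)$ tails). Your sanity check $\mathbb{P}(\h(0,t)=0)=e^{-(1-q)t^2}$ is correct and is indeed consistent with $\Phi_-(0)=1-q$.
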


\begin{remark} Few remarks related to the above theorems are in order.

\begin{enumerate}[leftmargin=18pt]
    \item The intensity $\Lambda$ of the $q$-PNG is assumed to be $2(1-q)$ for convenience. Under this intensity, we have the law of large numbers $\lim_{t\to \infty} \h(0,t)/t=2$, which is free of $q$. The Large Deviation Principle (LDP) for the general intensity case can be obtained with minor modifications in our arguments.
    \item In \Cref{cor:xzero}, we show that for each $t>|x|>0$, we have the following equality in distribution (in the sense of one-point marginals):
    $$\h(x,t) \stackrel{d}{=}\h(0,\sqrt{t^2-x^2}).$$
     This allows us to derive LDPs for the height function of a $q$-PNG model at a general location from our main theorems.   
     \item Note that the upper-tail rate function $\Phi_+$ is free of $q$, and in fact,  matches with the upper-tail rate function for the $(q=0)$ PNG model obtained in \cite{seppalainen_98_increasing}. This matching of the upper-tail rate function of a non-determinantal model with its determinantal counterpart was observed in the context of the KPZ equation \cite{dt21} and ASEP \cite{dz1} as well. A geometric explanation for this matching can be given using the theory of Gibbsian line ensembles. We refer to the recent work of Ganguly and Hegde \cite{ganguly2022sharp} where this line ensemble approach was carried out successfully in obtaining sharp estimates on the upper-tail of the KPZ equation and other general models under certain assumptions.  The lower-tail rate function, however, depends on the $q$ parameter (see \Cref{sec:phi-}).
\end{enumerate} 
\end{remark}

\subsubsection{Description of $\mathcal{F}$}\label{sec:phi-}
The explicit characterization of the function $\mathcal{F}$ requires the introduction of several objects. Throughout the paper, we will use the notation
\begin{equation}\label{eq:logq}
    \logq:=\log q^{-1}.
\end{equation} 
We will call $h:\mathbb{R}\to \mathbb{R}$ to be  $K$-Lipschitz if $|h(x)-h(y)|\le K|x-y|$ for all $x,y\in \mathbb{R}$. We define the set
\begin{equation}\label{def:y1bar}
    \begin{aligned}
    \overline{\mathcal{Y}}_1 & := \{ h:\mathbb{R} \to \mathbb{R}_+: h \text{ is 2-Lipschitz, non-decreasing in $\mathbb{R}_{-}$}, \\ & \hspace{3.5cm}\text{ decreasing on $\mathbb{R}_{+}$ and } \|h\|_{L^1}=1\}
\end{aligned}
\end{equation}
and the functional
\begin{equation} \label{eq:W_intro}
    \mathcal{W}^{(q)}(\kappa,h;x) := 1+\kappa \log \kappa + \kappa \mathsf{J}_{\logq}(h;x/\sqrt{\kappa}), \qquad \text{for } \kappa > 0, h\in \overline{\mathcal{Y}}_1, x\in \mathbb{R},
\end{equation}
where
\begin{equation}
\begin{split}
    \mathsf{J}_{\eta}(h;y) :=& - \frac{1}{2} + \eta \frac{[-y]_+^2}{2} + \frac{1}{2} \| \overline{\phi}_{\mathrm{VKLS}} - h \|_1^2
    \\
    & + 2 \int_{\mathbb{R}} h(\xi) \left( \mathbf{1}_{[\sqrt{2},+\infty)} (|\xi|) \arccosh\left| \frac{\xi}{\sqrt{2}} \right| +  \frac{\eta}{2} \mathbf{1}_{[\frac{y}{\sqrt{2}},+\infty)}(\xi)  \right)\diff \xi.
\end{split}
\end{equation}
Above the norm $\| \, \cdot \, \|_1$ is the Sobolev $H^{1/2}$ norm defined in \Cref{def:sob}, the function $\overline{\phi}_{\mathrm{VKLS}}\in \overline{\mathcal{Y}}_1$ is the Vershik-Kerov-Logan-Shepp optimal shape, explicitly given in \Cref{eq:VKLS_shape} and $[a]_+:=\max\{a,0\}$. We also define the minimizer of the functional $\mathcal{W}^{(q)}$ for any fixed $x$ as
\begin{equation}\label{def:f_intro}
    \mathcal{F}(x) := \inf_{\kappa > 0} \inf_{h \in \overline{\mathcal{Y}}_1}  \mathcal{W}^{(q)}(\kappa,h;x).
\end{equation}
The following theorem describes some of the main properties of $\mathcal{F}$ we prove in this paper. 

\begin{theorem}\label{prop:fall}
    Fix $q\in(0,1)$. The function $\mathcal{F}(x)$ is decreasing, non-negative, convex, and continuously differentiable with derivative $\mathcal{F}'(x)$ being $\logq$-Lipschitz. Moreover, there exists $x_q\in \R$ such that
    \begin{equation} \label{eq:F_parabola_intro}
        \mathcal{F}(x) = (1-q) + \logq \tfrac{x^2}{2} \quad\mbox{ for all } \ \ x\le x_q.
    \end{equation}
    Furthermore, we have
    \begin{equation}
         \label{eq:F_convolution}
        \mathcal{F}(x) = \inf_{y\in \mathbb{R}} \left\{ \Phi_-(y) + \frac{\logq}{2} (x-y)^2 \right\}.
    \end{equation}
\end{theorem}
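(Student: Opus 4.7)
The plan is to first establish the key structural fact that $x \mapsto \mathcal{F}(x) - \frac{\logq}{2}x^2$ is concave on $\R$. From this single observation, combined with the defining formula $\Phi_-(\mu) = \sup_y\{\mathcal{F}(y) - \frac{\logq}{2}(\mu - y)^2\}$ from \cref{thm:lower_tail_intro} and the convexity of $\Phi_-$ asserted there, all the remaining claims will follow from standard convex-analytic manipulations.

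To prove the concavity, I would fix $\kappa > 0$ and $h \in \overline{\mathcal{Y}}_1$ and analyze $x \mapsto \mathcal{W}^{(q)}(\kappa, h; x) - \frac{\logq}{2}x^2$. After collecting the $x$-dependent terms from the definition of $\mathsf{J}_{\logq}$, this function equals a constant in $x$ plus $\frac{\logq}{2}[-x]_+^2 + \kappa \logq \int_{x/\sqrt{2\kappa}}^{\infty} h(\xi) \, d\xi - \frac{\logq}{2}x^2$. Computing distributional second derivatives gives, on $\R_+$, the expression $-\frac{\logq}{2} h'(x/\sqrt{2\kappa}) - \logq$, which is non-positive because $h$ is weakly decreasing and $2$-Lipschitz on $\R_+$; on $\R_-$, the expression is $-\frac{\logq}{2} h'(x/\sqrt{2\kappa})$, which is non-positive because $h$ is non-decreasing on $\R_-$. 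Continuity of the first derivative across $x = 0$, which follows from Lipschitz continuity of $h$, upgrades the piecewise concavity to global concavity. Since the pointwise infimum of concave functions is concave, $\mathcal{F}(x) - \frac{\logq}{2}x^2$ is concave on $\R$.

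Next, set $G(x) := \frac{\logq}{2}x^2 - \mathcal{F}(x)$, which is convex, finite (using $\overline{\phi}_{\mathrm{VKLS}} \in \overline{\mathcal{Y}}_1$ to produce a finite upper bound on $\mathcal{F}$, together with an elementary lower bound on $\mathcal{W}^{(q)}$ in $\kappa$), and lower semicontinuous. Expanding the square in \eqref{eq:Phi_deconvolution} identifies the Legendre transform of $G$ as
\begin{equation*}
    G^*(\logq \mu) = \Phi_-(\mu) + \tfrac{\logq}{2}\mu^2.
\end{equation*}
Fenchel--Moreau biconjugacy $G = G^{**}$, followed by the substitution $p = \logq \mu$, then produces
\begin{equation*}
    \mathcal{F}(x) = \tfrac{\logq}{2}x^2 - G(x) = \inf_{\mu \in \R}\Bigl\{\Phi_-(\mu) + \tfrac{\logq}{2}(x - \mu)^2\Bigr\},
\end{equation*}
which is \eqref{eq:F_convolution}. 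From this formula one reads off the remaining claims: $\mathcal{F}$ is an inf-convolution of the convex function $\Phi_-$ with a convex quadratic, hence convex; non-negativity follows from $\Phi_- \ge 0$ and $(x-\mu)^2 \ge 0$; and continuous differentiability with $\logq$-Lipschitz derivative follows from combining convexity of $\mathcal{F}$ (which gives $\mathcal{F}'$ non-decreasing) with concavity of $\mathcal{F} - \frac{\logq}{2}x^2$ (which gives $\mathcal{F}' - \logq x$ non-increasing, equivalently $\mathcal{F}'$ Lipschitz with constant $\logq$). For the parabolic regime, the first-order optimality condition $\logq(x - \mu) \in \partial \Phi_-(\mu)$ is solved at $\mu = 0$ precisely when $x \le x_q := \Phi_-'(0^+)/\logq$, using $\partial \Phi_-(0) = (-\infty, \Phi_-'(0^+)]$ since $\Phi_- \equiv +\infty$ on $(-\infty, 0)$; this $x_q$ is finite because $\Phi_-$ is convex and finite near $0$, and substituting $\mu = 0$ together with $\Phi_-(0) = 1-q$ into \eqref{eq:F_convolution} yields \eqref{eq:F_parabola_intro}.

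The main technical obstacle is the concavity step for $\mathcal{W}^{(q)}(\kappa, h; \cdot) - \frac{\logq}{2}x^2$. The computation must be carried out for a merely Lipschitz profile $h$, which forces a distributional reading of the second derivative and careful attention to the potential kink at $x = 0$ coming from the $[-x]_+^2$ term; moreover, all three structural constraints of $\overline{\mathcal{Y}}_1$ (the $2$-Lipschitz bound, non-decrease on $\R_-$, weak decrease on $\R_+$) are used in the sign analysis, and without the $2$-Lipschitz bound one would fail to absorb the $-\logq$ contribution on $\R_+$. Once concavity is in place, the remaining steps are essentially routine applications of convex duality.
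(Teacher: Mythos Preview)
Your approach is largely correct and genuinely different from the paper's. The paper establishes convexity of $\mathcal{F}$ probabilistically, via log-concavity of Schur polynomials (the Okounkov--Lam--Postnikov--Pylyavskyy inequality) passed through the approximation $\mathcal{F}=\lim_t (g\oplus\mathcal{T}_t)$, and then obtains differentiability and the $\logq$-Lipschitz derivative by quoting standard Moreau-envelope results. You instead prove directly from the variational definition that $x\mapsto \mathcal{W}^{(q)}(\kappa,h;x)-\tfrac{\logq}{2}x^2$ is concave for every $(\kappa,h)$; this sign computation is correct (all three structural constraints of $\overline{\mathcal{Y}}_1$ are indeed needed exactly where you use them), and the infimum preserves concavity. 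Your derivation of \eqref{eq:F_convolution} via Fenchel--Moreau applied to $G=\tfrac{\logq}{2}x^2-\mathcal{F}$ is clean, and your argument that convexity of $\mathcal{F}$ together with concavity of $\mathcal{F}-\tfrac{\logq}{2}x^2$ forces $\mathcal{F}\in C^{1,1}$ with $\logq$-Lipschitz derivative is more elementary than invoking the Moreau-envelope literature. What your route buys is a self-contained analytic argument for the regularity of $\mathcal{F}$; what the paper's route buys is that the semiconcavity you prove is not needed as a separate input, since it is automatic for Moreau envelopes of convex functions.

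There is, however, one genuine gap: your justification of the parabolic regime \eqref{eq:F_parabola_intro}. You set $x_q:=\Phi_-'(0^+)/\logq$ and claim this is finite ``because $\Phi_-$ is convex and finite near $0$''. That implication is false: a convex function on $[0,\infty)$ that is finite and continuous at the boundary can still have $\Phi_-'(0^+)=-\infty$ (e.g.\ $(1-q)-\sqrt{\mu}$ for small $\mu$), and none of the properties of $\Phi_-$ listed in \cref{thm:lower_tail_intro} rule this out. If $\Phi_-'(0^+)=-\infty$ then $\partial\Phi_-(0)=\varnothing$ and the minimizer in \eqref{eq:F_convolution} is never at $\mu=0$, so your argument would give no parabolic regime at all. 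Equivalently, from your concavity you only know that $H(x):=\mathcal{F}(x)-\tfrac{\logq}{2}x^2$ is concave, bounded on $(-\infty,0]$, and tends to $-\infty$ as $x\to+\infty$; this forces $H$ to be non-increasing and to have a limit at $-\infty$, but \emph{not} to be eventually constant. The paper closes this by a direct analysis of the functional (\cref{prop:fprop}): for $-x$ large enough that $2\,\arccosh(-x/2\sqrt{\kappa})\ge\logq$ uniformly over the bounded range of admissible $\kappa$, the $\arccosh$-term in $I_{\mathrm{hook}}$ dominates the $\logq$-contribution from $\mathcal{V}^{(q)}$, forcing the minimizing shape to be $\phi_{\mathrm{VKLS}}$ and then $\kappa=q$. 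You should either reproduce that argument or cite it; once \eqref{eq:F_parabola_intro} is in hand, it \emph{a posteriori} yields $\Phi_-'(0^+)\ge \logq\, x_q>-\infty$ via your optimality condition, but not the other way around.
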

\begin{remark}
    The functional $\mathcal{W}^{(q)}$ has an alternative representation reported in \eqref{eq:J_potential}.
\end{remark}

\begin{remark}
    The function $\mathcal{F}$ is the \emph{Moreau envelope} of the lower-tail rate function $\Phi_-$ with parameter $1/\logq$. The Moreau envelope of a function $f$ with parameter $\lambda$ is defined as the infimal convolution of $f$ with $\frac{x^2}{2\lambda}$ \cite{moreau}. It has applications in convex and variational analysis (see \cite{attouch,rock}) and can be understood as a general mechanism to \textit{smoothen} convex functions.
\end{remark}

\begin{remark}
The function $\mathcal{F}$ is, in fact, the lower tail rate function for the $q$-PNG height function with a random shift; see \eqref{eq.lwtailint}. For this reason, we will often refer to $\mathcal{F}$ as a lower-tail rate function as well.    In \Cref{subs:eq_diff} we conjecture an explicit formula for the function $\mathcal{F}$, given in terms of a solution of a certain non-linear differential equation.
\end{remark}

\subsection{Proof Ideas} We now describe the key ideas behind the proofs of our main theorems.  

\subsubsection{Connections between different solvable models}\label{sec:1.2.1} Our proof is facilitated by the connections between $q$-PNG and two important measures on the set of integer partitions: the Poissonized Plancherel measure and the cylindric Plancherel measure.

\begin{figure}[t]
    \begin{tikzpicture}[auto,
 		block_main/.style = {rectangle, draw=black, fill=white!95!black, text width=15em, text centered, minimum height=4.5em,font=\small},
 		block_KPZ/.style = {rectangle, draw=black, fill=white, text width=10em, text centered, minimum height=3.5em},
 		line/.style ={draw, thick, -latex', shorten >=0pt}]

 		%\node [block_KPZ] (qpt) at (-3,0) {$q$-PushTASEP};
 		%\node [block_KPZ] (ps) at (3,0) {Periodic Schur};
 		\node [block_KPZ] (qpng) at (-3,-3) {$q$-PNG};
 		\node [block_KPZ] (cp) at (2,-3) {Cylindric Plancherel};
 		\node [block_KPZ] (det) at (7,-3) {Poissonized Plancherel};

%\node [block_KPZ] (t1) at (0,-6) {Theorem \ref{l:fred}};
 %		\node [block_KPZ] (t2) at (6,-6) {Theorem \ref{cor:iden}};
   
 		\begin{scope}[every path/.style=line]
 			%\path (qpt)  -- (ps) node[midway,above] {$\chi$-shift};
            %\path (qpt)  -- (qpng) node[midway,above] {};
            %\path (ps)  -- (cp) node[midway,above] {};
             \path (qpng)  -- (cp) node[midway,above] {$\chi$-shift};
             \path (cp)  -- (det) node[midway,above] {$S_{\zeta}$-shift};
             %\path[dashed] (det)  -- (t1);
             %\path[dashed] (det)  -- (t2);
 		\end{scope}
 	\end{tikzpicture}
 	\caption{Connections between models}
 	\label{fig:conn}
\end{figure}
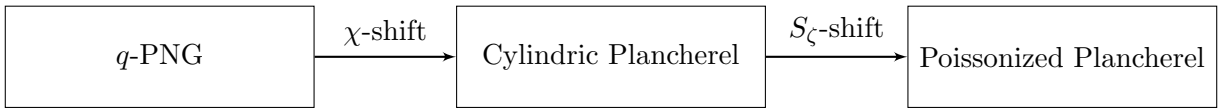
We observe that the height function $\h$ after a random shift by $\chi$, a $q$-geometric random variable (see \eqref{eq:qgeo}), is equal in distribution to the largest row of a random partition distributed according to a cylindric Plancherel measure. This distributional identity is proven in \Cref{thm:matching_qPNG_cylindric_plancherel} and essentially comes from a more general result established first in \cite{IMS_matching}.

The cylindric Plancherel measure and the more general Periodic Schur measure were introduced in the seminal paper by Borodin \cite{borodin2007periodic}. A remarkable property of these measures, as observed by Borodin, is that upon a random shift by $S_{\zeta}$, a $\mathrm{Theta}(q,\zeta)$-distributed random variable (see \eqref{eq:S_distribution}), they become determinantal point processes (\Cref{fig:conn}). The explicit correlation kernel $\mathsf{K}$ (defined in \eqref{skernel}) for the $S_\zeta$-shifted cylindric Plancherel measure allows us to derive Fredholm determinant formulas for the one-point probability distribution of the height function $\h$ after the combined $(\chi+S_\zeta)$-shift:
\begin{equation}
    \label{eq:rel_intro}
    \begin{aligned}
   \mathbb{P}(\h(0,t)+\chi+S_\zeta \le s) & =\mathbb{P}_{\mathsf{cPlan}(t(1-q))}(\lambda_1+S_\zeta \le s) \\ & =\det \left( 1 - \mathsf{K}_{\zeta,t(1-q)} \right)_{\ell^2(s+\frac{1}{2}, s+\frac{3}{2},\dots)}\!.\! 
\end{aligned}
\end{equation}
On the other hand, the one-point probability distribution of $(\chi+S_\zeta)$-shift of the height function $\mathfrak{h}$ is also equal to the expectation of a certain multiplicative functional of the Poissonized Plancherel measure, derived in \cite{aggarwal_borodin_wheeler_tPNG} as a special case of a more general result by Borodin \cite{borodin2016stochastic_MM} and also follows from the results in \cite{IMS_KPZ_free_fermions} (see \Cref{cor:iden}). This leads to a second formula for the probability distribution of $(\chi+S_\zeta)$-shifted height function $\h$:
{
\begin{equation}
\label{eq:rel_intro2}
   \mathbb{P}(\h(0,t)+\chi+S_\zeta \le s) =\mathbb{P}_{\mathsf{cPlan}(t(1-q))}(\lambda_1+S_\zeta \le s) =\mathbb{E}_{\mathsf{Plan}(t)} \bigg[\prod_{i \ge 1} \frac{1}{1+\zeta q^{s+i-\lambda_i}} \bigg]\!.\! 
\end{equation}
}
These formulas form the starting point of our analysis towards LDP results for $q$-PNG.

\subsubsection{Upper-Tail}  \label{sec:1.2.2}
In this subsection, we present a brief sketch of the proof for the upper-tail. The main component of our proof is the Fredholm determinant formula from \eqref{eq:rel_intro}.  Recall that a Fredholm determinant can be defined as a series:
\begin{align}
    \label{eq:fred_intro}
    \det(I-\mathsf{K}_{\zeta,t(1-q)})=1-\tr(\mathsf{K}_{\zeta,t(1-q)})+\sum_{L=2}^{\infty} (-1)^L\tr(\mathsf{K}_{\zeta,t(1-q)}^{\wedge L}),
\end{align}
where the notation $\mathsf{K}_{\zeta,t(1-q)}^{\wedge L}$ comes from the exterior algebra definition (see \Cref{sec.ho}). We shall collectively call the $L\ge 2$ series as ``higher-order term''. In the upper-tail regime, we expect the Fredholm determinant to behave perturbatively, i.e., leading order of $1-\det(I-\mathsf{K}_{\zeta,t(1-q)})$ is given by the trace term $\tr(\mathsf{K}_{\zeta,t(1-q)})$. Indeed, direct analysis of the trace yields
$$-\lim_{t\to\infty}\frac1t\log\Pr(\h(0,t)+\chi+S_{\zeta} \ge \mu t)%= \lim_{t\to\infty}\frac1t\log\tr(\mathsf{K}_{\zeta,t(1-q)})_{\ell^2(\mu t-\frac12,\mu t+\frac12,\ldots)}
=\Psi_+(\mu):= \begin{dcases}
    \Phi_+(\mu) & \mu \in [2,q^{\frac12}+q^{-\frac12}] \\
    \mu \logq+2q^{-\frac12}-2q^{\frac12} & \mu \in [q^{\frac12}+q^{-\frac12},\infty)
\end{dcases}$$
where $\Phi_+$ is defined in \eqref{eq:Phi+}. However, it is not straightforward to extract the upper-tail rate function from the above result. From the precise tail behavior of $\chi+S_{\zeta}$  (notice that $\chi$ has an exponential right tail and no left tail, while $S_\zeta$ has Gaussian left and right tails), the previous limit suggests that, \emph{assuming that the upper-tail rate function $\widetilde{\Phi}_+$ of $\mathfrak{h}$ exists}, $\widetilde{\Phi}_+$ can be found as a solution of the relation 
\begin{equation} \label{eq:infimal_convolution_upper_tail_intro}
    \Psi_+(\mu) = \inf_{p\in \R} \left\{ \widetilde{\Phi}_+(\mu-p)+\mathcal{L}(p) \right\},
\end{equation}
where $\mathcal{L}(p):=\max\{p,0\}\cdot \logq$. {By \cite[p.57]{borwein2006convex}, taking Legendre transform of both sides we find that $\Psi_+^*=\widetilde{\Phi}_+^*+\mathcal{L}^*$. Since $\Psi_+$ and $\mathcal{L}$ are linear with slope $\logq$ after a certain point, we have $\Psi_+^*(x)=\mathcal{L}^*(x)=\infty$ for all $x> \logq$. Thus for all $x>\logq$, any $\widetilde{\Phi}_+^*(x) \in (-\infty,\infty]$ satisfies $\Psi_+^*(x)=\widetilde{\Phi}_+^*(x)+\mathcal{L}^*(x)$. This indicates that the above deconvolution problem does not have a unique solution. Indeed, there are \textit{infinitely many proper convex closed functions} including $\Phi_+$ defined in \eqref{eq:Phi+} that all satisfy the above deconvolution problem. This  suggests that direct analysis of the Fredholm determinant would not produce the exact rate function.}

To bypass the above problem, we utilize the Fredholm determinant in a different way following the strategy of \cite{dt21,dz1}. First, using the explicit distribution formula for $\chi+S_\zeta$, one can check that $\Pr(\h(0,t)+\chi+S_{\zeta/\sqrt{q}} \le 0)$ is equal to a certain $q$-Laplace transform of $\h(0,t)$, defined as $\Ex[\mathsf{F}_q(\zeta q^{-\h(0,t)})]$ where $\mathsf{F_q}(\zeta):=\prod_{k\ge 0} (1+\zeta q^k)^{-1}$. This $q$-Laplace transform may then be inverted in the following way to extract a formula for the moment-generating function of $\h(0,t)$. 
\begin{equation}\label{eq:fubini_intro}
    \Ex[e^{p\h(0,t)}]  =\frac{(-1)^n\int_0^{\infty} \zeta^{-\alpha}\frac{\diff^n}{\diff \zeta^n}\Ex[\mathsf{F}_q(\zeta q^{-\h(0,t)})] \diff \zeta}{(-1)^n\int_0^{\infty} \zeta^{-\alpha}\mathsf{F}_q^{(n)}(\zeta) \diff \zeta}, \mbox{ for }p>0,
\end{equation}
\noindent where $n:=\lfloor p\logq\rfloor+1$, and $\alpha:= p\logq-\lfloor p\logq\rfloor$. The $(-1)^n$ factor above ensures both the numerator and the denominator are positive. The above formula follows via Fubini's theorem and properties of $\mathsf{F}_q$ \cite[Lemma 1.8, Proposition 2.2]{dz1}. Armed with the Fredholm determinant formula from \eqref{eq:rel_intro}, we utilize the above relation to compute an exact expression for \textit{$p$-th Lyapunov exponent} for $\h(0,t)$, which is the limit of  $\log\Ex[e^{p\h(0,t)}]$ scaled by $t$:
\begin{align}
    \label{eq:lya_intro}
    \lim_{t\to \infty}\frac1t\log\Ex\big[e^{p\h(0,t)}\big]=4\sinh(p/2).
\end{align}
The upper-tail rate function can then be computed from the Lyapunov exponent by a standard Legendre-Fenchel transform technique.

%The above line of approach was first implemented in the context of the KPZ equation \cite{dt21} and later utilized in extracting upper-tail rate functions for half-space KPZ \cite{lin2020kpz} and ASEP \cite{dz1}. While the broad theme of this method is the same in all the papers, it is certainly not trivial to carry out the scheme in any of these models. 
Let us now mention briefly how we derive \eqref{eq:lya_intro} from \eqref{eq:fubini_intro}. We focus on the right-hand side of \eqref{eq:fubini_intro}. The denominator does not depend on $t$ and vanishes in the $-\frac1t\log$ limit. For the numerator, we wish to plug in the Fredholm determinant formula from \eqref{eq:fred_intro} (with $\zeta\mapsto \zeta/\sqrt{q}$) and analyze the derivatives of the Fredholm determinant series. However,  direct analysis of these derivatives is still quite delicate as the derivatives of the higher-order term (which also has a similar series representation) have an oscillatory behavior for large values of $\zeta$.  To circumvent this issue, we first split the range of integral in the numerator into two parts: $[0,\agamma]$ and $[\agamma,\infty)$ based on a carefully chosen $\agamma$ (see \eqref{def:gamma}). Using the decay properties of $\mathsf{F}_q$ and the precise value of $\agamma$,  the integral on the range $[\agamma,\infty)$ can easily be shown to be subdominant. For the $[0,\agamma]$ range integral, we now feed in the Fredholm determinant formula and write the integral as
a sum of the following two terms:
$$(-1)^{n+1}\hspace{-0.1cm}\int_0^{\agamma}\hspace{-0.1cm} \zeta^{-\alpha}\frac{\diff^n}{\diff\zeta^n}\tr(\mathsf{K}_{\zeta/\sqrt{q},t(1-q)})\diff \zeta, \ \ (-1)^{n}\hspace{-0.1cm}\int_0^{\agamma}\hspace{-0.1cm} \zeta^{-\alpha}\frac{\diff^n}{\diff\zeta^n}\sum_{L=2}^{\infty} (-1)^L\tr(\mathsf{K}_{\zeta/\sqrt{q},t(1-q)}^{\wedge L}) \diff \zeta.$$
Relying on the explicit expression of the kernel which involves Bessel functions, we develop precise estimates for the traces of the kernel and its derivatives in \Cref{sec:upper_tail}. This allows us to show that the first term above yields the correct Lyapunov exponent whereas the second term is subdominant.

\subsubsection{Lower-Tail}

%\sd{In the lower-tail regime, the explicit Fredholm determinant expression has an oscillatory behavior and hence is not suitable for perturbative analysis. For the characterization of lower-tail probabilities, we instead use the multiplicative functional formula  under the Poissonized Plancherel measure from \eqref{eq:rel_intro2}. }

For the characterization of lower-tail probabilities, rather than the explicit Fredholm determinant expression for the law of the height $\mathfrak{h}$, we use its connections with Poissonized
Plancherel and cylindric Plancherel measures.
We start with analyzing the multiplicative functional formula from \eqref{eq:rel_intro2}. 
The fact that it is worthwhile to probe into multiplicative functional formulas to derive lower-tail asymptotics for KPZ models was first noted by Corwin and Ghosal \cite{lwtail}, who obtained sharp lower-tail estimates for the KPZ equation.
But, as we will see below, in our case, the analysis of the right-hand side of \eqref{eq:rel_intro2} alone only gives us partial information. For a complete characterization of the lower-tail LDP, we will need new ideas based on the distributional equality between the law of the height $\mathfrak{h}$ and the first row of the cylindric Plancherel measure. %will only lead to a lower-tail LDP for $(\chi+S_{\zeta})$-shifted $q$-PNG height function. The removal of the effect of the shift (particularly the Gaussian shift)  
%\sd{We shall also require the precise distributional equality between the law of the height $\mathfrak{h}$ and the first row of cylindric Plancherel measure described in \Cref{sec:1.2.1}. This is where we rely on several innovative ideas originating from the asymptotic results for the Poissonized Plancherel measure and log-concavity properties of Schur polynomials.}

Utilizing the precise form of the functional in \eqref{eq:rel_intro2} and leveraging regularity properties of the Poissonized Plancherel measure allow us to
%For the characterization of lower-tail probabilities, rather than the explicit Fredholm determinant expression for the law of the height $\h$, we use its connections with Poissonized Plancherel and cylindric Plancherel measures. 
%The equality between the law of the shifted height $\h(0,t)+\chi+S_1$ and the average of the multiplicative function of the Poissonized Plancherel measure allows us {(i.e., the relation obtained by taking $\zeta=1$ in \eqref{eq:rel_intro2})}, leveraging regularity properties of the latter, 
to establish the Large Deviation Principle
\begin{equation}\label{eq.lwtailint}
    -\lim_{t\to +\infty} \tfrac{1}{t^2} \log \mathbb{P}(\h(0,t) + \chi + S_1 \le xt) = \mathcal{F}(x),
\end{equation}
where $\mathcal{F}(x)$ is defined in \eqref{def:f_intro}.
Observing the structure of the functional $\mathcal{W}^{(q)}$ defined in \eqref{eq:W_intro} we see that the $q$-dependent term $\frac{\logq}{2}[-y]_+^2 + \logq \int_{y/\sqrt{2}}^{+\infty}h(\xi) \diff \xi$ originates from the limit of the product $\prod_{i\ge 1}(1+q^{s+i-\lambda_i})^{-1}$ appearing in the right-hand side of \eqref{eq:rel_intro2}, while the remaining terms come from a Poissonization of the Vershik-Kerov limit of the Plancherel measure \cite{VershikKerov_LimShape1077}. From the precise tail behavior of $\chi+S_{1}$, the previous limit suggests that, \emph{(a) assuming that the lower-tail rate function $\widetilde{\Phi}_-$ of $\mathfrak{h}$ exists}, $\widetilde{\Phi}_-$ can be found as a solution of
\begin{equation} \label{eq:infimal_convolution_lower_tail_intro}
    \mathcal{F}(x) = \min_{y\in \mathbb{R}} \left\{ \widetilde{\Phi}_-(x-y) + \logq \frac{y^2}{2} \right\}.
\end{equation}
Unlike the case of the upper-tail discussed around \eqref{eq:infimal_convolution_upper_tail_intro}, because of the presence of the quadratic term $\logq y^2/2$, the problem \eqref{eq:infimal_convolution_lower_tail_intro}, admits a unique solution, \textit{(b) assuming that $\mathcal{F}$, $\widetilde{\Phi}_-$ are proper convex and closed}. The solution then is expressed \cite{hiriart1994deconvolution} as
\begin{equation*}
    \widetilde{\Phi}_-(\mu) = (\mathcal{F}^*-g^*)^*(\mu) = \sup_{y\in \mathbb{R}} \left\{ \mathcal{F}(y) - \frac{\eta_q}{2}(\mu-y)^2 \right\}.
\end{equation*}
Here, again, the superscript $^*$ denotes the Legendre transform. 

To rigorously support the aforementioned argument, it is necessary to establish the two assumptions (a) and (b) made earlier. This is where we rely on several innovative ideas originating from the asymptotic results for the Poissonized Plancherel measure and log-concavity properties of Schur polynomials.
%In light of this more favorable structure of the problem \eqref{eq:infimal_convolution_lower_tail_intro}, 
We first look for regularity properties of the rate function $\mathcal{F}$. These regularity properties will allow us to first identify uniquely the function $\widetilde{\Phi}_-$ and further to prove that the lower-tail large deviation function for $\h$ exists and it is given by $\widetilde{\Phi}_-$. To this end, we find the distributional equality between the height function $\mathfrak{h}$ and the cylindric Plancherel measure to be crucial. In fact, using log-concavity properties of the (skew) Schur polynomials proved by Lam-Postnikov-Pylyavyskyy \cite{Lam_Postnikov_Pylyavskyy_concavity} we are able to conclude, through a Laplace principle-type argument, that both $\mathcal{F}$ and (any possible) $\widetilde{\Phi}_-$ are necessarily convex. The final important ingredient to the proof of \Cref{prop:fall} and hence of \Cref{thm:lower_tail_intro} is the explicit characterization of the function $\mathcal{F}(x)$ as $x\to - \infty$. The parabolic behavior \eqref{eq:F_parabola_intro} of the function $\mathcal{F}(x)$ for $x$ negative sufficiently large follows from an exact minimization of the functional $\mathcal{W}^{(q)}$, owing to classical results of Vershik-Kerov \cite{VershikKerov_LimShape1077,VershikKerov_LimShape1985}, Logan-Shepp \cite{logan_shepp1977variational} along with further non-trivial inequalities we establish. Interestingly, this explicit knowledge of $\mathcal{F}$ for large negative $x$ can be leveraged to extract uniform equicontinuity bounds of the pre-limit lower-tail probabilities for the height function $\mathfrak{h}$; see \Cref{prop:equicont}. Then, a compactness argument, in the style of Arzel{\'a}-Ascoli theorem, allows us to conclude that the limit in the left-hand side of \eqref{eq:LDP_h_lower_tail} exists and hence, by its convexity and Lipschitz properties, is equal to the \emph{infimal deconvolution} between the function $\mathcal{F}$ and a parabola, as in \eqref{eq:Phi_deconvolution}.

One of the key novelties of this paper is the deconvolution scheme that we propose to establish lower tail Large Deviation Principle. In our forthcoming work \cite{dlm24} we adapt this scheme to the stochastic six vertex model and we envision them to be adaptable to ASEP as well, where similar algebraic structures are present. %envision them to be adaptable to other interesting situations such as ASEP or stochastic six vertex model, where similar algebraic structures are present. %{[Can our method be used to reprove lower-tail LDP for KPZ equation (at least existence and some properties)?]} \{[I am not sure. I would like to not comment about this.]}. 
The fact that the function $\mathcal{F}$ is the Moreau envelope of a convex, closed function, i.e. $\Phi_-$ implies, thanks to results from convex analysis \cite{attouch}, that $\mathcal{F}$ is also continuously differentiable. This is remarkable because only little explicit knowledge of the function $\mathcal{F}$ is required to establish this sort of regularity. In the presence of further regularity of $\Phi_-$, namely it being $C^2$, we could find an explicit expression for $\mathcal{F}$ (which would become $C^2$ in a half line $[\mu_q,+\infty)$, from properties of the Moreau envelope), which would follow from solving a certain highly non-linear differential equation derived (under the assumption that $\mathcal{F} \in C^2$) in \Cref{subs:eq_diff}.

\subsection{Comparison with other large deviations results for growth processes} We now review some of the results and available techniques to solve LDP problems for KPZ models. 

\subsubsection{Upper-Tail LDP literature} Zero-temperature models, such as PNG, Totally Asymmetric Simple Exclusion Process (TASEP), and first and last passage percolation (LPP), have sub- or super-additive structures. The existence of the upper-tail rate function in these models can be deduced easily from a standard subadditive argument \cite{kesten}. Thus, it is the explicit form of the upper-tail rate function that is interesting in these models. For the PNG model, i.e., $q$-PNG with $q=0$, \cite{seppalainen_98_increasing} first computed the upper-tail rate function explicitly. Sepp{\"a}l{\"a}inen's proof was based on a coupling between the superadditive process with a suitable particle system that admits certain known stationary initial conditions. This general coupling approach to extract the upper-tail rate function was later applied in other models such as TASEP \cite{sepp98mprf}, LPP with inhomogeneous exponential weights \cite{emrah}, LPP in the Bernoulli environment \cite{ciech}, and Brownian LPP \cite{jan19}.  A different proof for the PNG upper-tail rate function was given by \cite{deuschel_zeitouni_1999} based on Young diagrams.

The LDP problem for TASEP (equivalently Exponential LPP) was also explored by Johansson in his seminal paper \cite{johansson2000shape}. He related TASEP to the largest eigenvalue for the Laguerre unitary ensemble, which is a particular example of continuous Coulomb gas that arises in random matrix theory. Leveraging potential theory tools, Johansson developed a comprehensive framework for obtaining upper- and lower-tail LDP results for both discrete and continuous Coulomb gases. This framework has been successfully extended to integrable discretizations of Coulomb gases \cite{das2022large}.  Over the past two decades, it has been discovered that some of the solvable models mentioned above possess determinantal structures. A direct perturbative analysis of the Fredholm determinants provides an alternative route to extract the one-point upper-tail rate function in those models (see \cite{eichelsbacher2016precise} for unitary invariant ensembles). In a more recent work, \cite{qt21} employed exact solvability to prove multi-point LDP for TASEP.

In the case of positive temperature models, the available techniques are limited. Nonetheless, in certain instances, the techniques used in zero-temperature models can be applied to specific positive temperature models that possess a rich structure. Two such models are log-gamma polymer \cite{Seppalainen2012} and O'Connell-Yor polymer \cite{o2001brownian}. Indeed, exploiting the fact that these polymer models can be coupled with their stationary counterparts, the approach in \cite{seppalainen_98_increasing} was successfully implemented to obtain the upper-tail rate function in these models \cite{georgiou2013large,janjigian2015large}. The Lyapunov exponent approach that we used here to obtain the upper-tail rate function for $q$-PNG was first implemented in the context of the KPZ equation \cite{dt21} with droplet initial condition (see \cite{le2016large,kamenev2016short} for earlier physics works, and see the rigorous work \cite{gl20} for general initial data). The scheme was also later utilized in extracting upper-tail rate functions for half-space KPZ \cite{lin2020kpz} and ASEP \cite{dz1} (see \cite{dps} for earlier work on one-sided  upper-tail estimates for ASEP). Lastly, we mention the recent paper by \cite{ganguly2022sharp} which utilizes the line ensemble framework to address the upper-tail problem for the KPZ equation and related zero-temperature models satisfying suitable hypotheses. In fact, their techniques also provide a one-sided multi-point upper-tail LDP for the KPZ equation (see also \cite{yl23} for a multi-point upper-tail LDP for the KPZ equation in a different regime). Although the approach presented in \cite{ganguly2022sharp} shows promise for application in other solvable models,  many of the inputs of their paper have not yet been established for those models.

\subsubsection{Lower-Tail LDP literature} 

The explicit study of lower-tail probability for solvable growth processes in the KPZ class is harder, from a methodological perspective than that of fluctuations or upper-tail large deviations. The reason for this is that explicit formulas such as Fredholm determinant representations for the probability distribution of the height function {have oscillatory behavior in this regime} and hence are hard to analyze in this setting.

The first explicit result concerning the evaluation of the lower-tail rate function for a growth process is found in the seminal paper by Logan and Shepp \cite{logan_shepp1977variational}, where authors, using potential theoretic arguments, derived an upper bound (which easily becomes a sharp equality \cite{deuschel_zeitouni_1999}) for the probability law of the longest increasing subsequence of a random permutation. The calculations presented in \cite{logan_shepp1977variational} (which in the paper are attributed to B.F. Logan) can be also used to derive explicitly the lower-tail of the PNG \cite{seppalainen_98_increasing}, through a simple Poissonization argument. Large deviations of the TASEP were proved in the seminal paper by Johansson \cite{johansson2000shape}, where lower-tail were expressed in terms of a variational problem. The connection between TASEP and the edge of Laguerre Unitary Ensemble offers another route to derive the explicit lower-tail rate function borrowing results of \cite{Vivo_Majumdar_Bohigas_2007} (see \cite{majumdar2014top} for detailed calculations for the case of Gaussian $\beta$-ensembles). {For first passage percolation, \cite{basu2021upper} proved a $t^2$ speed one-sided LDP using geometric techniques.}

More recently, considerable attention has been given to the characterization of rare events for the KPZ equation. The approach of using the multiplicative functional formulas to study lower-tail asymptotics was first done in \cite{lwtail} where the authors established sharp lower-tail estimates for the KPZ equation under the droplet initial condition.  The full lower-tail LDP was resolved in \cite{tsai_lower_tail},  proving conjectures from the physics literature \cite{sasorov2017large,corwin2018coulomb,krajenbrink2018systematic}.
%More recently, considerable attention has been given to the characterization of rare events for the KPZ equation height function. Under droplet initial condition, the problem of characterizing the lower-tail was resolved in \cite{tsai_lower_tail} (see also the earlier work \cite{lwtail} for related estimates), proving conjectures from the physics literature \cite{sasorov2017large,corwin2018coulomb,krajenbrink2018systematic}. 
The different routes used in these physics works along with Tsai's approach were later shown to be closely related in \cite{kraj19}. A Riemann-Hilbert approach presented later in \cite{cafasso_claeys_KPZ}, highlights similarities between the lower-tail problem for the KPZ equation and that of zero temperature models. A key step to the derivation of the rate function amounts to finding a $g$-function (in the jargon of the Riemann-Hilbert Problem), which solves a one-cut singular integral equation (i.e.~the support of $g$ is connected). Besides the above methods, the physics work \cite{Le_Doussal_KP_large_deviations} discusses another route to obtain LDP for the KPZ equation by using its connection to KP equation \cite{quastel_remenik_2022_KP}. 
For the O'Connell-Yor and log-gamma polymer models, recently Landon and Sosoe \cite{landon2023upper,landon2022tail} developed a systematic approach to obtaining lower tail estimates by combining exact formulas and coupling arguments (adapted from \cite{emrah2021optimal}) and geometric arguments (adapted from \cite{ganguly_hegde2023optimal}). While their results provide the correct cubic-order exponents in the moderate deviation regime, it is not clear how to extend them to the LDP regime.

{Before our work, the lower-tail LDP problem had been successfully solved only for the KPZ equation among the {non-determinantal} models.} Compared to the existing literature, our situation, from a technical standpoint, is considerably different. As defined in \eqref{def:f_intro} the
rate function $\mathcal{F}$ is found through a minimization of a quadratic functional: in this case, the explicit minimizer could be found as the solution of a \emph{multi-cut} Cauchy integral equation, which gives rise to complicated transcendental relations seemingly hard to manipulate (discussed in \Cref{subsub:logan_shepp_generalization}). Without an explicit characterization of the optimizer of the  functional $\mathcal{W}^{(q)}$, we find through the use of log-concavity properties of Schur polynomials, that the function $\mathcal{F}$ is strictly convex, which allows us to compute rigorously its deconvolution and to establish Large Deviation Principle, hence avoiding to rely on explicit knowledge of $\mathcal{F}$.

\subsection{Further Approaches} \label{sub:furapp} In general the presence of a rich mathematical structure around the study of the $q$-PNG invites to several alternative approaches, which we do not pursue in this paper. We discuss approaches through connections to dimer models and probabilistic-geometric methods from the theory of last passage percolation here. We collect further directions, which include connections to potential theory and Riemann-Hilbert method in \Cref{sec:further_approaches}.

\subsubsection{Approach through Morales-Pak-Tassy's extension of the Hook integral}

As explained in \Cref{sec:1.2.1}, we have used in our main arguments a relation between the height function of the $q$-PNG and the first row of a partition taken with cylindric Plancherel law. This correspondence opens the possibility to study large deviations for the height function $\mathfrak{h}$ using asymptotic formulas for $f^{\lambda/\rho}$, the number of standard Young tableaux of skew shape $\lambda/\rho$. In \cite{morales_pak_tassy_2022}, authors managed to compute the asymptotic limit of the Naruse hook formula \eqref{eq:naruse_hook}, which expresses $f^{\lambda/\rho}$ in terms of the evaluation of a functional $c(\phi_{\lambda / \rho})$ which is a deformation of the hook integral \eqref{eq:hook_integral}. The expression of the functional $c$ involves solving a variational problem that arises from a connection between lozenge tilings on a hexagonal lattice and the Naruse hook formula \cite{morales_pak_panova_hook3} and for this reason, it appears significantly more involved to analyze explicitly. Variational problems associated with the asymptotic number of standard Young tableaux have appeared in literature also in \cite{sun2018dimer,gordenko2020limit}.%In light of the explicit formula for the rate function $\mathcal{F}$ conjectured in \Cref{subs:eq_diff} 
It would be  interesting to analyze these various variational problems to possibly extract further properties of the lower-tail rate function $\Phi_-$; we leave these further approaches for future works.

\subsubsection{Approach through geometric techniques from the theory of last passage percolation}

{Yet another possible way of studying lower-tail behaviors of the $q$-PNG model is through its connection to a cylindrical Poissonian last passage percolation. The connection between $q$-PushTASEP and a model of last passage percolation in a cylindric geometry was essentially discovered in \cite{IMS_skew_RSK,IMS_KPZ_free_fermions} and used by Corwin and Hegde to obtain bounds for the tail probabilities of the height function in a $q$-PushTASEP in the moderate deviation regime \cite{corwin_hegde2022lower}. For a thorough description of the last passage percolation model, the reader should consult \cite[Section 1.4, 1.5]{corwin_hegde2022lower}. Following the scaling limit presented in \Cref{thm:matching_qW_qPNG} which transforms the $q$-PushTASEP into the $q$-PNG we can transform the last passage percolation model with 
random geometric weights into a model with Poisson rates.}

{Although it is possible that techniques developed in \cite{corwin_hegde2022lower} could adapt well to our situation and possibly allow us to derive moderate deviation bounds for the tails of the distribution of $\mathfrak{h}$, we are also positive that sharper bounds, in the style of results developed by Ganguly and Hegde \cite{ganguly_hegde2023optimal}, could be obtained. We leave these interesting directions for future works.}

\subsection*{Organization} The rest of the article is organized as follows. In \Cref{sec:qPNG}, we describe the connections between $q$-PNG and other solvable models of interest. The upper-tail LDP and lower-tail LDP are proven in \Cref{sec:upper_tail} and \Cref{sec:lower_tail} respectively. In \Cref{sec:further_approaches}, we discuss possible different approaches that could lead to an explicit characterization for the lower-tail rate function.  

\subsection*{Notation and Conventions} Throughout the paper we fix a $q\in (0,1)$ and use the notation $\logq:=\log q^{-1}$. We use $\Con(x,y,z,\ldots)>0$ to denote a generic
deterministic positive finite constant that is dependent on the designated variables $x,y,z,\ldots.$ $\#A$ denotes the cardinality of a finite set $A$.

\section{$q$-Deformed Polynuclear Growth and other solvable models} \label{sec:qPNG}

In this section, we show how the $q$-Deformed Polynuclear Growth is related to several other solvable models in probability and algebraic combinatorics. In \Cref{sec:2.1} and \Cref{subs:relationship_qPNG_cplan}, we connect $q$-PNG to $q$-PushTASEP and Cylindric Plancherel measures respectively. In \Cref{sec:2.3}, we describe a sampling procedure for cylindric Plancherel measure which lead us to derive crucial moment bounds for the observables in that model. Finally, in \Cref{sec:2.4} we derive exact formulas descending from the above connections. These formulas will be a central tool for our analysis of probabilities of tail events in \Cref{sec:upper_tail,sec:lower_tail}.

\subsection{$q$-PNG as a limit of the $q$-PushTASEP} \label{sec:2.1}

The scope of this subsection is to relate the $q$-PNG described in the introduction with another solvable model, the $q$-PushTASEP \cite{BorodinPetrov2013NN,MatveevPetrov2014}. This connection had been predicted in \cite{aggarwal_borodin_wheeler_tPNG}, where the $q$-PNG was defined as a scaling limit of another stochastic model.

The $q$-PushTASEP is a discrete-time interacting particle system whose distribution
is related to the $q$-Whittaker measure \cite{BorodinCorwin2011Macdonald}.  
In order to describe the model we need to define a special probability distribution that generalizes the beta binomial law, given next. Here and below we will make use of the notion of $q$-Pochhammer symbol
\begin{equation*}
    (z;q)_n := (1-z) (1-q z) \cdots (1-q^{n-1} z),
\end{equation*}
 for $z\in\mathbb{C}$ and $n\in \mathbb{N}\cup \{+\infty\}$. %When $|q|<1$ the choice $n=+\infty$ is allowed as $(z;q)_\infty$ defines a convergent product.

\begin{definition}
    The $q$-deformed beta binomial distribution $\varphi_{q,\mu,\nu}(\cdot |m)$ is given by
    \begin{equation*}
        \varphi_{q,\mu,\nu}(k|m) = \mu^k \frac{(\nu/\mu;q)_k(\mu;q)_{m-k}}{(\nu;q)_m} \frac{(q;q)_m}{(q;q)_k (q;q)_{m-k}}, \qquad k\in\{0,\dots,m\}. 
    \end{equation*}
    By virtue of the $q$-Chu-Vandermonde identity \cite{ismailBook} we have $\sum_{k=1}^m \varphi_{q,\mu,\nu}(k|m)=1$, whenever the sum converges. Moreover, several choices of $q,\mu,\nu$ guarantee that $\varphi_{q,\mu,\nu}(k|m)$ is positive. 
\end{definition}

A relevant specialization of the $q$-deformed beta binomial distribution comes from setting $\nu=0$, $m=+\infty$. We say that a random variable $Y$ has $q$-Geometric distribution of parameter $\mu\in (0,1)$ if
\begin{equation}\label{eq:qgeo}
    \mathbb{P}(Y = k) = \varphi_{q,\mu,0}(k|+\infty){=\mu^k \frac{(\mu;q)_{\infty}}{(q;q)_k}, \qquad k\in\Z},
\end{equation}
and in this case we write $Y \sim q$-Geo$(\mu)$. We are now ready to define the $q$-PushTASEP.

\begin{comment}
    \matteo{Page 12 starting from Definition 2.2 to the end of Page 13, I did not follow the definition of $q$-push TASEP, there are some typos in this definition which cause confusion. I suggest drawing a picture to clarify the dynamic of the $q$-push TASEP. I also have some questions:
    
    (a) page 11 Line $-2, x_{0}(T)=+\infty$ ?
    
    Reply: We added a figure. Now there are no typos in the definition. Particles start densely packed on the positive lattice $\mathbb{Z}_+$.
    
    (b) Next line, when $y_{i}=-i$ ? (since you mention that $y_{1}>\ldots>y_{N}$ ).
    
    Reply: There was a mistake in the ordering of $y_i's$ that is now corrected.
    
    (e) The display after "Considering the specialization ..." I do not think one can at most get $1+O\left(\epsilon^{2}\right)$. The authors should check.
    
    Reply: We corrected the typo $1+O\left(\varepsilon^{4}\right)$ into $1+O\left(\varepsilon^{2}\right)$.}
\end{comment}

\begin{definition}
    Let $N \in \mathbb{N}$ and consider locations {$y_1<\cdots<y_N$} with $y_i \in \mathbb{Z}$. The $N$-particles $q$-PushTASEP is the process $\{ \mathsf{x}_k(T):k=1,\dots,N, \, T=1,2,\dots \}$, where 
    \begin{itemize}
        \item at time $T=0$, we have $\mathsf{x}_k(0) = y_k$, for $k=1,\dots,N$;
        \item at time $T$ the $k$-th particle evolves as
        \begin{equation*}
            \mathsf{x}_k(T) = \mathsf{x}_k(T-1) + \mathsf{J}_{k,T} + \mathsf{P}_{k,T},
        \end{equation*}
        where $\mathsf{J}_{k,T} \sim q$-Geo$(a)$ with $a>0$ and $$\mathsf{P}_{k,T} \sim \varphi_{q^{-1},q^{\mathsf{x}_{k}(T-1)-\mathsf{x}_{k-1}(T-1)},0}(\cdot | \mathsf{x}_{k-1}(T) - \mathsf{x}_{k-1}(T-1)).$$
        Here we assume $\mathsf{x}_0(T)=-\infty$ by convention.
    \end{itemize}
    When $y_i=i$ for $i=1,\dots,N$ we call this the $q$-PushTASEP with step initial condition.
\end{definition}

{In words, in the $q$-pushTASEP particles evolve following a sequential update from the leftmost to the right. The jump of each particle consists in the sum of two independent components. One is a pushing effect, produced by the random variables $\mathsf{P}_{k,T}$, which depends on the movements of particles to the left. The other is a independent jump given by the random variables $\mathsf{J}_{k,T}$. The pushing effect in particular makes sure that the exclusion rule is preserved following each update.}

The $q$-PushTASEP model is of particular interest as it degenerates to various well-known models. Indeed, in \cite{MatveevPetrov2014}, it was shown that taking $q\to 1$ limit of the model, one obtains the log-gamma polymer model \cite{Seppalainen2012}. The log-gamma polymer model itself degenerates to continuum directed random polymer under intermediate disorder regime \cite{AlbertsKhaninQuastel2012} whose free energy is related to the KPZ equation \cite{KPZ1986}. Presently, we describe how $q$-PushTASEP degenerates to $q$-PNG as well.

To this end, we first construct a vertex model from the $q$-PushTASEP particle system (see \cite{BufetovMucciconiPetrov2018} also). At each location $(k,T)\in ([1,N]\cap \Z)\times \Z$, consider the random variables
    \begin{align*}
        &J= \mathsf{x}_{k-1}(T) - \mathsf{x}_{k-1}(T-1),
        \qquad
        &J'= \mathsf{x}_k(T) - \mathsf{x}_k(T-1),
        \\
        &G= \mathsf{x}_{k}(T-1)-\mathsf{x}_{k-1}(T-1) -1, \qquad &G'=\mathsf{x}_k(T)-\mathsf{x}_{k-1}(T) -1.
    \end{align*}
    {We interpret $G$ and $J$ to be the number of arrows entering from below and left into $(k,T)$ respectively, and $G'$ and $J'$ to be the number of arrows exiting above and to the right from $(k,T)$ respectively (see \Cref{fig:arrow}).
    \begin{figure}[t]
        \centering
        \setlength{\tabcolsep}{2em}
        \begin{tabular}{cc}
            \begin{tikzpicture}[anchor=base,baseline,scale=.5]
                \draw[] (0,0) -- (12,0);
                \draw[] (0,1) -- (12,1);
                \foreach \i in {1,...,11}
                {
                    \draw[] (\i,0) -- (\i,1);
                }
                \draw[fill] (1.5,.5) circle(.25);
                \draw[fill] (6.5,.5) circle(.25);
                \draw[densely dotted] (4.5,.5) circle(.25);
                \draw[densely dotted] (10.5,.5) circle(.25);
                \draw[>=stealth,->] (2.15,1.5) -- node[above] {$J$} (3.7,1.5);
                \draw[>=stealth,->] (6.85,1.5) -- node[above] {$J'$} (10.1,1.5);
                \draw[>=stealth,<->] (2.1,-.3) -- node[midway,below] {$G$} (5.9,-.3);
                \draw[>=stealth,<->] (5.1,-.6) -- node[midway,below] {$G'$} (9.9,-.6);
                \node[above] at (1.5,1) {\scriptsize $\mathsf{x}_{k-1}$};
                \node[above] at (4.5,.9) {\scriptsize $\mathsf{x}_{k-1}'$};
                \node[above] at (6.5,1) {\scriptsize $\mathsf{x}_{k}$};
                \node[above] at (10.5,.9) {\scriptsize $\mathsf{x}_{k}'$};
            \end{tikzpicture}
            &
            \begin{tikzpicture}[anchor=base,baseline,scale=.7]
                \draw [fill=black] (0,0) circle (5pt);
                %left edge
                \foreach \p in {(-2.1,-.15),(-2.1,0),(-2.1,.15)}
                {
                    \draw [>=stealth,thick,->]  \p --+ (1.8,0);
                }
                \node[left] at (-2.1,0) {\scriptsize $J$};
                %right edge
                \foreach \p in {(.3,.2),(.3,0.075),(.3,-.075),(.3,-.2)}
                {
                    \draw [>=stealth,thick,->]  \p --+ (1.8,0);
                }
                \node[right] at (2.1,0) {\scriptsize $J'$};
                %bottom edge
                \foreach \p in {(0.2,-2.1),(0.075,-2.1),(-0.075,-2.1),(-0.2,-2.1)}
                {
                    \draw [>=stealth,thick,->]  \p --+ (0,1.8);
                }
                \node[below] at (0,-2.1) {\scriptsize $G$};
                %top edge
                \foreach \p in {(0.3,.3),(0.15,.3),(0,.3),(-0.15,.3),(-0.3,.3)}
                {
                    \draw [>=stealth,thick,->]  \p --+ (0,1.8);
                }
                \node[above] at (0,2.1) {\scriptsize $G'$};
            \end{tikzpicture}
        \end{tabular}
        \caption{On the left panel a local update of $q$-pushTASEP particles $\mathsf{x}_{k-1}=\mathsf{x}_{k-1}(T-1)$, $\mathsf{x}_{k}=\mathsf{x}_{k}(T-1)$ into $\mathsf{x}_{k-1}'=\mathsf{x}_{k-1}(T)$, $\mathsf{x}_{k}'=\mathsf{x}_{k}(T)$. In the right panel the representation of the update as a vertex configuration.}
        \label{fig:arrow}
    \end{figure}
 }
 
 Note that the arrows satisfy certain conservation property: $J-G=J'-G'$. Let us write
$\mathsf{R}(j,g;j',g'):=\Pr\left[J'=j',G'=g' \,|\, J=j,G=g  \right].$
We have
    \begin{equation} \label{eq:stochastic R}
       \mathsf{R}(j,g;j',g')  = \sum_{k = 0}^{j'} \varphi_{q^{-1},q^g,0}(j'-k|j) \frac{a^k}{(q;q)_k} (a;q)_\infty. 
    \end{equation}
    Considering the specialization $a=\varepsilon^2 \theta^2$ we see that, up to order $\varepsilon^2$, the relevant cases are when $j,g,j',g'\in\{0,1\}$ and we find
    \begin{equation*}
       \mathsf{R}(0,0;0,0)  , \mathsf{R}(1,0;1,0) , \mathsf{R}(0,1;0,1) = 1+{O(\varepsilon^2)}.
    \end{equation*}
    Thus most of the vertices in the lattice will have no arrows. If there is a single arrow entering, it exits in the same direction with high probability. However, note that
    \begin{equation*}
    \mathsf{R}(1,1;0,0) =  1-q + O(\varepsilon^2),
        \qquad
 \mathsf{R}(1,1;1,1)  = q + O(\varepsilon^2).
    \end{equation*}
    Thus if two arrows enter at a point, they pass through each other with a probability close to $q$ and annihilate each other with a probability close to $1-q$. 
    Finally, we observe 
    \begin{equation*}
        \mathsf{R}(0,0;1,1) = \frac{\varepsilon^2 \theta^2}{1-q} + O(\varepsilon^4).
    \end{equation*}
Thus with $O(\e^2)$ probability a pair of exiting arrows is created, or nucleates. Taking $k,T$ of the order $\e^{-1}$, we thus expect $O(1)$ many nucleations, and they are distributed according to a Poisson point process in the $\e\downarrow 0$ limit. Given the definition of $q$-PNG from \Cref{def:qpng},  the above heuristic computation suggests that upon a $45^\circ$ rotation, the above-constructed vertex model is converging to $q$-PNG in the $\e\downarrow 0$ limit.

%Rescaling the system's coordinates as $k=\lfloor \varepsilon^{-1} 2^{-1/2} (x+t) \rfloor$ and $T=\lfloor \varepsilon^{-1} 2^{-1/2} (x-t) \rfloor$ we obtain the convergence.

\begin{theorem} \label{thm:matching_qW_qPNG}
    Let $\{ \mathsf{x}_k(T):k=1,\dots,N, \, T=1,2,\dots \}$ be the $q$-PushTASEP with step initial condition. Then, under the scaling
    \begin{equation}\label{eq:scaling}
        a=\varepsilon^2 \theta^2,
        \qquad
        N = \lfloor \varepsilon^{-1} 2^{-1/2} (t+x) \rfloor,
        \qquad
        T = \lfloor \varepsilon^{-1} 2^{-1/2} (t-x) \rfloor,
    \end{equation} 
    we have, as a process in $(x,t)$ 
    \begin{equation*} 
        \mathsf{x}_{N}(T) - N \xrightarrow[\varepsilon \downarrow 0]{{d}} \mathfrak{h}(x,t)
    \end{equation*}
    in the sense of distribution w.r.t~the uniform-on-compact topology. Here $\mathfrak{h}$ denotes the height function of the $q$-PNG with intensity $\Lambda=\theta^2/(1-q)$ and droplet initial condition.
\end{theorem}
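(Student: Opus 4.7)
The plan is to promote the heuristic vertex-model computation in the paragraphs preceding the theorem into a genuine distributional convergence result, leveraging the fact that under the scaling \eqref{eq:scaling} only a finite number of ``exceptional'' vertices contribute in any compact region, and these exceptional vertices resolve into the three elementary mechanisms of \cref{def:qpng}: rectilinear propagation, nucleation, and merge-or-cross collisions.

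\textbf{Step 1 (vertex model and height readout).} First I would recast the $q$-PushTASEP as a stochastic vertex model on $\{1,\dots,N\}\times\{1,\dots,T\}$ with vertex weights $\mathsf{R}(j,g;j',g')$ as introduced in the text. Using the conservation $J-G=J'-G'$, arrows concatenate into lattice paths and one checks directly from the definition of $\mathsf{x}_k(T)-\mathsf{x}_{k-1}(T)$ that $\mathsf{x}_N(T)-N$ equals the number of arrow-paths that cross the antidiagonal joining $(0,0)$ to $(N,T)$. Performing the $45^\circ$ rotation implicit in \eqref{eq:scaling} maps the lattice into Euclidean coordinates where arrows travel along the directions $\pm 45^\circ$ and paths cross a straight ray from $(0,0)$ to $(x,t)$, exactly matching the definition of $\mathfrak{h}(x,t)$ in \cref{def:qpng}.

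\textbf{Step 2 (asymptotic vertex weights).} Using the closed form of $\varphi_{q,\mu,\nu}(\cdot|m)$ and the specialization $a=\varepsilon^2\theta^2$, I would expand $\mathsf{R}(j,g;j',g')$ to order $\varepsilon^4$. Beyond the six cases already listed in the text, I need to verify two things: (i) whenever $j+g\le 2$, the only events of probability exceeding $\varepsilon^4$ are the five identified (pass-through, annihilation, cross, single nucleation); and (ii) whenever $j+g\ge 3$, \emph{any} outgoing configuration has probability $O(\varepsilon^2)$, so three-arrow incidences are automatically rare. Combining with a union bound over the $O(\varepsilon^{-2})$ vertices in a compact space-time box $[-M,M]\times[0,M]$ shows that, with probability $1-O(\varepsilon^2)$, every vertex receives at most two arrows.

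\textbf{Step 3 (Poissonization and process convergence).} Conditional on the event of Step~2, the configuration is determined by (a) the random set of vertices where spontaneous nucleation occurs and (b) independent Bernoulli$(1-q)$ choices at each collision vertex. Each lattice vertex nucleates independently with probability $\varepsilon^2\theta^2/(1-q)+O(\varepsilon^4)$. Since the number of lattice vertices in a region of Euclidean area $A$ is $A/\varepsilon^2+O(\varepsilon^{-1})$, a classical Poissonization argument (via convergence of factorial moments, or equivalently via the Chen--Stein method) yields convergence in distribution of the nucleation point measure to a Poisson point process of intensity $\Lambda=\theta^2/(1-q)$ on the backward light cone $\mathcal{C}$. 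Between nucleations, arrows travel deterministically at $45^\circ$ until they either reach the boundary or collide; collisions occur at isolated points of $\mathcal{C}$ determined by the nucleation locations and are resolved by i.i.d.~Bernoulli$(1-q)$ random variables, exactly as prescribed by \cref{def:qpng}.

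\textbf{Step 4 (uniform-on-compact topology).} Coupling the pre-limit and limit models on the same probability space, Steps~1--3 show that on any compact rectangle $K\subset \mathcal{C}$ the two sets of arrow-paths agree with probability $1-o(1)$, so that $\mathsf{x}_N(T)-N$ and $\mathfrak{h}(x,t)$ coincide at every $(x,t)\in K$ with probability $1-o(1)$. Since both are integer-valued with jumps of unit size, convergence of finite-dimensional distributions and tightness in the uniform-on-compact (actually Skorokhod) topology follow.

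\textbf{Main obstacle.} The delicate point is controlling vertices where several arrows coincide. Although each is individually rare, there are $\varepsilon^{-2}$ of them and one must rule out the accumulation of $O(\varepsilon^2)$-probability anomalies (e.g.~a nucleation at the same vertex where two rays are already colliding, or a run of two nucleations at neighboring vertices that mimics a single nucleation plus a pass-through). A careful order-$\varepsilon^4$ expansion of $\mathsf{R}$ for $j+g\in\{0,1,2\}$, combined with the union bound over $O(\varepsilon^{-2})$ vertices, shows that the total probability of any such anomaly in a compact region is $O(\varepsilon^2)$, which is enough to close the argument.
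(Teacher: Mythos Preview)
The paper does not give its own proof: it simply states that a complete argument can be adapted from the proof of \cite[Proposition 4.4]{aggarwal_borodin_wheeler_tPNG} (their Appendix B), where the $q$-PNG was obtained as a scaling limit of a different vertex model. Your outline is precisely such an adaptation---you take the heuristic vertex-weight expansion already displayed in the paper and upgrade it to a genuine convergence proof via Poissonization of nucleations, i.i.d.\ Bernoulli resolution of collisions, and a coupling/union-bound control of multi-arrow anomalies on compacts---so your approach is the one the paper points to rather than a genuinely different route.

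One small point worth tightening in Step~1: the claim that $\mathsf{x}_N(T)-N$ equals the number of arrow-paths crossing ``the antidiagonal joining $(0,0)$ to $(N,T)$'' is not literally how one reads it off. What is immediate from the definitions is that $\mathsf{x}_N(T)-N=\sum_{T'=1}^{T}J'_{(N,T')}$ counts arrows exiting upward through the column $k=N$ below height $T$; the conservation law $J-G=J'-G'$ then lets you deform this counting path to any lattice path from the boundary to $(N,T)$, in particular to a straight segment after the $45^\circ$ rotation, which is what matches \cref{def:qpng}. With that correction your argument is sound.
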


{
\begin{proof}[Sketch of proof]
    The argument outlined in the above paragraphs can be made rigorous by adapting the proof of \cite[proposition 4.4]{aggarwal_borodin_wheeler_tPNG} given in Appendix B of the same paper. To avoid repetition, we direct the readers to \cite{aggarwal_borodin_wheeler_tPNG} for details and provide below only an outline of the main ideas.
\\
\indent
    Denote by $\widetilde{\mathfrak{h}}(\eta,\tau) = \mathfrak{h}(2^{-1/2}(\eta-\tau),2^{-1/2}(\eta+\tau))$, the height function of the $q$-PNG in a space-time frame rotated by $45^\circ$ and by $\widetilde{q\mathrm{-PNG}}$ the push-forward measure defined by the change of coordinates $\mathfrak{h} \to \widetilde{\mathfrak{h}}$. 
\\
\indent
    As explained in the previous paragraphs the $q$-pushTASEP can be seen as a stochastic vertex model with stochastic vertex weights $\mathsf{R}$ as in \eqref{eq:stochastic R}. In this vertex model step initial conditions correspond to taking edges $(1,i)- (1,i+1)$ occupied by infinitely many arrows while edges $(i,0) -(i+1,0)$ empty, as shown in \Cref{fig:vertex model}. In this language the height function is $\mathsf{h}(k,T) = \mathsf{x}_k(T)-k$ which counts the number of edge arrows crossed by a segment with endpoints $(k+\frac{1}{2},T+\frac{1}{2})$ and $(\frac{1}{2},\frac{1}{2})$. Under the scaling of parameter $a=\varepsilon^2 \theta^2$ we denote the height function by $\mathsf{h}_\varepsilon$. 
\\
\indent
    For fixed real numbers $A,B>0$ we define the finite rectangular lattice 
    \begin{equation*}
        \Lambda^{(A,B)}_\varepsilon = \{0,\dots, \lfloor A/\varepsilon \rfloor\} \times \{0,\dots, \lfloor B/\varepsilon \rfloor\}.
    \end{equation*}
    We also define the events
    \begin{equation*}
        \mathfrak{E}^{(A,B)}_\varepsilon = \begin{minipage}{0.7\textwidth}
            for all $(i,j)\in \Lambda^{(A,B)}_\varepsilon$ there is at most one nucleation in the cross-shaped set $[i\varepsilon, (i+1)\varepsilon) \times [0,B] \cup [0,A] \times [j\varepsilon , (j+1)\varepsilon)$.
        \end{minipage}
    \end{equation*}
    and
    \begin{equation*}
        \mathsf{E}^{(A,B)}_\varepsilon = \begin{minipage}{0.7\textwidth}
            no edge in $\Lambda^{(A,B)}_\varepsilon$ has more than a single arrow and no more than one configuration \, $\begin{tikzpicture}[anchor=base,baseline=-3,scale=.5]
            %first vertex configuration
                %second vertex configuration
                \draw [fill=black] (0,0) circle (2pt);
                %left edge
                \draw[densely dotted] (-1,0) -- (-.2,0);
                %bottom edge
                \draw[densely dotted] (0,-1) -- (0,-.2);
                %right edge
                \draw[->] (.2,0) -- (1,0);
                %top edge
                \draw[->] (0,.2) -- (0,1);
                \end{tikzpicture}$ \,
            per row and column in $\Lambda^{(A,B)}_\varepsilon$.
        \end{minipage}
    \end{equation*}
    A simple calculation shows that 
    \begin{equation*}
        \mathbb{P}_{\widetilde{q\mathrm{-PNG}}} \left(  \mathfrak{E}^{(A,B)}_\varepsilon \right) = 1- O(\varepsilon AB)
        \qquad
        \mathbb{P}_{q\mathrm{-pushTASEP}} \left(  \mathsf{E}^{(A,B)}_\varepsilon \right) = 1- O(\varepsilon AB).
    \end{equation*}
    Conditioning the processes $\widetilde{q\mathrm{-PNG}}$ and $q\mathrm{-pushTASEP}$ respectively to events $\mathfrak{E}^{(A,B)}_\varepsilon$ and $\mathsf{E}^{(A,B)}_\varepsilon$ it is possible to couple them in such a way that 
    \begin{equation*}
        \widetilde{\mathfrak{h}}(\eta,\tau) = \mathsf{h}_\varepsilon(\eta/\varepsilon, \tau/\varepsilon) \qquad \text{for all $\eta,\tau$ such that } (\eta/\varepsilon, \tau/\varepsilon) \in \Lambda^{(A,B)}_\varepsilon,
    \end{equation*}
    with probability $1-O(\varepsilon AB)$. This shows that, letting $\varepsilon$ tend to $0$, we have $\mathsf{h}_\varepsilon \to \widetilde{\mathfrak{h}}$ in law over the compact set $[0,A]\times [0,B]$. Upon rotating back the space time coordinates $(\eta,\tau) \to (x,t)$, this proves the theorem. 
\end{proof}
}

{
\begin{figure}
    \centering
    \begin{tabular}{cc}
        \begin{tikzpicture}[scale=0.75]
            \draw[] (0,0) grid[step=.4] (8.3,8);
            \foreach \y in {1,...,20}
            {
                \node[right]  at (8,.4*\y-.2) {\tiny $\cdots$};
            }
            \foreach \x in {1,...,20}
            {
                \node[below] at (.4*\x-.2,0) {\tiny $\x$};
            }
            %row 1
            \foreach \x in {0,...,19}
            {
                \draw[fill]  (.4*\x+.2,.4*0+.2) circle(.1);
            }
            %row 2
            \foreach \x in {0,...,19}
            {
                \draw[fill]  (.4*\x+.2,.4*1+.2) circle(.1);
            }
            %row 3
            \foreach \x in {0,...,11,13,14,15,16,17,18,19}
            {
                \draw[fill]  (.4*\x+.2,.4*2+.2) circle(.1);
            }
            %row 4
            \foreach \x in {0,...,11,13,14,15,16,17,18,19}
            {
                \draw[fill]  (.4*\x+.2,.4*3+.2) circle(.1);
            }
            %row 5
            \foreach \x in {0,...,11,13,14,15,16,17,18,19}
            {
                \draw[fill]  (.4*\x+.2,.4*4+.2) circle(.1);
            }
            %row 6
            \foreach \x in {0,...,11,13,14,15,16,17,18,19}
            {
                \draw[fill]  (.4*\x+.2,.4*5+.2) circle(.1);
            }
            %row 7
            \foreach \x in {0,1,2,3,5,6,7,8,9,10,11,12,14,15,16,17,18,19}
            {
                \draw[fill]  (.4*\x+.2,.4*6+.2) circle(.1);
            }
            %row 8
            \foreach \x in {0,1,2,3,5,6,7,8,9,10,11,12,14,15,16,17,18,19}
            {
                \draw[fill]  (.4*\x+.2,.4*7+.2) circle(.1);
            }
            %row 9
            \foreach \x in {0,1,2,3,5,6,7,8,9,10,11,12,14,15,16,17,18}
            {
                \draw[fill]  (.4*\x+.2,.4*8+.2) circle(.1);
            }
            %row 10
            \foreach \x in {0,1,2,3,5,6,7,8,9,10,11,12,14,15,16,17,18}
            {
                \draw[fill]  (.4*\x+.2,.4*9+.2) circle(.1);
            }
            %row 11
            \foreach \x in {0,1,2,3,5,6,7,8,9,10,11,12,14,15,16,17,18}
            {
                \draw[fill]  (.4*\x+.2,.4*10+.2) circle(.1);
            }
            %row 12
            \foreach \x in {0,1,2,3,5,6,7,8,9,10,11,12,14,15,16,17,18}
            {
                \draw[fill]  (.4*\x+.2,.4*11+.2) circle(.1);
            }
            %row 13
            \foreach \x in {0,1,2,3,5,6,7,8,9,10,11,12,14,15,16,17,18}
            {
                \draw[fill]  (.4*\x+.2,.4*12+.2) circle(.1);
            }
            %row 13
            \foreach \x in {0,1,2,3,5,6,7,8,9,10,11,12,14,15,16,17,18}
            {
                \draw[fill]  (.4*\x+.2,.4*12+.2) circle(.1);
            }
            %row 14
            \foreach \x in {0,1,2,3,5,6,7,8,9,10,11,12,14,15,17,18,19}
            {
                \draw[fill]  (.4*\x+.2,.4*13+.2) circle(.1);
            }
            %row 15
            \foreach \x in {0,1,2,3,5,6,7,8,9,10,11,12,14,15,17,18,19}
            {
                \draw[fill]  (.4*\x+.2,.4*14+.2) circle(.1);
            }
            %row 16
            \foreach \x in {0,1,2,3,5,6,7,8,9,10,11,12,14,15,17,18,19}
            {
                \draw[fill]  (.4*\x+.2,.4*15+.2) circle(.1);
            }
            %row 17
            \foreach \x in {0,1,2,3,5,6,7,8,9,10,11,12,14,15,17,18,19}
            {
                \draw[fill]  (.4*\x+.2,.4*16+.2) circle(.1);
            }
            %row 18
            \foreach \x in {0,2,3,4,6,7,8,9,10,11,12,13,14,15,17,18,19}
            {
                \draw[fill]  (.4*\x+.2,.4*17+.2) circle(.1);
            }
            %row 19
            \foreach \x in {0,2,3,4,6,7,8,9,10,11,12,13,14,15,17,18,19}
            {
                \draw[fill]  (.4*\x+.2,.4*18+.2) circle(.1);
            }
            %row 20
            \foreach \x in {0,2,3,4,6,7,8,9,10,11,12,13,14,15,17,18,19}
            {
                \draw[fill]  (.4*\x+.2,.4*19+.2) circle(.1);
            }
        \end{tikzpicture}
        &
        \begin{tikzpicture}[scale=0.75]
            \draw[densely dotted] (0,0) grid[step=.4] (8,8);
            \foreach \i in {0,...,20}
            {
                \foreach \j in {0,...,20}
                {
                    \draw[fill] (0.4*\i,0.4*\j) circle(.02);
                }
            }
            \foreach \y in {0,...,19}
            {
                \draw[thick,->] (0,.4*\y+0.05) --+ (0,0.3);
                \node[left] at (0,.4*\y+0.2) {\tiny $\infty$};
            }
            % horizontal lines
            \foreach \x in {12,...,20}
            {
                \draw [>=stealth,thick,->]  (.4*\x+.05,.4*2) --+ (0.3,0);
            }
            \foreach \x in {4,...,20}
            {
                \draw [>=stealth,thick,->]  (.4*\x+.05,.4*6) --+ (0.3,0);
            }
            \foreach \x in {1,...,11}
            {
                \draw [>=stealth,thick,->]  (.4*\x+.05,.4*17) --+ (0.3,0);
            }
            \foreach \x in {14,...,16}
            {
                \draw [>=stealth,thick,->]  (.4*\x+.05,.4*13) --+ (0.3,0);
            }
            \foreach \x in {17,...,20}
            {
                \draw [>=stealth,thick,->]  (.4*\x+.05,.4*8) --+ (0.3,0);
            }
            % vertical lines
            \foreach \y in {2,...,16}
            {
                \draw [>=stealth,thick,->]  (.4*12,.4*\y+.05) --+ (0,0.3);
            }
            \foreach \y in {6,...,20}
            {
                \draw [>=stealth,thick,->]  (.4*4,.4*\y+.05) --+ (0,0.3);
            }
            \foreach \y in {17,...,20}
            {
                \draw [>=stealth,thick,->]  (.4*1,.4*\y+.05) --+ (0,0.3);
            }
            \foreach \y in {13,...,20}
            {
                \draw [>=stealth,thick,->]  (.4*14,.4*\y+.05) --+ (0,0.3);
            }
            \foreach \y in {8,...,12}
            {
                \draw [>=stealth,thick,->]  (.4*17,.4*\y+.05) --+ (0,0.3);
            }
            \foreach \x in {1,...,21}
            {
                \node[below] at (.4*\x-.4,0) {\tiny $\x$};
            }
        \end{tikzpicture}
    \end{tabular}
    \caption{On the left panel a typical evolution of a $q$-pushTASEP under the scaling \eqref{eq:scaling} with $\varepsilon$ small. Here discrete time runs upward and the bottom row consists in a step initial condition. The left panel depicts the vertex configuration corresponding to the dynamics on the left panel.}
    \label{fig:vertex model}
\end{figure}
}

\subsection{Relationships between $q$-PNG and Cylindric Plancherel measures} \label{subs:relationship_qPNG_cplan}

The connection between the $q$-PNG and the $q$-PushTASEP elaborated in the previous subsection allows us to establish a relation between the $q$-PNG and cylindric Plancherel measure. This is because of a more general connection relating the periodic Schur measure \cite{borodin2007periodic} and the $q$-PushTASEP discovered in \cite{IMS_KPZ_free_fermions}. In order to discuss these developments we need to introduce some notation.

A partition $\lambda$ is a decreasing sequence of non-negative integers $\lambda_1 \ge \lambda_2 \ge \cdots$ which eventually become zero $\lambda_m=\lambda_{m+1}=\cdots=0$. The size of a partition $\lambda$ is the sum of its elements $|\lambda|=\lambda_1+ \lambda_2 + \cdots$.
For any $n\in \mathbb{N}\cup\{0\}$ we denote 
\begin{equation} \label{eq:p_n}
    \mathsf{p}_n = \#\{ \lambda \text{ partition } : |\lambda|=n\}.
\end{equation}
It is well-known (see \cite[Eq.~(1.15)]{romik_2015} for example) that $\mathsf{p}_n$ is of exponential order in the square root of $n$. Indeed, there exists a constant $C>0$ such that 
\begin{equation}
    \label{eq:pnbd}
    \mathsf{p}_n \le e^{C\sqrt{n}}
\end{equation}
holds for all $n$. Graphically a partition is represented by its Young diagram, obtained drawing, one above the other, left justified rows of cells of lengths given by elements $\lambda_1,\lambda_2,\dots$ as done below in \Cref{fig:partition_tableau}. We are going to identify a partition with its Young diagram and for us, the two notions will be equivalent. Given a partition $\lambda$ we also define its transpose $\lambda' = (\lambda_1',\lambda_2',\dots)$, where $\lambda_i'=\#\{ j : \lambda_j \ge i\}$; the Young diagram of $\lambda'$ is obtained from that of $\lambda$ by a reflection with respect to the diagonal. A partition $\rho$ is contained in another partition $\lambda$ if $\rho_i \le \lambda_i$ for all $i=1,2,\dots$ and in this case we write $\rho \subset \lambda$. This relation also implies that all cells of the Young diagram of $\rho$ belong also to that of $\lambda$.  Whenever $\rho \subset \lambda$ we define the skew partition $\lambda/\rho$ which is the set of cells of the Young diagram of $\lambda$ which do not belong to $\rho$. The size of a skew partition (or equivalently of a skew Young diagram) $\lambda/\rho$ is $|\lambda / \rho| = |\lambda| - |\rho|$.

Labeling cells of skew Young diagrams with natural numbers defines Young tableaux. We say that a Young tableau $T$ of shape $\lambda/\rho$ is \emph{partial} if $T$ is a labeling of cells of $\lambda/\rho$, where values are increasing row-wise and column-wise and each value appears at most once. A tableau $T$ is \emph{standard} if it is a partial tableau where values range in $\{1,\dots , |\lambda/\rho|\}$. Partial and standard Young tableaux are also defined when $\rho = \varnothing$, and in this case we say that their shape is straight (as opposed to skew). The content of a tableau $T$ is the set of its labels
\begin{equation}\label{def:conttab}
    \mathrm{cont} (T) = \{ T(i,j) : (i,j) \in \mathrm{shape}(T) \}. 
\end{equation}

\begin{figure}[htbp]
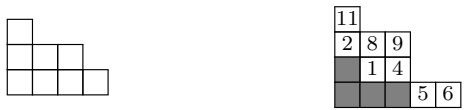

    \centering
    \begin{equation*} \ytableausetup{centertableaux,smalltableaux}
    \ydiagram{1,3,4}
    \hspace{3cm}
    \begin{ytableau}
    11\\
    2 & 8 & 9\\
    *(gray) & 1 & 4 \\
    *(gray) & *(gray) & *(gray) & 5 & 6
    \end{ytableau}
\end{equation*}
    \caption{On the left the Young diagram corresponds to the partition $(4,3,1)$. On the right a partial Young tableau of skew shape $(5,3,3,1)/(3,1)$ and content $\{1,2,4,5,6,8,9,11\}$. }
    \label{fig:partition_tableau}
\end{figure}

For a skew partition $\lambda/\rho$ we define the number
\begin{equation}\label{eq:flr}
    f^{\lambda/\rho} := \# \{ T  :  T \text{ is a standard Young tableau of shape } \lambda/\rho \}.
\end{equation}
Counting standard Young tableau of a given shape has been a problem of great interest in algebraic combinatorics and representation theory. In the case of straight shapes the celebrated hook length formula \cite{frame1954hook} provides the closed expression
\begin{equation} \label{eq:hook_length_formula}
    f^\lambda = \frac{|\lambda|!}{ \prod_{c \in \lambda} h_\lambda(c) },
\end{equation}
where $h_\lambda(c)-\lambda_i - i + \lambda_j' -j +1$ is the \emph{hook length} of the cell $c = (i,j)$ in $\lambda$.
A similar formula is available for the number of standard Young tableaux of skew shape:
\begin{equation} \label{eq:naruse_hook}
    f^{\lambda/\rho} = |\lambda/\rho|! \sum_{C\in \mathcal{E}(\lambda/\rho)} \prod_{c \in \lambda \setminus C } \frac{1}{h_\lambda(c)}.
\end{equation}
where the sum is performed over the $\mathcal{E}(\lambda / \rho)$, the set of all \emph{excited Young diagrams} of $\lambda / \rho$ (which, for the sake of brevity we will not define here). This formula was discovered by Naruse \cite{naruse2014schubert} and a proof can be found in \cite{Morales_Pak_Panova_hook}. There exist also other formulas for $f^{\lambda/\rho}$ in terms of Littlewood–Richardson coefficients, determinants \cite{Stanley1999}, and reverse semistandard tableaux \cite{okounkov1997shifted}.

With these definitions in place, we are ready to define an important measure on the set of partitions.

\begin{definition}
    Fix any $\gamma>0$. The cylindric Plancherel measure with intensity $\gamma$ is the probability measure on the set of skew partitions
    \begin{equation*}
        \mathbb{P}_{\mathsf{cPlan}(\gamma)}(\lambda/\rho) = q^{|\rho|} \gamma^{2 |\lambda/\rho|} \left( \frac{f^{\lambda/\rho}}{|\lambda/\rho|!} \right)^2 e^{-\gamma^2/(1-q)} (q;q)_\infty.
    \end{equation*}
    It was shown in \cite[Example 3.4]{borodin2007periodic} that the measure of the whole set of skew partitions is one, i.e. $\sum_{\rho \subset \lambda} \mathbb{P}_{\mathsf{cPlan}(\gamma)}(\lambda/\rho) =1$.
\end{definition}
The cylindric Plancherel measure is a particular case of the periodic Schur measure, which is also a probability measure on the set of skew partitions proportional to
 \begin{equation} \label{eq:periodic_schur}
     q^{\rho} s_{\lambda/\rho}(a_1,\dots, a_N) s_{\lambda/\rho}(b_1,\dots, b_T) \left(\prod_{i=1}^N \prod_{j=1}^T(a_ib_j;q)_\infty \right) (q;q)_\infty.
 \end{equation}
 It was introduced by Borodin in \cite{borodin2007periodic} and more recently revisited by Betea-Bouttier \cite{betea_bouttier_periodic}. In \eqref{eq:periodic_schur} functions $s_{\lambda/\rho}$ are the Schur polynomials \cite{macdonald1988new} and $a_1,\dots,a_N,b_1,\dots,b_T \in [0,1)$. The cylindric Plancherel measure is recovered from \eqref{eq:periodic_schur} setting $N=T=n$, $a_i=b_j=\gamma/n$ for all $i,j$ and taking the limit $n\to +\infty$. Under this limit we have
 \begin{equation}\label{eq:Schur_exponential}
     s_{\lambda/\rho} \xrightarrow[n \to +\infty]{} \gamma^{|\lambda/\rho|} \frac{f^{\lambda/\rho}}{|\lambda/\rho|!} 
     \qquad
     \text{and}
     \qquad
     \prod_{i=1}^N \prod_{j=1}^T(a_ib_j;q)_\infty \xrightarrow[n \to +\infty]{}  e^{-\gamma^2/(1-q)}.
 \end{equation}
 The limit $q\to 0$ of the cylindric Plancherel measure recovers the Poissonized Plancherel measure \cite{borodin2000b}, which we denote by
 \begin{equation*}
     \mathbb{P}_{\mathsf{Plan}(\gamma)}(\lambda) = \gamma^{2 |\lambda|} \left( \frac{f^{\lambda}}{|\lambda|!} \right)^2 e^{-\gamma^2}.
 \end{equation*}
Note that the Poissonized Plancherel measure is supported on straight partitions $\lambda$. On the other hand, taking $\gamma\to 0$, the cylindric Plancherel measure becomes the volume measure
\begin{equation} \label{eq:volume_measure}
    \mathbb{P}_{\mathsf{vol}}(\rho) = q^{|\rho|} (q;q)_\infty.
\end{equation}

The following theorem states an equivalence in law between the height function of the $q$-PNG and the length of the first row in a cylindric Plancherel measure.

\begin{theorem} \label{thm:matching_qPNG_cylindric_plancherel}
    Fix any $\theta>0$. Let $\mathfrak{h}$ be the height function of the $q$-PNG with intensity $\Lambda=\theta^2/(1-q)$ and droplet initial condition and let $\chi \sim q$-$\mathrm{Geo}(q)$ be an independent random variable. Then, for all $t>|x|>0$ we have
    \begin{equation} \label{eq:matching_height_qPNG_cylindric_Plancherel}
         \mathfrak{h}(x,t) + \chi \stackrel{d}{=} \lambda_1,
    \end{equation}
    where $\lambda_1$ is the first row of a random partition $\lambda$ where $\lambda/\rho \sim \mathbb{P}_{\mathsf{cPlan}(\theta \sqrt{(t^2-x^2)/2})}$.
\end{theorem}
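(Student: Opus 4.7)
The plan is to bootstrap the $q$-PushTASEP to $q$-PNG convergence \cref{thm:matching_qW_qPNG} with a pre-limit distributional identity relating the $q$-PushTASEP position to the first row of a partition under a periodic Schur measure. Such an identity is a specialization of the Imamura--Mucciconi--Sasamoto matching \cite{IMS_matching}; combined with a degeneration of the periodic Schur measure to the cylindric Plancherel measure, it produces exactly the claimed equality in law.

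Concretely, I would first record the pre-limit identity: for $a\in(0,1)$, any $N,T\ge 1$, and $\chi\sim q\text{-}\mathrm{Geo}(q)$ independent of the $q$-PushTASEP with step initial condition and jump parameter $a$,
\[
\mathsf{x}_N(T) - N + \chi \stackrel{d}{=} \lambda_1,
\]
where $\lambda/\rho$ is drawn from the periodic Schur measure \eqref{eq:periodic_schur} with Plancherel-type specializations $a_i = b_j = \sqrt{a}$ for all $i,j$. The role of $\chi$ here is to absorb the $q^{|\rho|}$ factor that distinguishes the periodic from the ordinary Schur measure. Substituting the scaling $a = \varepsilon^2\theta^2$, $N = \lfloor \varepsilon^{-1}2^{-1/2}(t+x)\rfloor$, $T = \lfloor\varepsilon^{-1}2^{-1/2}(t-x)\rfloor$ of \cref{thm:matching_qW_qPNG}, the left-hand side converges in distribution to $\mathfrak{h}(x,t)+\chi$. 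For the right-hand side I would check weight-by-weight convergence: applying the Plancherel limit \eqref{eq:Schur_exponential} separately to the two Schur factors, whose total alphabet masses are $N\sqrt{a}\to\theta(t+x)/\sqrt{2}$ and $T\sqrt{a}\to\theta(t-x)/\sqrt{2}$, yields
\[
s_{\lambda/\rho}(\sqrt{a},\ldots,\sqrt{a})\, s_{\lambda/\rho}(\sqrt{a},\ldots,\sqrt{a}) \longrightarrow \gamma^{2|\lambda/\rho|} \Bigl(\frac{f^{\lambda/\rho}}{|\lambda/\rho|!}\Bigr)^2, \qquad \gamma := \theta\sqrt{(t^2-x^2)/2},
\]
while the Cauchy-type normalizer $\prod_{i,j}(a_ib_j;q)_\infty = (\varepsilon^2\theta^2;q)_\infty^{NT}$ tends to $e^{-\gamma^2/(1-q)}$ since $-\log(a;q)_\infty \sim a/(1-q)$ as $a\to 0$ and $NT\cdot a \to \gamma^2$. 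Multiplied by $q^{|\rho|}(q;q)_\infty$, these factors combine to give exactly $\mathbb{P}_{\mathsf{cPlan}(\gamma)}(\lambda/\rho)$.

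The only remaining task is to promote pointwise-in-$(\lambda,\rho)$ convergence of weights to convergence in distribution of $\lambda_1$. Since the state space is countable and both the prelimit and the limit are genuine probability measures, Scheff\'e's lemma accomplishes this provided no mass escapes to infinity. Uniform summability in $|\lambda/\rho|$ is immediate from $f^{\lambda/\rho}\le|\lambda/\rho|!$ together with the convergence of the Cauchy normalizer, while uniform summability in $|\rho|$ follows from the $q^{|\rho|}$ factor combined with the partition-count bound $\mathsf{p}_n \le e^{C\sqrt{n}}$ from \eqref{eq:pnbd}. I expect this tightness/uniform-integrability check to be the main---but entirely routine---technical step; the substance of the argument lies in the pre-limit identity together with the Plancherel-specialization asymptotics.
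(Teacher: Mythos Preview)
Your proposal is correct and follows essentially the same route as the paper: invoke the pre-limit identity $\mathsf{x}_N(T)-N+\chi \stackrel{d}{=}\bar\lambda_1$ from \cite{IMS_KPZ_free_fermions,IMS_matching}, then pass to the scaling limit of \cref{thm:matching_qW_qPNG} on the left and the Plancherel degeneration of the periodic Schur measure on the right. Your explicit Scheff\'e/tightness justification for upgrading weight-wise convergence to convergence in law is a welcome bit of additional care that the paper leaves implicit.
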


\begin{proof}
    In \cite{IMS_KPZ_free_fermions} it was shown that 
    \begin{equation} \label{eq:matching_qPushTASEP_periodicSchur}
        \mathsf{x}_N(T)-N +\chi = \bar{\lambda}_1,
    \end{equation} 
    where $\mathsf{x}_N(T)$ is the $N$-th particle in the $q$-PushTASEP with step initial conditions and $\bar{\lambda}_1$ is the first row of a partition $\bar{\lambda}$ distributed according to the periodic Schur measure \eqref{eq:periodic_schur} with parameters $a_i=b_j=\sqrt{a}$ for all $i,j$. This is a consequence of \cite[Theorem 4.10]{IMS_KPZ_free_fermions} and \cite[Proposition 3.1]{IMS_KPZ_free_fermions}, which are respectively rephrasing of \cite[Theorem 1.3]{IMS_matching} and \cite[Section 6.3]{MatveevPetrov2014}. Taking the scaling limit \eqref{eq:scaling}, in the periodic Schur measure on the right-hand side of \eqref{eq:matching_qPushTASEP_periodicSchur} we see that
    \begin{equation*}
        s_{\lambda / \rho}(a_1,\dots, a_N) \xrightarrow[\varepsilon \downarrow 0]{} \left[\frac{\theta(t+x)}{\sqrt{2}} \right]^{|\lambda/\rho|} \frac{f^{\lambda/\rho}}{|\lambda/\rho|!},
        \ \ \
        s_{\lambda / \rho}(b_1,\dots, b_T) \xrightarrow[\varepsilon \downarrow 0]{} \left[\frac{\theta(t-x)}{\sqrt{2}} \right]^{|\lambda/\rho|} \frac{f^{\lambda/\rho}}{|\lambda/\rho|!}
    \end{equation*}
    and hence the limiting law of the partition $\bar{\lambda}$ becomes $\mathbb{P}_{\mathsf{cPlan}(\theta \sqrt{(t^2-x^2)/2})}$. Since by \Cref{thm:matching_qW_qPNG} under the same scaling limit $\mathsf{x}_N(T)-N$ converges to $\mathfrak{h}(x,t)$, this proves \eqref{eq:matching_height_qPNG_cylindric_Plancherel}.
\end{proof}

\begin{corollary}\label{cor:xzero}
   Fix any $\theta>0$. Let $\mathfrak{h}$ be the height function of the $q$-PNG with intensity $\Lambda=\theta^2/(1-q)$ and droplet initial condition.  Fix any $x\in \R$ and $t>0$ such that $t>|x|>0$. We have the following equality in distribution (in the sense of one-point marginals):
    \begin{equation*}
        \h(x,t)\stackrel{d}{=}\h\big(0,\sqrt{t^2-x^2}\big).
    \end{equation*}
\end{corollary}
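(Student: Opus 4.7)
The plan is to recognize the corollary as a direct consequence of \cref{thm:matching_qPNG_cylindric_plancherel}, using the crucial fact that the parameter of the matching cylindric Plancherel measure, $\theta\sqrt{(t^2-x^2)/2}$, depends on $(x,t)$ only through the combination $t^2-x^2$.

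First I would apply \cref{thm:matching_qPNG_cylindric_plancherel} at the two points $(x,t)$ and $(0,\sqrt{t^2-x^2})$, using two independent $q$-$\mathrm{Geo}(q)$-distributed variables $\chi$ and $\chi'$ respectively. Since both applications yield the first row of a cylindric Plancherel partition with the \emph{same} parameter, we obtain
\[
\h(x,t)+\chi \stackrel{d}{=} \h\big(0,\sqrt{t^2-x^2}\big)+\chi'.
\]

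The next step is a standard deconvolution. Passing to probability generating functions turns this identity into $G_{\h(x,t)}(s)\cdot G_\chi(s)=G_{\h(0,\sqrt{t^2-x^2})}(s)\cdot G_\chi(s)$ for $|s|\le 1$. Using the explicit law $\mathbb{P}(\chi=k)=q^k(q;q)_\infty/(q;q)_k$ together with the $q$-exponential identity $\sum_{k\ge 0} z^k/(q;q)_k=1/(z;q)_\infty$, one computes $G_\chi(s)=(q;q)_\infty/(qs;q)_\infty$, which is evidently nonzero for $|s|\le 1$ (the factors $1-q^{k+1}s$ never vanish in that range). Dividing by $G_\chi(s)$ yields equality of the generating functions, hence equality in distribution of $\h(x,t)$ and $\h(0,\sqrt{t^2-x^2})$.

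The only technical point to address is that \cref{thm:matching_qPNG_cylindric_plancherel} is stated with the hypothesis $|x|>0$, while the second application is at $x'=0$. This is a formal restriction rather than a genuine one: the proof there proceeds via the scaling limit $N,T=\lfloor \e^{-1}2^{-1/2}(t\pm x)\rfloor\to\infty$ from \cref{thm:matching_qW_qPNG}, which requires only $t>|x|$ and is perfectly valid at $x=0$. A fully rigorous alternative is to apply the theorem at nearby points $(\e,\sqrt{t^2-x^2+\e^2})$ with $\e\ne 0$ and send $\e\downarrow 0$, invoking joint continuity in distribution of the $q$-PNG height function in its spatial argument, but the direct invocation is cleaner. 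Beyond this bookkeeping, I do not foresee any real obstacle.
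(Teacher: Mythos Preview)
Your proposal is correct. The paper does not include an explicit proof of this corollary, presenting it as an immediate consequence of \cref{thm:matching_qPNG_cylindric_plancherel}; your deconvolution argument via probability generating functions is exactly the natural way to make that deduction precise, and your observation that the hypothesis $|x|>0$ in \cref{thm:matching_qPNG_cylindric_plancherel} is inessential (the scaling \eqref{eq:scaling} only requires $t>|x|$) is accurate.
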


\begin{comment}
    \matteo{[Corollary 2.6 follows directly from scaling property. It is strange to list it as a corollary.
    \\
    Reply: We list it as a corollary for convenience. The statement of this corollary in fact allows us to focus on a height function $\mathfrak{h}$ at the origin.
    ]}
\end{comment}

Since our main results deal with only one-point marginals of the height function of $q$-PNG, we shall consider the height function at the origin, i.e.~$\h(0,t)$, for the remainder of the paper.

\subsection{Sampling the cylindric Plancherel measure}
\label{sec:2.3}
The cylindric Plancherel measure can be sampled leveraging a combinatorial construction discovered by Sagan and Stanley in \cite{sagan1990robinson}, which is a generalization of the celebrated Robinson-Schensted correspondence \cite{robinson1938representations,Schensted1961} (see e.g. \cite{Stanley1999}). Although we will not discuss precisely this construction, whose details are not used in this paper, we like to give a general idea of this sampling mechanism. {The procedure we present below is a generalization of the canonical use of the Robinson-Schensted correspondence to sample the Plancherel measure \cite{Schensted1961}; see also \cite[Section 1.8]{romik_2015}.} 

\begin{comment}
    \matteo{[Please cite the exact location in reference [97].
    \\
    Reply: In literature it is common to refer to [97] (now [98]) as the first appearence of the Plancherel measure, which is the push-forward of the uniform measure on the symmetric group under the Schensted's algorithm. The cited paper makes no mention of probability, although it represented an advance in the study of longest increasing subsequences of uniform random permutations (Ulam's problem).
    ]}
\end{comment}

We use the notion of \emph{partial permutation matrices}, that are square matrices $M=(M_{i,j})_{i,j=1}^n$ such that $M_{i,j}\in \{0,1\}$ and for all $i,j$, we have $\sum_k M_{i,k} , \sum_k M_{k,j} \in \{ 0,1\}$. In other words, partial permutation matrices $M$ have at most one non-zero element per row and column. We define
\begin{equation}
    \mathrm{cont}(M) := \left\{ i:\sum_{k} M_{i,k} =1 \right\},
    \qquad \# M := \sum_{i,j=1}^n M_{i,j}.
\end{equation}
The Sagan-Stanley correspondence can be formulated as a bijection
\begin{equation} \label{eq:sagan_stanley}
    (P,Q; M ) \longleftrightarrow (\overline{P},\overline{Q})
\end{equation}
where $P,Q$ are a pair of partial tableaux of the same shape $\lambda/\rho$, $M$ is a partial permutation matrix such that $\mathrm{cont}(P)\cap \mathrm{cont}(M)= \mathrm{cont}(Q)\cap \mathrm{cont}(M^T)=\varnothing$ and $\overline{P},\overline{Q}$ are a pair of partial tableaux of the same shape $\mu/\lambda$, where
\begin{equation} \label{eq:properties_SS}
\begin{aligned}
    & \mathrm{cont}(P) \cup \mathrm{cont}(M) = \mathrm{cont}(\overline{P}),
    \quad
    \mathrm{cont}(Q) \cup \mathrm{cont}(M^T) = \mathrm{cont}(\overline{Q}), \\
    & |\mu/\lambda| = |\lambda /\rho| + \# M. 
\end{aligned}
\end{equation}
where recall the content of a tableau was defined in \eqref{def:conttab}. The correspondence \eqref{eq:sagan_stanley} can be used iteratively to build random skew partitions from random permutation matrices. For this let us consider $\{\mathscr{M}_{k}\}_{k \ge 0}$, a sequence of independent Poisson point processes on the square $(0,1) \times (0,1)$, where $\mathscr{M}_k$ has intensity $\gamma^2 q^k$ and $\nu$, an independent random partition taken with law $\mathbb{P}_\mathsf{vol}$ as in \eqref{eq:volume_measure}. We define %the number of points in each of the Poisson point processes $\mathscr{M}_k$ as
\begin{equation} \label{eq:law_A_k}
    \mathcal{A}_k := \#\mathscr{M}_k,
    \qquad \qquad \mathcal{A}_k \sim \mathrm{Poi}(q^k \gamma^2)
\end{equation}
and we also define random variables 
\begin{equation} \label{eq:n_and_N}
    \mathsf{N} = \max \left\{ k: \mathscr{A}_k > 0 \right\}  
    \qquad \mathsf{n} = \sum_{k\ge 0} \mathscr{A}_k.
\end{equation}
By the Borel-Cantelli theorem, since the intensities of the Poisson random variables $\mathscr{A}_k$ decay exponentially we have that $\mathsf{N}$ is almost surely finite, whereas a simple calculation shows that
\begin{equation} \label{eq:n_Poisson}
    \mathsf{n} \sim \mathrm{Poi}(\gamma^2/(1-q)).
\end{equation}
Let us write $\mathscr{M}$ to denote the point process obtained by superimposing $\{\mathscr{M}_k\}_{k\ge 0}$. From any realization of the Poisson point processes $\{\mathscr{M}_{k}\}_{k \ge 0}$ and hence of $\mathsf{N}=N, \mathsf{n}= n$, we construct a sequence of $n \times n$ partial permutation matrices $\{M_k\}_{k \ge 0}$ as follows:

For each point $p=(p_x,p_y)\in \mathscr{M}_k$, we set $M_k(i_p,j_p)=1$ where
\begin{equation*}
    i_p:=\sum_{k'\ge 0} \#\{ \mathscr{M}_{k'} \cap ([0,1] \times [p_y,1])\},
    \qquad
    j_p:=\sum_{k'\ge0} \#\{\mathscr{M}_{k'} \cap ([0,p_x] \times [0,1])\}.
\end{equation*}
The remaining entries of $M_k$ are set to be zero (see \Cref{fig:mkmatrix}).
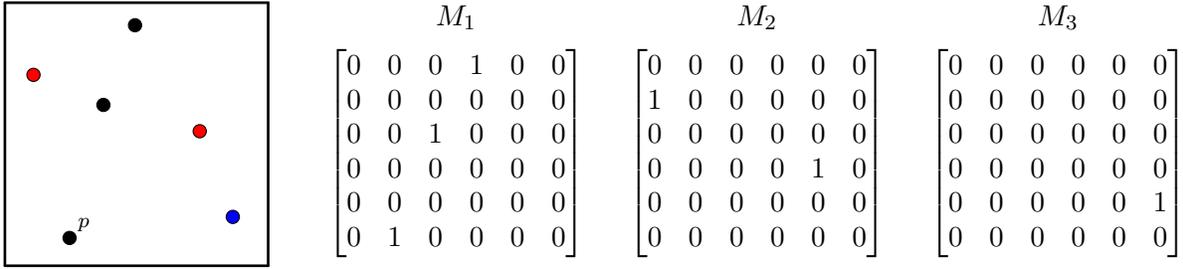
\begin{figure}[t]
    \centering
\begin{tikzpicture}[line cap=round,line join=round,>=triangle 45,x=0.5cm,y=0.5cm]
%\draw[fill=gray!20] (14,2.74)--(7,2.74)--(7,9)--(14,9)--(14,2.74);
%\draw[fill=gray!20] (8.72,2)--(7,2)--(7,9)--(8.72,9)--(8.72,2);
\draw [line width=1pt] (7,9)-- (7,2);
\draw [line width=1pt] (7,9)-- (14,9);
\draw [line width=1pt] (14,9)-- (14,2);
\draw [line width=1pt] (14,2)-- (7,2);
%\draw [line width=1pt] (8.72,9)-- (8.72,2);
%\draw [line width=1pt] (7,2.74)-- (14,2.74);

\node at (19,8.6) {$M_0$};
\node at (25,8.6) {$M_1$};
\node at (31,8.6) {$M_2$};

\node at (19,5) {$\begin{bmatrix} 0 & 0 & 0 & 1 & 0 & 0 \\ 0 & 0 & 0 & 0 & 0 & 0 \\ 0 & 0 & 1 & 0 & 0 & 0 \\ 0 & 0 & 0 & 0 & 0 & 0 \\ 0 & 0 & 0 & 0 & 0 & 0 \\ 0 & 1 & 0 & 0 & 0 & 0 \end{bmatrix}$};
\node at (25,5) {$\begin{bmatrix} 0 & 0 & 0 & 0 & 0 & 0 \\ 1 & 0 & 0 & 0 & 0 & 0 \\ 0 & 0 & 0 & 0 & 0 & 0 \\ 0 & 0 & 0 & 0 & 1 & 0 \\ 0 & 0 & 0 & 0 & 0 & 0 \\ 0 & 0 & 0 & 0 & 0 & 0 \end{bmatrix}$};
\node at (31,5) {$\begin{bmatrix} 0 & 0 & 0 & 0 & 0 & 0 \\ 0 & 0 & 0 & 0 & 0 & 0 \\ 0 & 0 & 0 & 0 & 0 & 0 \\ 0 & 0 & 0 & 0 & 0 & 0 \\ 0 & 0 & 0 & 0 & 0 & 1 \\ 0 & 0 & 0 & 0 & 0 & 0 \end{bmatrix}$};
\begin{scriptsize}
\draw [fill=red] (7.76,7.08) circle (2.5pt);
\draw [fill=black] (10.46,8.4) circle (2.5pt);
\draw [fill=red] (12.18,5.58) circle (2.5pt);
\draw [fill=black] (8.72,2.74) circle (2.5pt);
\draw [fill=black] (9.62,6.28) circle (2.5pt);
\draw [fill=blue] (13.06,3.3) circle (2.5pt);
\node at (9.1,3.1) {$p$};
\end{scriptsize}
\end{tikzpicture}
    \caption{A realization of the point processes $\{\mathscr{M}_k\}_{k\ge 0}$ and corresponding matrices $\{M_k\}_{k\ge 0}$. Black, red, and blue points are points in $\mathscr{M}_0$, $\mathscr{M}_1$, and $\mathscr{M}_2$ respectively. $\mathscr{M}_k=\varnothing$ for $k\ge 3$. %There are 2 points to the left of $p$ including it and 6 points above $p$ including it. Thus $M_1(2,6)=1$. The full matrix $M_1$ is shown on right. 
    The row number $i$ counts the number of points weakly above $p$, while the column number $j$ counts the number weakly to the left of $p$. Thus $M_0(6,2)=1$ for the point $p$ marked in the figure. }
    \label{fig:mkmatrix}
\end{figure}
Notice that almost surely none of the $n$ points of the Poisson point processes will share an $x$ or $y$ coordinate. Thus almost surely, $M_k$'s are partial permutation matrices with $\# M_k=\#\mathscr{M}_k$. After constructing the sequence of matrices $M_k$, of which only the matrices $M_0,\dots,M_N$ will be not identically zero we start building up random partitions. Let $P_0,Q_0$ be the pair of skew tableaux of shape $\nu/\nu$ (i.e. with no labeled cells) and construct, through the correspondence \eqref{eq:sagan_stanley}, the sequence of pairs of partial tableaux $(P_i,Q_i)$, for $i=0,\ldots,N$ such that
\begin{equation*} 
    (P_{i},Q_{i},M_{N-i}) \longleftrightarrow (P_{i+1},Q_{i+1}).
\end{equation*}
We set $(P,Q)=(P_{N+1}, Q_{N+1})$ and it is clear, by construction that this is a pair of standard tableau with the same shape, which we denote by $\lambda/\rho$, where 
\begin{equation} \label{eq:size_partitions}
    |\lambda/\rho| = n  \qquad \text{and} \qquad |\rho| = |\nu| + \sum_{k \ge 0} k \cdot (\# M_k).  
\end{equation}
The construction just described, along with the fact that the Sagan-Stanley correspondence is a bijection, proving the following proposition.
\begin{proposition}
    The random skew partition $\lambda/\rho$ constructed through the procedure described above is distributed according to the cylindric Plancherel measure $\mathbb{P}_{\mathsf{cPlan}(\gamma)}$.
\end{proposition}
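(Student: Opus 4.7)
The plan is to compute, via the iterated Sagan-Stanley bijection \eqref{eq:sagan_stanley}, the pushforward of the joint distribution of $\nu$ and the Poisson data onto pairs of standard Young tableaux, and then to sum over the $(f^{\lambda/\rho})^2$ pairs $(P,Q)$ of each fixed skew shape $\lambda/\rho$.

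First I would rewrite the input data in a convenient form. Set $\mathsf{n} = n$ and label the $n$ points of the superposition $\bigcup_k \mathscr{M}_k$ by their $y$-rank $i \in \{1,\dots,n\}$. Conditionally on $\mathsf{n}=n$, these points are i.i.d.\ uniform in $[0,1]^2$, so the permutation $\pi\in S_n$ read off from the ranks of their coordinates is uniform; moreover, by Poisson thinning each point is independently assigned to $\mathscr{M}_k$ with probability $q^k(1-q)$. Hence the labels $k_1,\dots,k_n\in\mathbb{Z}_{\ge 0}$ (where $k_i$ is the level of the point of $y$-rank $i$) are i.i.d.\ geometric, and the data $(M_0,M_1,\dots)$ is equivalent to $(\pi,k_1,\dots,k_n)$, with $\sum_k k\cdot \#M_k = \sum_i k_i$. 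Combining this with \eqref{eq:n_Poisson} and the independent $\nu\sim\mathbb{P}_{\mathsf{vol}}$ gives
\begin{equation*}
    \mathbb{P}(\nu,\pi,k_1,\dots,k_n) = q^{|\nu|}(q;q)_\infty \cdot e^{-\gamma^2/(1-q)}\frac{\gamma^{2n}}{(n!)^2}\prod_{i=1}^n q^{k_i}.
\end{equation*}

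Next I would apply the iterated Sagan-Stanley correspondence. Each step $(P_i,Q_i,M_{\mathsf{N}-i})\mapsto (P_{i+1},Q_{i+1})$ is a bijection, and so is their composition, placing the configuration $(\nu,\pi,k)$ in one-to-one correspondence with a pair $(P,Q)$ of standard Young tableaux of some skew shape $\lambda/\rho$. The size identities in \eqref{eq:size_partitions} read $|\lambda/\rho|=n$ and $|\rho| = |\nu|+\sum_i k_i$, so that $q^{|\nu|}\prod_i q^{k_i} = q^{|\rho|}$ and the probability above becomes
\begin{equation*}
    \mathbb{P}(P,Q) = e^{-\gamma^2/(1-q)}(q;q)_\infty \frac{\gamma^{2|\lambda/\rho|}}{(|\lambda/\rho|!)^2}\, q^{|\rho|},
\end{equation*}
depending on $(P,Q)$ only through its shape.

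Finally, summing this identical value over the $(f^{\lambda/\rho})^2$ pairs of standard Young tableaux of shape $\lambda/\rho$, see \eqref{eq:flr}, yields
\begin{equation*}
    \mathbb{P}(\mathrm{shape}=\lambda/\rho) = \left(\frac{f^{\lambda/\rho}}{|\lambda/\rho|!}\right)^2\, \gamma^{2|\lambda/\rho|}\, q^{|\rho|}\, e^{-\gamma^2/(1-q)}(q;q)_\infty,
\end{equation*}
which coincides with $\mathbb{P}_{\mathsf{cPlan}(\gamma)}(\lambda/\rho)$. The main nontrivial ingredients are the bijectivity of the iterated Sagan-Stanley correspondence (which requires carefully tracking when the backward iteration terminates at an empty pair of tableaux of shape $\nu/\nu$) and the size identities in \eqref{eq:size_partitions}; both are available from \cite{sagan1990robinson}, and modulo them every step above is a direct computation.
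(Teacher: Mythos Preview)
Your proposal is correct and follows exactly the approach the paper has in mind; the paper itself gives essentially no proof beyond the sentence ``The construction just describes, along with the fact that the Sagan-Stanley correspondence is a bijection, proving the following proposition,'' and you have simply made the underlying computation explicit. Your use of Poisson thinning to identify the level assignments as i.i.d.\ geometric variables, and the cancellation of the $(1-q)^n$ factors against the $(1-q)^{-n}$ in the Poisson mass of $\mathsf{n}$, are the right way to see why the weights match; the remaining subtleties you flag (bijectivity of the iterated correspondence and the size identities \eqref{eq:size_partitions}) are indeed the only nontrivial inputs, and are supplied by \cite{sagan1990robinson}.
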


The above sampling scheme allows us to derive exponential moment bounds for the size of random partitions $\lambda$ and $\rho$, where $\lambda/\rho$ is distributed according to the cylindric Plancherel measure. We report them in the following proposition.

\begin{proposition}
Fix $\gamma>0$. Suppose $\lambda/\rho \sim \Pr_{\mathsf{cPlan}(\gamma)}$. Then
    \begin{equation} \label{eq:law_size_skew_shape}
        |\lambda / \rho| \sim \mathrm{Poi}(\gamma^2/(1-q)),
    \end{equation}
    {For all $\theta\in (0,\log q^{-1})$, there exists a constant $\Con=\Con(q,\theta)>0$ such that \begin{align}
        \label{eq:expmombd}
        \Ex[e^{\theta |\rho|}]\le \Con \exp\left(\frac{\gamma^2qe^\theta}{1-qe^{\theta}}\right).
    \end{align}}
\end{proposition}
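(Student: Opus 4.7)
The plan is to read both assertions directly off the Sagan--Stanley sampling construction described above, and in particular off the decomposition \eqref{eq:size_partitions} of $|\rho|$ in terms of the volume partition $\nu$ and the Poisson point processes $\{\mathscr{M}_k\}_{k\ge 0}$. Since $\nu$ is independent of $\{\mathscr{M}_k\}_{k\ge 0}$ and the $\mathscr{M}_k$'s are mutually independent, everything will factor and reduce to a geometric series computation.

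\textbf{Part (i).} For the Poisson identity \eqref{eq:law_size_skew_shape}, I would simply invoke $|\lambda/\rho|=\mathsf{n}=\sum_{k\ge 0}\mathcal{A}_k$ from \eqref{eq:size_partitions} together with \eqref{eq:law_A_k}: since the $\mathcal{A}_k$ are independent Poisson variables with rates $q^k\gamma^2$, their sum is Poisson with rate $\gamma^2\sum_{k\ge 0}q^k=\gamma^2/(1-q)$, which is precisely \eqref{eq:n_Poisson}.

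\textbf{Part (ii).} Using \eqref{eq:size_partitions} and the fact that almost surely $\#M_k=\mathcal{A}_k$ (no two Poisson points share a coordinate), I write
\begin{equation*}
    |\rho|=|\nu|+\sum_{k\ge 1}k\,\mathcal{A}_k.
\end{equation*}
By independence of $\nu$ and $\{\mathcal{A}_k\}_{k\ge 1}$,
\begin{equation*}
    \mathbb{E}\bigl[e^{\theta|\rho|}\bigr]=\mathbb{E}\bigl[e^{\theta|\nu|}\bigr]\cdot\prod_{k\ge 1}\mathbb{E}\bigl[e^{\theta k\mathcal{A}_k}\bigr].
\end{equation*}
The first factor is a finite constant for $\theta<\log q^{-1}$: using $\mathbb{P}_{\mathsf{vol}}(\nu)=q^{|\nu|}(q;q)_\infty$ and summing over partitions via the classical generating function $\sum_\nu z^{|\nu|}=1/(z;z)_\infty$ (with $z=qe^\theta$), one obtains
\begin{equation*}
    \mathbb{E}\bigl[e^{\theta|\nu|}\bigr]=\frac{(q;q)_\infty}{(qe^\theta;qe^\theta)_\infty}=:\Con(q,\theta).
\end{equation*}
For the second factor, the Poisson moment generating function gives $\mathbb{E}[e^{\theta k\mathcal{A}_k}]=\exp\!\bigl(q^k\gamma^2(e^{\theta k}-1)\bigr)$, so provided $qe^\theta<1$,
\begin{equation*}
    \prod_{k\ge 1}\mathbb{E}\bigl[e^{\theta k\mathcal{A}_k}\bigr]=\exp\!\Bigl(\gamma^2\sum_{k\ge 1}\bigl((qe^\theta)^k-q^k\bigr)\Bigr)=\exp\!\Bigl(\frac{\gamma^2 qe^\theta}{1-qe^\theta}-\frac{\gamma^2 q}{1-q}\Bigr).
\end{equation*}
Bounding the negative term $\exp(-\gamma^2 q/(1-q))\le 1$ then yields \eqref{eq:expmombd} with the constant $\Con(q,\theta)$ above.

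\textbf{Main difficulty.} There is no real obstacle: the entire argument is a one-line independence factorization followed by a geometric series summation. The only point requiring a moment of care is verifying that $\#M_k=\mathcal{A}_k$ almost surely so that the deterministic identity \eqref{eq:size_partitions} translates into the random sum $\sum_{k}k\mathcal{A}_k$, and that the constraint $\theta<\log q^{-1}$ is exactly what is needed for both $\mathbb{E}[e^{\theta|\nu|}]$ and the product $\prod_{k\ge 1}\mathbb{E}[e^{\theta k \mathcal{A}_k}]$ to converge.
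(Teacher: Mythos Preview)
Your proposal is correct and follows essentially the same approach as the paper: both use the Sagan--Stanley decomposition $|\rho|=|\nu|+\sum_{k\ge 1}k\mathcal{A}_k$, factor by independence, compute the Poisson product exactly, and bound the $\nu$-contribution by a $\gamma$-free constant. The only cosmetic difference is that you evaluate $\mathbb{E}[e^{\theta|\nu|}]$ exactly via Euler's partition generating function, whereas the paper merely shows finiteness using the crude bound $\mathsf{p}_n\le e^{C\sqrt{n}}$.
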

\begin{proof}
    %We have seen that by construction the size of skew and empty shapes $\lambda/\rho$ and $\rho$ in a cylindric Plancherel measure is given by \eqref{eq:size_partitions}. Recalling that in \eqref{eq:size_partitions} the number $\mathsf{n}$ was a realization of the random variable \eqref{eq:n_Poisson} proves \eqref{eq:law_size_skew_shape}.
   {Note that \eqref{eq:law_size_skew_shape} is a direct consequence of \eqref{eq:n_Poisson} and the first equality in \eqref{eq:size_partitions}. Let us focus on proving \eqref{eq:expmombd}. From the second equality in \eqref{eq:size_partitions}, we deduce that
    $|\rho|=|\nu|+\sum_{k\ge 0}k\mathcal{A}_k$
    where $\nu \sim \mathbb{P}_{\mathsf{vol}}$ and $\mathcal{A}_k\sim \operatorname{Poi}(q^k\gamma^2)$ are all independent.  Using the explicit moment generating function for poisson random variables for $\theta\in (0,\log q^{-1})$, we obtain
    \begin{equation} \label{eq:generating_f_G}
        \mathbb{E} \left( e^{\theta\sum_{k\ge 1}k\, \mathscr{A}_k} \right) = \prod_{k\ge 1} \Ex[e^{\theta k\mathcal{A}_k}] = e^{\gamma^2\left( \frac{qe^{\theta}}{1-qe^{\theta}} - \frac{q}{1-q} \right)} \le e^{\frac{q\gamma^2 e^{\theta}}{1-qe^{\theta}}}.
    \end{equation}
    Since $\Pr(|\nu|=k)=(q;q)_\infty \mathsf{p}_k q^k$ and $\mathsf{p}_k\le e^{\Con \sqrt{k}}$ from \eqref{eq:pnbd}, for any $\theta\in (0,\log q^{-1})$ we have $\Ex[e^{\theta|\nu|}]\le \Con(q,\theta)<\infty.$ Combining this with \eqref{eq:generating_f_G}, we arrive at \eqref{eq:expmombd}.}  
\end{proof}

\subsection{Exact formulas for the height function of $q$-PNG}\label{sec:2.4}

The connection between the cylindric Plancherel measure and the height function of the $q$-Deformed Polynuclear Growth with droplet initial condition can be leveraged to write down explicit formulas for the distribution of $\mathfrak{h}$. Formulas analogous to those described below were found in \cite{aggarwal_borodin_wheeler_tPNG} through a similar matching with the Poissonized Plancherel measure.

As found in the seminal paper \cite{borodin2007periodic} the cylindric Plancherel measure becomes, after a certain random shift, a determinantal point process with an explicit correlation kernel. Given $\zeta>0$, we say that a random variable $S_\zeta$ has $\mathrm{Theta}(q,\zeta)$ distribution if
\begin{equation}\label{eq:S_distribution}
    \mathbb{P}(S_\zeta=k) = \frac{q^{k^2/2} \zeta^k}{\vartheta(q,\zeta)}  \quad \mbox{ for }  {k\in \Z},
\end{equation}
where the normalization constant can be evaluated as the Jacobi triple product
\begin{equation*}
    \vartheta(q,\zeta) = (q,-\sqrt{q} \zeta, -\sqrt{q}/\zeta;q)_\infty.
\end{equation*}
Here we are using the convention that $(a_1, \dots,a_m;q)_\infty = (a_1,;q)_\infty  \cdots (a_m;q)_\infty$.

\begin{proposition}[\cite{borodin2007periodic}] \label{prop:shift_mixed_det}
   Fix any $\gamma,\zeta>0$. Let $\lambda / \rho \sim \mathbb{P}_{\mathsf{cPlan}(\gamma)}$ and $S_\zeta\sim \mathrm{Theta}(q,\zeta)$ be independent random variables. Consider the point process
    \begin{equation*}
        \mathfrak{S}(\lambda,S_\zeta) = \left\{ \lambda_i -i + S_\zeta + \frac{1}{2} : i=1,2,\dots  \right\}.
    \end{equation*}
    Then $\mathfrak{S}(\lambda,S_\zeta)$ is a determinantal point process on $\mathbb{Z}':=\mathbb{Z}+1/2$ with correlation kernel
   \begin{align}\label{skernel}
    \mathsf{K}_{\zeta,\gamma}(a,b) := \sum_{\ell\in \mathbb{Z}'}\frac{1}{1+\zeta^{-1}q^{\ell}}J_{a+\ell}\left(\frac{2\gamma}{1-q}\right)J_{b+\ell}\left(\frac{2\gamma}{1-q}\right),
\end{align}
    where $J_m$ are the Bessel functions of the first kind.
\end{proposition}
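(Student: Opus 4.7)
The plan is to derive this statement as a specialization of Borodin's general determinantal formula for shifted periodic Schur measures \cite{borodin2007periodic}. From \eqref{eq:periodic_schur}--\eqref{eq:Schur_exponential}, the cylindric Plancherel measure $\mathbb{P}_{\mathsf{cPlan}(\gamma)}$ arises as the $n\to\infty$ limit of the periodic Schur measure with $a_i = b_j = \gamma/n$. Borodin's theorem states that, for any such periodic Schur measure, the configuration $\mathfrak{S}(\lambda, S_\zeta)$ is determinantal on $\mathbb{Z}'$ with correlation kernel representable as a double contour integral built from the generating functions $H_\pm(z) = \prod_i 1/(1-a_i z)$ of the two specializations. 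The Fermi--Dirac factor that ultimately appears in \eqref{skernel} is produced inside that integral by collapsing the sum over $S_\zeta$ against the weight $q^{|\rho|}$ via the Jacobi triple product underlying $\vartheta(q,\zeta)$; more conceptually, it is the single-particle density matrix of a free-fermion grand canonical ensemble with inverse temperature $\logq$ and chemical potential encoded by $\zeta$.

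The second step is to specialize this kernel to the cylindric limit. Under $a_i = b_j = \gamma/n$, $n\to\infty$, the generating functions degenerate to $H_\pm(z) \to \exp\!\bigl(\gamma z/(1-q)\bigr)$, where the $(1-q)^{-1}$ arises from summing the geometric series in $q$ induced by the periodic setup, mirroring the appearance of $e^{-\gamma^2/(1-q)}$ in the normalization. The integrand of the double contour integral then factorizes into the product of Laurent generating functions of Bessel functions,
\[
\exp\!\Bigl(\tfrac{\gamma}{1-q}(z - 1/z)\Bigr) = \sum_{m\in\mathbb{Z}} J_m\!\bigl(\tfrac{2\gamma}{1-q}\bigr)\, z^m,
\]
and residue extraction at the powers $z^{a+\ell}$, $w^{-b-\ell}$ produces exactly the summand $J_{a+\ell}(2\gamma/(1-q))\, J_{b+\ell}(2\gamma/(1-q))$, with $\ell \in \mathbb{Z}'$ labeling the occupation level weighted by $(1+\zeta^{-1}q^{\ell})^{-1}$.

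The main obstacle is the algebraic machinery underlying Borodin's theorem, which must be invoked rather than reproved: one writes the joint law of $(\lambda/\rho, S_\zeta)$ as a Fock-space trace of vertex operators acting against $q^H$, identifies the Fermi--Dirac weights $(1+\zeta^{-1}q^\ell)^{-1}$ as the eigenvalues of the single-particle density matrix, and applies Wick's theorem to conclude determinantal correlations. Once that framework is in place, the cylindric specialization is a mechanical substitution and the only verification needed is that the $n\to\infty$ limit commutes with the contour/residue computation --- which is immediate because the single-particle wave functions $J_{a+\ell}(2\gamma/(1-q))$ form an orthonormal family indexed by $\mathbb{Z}'$ and the series \eqref{skernel} converges absolutely by standard Bessel asymptotics together with the exponential decay of $1/(1+\zeta^{-1}q^{\ell})$ as $\ell \to -\infty$.
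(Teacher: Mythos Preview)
The paper does not give its own proof of this proposition; it is stated with a direct citation to Borodin \cite{borodin2007periodic} and treated as an imported result. Your sketch --- specializing Borodin's shift-mixed periodic Schur measure to the cylindric Plancherel limit and identifying the resulting single-particle wave functions as Bessel functions via the generating series $\exp\bigl(\tfrac{\gamma}{1-q}(z-1/z)\bigr)$ --- is exactly the derivation one would carry out to extract \eqref{skernel} from Borodin's general kernel, and is correct in outline.
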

By the theory of determinantal point processes \cite{Soshnikov2000, Johansson2005lectures, Borodin2009}, we have the following:
\begin{corollary}
 Fix any $\gamma,\zeta>0$.  Let $\lambda/\rho \sim \mathbb{P}_{\mathsf{cPlan}(\gamma)}$ and $S_\zeta\sim \mathrm{Theta}(q,\zeta)$ be independent random variables. Then, for any $s\in \mathbb{Z}$ we have
    \begin{equation}\label{eq:cor2}
        \mathbb{P}(\lambda_1+S_\zeta \le s) = \det \left( I - \mathsf{K}_{\zeta,\gamma} \right)_{\ell^2(s+\frac{1}{2}, s+\frac{3}{2},\dots)}.
    \end{equation}
\end{corollary}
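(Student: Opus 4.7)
The strategy is to rewrite the event $\{\lambda_1+S_\zeta \le s\}$ as a gap (void) probability for the determinantal point process $\mathfrak{S}(\lambda,S_\zeta)$ identified in \cref{prop:shift_mixed_det}, and then apply the standard Fredholm determinant representation of gap probabilities for determinantal point processes.

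The first step is a purely combinatorial observation about the shift map $i\mapsto \lambda_i-i+S_\zeta+\tfrac{1}{2}$. Because $\lambda$ is a partition, i.e.\ weakly decreasing with $\lambda_i\in\mathbb{Z}_{\ge 0}$, the sequence $\lambda_i-i$ is \emph{strictly} decreasing in $i$: indeed $\lambda_i-i-(\lambda_{i+1}-(i+1))=\lambda_i-\lambda_{i+1}+1\ge 1$. Consequently, the supremum of $\mathfrak{S}(\lambda,S_\zeta)$ is attained at $i=1$ and equals $\lambda_1-1+S_\zeta+\tfrac{1}{2}=\lambda_1+S_\zeta-\tfrac{1}{2}$. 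Therefore, for any $s\in\mathbb{Z}$,
\begin{equation*}
    \{\lambda_1+S_\zeta \le s\} \;=\; \{\max \mathfrak{S}(\lambda,S_\zeta)\le s-\tfrac{1}{2}\} \;=\; \left\{\mathfrak{S}(\lambda,S_\zeta)\cap \{s+\tfrac{1}{2},s+\tfrac{3}{2},\dots\}=\varnothing\right\}.
\end{equation*}

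The second step is to invoke the general Fredholm determinant formula for the probability that a determinantal point process has no points in a prescribed subset of its state space, as found for example in \cite{Soshnikov2000,Johansson2005lectures,Borodin2009}. Applied to the determinantal point process $\mathfrak{S}(\lambda,S_\zeta)$ on $\mathbb{Z}'$ with correlation kernel $\mathsf{K}_{\zeta,\gamma}$ from \eqref{skernel} and to the subset $A=\{s+\tfrac{1}{2},s+\tfrac{3}{2},\dots\}\subset\mathbb{Z}'$, this formula gives
\begin{equation*}
    \mathbb{P}\bigl(\mathfrak{S}(\lambda,S_\zeta)\cap A=\varnothing\bigr) \;=\; \det(1-\mathsf{K}_{\zeta,\gamma})_{\ell^2(A)},
\end{equation*}
which matches the right-hand side of \eqref{eq:cor2}. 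Combining with the first step yields the claim.

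The only point deserving a brief justification is that the operator $\mathsf{K}_{\zeta,\gamma}$ restricted to $\ell^2(s+\tfrac{1}{2},s+\tfrac{3}{2},\dots)$ is trace class, so that the Fredholm determinant is well defined and the series expansion used in the general theory converges. This follows from the explicit Bessel representation in \eqref{skernel}: standard large-order estimates for $J_m(2\gamma/(1-q))$ give super-exponential decay as $m\to+\infty$, while the weights $(1+\zeta^{-1}q^\ell)^{-1}$ are bounded, so the diagonal of $\mathsf{K}_{\zeta,\gamma}$ is summable on $A$ and the operator is even Hilbert--Schmidt (in fact trace class). I expect no serious obstacle in this step; the only thing to keep track of is the half-integer shift by $\tfrac{1}{2}$ coming from the definition of $\mathfrak{S}(\lambda,S_\zeta)$, which is precisely why the Fredholm determinant is indexed over $\{s+\tfrac{1}{2},s+\tfrac{3}{2},\dots\}$ rather than $\{s+1,s+2,\dots\}$.
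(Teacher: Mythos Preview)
Your proposal is correct and follows exactly the approach the paper indicates: the paper simply states that the corollary follows from the general theory of determinantal point processes \cite{Soshnikov2000, Johansson2005lectures, Borodin2009}, and you have spelled out the standard two-step argument (identify $\{\lambda_1+S_\zeta\le s\}$ as the gap event for $\mathfrak{S}(\lambda,S_\zeta)$ on $\{s+\tfrac12,s+\tfrac32,\dots\}$, then apply the Fredholm determinant formula for void probabilities). Your handling of the half-integer shift and the trace-class remark are both fine.
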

For the next result, we define the function $\mathsf{F}_q: [0,\infty)\to [0,\infty)$ as
\begin{equation}\label{def:fq}
    \mathsf{F}_q(\zeta) := \prod_{k \ge 0} \frac{1}{1+\zeta q^{k}}.
\end{equation}
\begin{theorem}\label{l:fred} Fix any $\theta,\zeta>0$. Let $\mathfrak{h}$ be the height function of the $q$-PNG with intensity $\Lambda=\theta^2/(1-q)$ and droplet initial condition. We have
\begin{align}\label{e:fred}
\Ex\left[ \mathsf{F}_q(\zeta q^{-\h(0,t)})\right]  =\det(I-\mathsf{K}_{\zeta/\sqrt{q},\theta t/ \sqrt{2}})_{\ell^2(\mathbb{N}')}.
\end{align}
where $\mathbb{N}':=\{1/2,3/2,5/2,\ldots\}$.
\end{theorem}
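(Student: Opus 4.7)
The plan is to combine three ingredients available in the excerpt: the distributional matching $\h(0,t)+\chi\stackrel{d}{=}\lambda_1$ of \cref{thm:matching_qPNG_cylindric_plancherel}, the Fredholm determinant representation \eqref{eq:cor2} for the $S_\zeta$-shifted largest row of a cylindric Plancherel partition, and an algebraic identification of the law of $\chi+S_{\zeta/\sqrt{q}}$ in terms of $\mathsf{F}_q$.

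Specialising \eqref{eq:cor2} to $s=0$ with parameter $\zeta$ replaced by $\zeta/\sqrt{q}$, and using independence of $\chi$, $\h(0,t)$, $S_{\zeta/\sqrt{q}}$, I would write
\begin{equation*}
\det(I-\mathsf{K}_{\zeta/\sqrt{q},\theta t/\sqrt{2}})_{\ell^2(\mathbb{N}')}
=\Pr\bigl(\lambda_1+S_{\zeta/\sqrt{q}}\le 0\bigr)
=\Pr\bigl(\h(0,t)+\chi+S_{\zeta/\sqrt{q}}\le 0\bigr).
\end{equation*}
Conditioning on $\h(0,t)=h\in\mathbb{Z}_{\ge 0}$ and using independence from $\chi$ and $S_{\zeta/\sqrt{q}}$ reduces \eqref{e:fred} to the purely algebraic identity
\begin{equation}\label{eq:proposal_id}
\Pr\bigl(\chi+S_{\zeta/\sqrt{q}}\le -h\bigr)=\mathsf{F}_q(\zeta q^{-h}),\qquad h\in\mathbb{Z}_{\ge 0},
\end{equation}
since matching term by term across $\sum_{h\ge 0}\Pr(\h(0,t)=h)\,(\cdot)$ yields \eqref{e:fred}.

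To establish \eqref{eq:proposal_id} I would convolve the $q$-geometric law of $\chi$ with the theta law of $S_{\zeta/\sqrt{q}}$. Using the Jacobi triple product $\vartheta(q,\zeta/\sqrt{q})=(q;q)_\infty(-\zeta;q)_\infty(-q/\zeta;q)_\infty$ together with the $q$-exponential identity $\sum_{a\ge 0}q^{a(a+1)/2}z^a/(q;q)_a=(-qz;q)_\infty$, the inner sum over $a$ collapses and gives the compact formula
\begin{equation*}
\Pr\bigl(\chi+S_{\zeta/\sqrt{q}}=-l\bigr)=\frac{q^{l(l+1)/2}\zeta^{-l}}{(-\zeta;q)_\infty\,(-q/\zeta;q)_{l+1}},\qquad l\ge 0.
\end{equation*}
Independently, shifting the infinite product in the definition of $\mathsf{F}_q$ yields the closed form $\mathsf{F}_q(\zeta q^{-h})=q^{h(h+1)/2}\zeta^{-h}/\bigl[(-q/\zeta;q)_h(-\zeta;q)_\infty\bigr]$. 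Setting $w:=q^{h+1}/\zeta$, the identity \eqref{eq:proposal_id} reduces to the telescoping statement
\begin{equation*}
\sum_{k\ge 0}\frac{q^{k(k-1)/2}w^k}{(-w;q)_{k+1}}=1,
\end{equation*}
which follows by writing the summand as $a_k-a_{k+1}$ with $a_k:=q^{k(k-1)/2}w^k/(-w;q)_k$ and observing that $a_0=1$ while $a_k\to 0$ superexponentially.

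The only non-routine step is the telescoping identity, which drives the whole argument; the rest is mechanical bookkeeping using independence, the Jacobi triple product, and the $q$-binomial theorem. A convergence check for the Fredholm determinant (needed to justify the limits in the cylindric Plancherel formula) is standard given the Bessel form of the kernel in \eqref{skernel} and will not present a genuine obstacle.
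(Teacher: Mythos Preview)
Your proof is correct and follows the same architecture as the paper: both arguments reduce \eqref{e:fred} to the Fredholm determinant formula \eqref{eq:cor2} via the matching $\h(0,t)+\chi\stackrel{d}{=}\lambda_1$, together with the identity $\Pr(\chi+S_{\zeta/\sqrt{q}}\le -h)=\mathsf{F}_q(\zeta q^{-h})$. The only difference is that the paper cites this last identity (in the equivalent form $\Pr(\chi+S_\zeta\le n)=\mathsf{F}_q(\zeta q^{n+1/2})$) from \cite[Eq.~(5.4)]{IMS_KPZ_free_fermions}, whereas you supply a self-contained derivation via the convolution and the telescoping sum $\sum_{k\ge 0}q^{k(k-1)/2}w^k/(-w;q)_{k+1}=1$; your version is thus slightly more informative for a reader without access to that reference.
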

\begin{proof}
Let $\chi\sim q$-Geo$(q)$ and $S_\zeta\sim$Theta$(\zeta,q)$ independent of $q$-PNG. By \cite[Eq, (5.4)]{IMS_KPZ_free_fermions} 
\begin{equation*}
    \mathbb{P}(\chi + S_\zeta \le n) = \frac{1}{(-\zeta q^{n+1/2};q)_\infty} = \mathsf{F}_q(\zeta q^{n+1/2}),
\end{equation*}
for any $n\in \mathbb{Z}$ and hence we have    
\begin{align}\label{eq:iden1}
    \Pr(\h(0,t) + \chi+S_\zeta \le 0) = \Ex\left[ \mathsf{F}_q(\zeta q^{1/2-\h(0,t)})\right] .
\end{align}
By \Cref{thm:matching_qPNG_cylindric_plancherel} we know $\h(0,t)+\chi$ is equivalent in distribution to the first row $\lambda_1$ of a partition $\lambda \sim \mathbb{P}_{\mathsf{cPlan}(\theta t/\sqrt{2})}$. Therefore, $\h(0,t)+\chi+S_\zeta$ is equal in law to $\lambda_1+S_\zeta$ whose probability distribution is given in \Cref{prop:shift_mixed_det}. Taking $\zeta \mapsto \zeta/\sqrt{q}$, $\gamma\mapsto \theta t/\sqrt{2}$ and $s\mapsto 0$ in \eqref{eq:cor2}, in view of \eqref{eq:iden1}, we arrive at \eqref{e:fred}. This completes the proof.
\end{proof}

We next give another formula for the height function of the $q$-PNG stemming from a relation between the periodic Schur and the (usual) Schur measure observed in \cite{IMS_KPZ_free_fermions}. This formula was also derived in \cite{aggarwal_borodin_wheeler_tPNG} as a special case of a more general result by Borodin \cite{borodin2016stochastic_MM}.
\begin{theorem}\label{cor:iden}
   Fix $\theta,\zeta>0$. Let $\mathfrak{h}$ be the height function of the $q$-PNG with intensity $\Lambda=\theta^2/(1-q)$ and let $\chi \sim q$-$\mathrm{Geo}(q)$, $S_\zeta\sim\mathrm{Theta}(q,\zeta)$ be independent random variables, independent of $\h$ as well. Then for $s\in \Z$ we have
    \begin{equation} \label{eq:S+h=expextation}
        \mathbb{P}(\mathfrak{h}(0,t)+\chi+S_\zeta \le s) = \mathbb{E}_{\mathsf{Plan}(\theta t/\sqrt{2}(1-q))} \bigg[ \prod_{i \ge 1} \frac{1}{1+\zeta q^{s+i-\lambda_i}} \bigg].
    \end{equation}
\end{theorem}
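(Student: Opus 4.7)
The plan is to reduce the identity to an equivalence between functionals of the cylindric and Poissonized Plancherel measures, which is then a direct consequence of the free--fermion matching worked out in \cite{IMS_KPZ_free_fermions} (equivalently, a specialization of Borodin's Schur--process identity \cite{borodin2016stochastic_MM}, as noted in \cite{aggarwal_borodin_wheeler_tPNG}).

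First, I would invoke \cref{thm:matching_qPNG_cylindric_plancherel} at $x=0$: $\mathfrak{h}(0,t)+\chi \stackrel{d}{=} \lambda_1$ for $\lambda/\rho \sim \mathbb{P}_{\mathsf{cPlan}(\theta t/\sqrt{2})}$, independent of $S_\zeta$ by construction. Consequently, the LHS of \eqref{eq:S+h=expextation} equals $\mathbb{P}(\lambda_1+S_\zeta \le s)$, which by \cref{prop:shift_mixed_det} is the gap probability
\begin{equation*}
    \det(I-\mathsf{K}_{\zeta,\theta t/\sqrt{2}})_{\ell^2(s+1/2,\,s+3/2,\ldots)}.
\end{equation*}

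Next, I would rewrite the RHS in determinantal form. The Poissonized Plancherel measure $\mathsf{Plan}(\theta t/(\sqrt{2}(1-q)))$, viewed through the Maya map $\lambda \mapsto \{\lambda_i-i+1/2:i\ge 1\}\subset \mathbb{Z}'$, is a determinantal point process with the discrete Bessel kernel $B$ of argument $2\theta t/(\sqrt{2}(1-q))$, matching the Bessel argument in $\mathsf{K}_{\zeta,\theta t/\sqrt{2}}$. Setting $x_i = \lambda_i-i+1/2$ and $\phi(x) = (1+\zeta q^{s+1/2-x})^{-1}$, the product inside the expectation becomes the multiplicative functional $\prod_{x \in \mathrm{Maya}(\lambda)} \phi(x)$. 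Because $\phi(x)\to 1$ rapidly as $x\to -\infty$ and $\phi(x) \to 0$ rapidly as $x\to +\infty$, the standard multiplicative--functional formula identifies the expectation with a Fredholm determinant $\det(I-(1-\phi)B)$.

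The core task thus reduces to the kernel identity
\begin{equation*}
    \det(I-\mathsf{K}_{\zeta,\gamma})_{\ell^2(s+1/2,\,s+3/2,\ldots)} = \det(I-(1-\phi)B),
\end{equation*}
with $\gamma=\theta t/\sqrt{2}$. I expect this to be the main obstacle. Conceptually, the identity reflects a boson--fermion correspondence in which the Fermi weight $(1+\zeta^{-1}q^\ell)^{-1}$ sitting inside the cylindric kernel $\mathsf{K}_{\zeta,\gamma}$ is rearranged as the Plancherel Bessel projection composed with a Fermi-type multiplicative factor realized by $\phi$. The identity is exactly the content of the matching between the theta-shifted cylindric Plancherel measure and the Poissonized Plancherel measure twisted by a Fermi--Dirac multiplicative functional established in \cite{IMS_KPZ_free_fermions}, and it also follows by Plancherel specialization from \cite{borodin2016stochastic_MM}. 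My plan is to invoke one of these two results after a careful bookkeeping of parameters, yielding \eqref{eq:S+h=expextation}.
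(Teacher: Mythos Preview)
Your plan is correct and lands on the same external inputs as the paper (\cite{IMS_KPZ_free_fermions} and \cite{borodin2016stochastic_MM}), but it takes a longer route. The paper does \emph{not} pass through Fredholm determinants at all: it simply quotes the periodic--Schur/Schur identity
\[
\mathbb{P}(\bar{\lambda}_1+S_\zeta\le s)=\mathbb{E}\Big(\prod_{i\ge1}\frac{1}{1+\zeta q^{s+i-\tilde{\lambda}_i}}\Big)
\]
from \cite[Corollary~4.3 and Remark~4.4]{IMS_KPZ_free_fermions}, then takes the Plancherel scaling limit on both sides (the left side via \cref{thm:matching_qPNG_cylindric_plancherel}, the right side by sending the Schur measure with geometric specializations to $\mathsf{Plan}(\theta t/\sqrt{2}(1-q))$). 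Your approach instead rewrites the left side via \cref{prop:shift_mixed_det} and the right side via the multiplicative--functional formula for the discrete Bessel process, thereby recasting the goal as an equality of two Fredholm determinants --- and then you still invoke \cite{IMS_KPZ_free_fermions} to identify them. That detour buys you nothing: the reference you cite gives the identity at the probability/expectation level, not as a kernel statement, so you would either re-derive the paper's scaling argument inside the Fredholm framework or unpack the same [IMS] result anyway. If instead you had planned to establish the Fredholm identity by a direct kernel computation (relating the Fermi weight $(1+\zeta^{-1}q^\ell)^{-1}$ in $\mathsf{K}_{\zeta,\gamma}$ to the Bessel projector twisted by $\phi$), that would be a genuinely independent proof; as written, the Fredholm layer is extra bookkeeping on top of the paper's argument.
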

\begin{proof}
    It was shown in \cite[Corollary 4.3 and Remark 4.4]{IMS_KPZ_free_fermions} that
    \begin{equation} \label{eq:S+lambda1=expextation}
        \mathbb{P}(\bar{\lambda}_1+S_\zeta\le s) = \mathbb{E} \left( \prod_{i \ge 1} \frac{1}{1+\zeta q^{s+i-\tilde{\lambda}_i}} \right),
    \end{equation}
    where in the left-hand side the partition $\bar{\lambda}$ is taken with respect to the periodic Schur measure \eqref{eq:periodic_schur}, while in the right-hand side the partition $\tilde{\lambda}$ obeys a Schur measure (i.e. \eqref{eq:periodic_schur} with $q=0$) with specializations in geometric progression $(a_1,qa_1,q^2a_1, \dots, a_N,qa_N,q^2a_N,\dots)$ and $(b_1,qb_1,q^2b_1, \dots, b_T,qb_T,q^2b_T,\dots)$. We now take the scaling \eqref{eq:scaling} after setting $a_i=b_j=\sqrt{a}$ and $x=0$ to deduce \eqref{eq:S+h=expextation} from \eqref{eq:S+lambda1=expextation}. The limit of the left-hand side was already taken in \eqref{thm:matching_qPNG_cylindric_plancherel}. Considering the limits
    \begin{equation*}
    \begin{split}
        s_\lambda(a_1,qa_1,q^2a_1, \dots, a_N,qa_N,q^2a_N,\dots) &\xrightarrow[\varepsilon \downarrow 0]{} \left( \frac{\theta t}{\sqrt{2} (1-q)} \right)^{|\lambda|} \frac{f^{\lambda}}{|\lambda|!},
        \\
        s_{\lambda}(b_1,qb_1,q^2b_1, \dots, b_T,qb_T,q^2b_T,\dots) &\xrightarrow[\varepsilon \downarrow 0]{} \left( \frac{\theta t}{\sqrt{2}(1-q)} \right)^{|\lambda|} \frac{f^{\lambda}}{|\lambda|!},
    \end{split}
    \end{equation*}
    we see that the right-hand side of \eqref{eq:S+lambda1=expextation} converges to the right-hand side of \eqref{eq:S+h=expextation}.  
\end{proof}

\section{Upper-Tail LDP for $q$-PNG} 
\label{sec:upper_tail}

The goal of this section is to prove the upper-tail Large Deviation Principle for the height function $\h$ of $q$-PNG. In \Cref{sec:3.1}, we elaborate on the brief sketch given in \Cref{sec:1.2.2} and reduce our proof to establishing asymptotics and estimates for the leading term and higher-order term respectively. In \Cref{sec.bessel}, we utilize Bessel function tail behavior to extract various estimates related to the trace of the kernel and its derivatives. The leading term and the higher-order term are analyzed in  \Cref{sec.trace} and \Cref{sec.ho} respectively.

\subsection{An outline of proof of \Cref{thm:upper_tail}}  \label{sec:3.1}

 In this subsection, we present the main argument for the proof of the Large Deviation Principle for the upper-tail of the height function $\mathfrak{h}(0,t)$. For this, we define the function 
 \begin{equation}\label{def:wp}
     \Upsilon(p) := 4 \sinh(p/2), \qquad \text{for } p>0,
 \end{equation}
 whose Legendre transform is the rate function $\Phi_+$ defined in \eqref{eq:Phi+}. Indeed, it can be easily checked that
 \begin{equation}\label{eq:phi_W}
     \Phi_+(\mu) = \sup_{p>0} \big( p \mu - \Upsilon(p) \big).
 \end{equation}
We are now ready to state the main theorem of this section, which computes the large time asymptotics of the moment-generating function of $\mathfrak{h}$.
\begin{theorem}\label{t:uldp} Fix any $p>0$. Let $\mathfrak{h}$ be the height function of the $q$-PNG with intensity $\Lambda=2(1-q)$ and droplet initial condition.  We have
    \begin{align}\label{e:lyap}
        \lim_{t\to \infty} \frac1t \log \Ex \left[ e^{p\h(0,t)} \right] = \Upsilon(p),
    \end{align}
    where $\Upsilon(\cdot)$ is defined in \eqref{def:wp}
\end{theorem}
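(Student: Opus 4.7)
The plan is to start from the Fredholm determinant identity in \cref{l:fred}, which expresses the $q$-Laplace transform of $\h(0,t)$ as $\det(I - \mathsf{K}_{\zeta/\sqrt{q},\,t(1-q)})_{\ell^2(\N')}$, and then invert this Laplace transform to extract the moment generating function. Concretely, following the $q$-inversion scheme of \cite{dt21,dz1} rewritten here as \eqref{eq:fubini_intro}, for $p > 0$ we can write
\begin{equation*}
    \Ex\bigl[e^{p\h(0,t)}\bigr] = \frac{(-1)^n \int_0^\infty \zeta^{-\alpha}\, \tfrac{d^n}{d\zeta^n}\det(I - \mathsf{K}_{\zeta/\sqrt{q},\,t(1-q)})_{\ell^2(\N')}\, d\zeta}{(-1)^n \int_0^\infty \zeta^{-\alpha}\, \mathsf{F}_q^{(n)}(\zeta)\, d\zeta},
\end{equation*}
with $n := \lfloor p\logq \rfloor + 1$ and $\alpha := p\logq - n + 1$. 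The denominator is positive, finite, and independent of $t$, hence it vanishes in the $\tfrac{1}{t}\log$ limit and can be discarded; all the work is in the numerator.

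To render the numerator tractable, I would split the $\zeta$-integration at the threshold $\agamma$ from \eqref{def:gamma}, which is tuned so that $\zeta$ lies near the relevant saddle of the trace integral. The tail part over $[\agamma,\infty)$ is subdominant: using $\mathsf{F}_q(x) = O(x^{-M})$ for any $M$ as $x \to \infty$ together with $0 \le \Ex[\mathsf{F}_q(\zeta q^{-\h(0,t)})] \le 1$, one can bound its contribution to $\tfrac1t\log\Ex[e^{p\h(0,t)}]$ by a quantity strictly smaller than $\Upsilon(p)$. On $[0,\agamma]$ we substitute the Fredholm expansion \eqref{eq:fred_intro} and split the remaining numerator as
\begin{equation*}
    (-1)^{n+1}\!\int_0^{\agamma}\!\! \zeta^{-\alpha}\tfrac{d^n}{d\zeta^n}\tr(\mathsf{K}_{\zeta/\sqrt{q},t(1-q)})\,d\zeta
    \ +\ (-1)^{n}\!\int_0^{\agamma}\!\!\zeta^{-\alpha}\,\tfrac{d^n}{d\zeta^n}\!\sum_{L\ge 2}(-1)^L\tr(\mathsf{K}_{\zeta/\sqrt{q},t(1-q)}^{\wedge L})\,d\zeta,
\end{equation*}
and establish that the first (trace) integral contributes $e^{t\Upsilon(p)(1+o(1))}$ while the higher-order sum is subdominant.

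For the trace term, \eqref{skernel} writes $\tr(\mathsf{K}_{\zeta/\sqrt{q},t(1-q)})$ as a double sum over $a\in\N'$ and $\ell\in\Z'$ of squared Bessel functions $J_{a+\ell}(2t)^2$ multiplied by a Fermi-type factor $(1+\sqrt{q}\,\zeta^{-1}q^{\ell})^{-1}$. Differentiating $n$ times in $\zeta$ only produces controlled rational prefactors in $\zeta^{-1}q^{\ell}$, leaving the Bessel product intact. The uniform Debye asymptotics for $J_\nu(2t)$ in the regime $\nu > 2t$ gives $J_{\mu t}(2t)^2 \asymp t^{-1} e^{-t\Phi_+(\mu)}$ for $\mu > 2$, with $\Phi_+$ as in \eqref{eq:Phi+}. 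A Laplace-type evaluation of the resulting integral over $(a,\ell,\zeta)$ then localizes at the saddle $\mu^{*} = 2\cosh(p/2)$ and produces
\begin{equation*}
    \lim_{t\to\infty}\tfrac1t\log\bigl(\text{trace piece}\bigr) \;=\; -\inf_{\mu\ge 2}\bigl(\Phi_+(\mu)-p\mu\bigr) \;=\; 4\sinh(p/2) \;=\; \Upsilon(p),
\end{equation*}
as required by Legendre duality with \eqref{eq:phi_W}.

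The main obstacle is controlling the higher-order piece on $[0,\agamma]$, since after $n$ $\zeta$-derivatives the sum over $L\ge 2$ has alternating signs and naive Hadamard bounds for $\tr(\mathsf{K}^{\wedge L})$ are not sharp enough at the exponential scale. My plan is to derive refined pointwise bounds for each $\tr(\mathsf{K}^{\wedge L})$ and its $\zeta$-derivatives via the Cauchy--Binet expansion, combined with off-diagonal Debye estimates for products $J_{a_i+\ell_i}(2t)\,J_{b_i+\ell_i}(2t)$: the key observation is that, once all $L$ Bessel indices are forced beyond the oscillatory region $\nu \le 2t$, the off-diagonal products pick up an extra exponential decay $e^{-c(L-1)t}$ relative to the diagonal ones appearing in the trace. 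Summing in $L\ge 2$ then yields an upper bound that is of strictly smaller exponential order than $e^{t\Upsilon(p)}$, establishing subdominance and completing the proof of \eqref{e:lyap}.
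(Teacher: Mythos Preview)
Your overall architecture matches the paper exactly: the $q$-inversion formula \eqref{eq:fubini_intro}, the split at $\agamma$, the trace/higher-order decomposition on $[0,\agamma]$, and the Debye--Laplace analysis of the trace piece leading to $\sup_{\mu\ge 2}(p\mu-\Phi_+(\mu))=\Upsilon(p)$ are all carried out essentially as you describe. Two points deserve sharpening. First, your tail bound on $[\agamma,\infty)$ is stated too loosely: knowing $0\le \Ex[\mathsf F_q(\zeta q^{-\h})]\le 1$ does not directly control its $n$-th $\zeta$-derivative. The paper instead pulls the derivative inside the expectation and uses the one-line fact $\sup_{x>0}|x^n\mathsf F_q^{(n)}(x)|<\infty$ (from \cite{dz1}) to get $\bigl|\int_\agamma^\infty \zeta^{-\alpha}\partial_\zeta^n\Ex[\mathsf F_q]\,d\zeta\bigr|\le C\,\agamma^{-s}=C\,e^{(\Upsilon(p)-\delta)t}$.

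The genuine gap is in your higher-order mechanism. The claim that ``off-diagonal products $J_{a_i+\ell_i}J_{b_i+\ell_i}$ pick up an extra exponential decay $e^{-c(L-1)t}$'' is not correct: the dominant contribution to the trace comes from $a=O(1)$ and $\ell\approx 2t\cosh(p/2)$, and for two such indices $a\neq b$ both of size $O(1)$ the product $J_{a+\ell}J_{b+\ell}$ is \emph{not} exponentially smaller than $J_{a+\ell}^2$ (Cauchy--Schwarz gives only a constant). Likewise, in the Cauchy--Binet expansion the $L$ distinct $\ell$-indices can all sit within $O(1)$ of the optimal location, so distinctness alone yields no $e^{-ct}$ gain. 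The paper's route is different and is where the real content lies: after distributing the $n$ $\zeta$-derivatives over the $L$ columns via the product rule, one bounds $\bigl|\partial_\zeta^n\tr(K^{\wedge L})\bigr|$ by a combinatorial sum of products $\prod_{i=1}^L|\tr(K^{(m_i)})|$ with $m_1+\cdots+m_L=n$ (\cref{l:hotrace}), and then shows in \cref{l.keytech} that the $\zeta$-integral of any such product has exponential rate strictly below $\Upsilon(p)$. The reason is the strict concavity of $p\mapsto 4\sinh(p/2)$: the exponent produced by the $L$-fold product is essentially $L\cdot 4\sinh(p/2L)$, and $L\mapsto 4L\sinh(p/2L)$ is strictly decreasing in $L$, so for $L\ge 2$ it falls short of $\Upsilon(p)$. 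This subadditivity of $\Upsilon$, not any off-diagonal Bessel decay, is the mechanism you need.
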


Let us complete the proof of \Cref{thm:upper_tail} assuming the \Cref{t:uldp}.

\begin{proof}[Proof of \Cref{thm:upper_tail}] Deriving large deviation rate functions from the asymptotic limit of moment generating function, i.e. from the Lyapunov exponent, is common practice and in the context of the KPZ equation, this idea has been worked out in \cite{dt21}. Indeed appealing to a general result,\cite[Proposition 1.13]{gl20}, we obtain that the upper-tail LDP of $\h(0,t)$ is given by the Legendre transform of $\Upsilon(\cdot)$, which is $\Phi_+$ due to the relation in \eqref{eq:phi_W}. This proves the upper-tail LDP.
\end{proof}

We now give the proof of \Cref{t:uldp} elaborating on the strategy outlined in \Cref{sec:1.2.2}. 

\begin{proof}[Proof of \Cref{t:uldp} modulo \Cref{p:trace} and \Cref{p:ho} below] 

We will need to define some parameters which we will use throughout the rest of the section. For $p>0$ and $q\in (0,1)$, we set 
\begin{equation} \label{eq:s_n_alpha}
    s=\frac{p}{\log q^{-1}}>0,
    \qquad
    n=\lfloor s \rfloor+1,
    \qquad
    \alpha=s-\lfloor s \rfloor.
\end{equation} 
Set $\delta:=(\Upsilon(p)-2p)/4>0$ (in fact any $\delta\in (0,\Upsilon(p)-2p)$ will work). For each $t>0$, we define 
\begin{align}\label{def:gamma}
    \agamma=\agamma(t):=e^{-(\Upsilon(p)-\delta)t/s}.
\end{align}

We are going to express the moment generating function of the height function $\mathfrak{h}(0,t)$ using a  ``Fubini trick" from  \cite[Lemma 1.8, Proposition 2.2]{dz1} as
\begin{equation}\label{eta0}
\begin{split}
    \Ex[e^{p\h(0,t)}]  =\Ex[q^{-s\h(0,t)}] 
    %& = \frac{\int_0^{\infty} \zeta^{-\alpha}\Ex[q^{-n\h(t,t)}\mathsf{F}_q^{(n)}(\zeta q^{-\h(0,t)})] \diff \zeta}{\int_0^{\infty} \zeta^{-\alpha}\mathsf{F}_q^{(n)}(\zeta) \diff \zeta}
    =\frac{(-1)^n\int_0^{\infty} \zeta^{-\alpha}\frac{\diff^n}{\diff \zeta^n}\Ex[\mathsf{F}_q(\zeta q^{-\h(0,t)})] \diff \zeta}{(-1)^n\int_0^{\infty} \zeta^{-\alpha}\mathsf{F}_q^{(n)}(\zeta) \diff \zeta},
\end{split}
\end{equation}
%The above expression holds for any `nice' function $F:[0,\infty)\to [0,\infty)$. The particular choice of the function $F$ that we will use is given by
where $\mathsf{F}_q$ is defined in \eqref{def:fq}.
%in which case the expectation $\Ex[\mathsf{F}_q(\zeta q^{-\h(0,t)})]$ has the exact representation \eqref{e:fred}. It was shown in Proposition 2.2 of \cite{dz1} that $\mathsf{F}_q$ satisfies the hypothesis of Lemma 1.8 of \cite{dz1}, so that \eqref{eta0} holds for $F=\mathsf{F}_q$. 
Let us analyze the right-hand side of \eqref{eta0}. First, from \cite[Proposition 2.2 (b)]{dz1} we know $(-1)^n\int_0^{\infty}\zeta^{-\alpha}\mathsf{F}_q^{(n)}(\zeta) \diff \zeta$ is strictly positive, finite and free of $t$. Thus we may ignore the denominator while computing $\frac1t\log$ limit of the right-hand side of \eqref{eta0}. Moving to the numerator, we are going to split the integration over two intervals as 
\begin{equation} \label{eq:split_integral}
    (-1)^n\int_0^{\infty} \zeta^{-\alpha}\frac{\diff^n}{\diff \zeta^n}\Ex[\mathsf{F}_q(\zeta q^{-\h(0,t)})] \diff \zeta = \left(  \int_0^{\agamma} + \int_{\agamma}^\infty \right) (-1)^n\zeta^{-\alpha}\frac{\diff^n}{\diff \zeta^n}\Ex[\mathsf{F}_q(\zeta q^{-\h(0,t)})] \diff \zeta, 
\end{equation}
where the parameter $\agamma$ was defined in \eqref{def:gamma}. The second integral in the right-hand side of \eqref{eq:split_integral} can be bounded, in absolute value, as follows.
\begin{equation*}
    \begin{split}
        \int_{\agamma}^{\infty} \left| \zeta^{-\alpha}\frac{\diff^n}{\diff \zeta^n}\Ex[\mathsf{F}_q(\zeta q^{-\h(0,t)})]  \right| \diff \zeta & =
        \int_{\agamma}^{\infty} \left|\zeta^{-\alpha}\Ex\left[q^{-n\h(0,t)}\mathsf{F}_q^{(n)}(\zeta q^{-\h(0,t)})\right]\right| \diff \zeta 
        \\ & \le \sup_{x>0} |x^{n}\mathsf{F}_q^{(n)}(x)|\cdot \int_{\agamma}^{\infty} \zeta^{-n-\alpha} \diff \zeta 
         = \frac{\agamma^{-s}}s\sup_{x>0} |x^{n}\mathsf{F}_q^{(n)}(x)|,
    \end{split}
\end{equation*}
where in the last line we have used that $s=n+\alpha-1$. 
By \cite[Proposition 2.2 (c)]{dz1}, we know that $\sup_{x>0} |x^{n}\mathsf{F}_q^{(n)}(x)|$ is finite (and independent of $t$), whereas, by the choice of $\agamma$ \eqref{def:gamma} we have $\agamma^{-s}=e^{(\Upsilon(p)-\delta)t}$. This shows that for any fixed $p>0$ we have 
\begin{equation} \label{eq:bound_tail}
    \left| \int_{\agamma}^\infty  \zeta^{-\alpha}\frac{\diff^n}{\diff \zeta^n}\Ex[\mathsf{F}_q(\zeta q^{-\h(0,t)})]d \zeta \right| \le C \, e^{(\Upsilon(p) - \delta) t} 
\end{equation}
for some constant $C=C(p)$. 

Let us now examine the first integral on the right-hand side of \eqref{eq:split_integral}. To analyze this remaining term, we use the exact representation of $\Ex[\mathsf{F}_q(\zeta q^{-\h(0,t)})]$ from \eqref{e:fred}, which allows us to write
\begin{equation}
    \label{eq:fred}
    \Ex[\mathsf{F}_q(\zeta q^{-\h(0,t)})]=\det(I-K_{\zeta,t})_{\ell^2(\mathbb{N}')}
\end{equation}
where the correlation kernel $K_{\zeta,t}$ equals $\mathsf{K}_{\zeta/\sqrt{q},\theta t/\sqrt{2}}$ defined in \eqref{skernel} with $\theta=\sqrt{2}(1-q)$ (as we work $q$-PNG with intensity $\Lambda=2(1-q)$). In other words, we set
\begin{equation}\label{kernel}
K_{\zeta,t}(a,b):=\mathsf{K}_{\zeta/\sqrt{q},t(1-q)}(a,b)=\sum_{\ell\in \Z'}v_{q,\ell}(\zeta)J_{a+\ell}(2t)J_{b+\ell}(2t), \quad v_{q,\ell}(\zeta):=\frac1{1+\zeta^{-1}q^{\ell+\frac12}},
\end{equation}
for $a,b\in \Z'$. For the remainder of this section, we shall always work with $\ell^2(\mathbb{N}')$ space, and drop it from the Fredholm determinant notation in \eqref{eq:fred}. We now state two propositions, whose proofs are postponed to \Cref{sec.trace} and \Cref{sec.ho} below.

\begin{proposition}[Trace asymptotics]\label{p:trace} For each $t>0$, $\tr(K_{\zeta,t}):=\sum_{a\in \mathbb{N}'} K_{\zeta,t}(a,a)$ is differentiable at each $\zeta\in (0,1)$. For each $p>0$, we have
\begin{align} \label{eq:limit_W(p)}
    \lim_{t\to\infty} \frac1t\log \left[(-1)^{n+1}\int_0^{\agamma} \zeta^{-\alpha}\frac{\diff^n}{\diff\zeta^n}\tr(K_{\zeta,t})\diff \zeta\right] = \Upsilon(p).
\end{align}
\end{proposition}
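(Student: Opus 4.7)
My plan is to establish the large-time asymptotics in three stages: justify the term-by-term manipulations that reduce the integrated derivative to an explicit weighted sum of squared Bessel functions, produce matching upper and lower bounds for the resulting sum via sharp Bessel asymptotics, and combine them using Laplace's method to extract the exponent $\Upsilon(p)$.

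\emph{Setup, differentiability, and Fubini.} Writing $v_{q,\ell}(\zeta) = \zeta/(\zeta + z_\ell)$ with $z_\ell := q^{\ell+1/2}$, the $m$-th derivative
\[
(-1)^{m+1}\frac{d^m}{d\zeta^m} v_{q,\ell}(\zeta) = \frac{m!\,z_\ell}{(\zeta+z_\ell)^{m+1}} \ge 0
\]
decays geometrically both as $\ell \to +\infty$ (where $z_\ell$ in the numerator is small) and as $\ell \to -\infty$ (where $(\zeta+z_\ell)^{m+1}$ is large). Combined with the completeness relation $\sum_{k \in \Z} J_k(2t)^2 = 1$, this yields absolute convergence, uniform on compact subsets of $\zeta \in (0,1)$, of the differentiated double series, giving both the differentiability claim and justifying Fubini. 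The resulting identity is
\[
I(t) := (-1)^{n+1}\int_0^{\agamma}\zeta^{-\alpha}\frac{d^n}{d\zeta^n}\tr(K_{\zeta,t})\,d\zeta = n!\sum_{a\in\mathbb{N}'}\sum_{\ell\in\Z'} J_{a+\ell}(2t)^2\,z_\ell^{-s}\,F(\agamma/z_\ell),
\]
where $s = n+\alpha-1$ (so $s\logq = p$ and $z_\ell^{-s} = e^{p(\ell+1/2)}$) and $F(x) := \int_0^x u^{-\alpha}(1+u)^{-n-1}\,du$ is increasing, bounded above by the beta integral $B(1-\alpha,n+\alpha)$, and satisfies $F(x) \le x^{1-\alpha}/(1-\alpha)$.

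\emph{Upper bound.} Let $\ell^\star := (\Upsilon(p)-\delta)t/p - 1/2$, the threshold at which $z_{\ell^\star} = \agamma$. In the high regime $\ell \ge \ell^\star$, bounding $F$ by $B(1-\alpha,n+\alpha)$, each term is at most $C\,e^{p(\ell+1/2)}\,J_{a+\ell}(2t)^2$. Since $\Upsilon(p) - \delta \ge 2p$, one has $\ell^\star + 1/2 \ge 2t$, so every $k = a+\ell$ in this regime satisfies $k \ge 2t$ and the sharp Debye-type bound $J_k(2t)^2 \le C \exp(-t\Phi_+(k/t))$ applies; after exchanging the order of summation,
\[
\sum_{\ell \ge \ell^\star,\, a \ge 1/2} e^{p(\ell+1/2)} e^{-t\Phi_+((a+\ell)/t)} \le C \sum_{k \ge 2t} e^{t[p(k/t) - \Phi_+(k/t)]},
\]
and Laplace's method yields an upper bound of order $\sqrt{t}\,e^{t\Upsilon(p)}$, the supremum $\Upsilon(p) = \sup_\mu[p\mu - \Phi_+(\mu)]$ being attained at $\mu^\star = 2\cosh(p/2)$; the elementary inequality $2p\cosh(p/2) > 4\sinh(p/2)$ for $p>0$ guarantees $\mu^\star > \ell^\star/t$, so the saddle lies strictly inside the admissible range. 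In the low regime $\ell < \ell^\star$, the sharper bound $F(\agamma/z_\ell) \le (\agamma/z_\ell)^{1-\alpha}/(1-\alpha)$ multiplies each term by the non-positive exponential factor $e^{(1-\alpha)\logq\,t[\ell/t - (\Upsilon(p)-\delta)/p]}$, and the restricted Laplace maximum is bounded by $\Upsilon(p) - \delta - \Phi_+((\Upsilon(p)-\delta)/p) < \Upsilon(p)$, which is strictly subdominant.

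\emph{Lower bound and main obstacle.} Since every summand in the Fubini expansion is non-negative, it suffices to isolate a single dominant term: fix $a = 1/2$ and pick $\ell_t \in \Z'$ with $\ell_t/t \to \mu^\star$; then $\agamma/z_{\ell_t} \to +\infty$, so $F(\agamma/z_{\ell_t}) \to B(1-\alpha,n+\alpha)>0$, and the sharp Debye asymptotic $J_k(2t)^2 \sim C\,t^{-1}\exp(-t\Phi_+(k/t))$ as $k/(2t) \to \cosh(p/2)$ produces a single-term contribution $\sim t^{-1}e^{t\Upsilon(p)}$, whence $\liminf_{t\to\infty} t^{-1}\log I(t) \ge \Upsilon(p)$. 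The main obstacle I foresee is producing Bessel estimates uniform enough to survive the interaction with the $\ell$-sum, namely the exponentially sharp upper bound $J_k(2t)^2 \le C \exp(-t\Phi_+(k/t))$ for all $k \ge 2t$ together with a matching asymptotic near $k \approx \mu^\star t$, with constants independent of $k$ so that the Laplace-type sums execute cleanly; I expect this to rely on the Debye uniform expansion of $J_\nu$ near $\nu = 2t\cosh\beta$, or on a saddle-point analysis of the Schl\"afli contour integral representation.
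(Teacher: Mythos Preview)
Your approach is essentially the same as the paper's: reduce the integrated derivative to a nonnegative weighted Bessel sum via Fubini, bound above by Debye-type estimates plus Laplace, and bound below by a single term at $k\approx 2\cosh(p/2)\,t$. The paper splits the upper-bound sum on $k=a+\ell$ against a threshold $bt$ with $2<b<\Upsilon(p)/p$; you instead split on $\ell$ alone against the threshold $\ell^\star$ where $z_{\ell^\star}=\agamma$. Your high-$\ell$ analysis and the single-term lower bound are correct as written.

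There is, however, a genuine gap in your low-$\ell$ regime. The claimed ``restricted Laplace maximum $\Upsilon(p)-\delta-\Phi_+\bigl((\Upsilon(p)-\delta)/p\bigr)$'' tacitly invokes the exponential Bessel bound $J_k(2t)^2\le C\,e^{-t\Phi_+(k/t)}$, but that bound requires $k>2t$. In the low regime $\ell<\ell^\star$ with small $a$ one certainly has $k=a+\ell\le 2t$, where $J_k(2t)$ lives in the oscillatory zone and admits no such decay; the Laplace computation you describe therefore does not apply. The repair is immediate with an ingredient you already stated: after bounding $F(\agamma/z_\ell)\le(\agamma/z_\ell)^{1-\alpha}/(1-\alpha)$, use the completeness relation $\sum_{a\in\mathbb{N}'}J_{a+\ell}(2t)^2\le 1$ and sum the remaining geometric series in $\ell$, obtaining
\[
\frac{\agamma^{1-\alpha}}{1-\alpha}\sum_{\ell<\ell^\star}z_\ell^{-n}
\;\le\; C\,\agamma^{1-\alpha}\,z_{\ell^\star}^{-n}
\;=\; C\,e^{(n-1+\alpha)\logq\,(\Upsilon(p)-\delta)t/p}
\;=\; C\,e^{(\Upsilon(p)-\delta)t},
\]
which is strictly subdominant. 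This is in fact a cleaner bound than the one you wrote down, and it is close in spirit to how the paper handles the corresponding piece (the paper's Lemma~3.3 uses $|J_k|\le 1$ plus a geometric sum for the region $a+\ell\le bt$).
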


\begin{proposition}[Higher-order estimates] \label{p:ho} For each $p>0$, there exists a constant $\Con=\Con(p)>0$ such that for all $t$ large enough we have
    \begin{align}\label{e.ho}
        \int_0^{\agamma} \zeta^{-\alpha}\left|\frac{\diff^n}{\diff \zeta^n}\big[\det(I-K_{\zeta,t})+\operatorname{tr}(K_{\zeta,t})\big]\right| \diff\zeta \le \Con \, e^{\Upsilon(p)t-\tfrac1\Con t}.
    \end{align}
\end{proposition}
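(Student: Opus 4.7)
Starting from the Fredholm expansion
\begin{equation*}
\det(I-K_{\zeta,t})+\tr(K_{\zeta,t})-1=\sum_{L\ge 2}\frac{(-1)^L}{L!}\sum_{a_1,\ldots,a_L\in\mathbb{N}'}\det\bigl[K_{\zeta,t}(a_i,a_j)\bigr]_{i,j=1}^L,
\end{equation*}
the plan is to control each $L\ge 2$ contribution after $n$-fold differentiation in $\zeta$ and integration of $\zeta^{-\alpha}$ against it on $[0,\agamma]$, and then sum over $L$ using the $1/L!$ factorial suppression. The termwise exchange of $\partial_\zeta^n$, the $\zeta$-integral, and the outer $L$-sum will be justified a posteriori by the absolute bounds produced below, which rest on the Bessel estimates developed in \cref{sec.bessel}.

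For fixed $L\ge 2$, I would first differentiate $\det[K_{\zeta,t}(a_i,a_j)]_{i,j=1}^L$ row-by-row via the Leibniz rule: the result is a sum over tuples $(n_1,\ldots,n_L)$ with $n_1+\cdots+n_L=n$ of multinomially-weighted determinants whose $i$-th row is $\bigl(\partial_\zeta^{n_i}K_{\zeta,t}(a_i,a_j)\bigr)_j$. Hadamard's inequality, followed by extending each inner sum from $j\in\{1,\ldots,L\}$ to $b\in\mathbb{N}'$, yields
\begin{equation*}
\Bigl|\det\bigl[\partial_\zeta^{n_i}K_{\zeta,t}(a_i,a_j)\bigr]_{i,j=1}^L\Bigr|\le\prod_{i=1}^L\Bigl(\sum_{b\in\mathbb{N}'}\bigl|\partial_\zeta^{n_i}K_{\zeta,t}(a_i,b)\bigr|^2\Bigr)^{1/2}.
\end{equation*}
After summing over $\vec{a}\in(\mathbb{N}')^L$ the right-hand side decouples into the product $\prod_{i=1}^L R_{n_i}(\zeta,t)$, where $R_k(\zeta,t):=\sum_{a\in\mathbb{N}'}\bigl(\sum_{b\in\mathbb{N}'}|\partial_\zeta^k K_{\zeta,t}(a,b)|^2\bigr)^{1/2}$. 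Using $K_{\zeta,t}(a,b)=\sum_\ell v_{q,\ell}(\zeta)J_{a+\ell}(2t)J_{b+\ell}(2t)$ together with the explicit derivatives of $v_{q,\ell}(\zeta)=\zeta/(\zeta+q^{\ell+1/2})$, each $R_k$ reduces to a triple Bessel sum which I will bound via the pointwise and summed Bessel estimates from \cref{sec.bessel}, exploiting in particular the sharp cutoff of $J_m(2t)$ around $|m|\approx 2t$ and the off-diagonal concentration of the partial Bessel kernel $\sum_{a\in\mathbb{N}'}J_{a+\ell_1}(2t)J_{a+\ell_2}(2t)$.

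Assembling, the $L$-th term on the left-hand side of \eqref{e.ho} is at most $\tfrac{P_L(n)}{L!}\int_0^{\agamma}\zeta^{-\alpha}\prod_i R_{n_i}(\zeta,t)\,d\zeta$, with $P_L(n)$ the polynomial Leibniz combinatorial factor. Parallel to the trace-only analysis in the proof of \cref{p:trace}, this integrand will be comparable to the single-trace integral giving $e^{\Upsilon(p)t}$, but with each factor beyond the first carrying an exponential suppression extracted from the off-diagonal Bessel estimates; this produces a uniform gain of $e^{-t/C}$ per extra factor. The $1/L!$ factorial then makes the sum over $L\ge 2$ convergent, yielding the desired bound $Ce^{\Upsilon(p)t-t/C}$. \textbf{The main obstacle} is precisely quantifying this off-diagonal suppression with a strictly positive exponential gap that is uniform in the derivative order $k\in\{0,1,\ldots,n\}$ and in $\zeta\in[0,\agamma]$: since the derivatives of $v_{q,\ell}$ blow up as $\zeta\to 0$ for small $\ell+\tfrac12$, the gain must survive the singularity of the weight $\zeta^{-\alpha}$, which will require the Bessel estimates in \cref{sec.bessel} to be sharp enough to control both pointwise and integrated behavior.
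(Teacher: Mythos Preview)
Your architecture---expand in $L\ge 2$, differentiate via Leibniz into a sum over compositions $(n_1,\ldots,n_L)$ of $n$, bound each determinant by a product, then sum in $L$---is the paper's. Instead of Hadamard, the paper invokes a lemma from \cite{dt21} to bound the sum of absolute determinants by $\frac{(\#\operatorname{supp}\vec m)!}{(L-\#\operatorname{supp}\vec m)!}\prod_i|\tr K^{(m_i)}_{\zeta,t}|$; since $\#\operatorname{supp}\vec m\le n$ is fixed, this already supplies the factorial-in-$L$ decay. Your Hadamard route with the retained $1/L!$ would give $\frac{L^n}{L!}\prod_i R_{n_i}$ after the Leibniz sum, and the same summability, provided you can bound your mixed norms $R_k$ by $|\tr K^{(k)}_{\zeta,t}|$ (plausible, since $(-1)^{k+1}K^{(k)}$ is positive by \cref{l:deriK}). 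So up to this point the two approaches are cosmetic variants.

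The genuine gap is in your last paragraph. You attribute the exponential gain to ``off-diagonal Bessel estimates'' giving ``a uniform gain of $e^{-t/C}$ per extra factor.'' Neither is correct. There is no off-diagonal cancellation anywhere in the argument: the paper works only with the diagonal trace bounds of \cref{l:trbd}. And the gain is not per factor: for $m_i\ge 1$ the factor $|\tr K^{(m_i)}_{\zeta,t}|$ \emph{grows} as $\zeta\to 0$, and even after combining all factors the deficit from $\Upsilon(p)$ stays bounded in $L$ (in the range where $\zeta=q^{\sigma t}$ with $\sigma<x_{\eta_q}^*$ the integrand is controlled by $e^{4Lt\sinh(p/2L)}$, whose gap from $e^{\Upsilon(p)t}$ tends only to $\Upsilon(p)-2p$ as $L\to\infty$). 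The entire content of the proposition lies in the paper's \cref{l.keytech}: substitute $\zeta=q^{\sigma t}$, reduce $\int_0^\agamma\zeta^{-\alpha}\prod_i|\tr K^{(m_i)}|\,d\zeta$ to $\int e^{tM(\sigma)}d\sigma$ for an explicit $M$ built from the functions $U_v$, and carry out a five-way case split at the thresholds $x_{\eta_q}^*$ and $x_{n\eta_q}^*$, each case requiring a separate convexity or monotonicity inequality for $\Upsilon$ (e.g.\ strict decrease of $L\mapsto L\sinh(p/2L)$, the bound $\Upsilon(\eta_q)<\eta_q x_{\eta_q}^*$, and $2\Upsilon(\eta_q)<\Upsilon(2\eta_q)$). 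Your worry that ``the gain must survive the singularity of $\zeta^{-\alpha}$'' is exactly right, but the resolution is this $\sigma$-integral case analysis, not any off-diagonal Bessel phenomenon, and your proposal gives no indication of it.
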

\noindent Making use of the identity in \eqref{eq:fred}, we write the first integral on the right-hand side of \eqref{eq:split_integral} as
\begin{equation*}
\begin{split}
      (-1)^n\int_0^{\agamma}\hspace{-0.1cm} \zeta^{-\alpha}\frac{\diff^n}{\diff\zeta^n}\Ex[\mathsf{F}_q(\zeta q^{-\h(0,t)})] \diff \zeta =& (-1)^{n+1}\int_0^{\agamma} \zeta^{-\alpha}\frac{\diff^n}{\diff \zeta^n}\tr(K_{\zeta,t})\diff\zeta \\
      & +(-1)^n\int_0^{\agamma} \zeta^{-\alpha}\frac{\diff^n}{\diff\zeta^n}\big[\det(I-K_{\zeta,t})+\operatorname{tr}(K_{\zeta,t})\big] \diff \zeta    
\end{split}
\end{equation*}
and employing the convergence result \eqref{eq:limit_W(p)} along with the bound \eqref{e.ho} we get
\begin{align*}
   \lim_{t\to\infty} \frac1t\log \left[(-1)^n\int\limits_0^{\agamma} \zeta^{-\alpha}\frac{\diff^n}{\diff \zeta^n}\Ex\left[\mathsf{F}_q(\zeta q^{-\h(0,t)})\right] \diff\zeta\right]=\Upsilon(p).
\end{align*}
Combining the previous limit with the bound \eqref{eq:bound_tail} we complete the proof of \eqref{e:lyap}.
\end{proof}

\subsection{Bessel estimates}\label{sec.bessel} In this section we collect various estimates related to Bessel functions that will be useful in our later analysis. The key reason why $\Phi_+$, defined in \eqref{eq:Phi+}, appears as the upper-tail rate function is that it governs the tail asymptotics of Bessel functions. Indeed, from classical estimates for Bessel functions (see Lemma 9.1 and Eq.~(9.17) in \cite{baik2016-book} for example) we have the following result.
\begin{lemma}\label{l.bessel} For each $n\in \Z_{>0}$ and for all $0<2t<n$ we have
	\begin{align}\label{e:jest}
		J_{n}^2(2t)\le \frac{\pi}{8\sqrt{n^2-4t^2}} e^{-t\Phi_+(\tfrac{n}{t})},
	\end{align}
where $J_m$ are the Bessel functions of the first kind. Furthermore, for each fixed $u>2$ we have
 \begin{align}\label{e:jest2}
		\lim_{t\to \infty} 2\pi \cdot t \sqrt{u^2-4} \cdot  J_{\lfloor ut \rfloor}^2(2t)e^{t\Phi_+(u)}=1.
	\end{align}
\end{lemma}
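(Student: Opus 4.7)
Both parts of the lemma are consequences of the classical saddle-point (Debye) analysis of the Schl\"afli contour integral
\begin{equation*}
    J_n(2t) = \frac{1}{2\pi i}\oint_{|z|=1} e^{t(z-1/z)}\, z^{-n-1}\, dz
\end{equation*}
in the regime $n > 2t$, which is precisely the content of Lemma~9.1 and equation~(9.17) of \cite{baik2016-book}. The bulk of the work for us is therefore to match the exponential rate produced by the saddle with $\tfrac{1}{2}\Phi_+(n/t)$ per Bessel factor, so that squaring reproduces the rate $\Phi_+$ from \eqref{eq:Phi+}.

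Setting $\phi(z) := t(z-1/z) - n\log z$, the saddle equation $\phi'(z)=0$ reads $tz^2 - nz + t = 0$, whose roots are real and reciprocal for $n>2t$, namely $z_\pm = e^{\pm\beta}$ with $\beta := \arccosh(n/(2t))$. In particular $\cosh\beta = n/(2t)$ and $\sinh\beta = \sqrt{n^2-4t^2}/(2t)$. Evaluating the phase at the dominant saddle $z_- = e^{-\beta}$:
\begin{equation*}
    \phi(e^{-\beta}) = -2t\sinh\beta + n\beta = n\arccosh(n/(2t)) - \sqrt{n^2-4t^2} = \tfrac{t}{2}\,\Phi_+(n/t),
\end{equation*}
where the last equality uses the definition $\Phi_+(u)=2u\arccosh(u/2)-2\sqrt{u^2-4}$. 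Consequently $J_n(2t)$ decays like $e^{-t\Phi_+(n/t)/2}$ and $J_n^2(2t)$ like $e^{-t\Phi_+(n/t)}$, which furnishes the exponential factors in both \eqref{e:jest} and \eqref{e:jest2}.

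For the inequality \eqref{e:jest}, I would deform the contour through $z_-$ along its steepest descent direction and estimate the resulting integral by its value at the saddle times the effective length of the contour; the second-derivative $\phi''(z_-)$ contributes a Gaussian prefactor of order $(n^2-4t^2)^{-1/2}$, and a crude $L^\infty$ bound on an arc of length $\pi$ (equivalently, Sonine's integral representation as used in (9.17) of \cite{baik2016-book}) produces the explicit constant $\pi/8$. For the sharp asymptotic \eqref{e:jest2}, I would apply Debye's uniform expansion
\begin{equation*}
    J_\nu(\nu\,\operatorname{sech}\alpha) = \frac{e^{-\nu(\alpha-\tanh\alpha)}}{\sqrt{2\pi\nu\tanh\alpha}}\,\bigl(1+o(1)\bigr),
\end{equation*}
which is uniform for $\alpha$ in compact subsets of $(0,\infty)$, at $\nu = \lfloor ut\rfloor$ and $\operatorname{sech}\alpha_\nu = 2t/\nu$, so that $\alpha_\nu \to \arccosh(u/2)$. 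The algebraic identities $\nu\tanh\alpha_\nu = \sqrt{\nu^2-4t^2}\sim t\sqrt{u^2-4}$ and $2\nu(\alpha_\nu-\tanh\alpha_\nu) = t\Phi_+(\nu/t)$ then yield, upon squaring, $J_\nu^2(2t)\sim \frac{1}{2\pi t\sqrt{u^2-4}}\, e^{-t\Phi_+(u)}$, which rearranges to the stated limit.

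The main technical subtlety is not the asymptotic itself---it is standard---but the careful bookkeeping needed to extract the exact constant $\pi/8$ in \eqref{e:jest} (which is not sharp: the sharp constant from \eqref{e:jest2} is $1/(2\pi)$, a factor of $\pi^2/4$ smaller), together with the algebraic identification $\phi(e^{-\beta}) = \tfrac{t}{2}\Phi_+(n/t)$ carried out above. Given the availability of both results in explicit form in \cite{baik2016-book}, the cleanest path is to cite them and record this identification.
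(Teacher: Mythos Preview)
Your approach is essentially the same as the paper's: the paper does not give a proof of this lemma at all, but simply introduces it as a consequence of ``classical estimates for Bessel functions (see Lemma 9.1 and Eq.~(9.17) in \cite{baik2016-book} for example).'' You cite the identical reference and additionally supply the algebraic identification $\phi(e^{-\beta}) = \tfrac{t}{2}\Phi_+(n/t)$, which the paper leaves implicit; so if anything your write-up is more complete than what appears in the paper.
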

For each $v>0$, we define the function
\begin{align}\label{def:U}
    U_v(x):=vx-\Phi_+(x).
\end{align}
The following lemma collects useful properties of $\Phi_+$, $\Upsilon$, and $U_v$ functions.
\begin{lemma}[Properties of $\Phi_+$, $\Upsilon$  and $U_v$ functions]\label{l:propphi} We have the following.
\begin{enumerate}[label=(\alph*), leftmargin=18pt]
    \item $\Phi_+, \Phi'_+$ are strictly increasing on $[2,\infty)$ with $\Phi_+'(x)=2\log\frac{x+\sqrt{x^2-4}}{2}$ and $\Phi_+''(x)=\frac{2}{\sqrt{x^2-4}}$.
    \item $U_v(x)$ has a unique maximizer on $[2,\infty)$ given by $x_v^*:=2\cosh(v/2)$ with maximum $$U_v(x_v^*)=\Upsilon(v)=4\sinh(v/2).$$
    \item $\Upsilon(v)/v$ is strictly increasing with
    \begin{align*}
        \lim_{v\downarrow 0} \frac{\Upsilon(v)}{v}=2.
    \end{align*}
    \item For all $v>0$, $v x_v^*=2v\cosh(v/2)>\Upsilon(v)$.
\end{enumerate}
\end{lemma}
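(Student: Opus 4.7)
The entire lemma consists of elementary calculus facts about explicit functions, so the plan is to verify each claim by direct computation, leveraging the substitution $x=2\cosh(\theta)$ whenever convenient.

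For part (a), I would start from the definition $\Phi_+(x)=2x\arccosh(x/2)-2\sqrt{x^2-4}$ and differentiate using $\tfrac{d}{du}\arccosh(u)=1/\sqrt{u^2-1}$ together with $\arccosh(u)=\log(u+\sqrt{u^2-1})$. The two terms arising from differentiating $2x\arccosh(x/2)$ partially cancel with the derivative of $-2\sqrt{x^2-4}$, leaving $\Phi_+'(x)=2\arccosh(x/2)=2\log\tfrac{x+\sqrt{x^2-4}}{2}$. A second differentiation gives $\Phi_+''(x)=2/\sqrt{x^2-4}>0$ on $(2,\infty)$, so both $\Phi_+$ and $\Phi_+'$ are strictly increasing on $[2,\infty)$, with $\Phi_+(2)=0=\Phi_+'(2)$.

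For part (b), setting $\tfrac{d}{dx}U_v(x)=v-\Phi_+'(x)=0$ gives $\arccosh(x/2)=v/2$, whose unique solution in $[2,\infty)$ is $x_v^{\ast}=2\cosh(v/2)$. Strict concavity (from $\Phi_+''>0$) guarantees this critical point is the unique global maximizer on $[2,\infty)$. Evaluating at $x_v^{\ast}$, using $\arccosh(\cosh(v/2))=v/2$ and $\sqrt{4\cosh^2(v/2)-4}=2\sinh(v/2)$, one computes
\begin{equation*}
\Phi_+(x_v^\ast)=2\cdot 2\cosh(v/2)\cdot \tfrac{v}{2}-2\cdot 2\sinh(v/2)=2v\cosh(v/2)-4\sinh(v/2),
\end{equation*}
hence $U_v(x_v^\ast)=2v\cosh(v/2)-\Phi_+(x_v^\ast)=4\sinh(v/2)=\Upsilon(v)$, as claimed.

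For part (c), write $\Upsilon(v)/v=4\sinh(v/2)/v=\sum_{k\ge 0}\tfrac{v^{2k}}{2^{2k}(2k+1)!}$ by the Taylor series of $\sinh$; every coefficient is positive, so the sum is strictly increasing in $v>0$, with limit $2$ as $v\downarrow 0$ (the $k=0$ term). For part (d), the inequality $2v\cosh(v/2)>4\sinh(v/2)$ is equivalent to $g(v):=v-2\tanh(v/2)>0$. Since $g(0)=0$ and $g'(v)=1-\mathrm{sech}^2(v/2)=\tanh^2(v/2)>0$ for $v>0$, the inequality follows. None of these steps poses a real obstacle; the only mildly delicate point is combining the identities $\arccosh(\cosh\theta)=\theta$ and $\sqrt{\cosh^2\theta-1}=\sinh\theta$ (both valid for $\theta\ge 0$) so that the cancellations in part (b) appear cleanly.
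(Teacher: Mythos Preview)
Your proof is correct and is precisely the kind of direct elementary verification the paper has in mind; the paper itself simply states that the lemma ``can be checked easily'' and omits the proof. One minor slip: the Taylor expansion in part~(c) is off by a factor of~$2$ (the correct series is $4\sinh(v/2)/v=\sum_{k\ge0}\tfrac{v^{2k}}{2^{2k-1}(2k+1)!}$, whose $k=0$ term is indeed~$2$), but this does not affect the argument since the coefficients are still all positive and the limit is stated correctly.
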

The above lemma can be checked easily and hence its proof is skipped.

In our analysis in subsequent sections, we shall often encounter the double sum $$\sum_{a\in \mathbb{N}'}\sum_{\ell\in \mathbb{Z}'}J_{a+\ell}^2(2t) e^{v\ell}$$ 
for $v>0$, which is related to the trace of the kernel $K_{\zeta,t}$ in \eqref{kernel} and its derivative (which we will define in the next subsection). We shall now devote a few lemmas to understanding the asymptotics as $t\to\infty$ of the above double sum restricted to various subsets of $\mathbb{N}'\times \mathbb{Z}'$.

\begin{lemma}\label{l:jupbd1} Fix any $D\in \R$ and $v>0$. For each $t>0$, we have
\begin{align*}
    \sum_{a\in \mathbb{N}'}\sum_{\ell\in \mathbb{Z}': a+\ell\le D}J_{a+\ell}^2(2t) e^{v\ell} \le \frac{e^{Dv+2v}}{(e^v-1)^2}.
\end{align*}
\end{lemma}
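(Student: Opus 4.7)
The strategy is to decouple the double sum by reindexing and then bound the resulting factors by elementary geometric series. Since $a\in \mathbb{N}'$ and $\ell\in \mathbb{Z}'$ are both half-integers, the variable $m:=a+\ell$ lies in $\mathbb{Z}$, and the constraint $a+\ell\le D$ depends only on $m$. Substituting $\ell=m-a$ and using Fubini (all terms are nonnegative), I would rewrite the double sum as
\begin{equation*}
    \sum_{a\in \mathbb{N}'}\sum_{\substack{\ell\in \mathbb{Z}' \\ a+\ell\le D}} J_{a+\ell}^2(2t)\, e^{v\ell} = \left(\sum_{a\in \mathbb{N}'} e^{-va}\right)\left(\sum_{\substack{m\in \mathbb{Z}\\ m\le D}} J_m^2(2t)\, e^{vm}\right).
\end{equation*}

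The first factor is a geometric series evaluating to $\frac{e^{-v/2}}{1-e^{-v}}$. For the second factor, I would invoke the classical bound $|J_m(2t)|\le 1$, valid for every integer $m$ and every real argument, which reduces the estimate to a geometric tail
\begin{equation*}
    \sum_{\substack{m\in \mathbb{Z}\\ m\le D}} J_m^2(2t)\, e^{vm} \;\le\; \sum_{\substack{m\in \mathbb{Z}\\ m\le D}} e^{vm} \;\le\; \frac{e^{vD}}{1-e^{-v}},
\end{equation*}
where the last inequality uses $v>0$ so that the largest term is controlled by $e^{vD}$.

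Multiplying the two estimates and rewriting $(1-e^{-v})^{-2} = e^{2v}(e^v-1)^{-2}$ gives
\begin{equation*}
    \frac{e^{-v/2}}{1-e^{-v}}\cdot \frac{e^{vD}}{1-e^{-v}} \;=\; \frac{e^{vD+3v/2}}{(e^v-1)^2} \;\le\; \frac{e^{vD+2v}}{(e^v-1)^2},
\end{equation*}
which is the claimed bound. There is no substantive obstacle here; the only point to verify is that the half-integer structure of $\mathbb{N}'$ and $\mathbb{Z}'$ makes $m=a+\ell$ an integer, so that after reindexing the exponential factor splits cleanly as $e^{v\ell}=e^{vm}e^{-va}$ and the constraint collapses to the single inequality $m\le D$. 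A tighter estimate, with $e^v-1$ in place of $(e^v-1)^2$, could be obtained from the Parseval-type identity $\sum_{m\in\mathbb{Z}} J_m^2(2t)=1$, but this stronger input is not required for the stated inequality.
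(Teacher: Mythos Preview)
Your proof is correct and follows essentially the same approach as the paper: bound $J_m^2(2t)\le 1$ for all integers $m$ and reduce to geometric series. The paper's proof is terser—it simply writes $\sum_{a\in\mathbb{N}'}\sum_{\ell\in\mathbb{Z}':a+\ell\le D} e^{v\ell}$ and says the estimate follows from geometric series—while your reindexing $m=a+\ell$ makes the factorization explicit and yields the slightly sharper constant $e^{vD+3v/2}$ before relaxing to the stated bound.
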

\begin{proof} Recall that for integers $n>0$, we have $J_{-n}(x)=(-1)^nJ_n(x)$. Combining this with the uniform estimate for nonnegative order Bessel functions from \cite{landau}, we have that $\sup_{x\in \R} |J_{m}^2(x)| \le 1$ for all $m\in \Z$. Hence,
    \begin{align*}
    \sum_{a\in \mathbb{N}'}\sum_{\ell\in \mathbb{Z}': a+\ell\le D}J_{a+\ell}^2(2t) e^{v\ell} \le \sum_{a\in \mathbb{N}'}\sum_{\ell\in \mathbb{Z}': a+\ell\le D} e^{v\ell}.
\end{align*}
The required estimate now follows from geometric series estimates.
\end{proof}

\begin{lemma}\label{l:jupbd2} Let $r>0$ and fix any $(2+r)<\gamma_1<\gamma_2\le \infty$. Fix any $v\ge 0$ and $t\ge 1$. There exists a $\Con=\Con(r,v)>0$ such that 
	\begin{align*}
		\sum_{a\in \mathbb{N}'}\sum_{\ell\in \mathbb{Z}': a+\ell\in [\gamma_1t,\gamma_2t]}J_{a+\ell}^2(2t) e^{v\ell} \le \Con \cdot  e^{t \cdot \max_{x\in [\gamma_1,\gamma_2]}U_{v}(x)},
	\end{align*}
	where the function $U_v$ is defined in \eqref{def:U}.
\end{lemma}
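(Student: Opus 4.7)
The plan is to collapse the double sum to a product of a geometric series in $a$ and a one-dimensional sum in $m := a+\ell$, and then apply the Bessel tail bound \eqref{e:jest} together with the concavity of $U_v$. Since both $a \in \mathbb{N}'$ and $\ell \in \mathbb{Z}'$ are half-integers, $m = a+\ell$ is always an integer, and for each fixed $a$ the map $\ell \mapsto m$ bijects $\mathbb{Z}'$ with $\mathbb{Z}$. Swapping the order of summation (valid for $v>0$, which I adopt as the effective hypothesis; the stated $v=0$ case would make the sum diverge through the geometric factor below) gives
\begin{equation*}
\sum_{a\in \mathbb{N}'}\sum_{\substack{\ell\in \mathbb{Z}' \\ a+\ell \in [\gamma_1t,\gamma_2t]}} J_{a+\ell}^2(2t)\, e^{v\ell} = \frac{e^{-v/2}}{1-e^{-v}} \sum_{m\in \mathbb{Z} \cap [\gamma_1 t,\gamma_2 t]} J_m^2(2t)\, e^{vm}.
\end{equation*}

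Next, since $m/t \ge \gamma_1 > 2+r$, the hypothesis of \eqref{e:jest} is satisfied, and combining the resulting pointwise bound with the identity $vm - t\Phi_+(m/t) = t\, U_v(m/t)$ from \eqref{def:U} yields
\begin{equation*}
J_m^2(2t)\, e^{vm} \le \frac{\pi}{8\sqrt{m^2 - 4t^2}}\, e^{t U_v(m/t)} \le \frac{\pi}{8\, t\, \sqrt{r(r+4)}}\, e^{t U_v(m/t)},
\end{equation*}
where the last inequality uses $\sqrt{m^2-4t^2} = t\sqrt{(m/t)^2-4} \ge t\sqrt{(2+r)^2-4}$. Setting $M := \max_{x \in [\gamma_1,\gamma_2]} U_v(x)$, which is finite and attained even when $\gamma_2 = \infty$ because $U_v(x) \to -\infty$ as $x \to \infty$, the desired estimate now reduces to
\begin{equation*}
\sum_{m\in \mathbb{Z} \cap [\gamma_1 t,\gamma_2 t]} e^{t U_v(m/t)} \le \Con(r,v)\, t\, e^{tM},
\end{equation*}
since the factor of $t$ will cancel with the $1/t$ produced in the Bessel bound.

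For this final estimate I exploit strict concavity of $U_v$ on $(2,\infty)$, which follows from $\Phi_+''(x) = 2/\sqrt{x^2-4}>0$ in \cref{l:propphi}(a), together with the unconstrained maximizer $x^* = 2\cosh(v/2)$ from \cref{l:propphi}(b). Set $x^{**} := \max(\gamma_1, x^*) + 1$. On the bounded window $m/t \in [\gamma_1, x^{**}]$, the trivial bound $e^{tU_v(m/t)} \le e^{tM}$ combined with the presence of at most $O(t)$ integers contributes $O(t\, e^{tM})$. On the tail $m/t \ge x^{**}$, concavity yields $U_v(\xi) \le U_v(x^{**}) - \delta(\xi - x^{**})$ with $\delta := 2\arccosh(x^{**}/2) - v > 0$, and the resulting geometric sum is bounded by $e^{tU_v(x^{**})}/(1-e^{-\delta}) \le C e^{tM}$, which is absorbed into the previous term.

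The main technical point is precisely the tail estimate just described, which becomes essential when $\gamma_2 = \infty$; the remainder amounts to bookkeeping of constants depending only on $r$ and $v$, so I do not anticipate serious obstacles.
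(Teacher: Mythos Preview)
Your proof is correct and takes essentially the same approach as the paper: apply the Bessel bound \eqref{e:jest}, collapse the $a$-sum to a geometric series, and then control the remaining one-variable sum $\sum_m e^{tU_v(m/t)}$ via the concavity of $U_v$. The only cosmetic difference is that the paper organizes the last step into three cases according to the location of $x_v^*$ relative to $[\gamma_1,\gamma_2]$ rather than your single cutoff $x^{**}$; your remark that the stated $v=0$ case is problematic (the geometric series diverges) applies equally to the paper's own proof.
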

\begin{proof} Using the estimate from \eqref{e:jest} we get
	\begin{equation}
 \begin{split}
        &\sum_{a\in \mathbb{N}'}\sum_{\ell\in \mathbb{Z}': a+\ell\in [\gamma_1t,\gamma_2t]}J_{a+\ell}^2(2t) e^{v\ell} 
        \\
        &\le \frac{\pi}{8t\sqrt{4r+r^2}}\sum_{a\in \mathbb{N}'} e^{-va}\sum_{\ell\in \mathbb{Z}': a+\ell\in [\gamma_1t,\gamma_2t]} e^{-t\Phi_+(\tfrac{a+\ell}{t})+v(a+\ell)} \\ 
        & \le \frac{\pi}{8t\sqrt{4r+r^2}}\frac{1}{1-e^{-v}}\sum_{k\in \mathbb{Z} \cap [\gamma_1t, \gamma_2t]} e^{t U_v(k/t)}.
  \label{e:jupbd}
  \end{split}
  \end{equation}
  Now we can estimate the sum in the right-hand side of \eqref{e:jupbd} using Laplace's method, which can be applied since $U_v$ is concave as shown in \Cref{l:propphi}. In case $\gamma_1 \le x_v^* \le \gamma_2$, one can easily show that
  \begin{equation*}
      \sum_{k\in \mathbb{Z} \cap [\gamma_1t, \gamma_2t]} e^{t U_v(k/t)} \le \Con \sqrt{t } e^{t U_v(x_v^*)}.
  \end{equation*}
  On the other hand, when $\gamma_2<x_v^*$, we have
  \begin{equation*}
    \begin{split}
        \sum_{k\in \mathbb{Z} \cap [\gamma_1t, \gamma_2t]} e^{t U_v(k/t)} &\le  e^{t U_v(\gamma_2)} \sum_{k\in \mathbb{Z} \cap [\gamma_1t, \gamma_2t]} e^{t U_v'(\gamma_2) (k/t -\gamma_2)} 
        \\
        &
        \le \Con \, e^{t U_v(\gamma_2)},
    \end{split}
    \end{equation*}
    while, if $x_v^* \le \gamma_1$, we have
    \begin{equation*}
    \begin{split}
        \sum_{k\in \mathbb{Z} \cap [\gamma_1t, \gamma_2t]} e^{t U_v(k/t)} &\le  e^{t U_v(\gamma_1)} \sum_{k\in \mathbb{Z} \cap [\gamma_1t, \gamma_2t]} e^{t U_v'(\gamma_1) (k/t -\gamma_1)} 
        \\
        &
        \le \Con \, e^{t U_v(\gamma_1)}.
    \end{split}
    \end{equation*} 
	Plugging these estimates back in \eqref{e:jupbd} yields the desired result. 
\end{proof}

\begin{lemma}\label{l:jupbd3} Fix any $r,v>0$ and $t\ge 1$. Fix any $\sigma\in (2+r,\infty]$. There exists a $\Con=\Con(r,v)>0$ such that
	\begin{align}\label{e:jupbd3}
		\sum_{a\in \mathbb{N}'}\sum_{\ell\in \mathbb{Z}': \ell\le \sigma t}J_{a+\ell}^2(2t) e^{v\ell} \le \Con  \cdot  e^{t \cdot U_{v}\big(\min \{\sigma,x_v^*\}\big)}.
	\end{align}
	where the function $U_v$ is defined in \eqref{def:U}.
\end{lemma}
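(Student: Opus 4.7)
My plan is to reduce the double sum to a one–dimensional sum indexed by $m=a+\ell$ by first summing out $\ell$. For each fixed $m\in\mathbb{Z}$, the conditions $a\in\mathbb{N}'$ (which forces $\ell=m-a\le m-\tfrac12$) and $\ell\le\sigma t$ give $\ell\le\min\{\sigma t,m-\tfrac12\}$, and the geometric series in $\ell$ evaluates to at most $\Con_v\,e^{v\min\{\sigma t,\,m\}}$ for a constant depending only on $v$. It therefore suffices to bound
\begin{equation*}
    T \;:=\; \sum_{m\in\mathbb{Z}} J_m^2(2t)\, e^{v\min\{\sigma t,\, m\}}
\end{equation*}
by $\Con(r,v)\cdot e^{tU_v(\min\{\sigma,x_v^*\})}$.

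Next I would split the $m$-sum into three regions: (i) $m\le 2t$, (ii) $2t<m\le\sigma t$, and (iii) $m>\sigma t$ (empty when $\sigma=\infty$). In (i) the weight is $e^{vm}$, so the uniform bound $|J_m(2t)|\le 1$ yields $\le\Con\,e^{2vt}=\Con\,e^{tU_v(2)}$, which is dominated by $e^{tU_v(\min\{\sigma,x_v^*\})}$ since $U_v$ is strictly increasing on $[2,x_v^*]$ (Lemma~\ref{l:propphi}(b)) and $\min\{\sigma,x_v^*\}>2$. In (iii) the weight saturates at $e^{v\sigma t}$; for $m>\sigma t>2t$ the Bessel bound \eqref{e:jest} combined with $\sqrt{m^2-4t^2}\ge t\sqrt{4r+r^2}$ and the uniform lower bound $\Phi_+'(\sigma)\ge\Phi_+'(2+r)>0$ gives $\sum_{m>\sigma t}J_m^2(2t)\le\Con\,t^{-1}e^{-t\Phi_+(\sigma)}$ by summing a geometric tail, whence the contribution is $\le\Con\,t^{-1}e^{tU_v(\sigma)}\le\Con\,e^{tU_v(\min\{\sigma,x_v^*\})}$ by unimodality of $U_v$ (equality holds when $\sigma\le x_v^*$ and strict inequality when $\sigma>x_v^*$).

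The central piece is (ii), where the Bessel bound applies but degenerates at $m=2t$. I would sub-split (ii) at $m=(2+r)t$. On $m\in(2t,(2+r)t]$ the factor $e^{tU_v(m/t)}$ is bounded by $e^{tU_v(\min\{2+r,x_v^*\})}\le e^{tU_v(\min\{\sigma,x_v^*\})}$, and the residual arithmetic sum $\sum_{2t<m\le(2+r)t}(m^2-4t^2)^{-1/2}$ is $O(\sqrt{r})$ uniformly in $t$, as one sees from $m=2t+k$ together with $m^2-4t^2=k(k+4t)\ge 4tk$. On $m\in((2+r)t,\sigma t]$ the denominator is $\ge t\sqrt{4r+r^2}$, and the problem reduces exactly to the discrete Laplace-type sum $\sum e^{tU_v(m/t)}$ treated inside the proof of Lemma~\ref{l:jupbd2}: depending on where $x_v^*$ lies relative to $[2+r,\sigma]$ one obtains either a boundary estimate $\Con\, e^{tU_v(\sigma)}$ or $\Con\, e^{tU_v(2+r)}$, or an interior Laplace estimate $\Con\sqrt{t}\, e^{tU_v(x_v^*)}$; all three, after multiplication by the $1/t$ prefactor, are $\le\Con\, e^{tU_v(\min\{\sigma,x_v^*\})}$.

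The main difficulty is not conceptual but bookkeeping: one must verify in each subregion that the supremum of $U_v$ over the relevant interval is controlled by $U_v(\min\{\sigma,x_v^*\})$. This reduces routinely to unimodality of $U_v$ and the inequalities $\min\{2+r,x_v^*\}\le\min\{\sigma,x_v^*\}\le x_v^*$, all consequences of Lemma~\ref{l:propphi}(b) together with $2+r<\sigma$. Assembling the three pieces produces the claim with $\Con=\Con(r,v)$.
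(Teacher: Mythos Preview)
Your approach is correct and takes a genuinely different route from the paper. The paper keeps the double sum intact and partitions $(a,\ell)$-space into three regions $T_1,T_2,T_3$ according to thresholds on both $a+\ell$ and $\ell$, after first choosing auxiliary parameters $\xi,\delta>0$ so that $U_v(2+\xi)\ge v(2+\delta)$; it then invokes Lemma~\ref{l:jupbd1} on $T_1$ and the Bessel bound with convexity of $\Phi_+$ on $T_2,T_3$. Your reduction---summing the geometric series in $\ell$ first to collapse to a one-variable sum over $m=a+\ell$ with weight $e^{v\min\{\sigma t,m\}}$---is more direct: it eliminates the auxiliary parameters, makes the one-dimensional Laplace structure transparent, and recycles the discrete-Laplace estimates from inside the proof of Lemma~\ref{l:jupbd2} rather than its statement. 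The paper's decomposition, by contrast, stays in the $(a,\ell)$ picture and would adapt more readily if the $\ell$-weight were not purely exponential. One small caveat: your claim that $\sum_{2t<m\le(2+r)t}(m^2-4t^2)^{-1/2}=O(\sqrt r)$ \emph{uniformly in $t$} is not literally true, since when $2t$ lies just below an integer the term $m=\lceil 2t\rceil$ has $m-2t$ arbitrarily small and that single summand blows up; the fix is to handle $m=\lceil 2t\rceil$ with the crude bound $J_m^2\le 1$, contributing at most $e^{v(2t+1)}=e^v e^{tU_v(2)}\le e^v e^{tU_v(\min\{\sigma,x_v^*\})}$, after which the remaining terms have $m-2t\ge 1$ and your $O(\sqrt r)$ estimate goes through.
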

\begin{proof} Recall that $\Phi_+$ is increasing and $\Phi_+(2)=\Phi_+'(2)=0$. We can thus choose a number $\xi>0$ such that $\xi \le \min\{r,x_v^*-2\}$ and $v-\Phi_+'(2+\xi) >0$. Take  $0 <\delta \le \min\{v^{-1}\xi(v-\Phi_+'(2+\xi)),\xi\}$. Note that as $\Phi_+'$ is increasing we have
	\begin{align*}
		\Phi_+(2+\xi)=\int_2^{2+\xi} \Phi_+'(y) \diff y \le \xi\Phi_+'(2+\xi). 
	\end{align*}
 Thus for any $v>0$, we have
\begin{align*}
	U_v(2+\xi)=v(2+\xi)-\Phi_+(2+\xi) \ge v(2+\xi)-\xi\Phi_+'(2+\xi) \ge v(2+\delta).
\end{align*}
Based on this $\delta$, we split the double sum in \eqref{e:jupbd3} into three parts given by the following three index sets:
\begin{align*}
    T_1 & :=\{(a,\ell)\in \mathbb{N}'\times \mathbb{Z}'\mid a+\ell\le (2+\delta)t\}, \\
    T_2 & :=\{(a,\ell)\in \mathbb{N}'\times \mathbb{Z}'\mid a+\ell> (2+\delta)t, \ell \le (2+\delta)t\}, \\
    T_3 & :=\{(a,\ell)\in \mathbb{N}'\times \mathbb{Z}'\mid a+\ell> (2+\delta)t,  (2+\delta)t< \ell\le \sigma t\}.
\end{align*}
We write
\begin{align*}
    \sum_{a\in \mathbb{N}'}\sum_{\ell\in \mathbb{Z}': \ell\le \sigma t}J_{a+\ell}^2(2t) e^{v\ell}=\left(\sum_{(a,\ell)\in T_1}+\sum_{(a,\ell)\in T_2}+\sum_{(a,\ell)\in T_3}\right)J_{a+\ell}^2(2t) e^{v\ell}.
\end{align*}
For the first sum, using \Cref{l:jupbd1} we have
	\begin{align*}
	\sum_{(a,\ell)\in T_1}J_{a+\ell}^2(2t) e^{v\ell} \le \Con \cdot e^{v(2+\delta)t}.
\end{align*}
For the second sum using \eqref{e:jest} we get
\begin{align}
	\sum_{(a,\ell)\in T_2}J_{a+\ell}^2(2t) e^{v\ell}& \le \frac{\pi}{8t\sqrt{4\delta+\delta^2}}\sum_{\ell\in \mathbb{Z}': \ell\le (2+\delta) t} e^{v\ell}\sum_{a\in \mathbb{N}', a+\ell\ge (2+\delta)t} e^{-t\Phi_+(\tfrac{a+\ell}{t})}. \label{e.387}
\end{align}
Using the fact that $\Phi_+$ is convex we see that for $a+\ell \ge (2+\delta)t$ we have
\begin{align*}
    t\Phi_+(\tfrac{a+\ell}{t}) \ge t\Phi_+(2+\delta)+((a+\ell)-2t-\delta t)\Phi_+'(2+\delta).
\end{align*}
This forces, after a change of variable $k=a+\ell$ in \eqref{e.387},
\begin{align*}
    \mbox{r.h.s.~of \eqref{e.387}} \le \Con\cdot e^{-t\Phi_+(2+\delta)} \sum_{\ell\in \mathbb{Z}': \ell\le (2+\delta) t} e^{v\ell}\sum_{k\in \mathbb{N}'} e^{-k\Phi_+'(2+\delta)}. 
\end{align*}
where we recognize that both the above sums are just geometric series. As $\Phi_+'(2+\delta)>0$, overall we have 
\begin{align*}
    \sum_{(a,\ell)\in T_2}J_{a+\ell}^2(2t) e^{v\ell} \le  \Con\cdot  e^{t\cdot U_v(2+\delta)}.
\end{align*}
Finally, for the third sum we have
\begin{align}\label{e.lbd1}
	\sum_{(a,t)\in T_3}J_{a+\ell}^2(2t) e^{v\ell} & \le \frac{\pi}{8t\sqrt{4\delta+\delta^2}}\sum_{\ell\in \mathbb{Z}': (2+\delta)t\le \ell\le \sigma t} e^{v\ell}\sum_{a\in \mathbb{N}', a+\ell\ge (2+\delta)t} e^{-t\Phi_+(\tfrac{a+\ell}{t})}.
\end{align}
Using the estimate $\Phi_+(\tfrac{a+\ell}{t}) \ge  \Phi_+(\tfrac{\ell}{t})+\tfrac{a}{t}\Phi_+'(2+\delta)$ for the term in the exponent, which holds for $(a,\ell) \in T_3$, we bound the rightmost sum to get
\begin{align*}
    \mbox{r.h.s.~of \eqref{e.lbd1}} \le \Con \cdot \frac{1}{t}\sum_{\ell\in \mathbb{Z}': (2+\delta)t\le \ell\le \sigma t} e^{v\ell} e^{-t\Phi_+(\tfrac{\ell}{t})}.
\end{align*}
The last sum can be bounded in a similar manner as done in the proof of \Cref{l:jupbd2}, to yield that the above term is at most  $\Con\cdot e^{t\cdot U_v(\min\{\sigma,x_v^*\})}$.
Since $$U_v(\min\{\sigma,x_v^*\}) \ge U_v(2+\zeta) \ge \max\{U_v(2+\delta), v(2+\delta)\},$$ we see that the third sum dominates. This gives us the desired result.
\end{proof}

\subsection{Trace Analysis} \label{sec.trace}  The main goal of this section  is to prove \Cref{p:trace}. We first recall a few basic definitions and results from operator theory. 

For an operator 
 with kernel $T:\mathbb{Z}'\times \mathbb{Z}'\to \R$, we define its norm as
 \begin{align*}
\norm{T}:=\tr(\sqrt{T^*T}).
\end{align*}
We say $T$ is trace-class when $\norm{T}<\infty.$ The operator $T$ is said to be positive if
 $$\sum_{a,b\in \mathbb{Z}'} T(a,b)f(a)f(b) \ge 0$$
 for all $f:\mathbb{Z}'\to \R$. For a positive operator, we have $\norm{T}=\tr(T)$.

To prove that the kernels are positive and trace class we will often rely on the standard square root trick result which we recall for the reader's convenience.

\begin{lemma}\label{l:sqtrick} Consider kernel $R:\mathbb{N}' \times \mathbb{Z}' \to \mathbb{R}$, with $\sum_{a\in \mathbb{N}'}\sum_{\ell\in \mathbb{Z}'} R^2(a,\ell)<\infty.$ The operator $K$ with the kernel
\begin{align*}
    K(a,b):=\sum_{\ell\in \mathbb{Z}'} R(a,\ell)R(b,\ell)
\end{align*}
is positive and trace-class, with $\norm{K}=\tr(K)=\sum_{a\in \mathbb{N}'}\sum_{\ell\in \mathbb{Z}'} R^2(a,\ell).$
\end{lemma}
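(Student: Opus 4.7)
The plan is to interpret the kernel $R$ as a Hilbert–Schmidt operator and then recognize $K$ as $R R^*$. Concretely, I would view $R$ as the operator $\ell^{2}(\mathbb{Z}') \to \ell^{2}(\mathbb{N}')$ whose matrix elements are $R(a,\ell)$. The hypothesis $\sum_{a\in\mathbb{N}'}\sum_{\ell\in\mathbb{Z}'} R(a,\ell)^{2} < \infty$ is precisely the statement that $R$ is Hilbert–Schmidt, with $\|R\|_{HS}^{2}$ equal to this double sum. In particular $R$ is bounded, hence the adjoint $R^{*}:\ell^{2}(\mathbb{N}')\to\ell^{2}(\mathbb{Z}')$ exists and has matrix elements $R^{*}(\ell,a) = R(a,\ell)$.

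Next I would compute the matrix of $RR^{*}$ directly: for $a,b\in\mathbb{N}'$,
\[
(RR^{*})(a,b) \;=\; \sum_{\ell\in\mathbb{Z}'} R(a,\ell)\,R^{*}(\ell,b) \;=\; \sum_{\ell\in\mathbb{Z}'} R(a,\ell)\,R(b,\ell) \;=\; K(a,b),
\]
where absolute convergence of the series is justified by Cauchy–Schwarz together with the Hilbert–Schmidt condition. Thus $K=RR^{*}$ as operators on $\ell^{2}(\mathbb{N}')$. Positivity is then immediate, since for any $f\in\ell^{2}(\mathbb{N}')$ one has $\langle Kf,f\rangle = \langle RR^{*}f,f\rangle = \|R^{*}f\|^{2}\ge 0$.

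The trace-class and norm statements are a standard consequence of the fact that a product of two Hilbert–Schmidt operators is trace-class, with trace norm equal to the product of their Hilbert–Schmidt norms when applied in the symmetric form $RR^{*}$. Specifically, $\|K\|_{1} = \|RR^{*}\|_{1} = \|R\|_{HS}^{2}$. Since $K$ is positive, $\|K\|_{1}=\tr(K)$, and evaluating the trace on the standard basis of $\ell^{2}(\mathbb{N}')$ gives
\[
\tr(K) \;=\; \sum_{a\in\mathbb{N}'} K(a,a) \;=\; \sum_{a\in\mathbb{N}'}\sum_{\ell\in\mathbb{Z}'} R(a,\ell)^{2},
\]
which matches $\|R\|_{HS}^{2}$ and yields the claimed identity.

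There is no serious obstacle here; the only mild point to be careful about is justifying the interchange of sums needed to identify $RR^{*}$ with $K$ (done by Cauchy–Schwarz and the $\ell^{2}$ hypothesis) and the fact that a nonnegative trace-class operator has trace norm equal to the sum of its diagonal entries in any orthonormal basis, which is Lidskii's theorem or, in this positive case, a direct application of the spectral theorem. If one prefers to avoid invoking the general theory of Hilbert–Schmidt operators, a self-contained alternative is to note that the partial sums $K_{N}(a,b):=\sum_{|\ell|\le N} R(a,\ell) R(b,\ell)$ define finite-rank positive operators with $\tr(K_{N})=\sum_{a,|\ell|\le N} R(a,\ell)^{2}$, and then pass to the limit using monotone convergence on the diagonal together with the fact that the trace norm of a positive operator equals the supremum of traces of finite-rank compressions.
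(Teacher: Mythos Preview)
Your proof is correct and follows the standard square-root-trick argument. The paper itself does not give a proof at all: it simply cites the continuous analog \cite[Lemma 2.1]{dt21} and says the adaptation is straightforward, so your write-up is in fact more detailed than what appears in the paper while taking the same underlying approach.
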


A continuous version of this result appears in \cite[Lemma 2.1]{dt21}. The proof in the reference can be adapted easily to show the above lemma and hence we do not report the details.

Recall the kernel $K_{\zeta,t}$ from \eqref{kernel}. A formal computation suggests that the `derivatives' of the kernel $K_{\zeta,t}$ are given by
\begin{equation}
\label{e:deriK}
    \begin{aligned}
    K_{\zeta,t}^{(n)}(a,b) & :=\sum_{\ell\in \mathbb{Z}'}\left( \frac{\diff^n}{\diff \zeta^n} v_{q,\ell}(\zeta)\right) J_{a+\ell}(2t)J_{b+\ell}(2t) \\ & = (-1)^{n+1}n!\sum_{\ell\in \mathbb{Z}'}\frac{q^{\ell+\frac12}}{(\zeta+q^{\ell+\frac12})^{n+1}}J_{a+\ell}(2t)J_{b+\ell}(2t).
\end{aligned}
\end{equation}
%Above the interchange of the sums and the derivative is justified by the decay property of Bessel functions such as those reported in \Cref{l.bessel}. 
The following lemma shows the derivative of $K_{\zeta,t}^{(n-1)}$ is indeed $K_{\zeta,t}^{(n)}$ in the trace norm sense. For convenience, we write $K_{\zeta,t}^{(0)}:=K_{\zeta,t}$.
\begin{lemma}\label{l:deriK} The kernel $K_{\zeta,t}$ defines a positive trace class operator on $\ell^2(\mathbb{N}')$. For each $n\ge 1$
\begin{enumerate}[label=(\alph*),leftmargin=18pt]
    \item $(-1)^{n+1}K_{\zeta,t}^{(n)}$ defines a positive trace class operator on $\ell^2(\mathbb{N}')$.
    \item $K_{\zeta,t}^{(n-1)}$ is differentiable in $\zeta$ at each $\zeta>0$ in the trace norm, with derivative being equal to $K_{\zeta,t}^{(n)}$, i.e.,
    \begin{align*}
        \lim_{\zeta'\to\zeta} \left\|\frac{K_{\zeta',t}^{(n-1)}-K_{\zeta,t}^{(n-1)}}{\zeta'-\zeta}-K_{\zeta,t}^{(n)} \right\|=0.
    \end{align*}
\end{enumerate}
\end{lemma}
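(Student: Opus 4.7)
The plan is to reduce everything to the square root trick (Lemma 3.3). First, for the positivity and trace-class property of $K_{\zeta,t}$, observe that $v_{q,\ell}(\zeta) = \zeta/(\zeta+q^{\ell+1/2})>0$, so we can factor $K_{\zeta,t}(a,b) = \sum_{\ell\in\mathbb{Z}'} R(a,\ell) R(b,\ell)$ with $R(a,\ell) := \sqrt{v_{q,\ell}(\zeta)}\, J_{a+\ell}(2t)$. By Lemma 3.3 it then suffices to show $\sum_{a\in\mathbb{N}'}\sum_{\ell\in\mathbb{Z}'} R(a,\ell)^2 < \infty$. Swapping the order of summation and setting $m=a+\ell$, this double sum equals $\sum_{m\in\mathbb{Z}'} J_m(2t)^2 \sum_{\ell<m} v_{q,\ell}(\zeta)$. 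Since $v_{q,\ell}(\zeta)\le 1$ and decays exponentially as $\ell\to-\infty$ (from $v_{q,\ell}(\zeta)\le \zeta q^{-\ell-1/2}$), the inner sum is bounded by $C(q,\zeta) + \max(m,0)$; combined with the super-exponential decay of $J_m(2t)$ in $m$ for fixed $t$ (or the Bessel estimates of Section 3.2), the total is finite.

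For part (a), a direct computation shows $\partial_\zeta^n v_{q,\ell}(\zeta) = (-1)^{n+1} n!\, q^{\ell+1/2}/(\zeta+q^{\ell+1/2})^{n+1}$, which matches the formula \eqref{e:deriK}. Thus
\begin{equation*}
(-1)^{n+1} K_{\zeta,t}^{(n)}(a,b) = \sum_{\ell\in\mathbb{Z}'} c_\ell^{(n)}(\zeta) J_{a+\ell}(2t) J_{b+\ell}(2t), \quad c_\ell^{(n)}(\zeta) := \frac{n!\, q^{\ell+1/2}}{(\zeta+q^{\ell+1/2})^{n+1}} > 0,
\end{equation*}
and the same square root trick, now with $R(a,\ell) = \sqrt{c_\ell^{(n)}(\zeta)}\, J_{a+\ell}(2t)$, gives positivity. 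The coefficients $c_\ell^{(n)}$ again decay exponentially as $|\ell|\to\infty$ (like $q^{\ell+1/2}/\zeta^{n+1}$ for $\ell\to+\infty$ and like $n!\, q^{-n(\ell+1/2)}$ for $\ell\to-\infty$), so the same computation as above shows the trace sum is finite.

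For part (b), write the kernel of $D_{\zeta',\zeta} := (K_{\zeta',t}^{(n-1)} - K_{\zeta,t}^{(n-1)})/(\zeta'-\zeta) - K_{\zeta,t}^{(n)}$ as $\sum_{\ell\in\mathbb{Z}'} \alpha_\ell J_{a+\ell}(2t) J_{b+\ell}(2t)$ with $\alpha_\ell := (v_{q,\ell}^{(n-1)}(\zeta')-v_{q,\ell}^{(n-1)}(\zeta))/(\zeta'-\zeta) - v_{q,\ell}^{(n)}(\zeta)$. Splitting $\alpha_\ell = \alpha_\ell^+ - \alpha_\ell^-$ into positive and negative parts and applying the square root trick to each piece, the triangle inequality for the trace norm yields
\begin{equation*}
    \|D_{\zeta',\zeta}\|_{\mathrm{tr}} \le \sum_{\ell\in\mathbb{Z}'} |\alpha_\ell| \sum_{a\in\mathbb{N}'} J_{a+\ell}^2(2t).
\end{equation*}
For each fixed $\ell$, $\alpha_\ell \to 0$ as $\zeta'\to\zeta$ by the ordinary scalar derivative of $v_{q,\ell}^{(n-1)}$. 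For a dominating bound, apply the mean value theorem to obtain $\alpha_\ell = v_{q,\ell}^{(n)}(\xi_\ell) - v_{q,\ell}^{(n)}(\zeta)$ for some $\xi_\ell$ between $\zeta$ and $\zeta'$, and restrict to $\zeta'\in[\zeta/2, 3\zeta/2]$. Then $|\alpha_\ell| \le 2\, n!\, q^{\ell+1/2}/(\zeta/2+q^{\ell+1/2})^{n+1} =: 2M_\ell$, which is exactly the type of coefficient bounded in part (a); hence $\sum_\ell M_\ell \sum_a J_{a+\ell}^2(2t)<\infty$. Dominated convergence in $\ell$ then gives $\|D_{\zeta',\zeta}\|_{\mathrm{tr}}\to 0$.

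The main subtlety is part (b), where one cannot just differentiate term by term inside the trace norm because the coefficients $\alpha_\ell$ are not signed. The positive/negative decomposition combined with Lemma 3.3 circumvents this, reducing trace-norm differentiability to summable pointwise convergence of scalars; verifying the uniform bound via MVT and reusing the trace estimates of part (a) is the only real work.
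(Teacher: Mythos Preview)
Your proof is correct and follows the same overall strategy as the paper (reduce everything to the square root trick of Lemma~3.3), but the implementation of part~(b) differs in an instructive way. You use the mean value theorem to write $\alpha_\ell = v_{q,\ell}^{(n)}(\xi_\ell) - v_{q,\ell}^{(n)}(\zeta)$, which has no sign, and therefore you must split $D_{\zeta',\zeta}$ into positive and negative parts before invoking Lemma~3.3 on each piece; this is fine and your dominating bound via $M_\ell$ is exactly what is needed for DCT. The paper instead writes the scalar coefficient via the Taylor \emph{integral} remainder,
\[
\alpha_\ell = \frac{1}{\zeta'-\zeta}\int_\zeta^{\zeta'} (\zeta'-r)\, v_{q,\ell}^{(n+1)}(r)\,\diff r,
\]
which for $\zeta'>\zeta$ has a definite sign in $\ell$ (since $v_{q,\ell}^{(n+1)}$ does). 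Thus $\pm D_{\zeta',\zeta}$ is already positive, Lemma~3.3 applies directly without any decomposition, and the trace norm equals the diagonal sum outright; the bound then visibly carries a factor of $(\zeta'-\zeta)$, giving Lipschitz control rather than merely $o(1)$. Your approach is perfectly valid and arguably more robust (it does not rely on the sign structure of higher derivatives), while the paper's is a bit slicker here. For the finiteness of the trace sums you compute directly by reindexing $m=a+\ell$ and bounding the inner sum in $\ell$; the paper instead packages this as the single estimate $\sum_{a,\ell} J_{a+\ell}^2(2t)\, M^\ell<\infty$ for $M>1$ via Lemmas~3.1--3.2, which is equivalent.
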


We now turn to the proof of \Cref{l:deriK}.

\begin{proof}[Proof of  \Cref{l:deriK}] From  \Cref{l:jupbd1,l:jupbd2} it follows that for each $t>0$ and $M>1$
  \begin{align}\label{e:Mbd}
    \sum_{a\in \mathbb{N}'}\sum_{\ell\in \mathbb{Z}'}J_{a+\ell}^2(2t) M^{\ell} = \sum_{a\in \mathbb{N}'}\sum_{\ell\in \mathbb{Z}':a+\ell\le 3t}J_{a+\ell}^2(2t) M^{\ell}+\sum_{a\in \mathbb{N}'}\sum_{\ell\in \mathbb{Z}':a+\ell>3t}J_{a+\ell}^2(2t) M^{\ell}<\infty.
\end{align}
This forces
\begin{align*}
    \sum_{a\in \mathbb{N}'}\sum_{\ell\in \mathbb{Z}'}\frac{q^{\ell+\frac12}J_{a+\ell}^2(2t)}{(\zeta+q^{\ell+\frac12})^{n+1}}<\infty, \quad \sum_{a\in \mathbb{N}'}\sum_{\ell\in \mathbb{Z}'}\frac{J_{a+\ell}^2(2t)}{1+\zeta^{-1}q^{\ell+\frac12}}<\infty,
\end{align*}
for each $\zeta,t>0$, $q\in (0,1)$ and $n\ge 1$. In view of  \Cref{l:sqtrick}, we have that $K_{\zeta,t}$ and $(-1)^{n+1}K_{\zeta,t}^{(n)}$ are positive and trace-class. We focus on proving the differentiability of $K_{\zeta,t}^{(n-1)}$. Towards this end, set $D_{\zeta',\zeta}:=\frac{K_{\zeta',t}^{(n-1)}-K_{\zeta,t}^{(n-1)}}{\zeta'-\zeta}-K_{\zeta,t}^{(n)}.$ Assume $\zeta'>\zeta$. We have
\begin{align*}
    D_{\zeta',\zeta}(a,b)=\sum_{\ell\in \mathbb{Z}'}J_{a+\ell}(2t)J_{b+\ell}(2t)\int_{\zeta}^{\zeta'}\frac{(\zeta'-r)}{2(\zeta'-\zeta)}\cdot\frac{(-1)^{n+2}(n+1)!q^{\ell+\frac12}}{(r+q^{\ell+\frac12})^{n+2}} \diff r.
\end{align*}
Applying \Cref{l:sqtrick} we see that 
\begin{align*}
    \left\|D_{\zeta',\zeta}\right\|=\sum_{a\in \mathbb{N}'}\sum_{\ell\in \mathbb{Z}'}J_{a+\ell}^2(2t)\int_{\zeta}^{\zeta'}\frac{(\zeta'-r)}{2(\zeta'-\zeta)}\cdot\frac{(n+1)!q^{\ell+\frac12}}{(r+q^{\ell+\frac12})^{n+2}}\diff r.
\end{align*}
Note that $\frac{(\zeta'-r)}{2(\zeta'-\zeta)}\le \frac12$. Thus, by \eqref{e:Mbd} with $M=q^{-n-1}$ we have 
\begin{align*}
    \left\|D_{\zeta',\zeta}\right\|\le (\zeta'-\zeta)\sum_{a\in \mathbb{N}'}\sum_{\ell\in \mathbb{Z}'}J_{a+\ell}^2(2t)\cdot (n+1)!q^{-(n+1)(\ell+\frac12)}<\infty.
    \end{align*}
    By Dominated Convergence Theorem we get that $\left\|D_{\zeta',\zeta}\right\|\to 0$ as $\zeta'\downarrow 0$. The proof for $\zeta'<\zeta$ is analogous. This completes the proof.
\end{proof}

We now come to the proof of \Cref{p:trace}.

\begin{proof}[Proof of \Cref{p:trace}]
    
 We are going to show that there exists a constant $C=C(p)>1$ such that for all $t$ large enough we have
    \begin{align} \label{eq:proof_traceorder}
        & \frac{1}{C\, t}\cdot e^{\Upsilon(p)t} \le (-1)^{n+1}\int_{0}^{\agamma} \zeta^{-\alpha}\frac{\diff^n}{\diff\zeta^n}\operatorname{tr}(K_{\zeta,t}) \diff\zeta \le C \, e^{\Upsilon(p)t}.
    \end{align}
In view of \Cref{l:deriK} we have
\begin{align*}
    \frac{\diff^n}{\diff \zeta^n}\operatorname{tr}(K_{\zeta,t})=\operatorname{tr}(K_{\zeta,t}^{(n)})=(-1)^{n+1}n!\sum_{a\in \mathbb{N}'}\sum_{\ell\in \mathbb{Z}'}\frac{q^{\ell+\frac12}J_{a+\ell}^2(2t)}{(\zeta+q^{\ell+\frac12})^{n+1}}.
\end{align*}
Pushing the integral inside the double sum we have 
    \begin{align}\label{eta1}
        (-1)^{n+1}\int_{0}^{\agamma} \hspace{-0.2cm}\zeta^{-\alpha}\frac{\diff^n}{\diff \zeta^n}\operatorname{tr}(K_{\zeta,t}) \diff \zeta= n!\sum_{a\in \mathbb{N}'}\sum_{\ell\in \mathbb{Z}'}q^{\ell+\frac12}J_{a+\ell}^2(2t)\int_{0}^{\agamma}\hspace{-0.2cm} \frac{\zeta^{-\alpha}}{(\zeta+q^{\ell+\frac12})^{n+1}} \diff \zeta.
    \end{align}
  Using the substitution $u=1/(1+\zeta^{-1}q^{\ell+\frac12})$ and setting $y_{\ell}=1/(1+\agamma^{-1}q^{\ell+\frac12})$ the integral in the right-hand side become
    \begin{align*}
        \int_{0}^{\agamma} \frac{\zeta^{-\alpha}}{(\zeta+q^{\ell+\frac12})^{n+1}} \diff \zeta & = q^{-(n+\alpha)(\ell+\frac12)}\int_0^{y_\ell} u^{-\alpha}(1-u)^{n+\alpha-1}\diff u. 
    \end{align*}
    Thus,
   \begin{equation}\label{eta11}
   \begin{aligned}
       & (-1)^{n+1}\int_{0}^{\agamma} \zeta^{-\alpha}\frac{\diff^n}{\diff\zeta^n}\operatorname{tr}(K_{\zeta,t}) \diff\zeta \\ & \hspace{2cm}= n!\sum_{a\in \mathbb{N}'}\sum_{\ell\in \mathbb{Z}'}e^{p(\ell+\frac12)}J_{a+\ell}^2(2t)\int_0^{y_\ell} u^{-\alpha}(1-u)^{n+\alpha-1}\diff u.
   \end{aligned}       
    \end{equation}  
We now seek to find an upper and lower bound for the right-hand side in the above equality. For the upper bound we extend the range of integration to $[0,1]$ and evaluate the integral, which we recognize to be a Beta function, to get
    \begin{align}\label{eta12}
        (-1)^{n+1}\int_{0}^{\agamma} \zeta^{-\alpha}\frac{\diff^n}{\diff\zeta^n}\operatorname{tr}(K_{\zeta,t}) \diff\zeta \le \Gamma(1-\alpha)\Gamma(n+\alpha)\sum_{a\in \mathbb{N}'}\sum_{\ell\in \mathbb{Z}'}e^{p(\ell+\frac12)}J_{a+\ell}^2(2t).
    \end{align}
Let us choose a constant $b=b(p)$ such that 
\begin{equation*}
    2<b< \min \left\{ \frac{\Upsilon(p)}{p}, 2\cosh(p/2)\right\},
\end{equation*}
which exists by virtue of \Cref{l:propphi}. Splitting the summation over $\ell$ in \eqref{eta12}, we obtain the bounds
\begin{align} \label{eq:eta12_1}
      \sum_{a\in \mathbb{N}'}\sum_{\ell\in \mathbb{Z}': a+\ell\le bt}J_{a+\ell}^2(2t)e^{p(\ell+\frac12)} \le C \cdot  e^{pbt} \le  C \cdot  e^{\Upsilon(p)t}
\end{align}
from \Cref{l:jupbd1} and
\begin{align}\label{eq:eta12_2}
     \sum_{a\in \mathbb{N}'}\sum_{\ell\in \mathbb{Z}': a+\ell\ge bt}J_{a+\ell}^2(2t)e^{p(\ell+\frac12)}  & \le C\cdot e^{\Upsilon(p)t}
\end{align}
from  \Cref{l:jupbd2}. Combining the inequalities \eqref{eq:eta12_1}, \eqref{eq:eta12_2} with \eqref{eta12} proves the upper bound in \eqref{eq:proof_traceorder}.
For the lower bound, we set $d=\frac1t\lfloor 2t\cosh(p/2)\rfloor$ and focus only on a single term in the double sum in the right-hand side of \eqref{eta11}, with $a=\frac12$ and $\ell=d \, 
t-\frac12$. We have
\begin{align}\label{etail12}
    \mbox{r.h.s.~of \eqref{eta11}} \ge n! \cdot e^{pdt}J_{dt}^2(2t)\int_0^{y_{*}} u^{-\alpha}(1-u)^{n+\alpha-1} \diff u,
\end{align}
where 
\begin{align*}
    y_*:=y_{dt-\frac12}  =(1+\agamma^{-1}q^{dt})^{-1} =(1+e^{(\Upsilon(p)-\delta-pd)t/s})^{-1}.
\end{align*}
For large enough $t$, we have $\Upsilon(p)-\delta-pd < 0$, which forces $y_*\to 1$ as $t\to \infty$. Thus, for all large enough $t$, one can ensure
\begin{align} \label{eq:etail12_1}
    \int_0^{y_{*}} u^{-\alpha}(1-u)^{n+\alpha-1} \diff u \ge C^{-1}\int_0^{1} u^{-\alpha}(1-u)^{n+\alpha-1} \diff u = C^{-1}\cdot \frac{\Gamma(1-\alpha)\Gamma(n+\alpha)}{n!}.
\end{align}
Applying \eqref{e:jest2} we see that for large enough $t$ we have
\begin{align} \label{eq:etail12_2}
     e^{pdt}J_{dt}^2(2t) \ge \frac{1}{C \, t} \cdot e^{-t\Phi_+(2\cosh(p/2))+pdt}.
\end{align}
{Note that 
\begin{align*}
    pdt-t\Phi_+(2\cosh(p/2)) & \le p+t[2p\cosh(p/2)-\Phi_+(2\cosh(p/2))] \\ & =p+tU_p(2\cosh(p/2))=p+\Upsilon(p)t.
\end{align*}
 Thus,
\begin{align} \label{eq:etail12_3}
     e^{pdt}J_{dt}^2(2t) \ge \frac{1}{C \, t} \cdot e^{-t\Phi_+(2\cosh(p/2))+pdt} \ge \frac{1}{\widetilde{C} \, t} \cdot e^{\Upsilon(p)t}.
\end{align}}
Inserting the estimates \eqref{eq:etail12_1}, \eqref{eq:etail12_3} in the right-hand side of \eqref{etail12} proves the desired lower bound in \eqref{eq:proof_traceorder}. This completes the proof.
\end{proof}

We conclude this subsection by proving additional bounds on the trace of the kernel $K_{\zeta,t}$ and its derivatives. These will be useful for our analysis in \Cref{sec.ho} below.

\begin{lemma}[Bounds on traces]\label{l:trbd} Fix any $r>0$, $\sigma>2+r$ and $n\ge 1$.  There exists a constant $\Con=\Con(n,r,q)>0$ such that
    \begin{align}
        |\tr(K_{q^{\sigma t},t})| & \le \Con \cdot e^{ t\cdot U_{\logq}\big(\min\{\sigma,x_{\logq}^*\}\big)-t{\logq}\sigma  }
        \label{eq:bound_trace_1}
        \\ 
        |\tr(K_{q^{\sigma t},t}^{(n)})| & \le \Con  \cdot e^{ t\cdot U_{n{\logq}}\big(\min\{\sigma,x_{n{\logq}}^*\}\big)}
        \label{eq:bound_trace_2}
    \end{align}
where $\logq:=\log q^{-1}$ and the function $U_v$ was defined in \eqref{def:U} and $x_p^*=2\cosh(p/2)$ was defined in \Cref{l:propphi}. 
\end{lemma}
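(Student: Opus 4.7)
The plan is to unfold each trace into its explicit double-sum expression and then split the inner (over $\ell$) sum at the threshold $\ell = \sigma t$, which is precisely the value of $\ell$ at which the two terms in the denominator $\zeta + q^{\ell+1/2}$ (with $\zeta = q^{\sigma t}$) balance. First I would write, by \cref{l:deriK} and \eqref{e:deriK},
\begin{equation*}
\tr(K_{q^{\sigma t},t}) = \sum_{a\in\mathbb{N}'}\sum_{\ell\in\mathbb{Z}'} \frac{1}{1+q^{\ell+1/2-\sigma t}} J_{a+\ell}^2(2t), \qquad
|\tr(K_{q^{\sigma t},t}^{(n)})| = n!\sum_{a\in\mathbb{N}'}\sum_{\ell\in\mathbb{Z}'} \frac{q^{\ell+1/2}}{(q^{\sigma t}+q^{\ell+1/2})^{n+1}} J_{a+\ell}^2(2t),
\end{equation*}
and split each $\ell$-sum as $(\sum_{\ell\le \sigma t}) + (\sum_{\ell>\sigma t})$.

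On the low-$\ell$ piece I would use $q^{\sigma t}+q^{\ell+1/2}\ge q^{\ell+1/2}$ to bound the weight in \eqref{eq:bound_trace_1} by $q^{\sigma t-\ell-1/2} = q^{\sigma t-1/2}\, e^{\logq\,\ell}$, and the weight in \eqref{eq:bound_trace_2} by $q^{-n(\ell+1/2)} = q^{-n/2}\, e^{n\logq\,\ell}$. \cref{l:jupbd3} applied with $v=\logq$ (resp.\ $v=n\logq$) then yields the target exponential rates $e^{tU_{\logq}(\min\{\sigma,x_{\logq}^*\})}$ and $e^{tU_{n\logq}(\min\{\sigma,x_{n\logq}^*\})}$, with the prefactor $q^{\sigma t-1/2} = e^{-t\logq\sigma}$ producing the extra $-t\logq\sigma$ in \eqref{eq:bound_trace_1}.

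On the tail piece $\ell>\sigma t$ I would use the simpler bounds $v_{q,\ell}(q^{\sigma t})\le 1$ (for \eqref{eq:bound_trace_1}) and $\frac{q^{\ell+1/2}}{(q^{\sigma t}+q^{\ell+1/2})^{n+1}}\le q^{\ell+1/2-(n+1)\sigma t}\le q^{-n\sigma t-\sigma t/2}$ using $\ell\ge \sigma t$ (for \eqref{eq:bound_trace_2}). Since $a+\ell\ge\sigma t>(2+r)t$ on this region, I can apply \cref{l:jupbd2} with $v=0$, $\gamma_1=\sigma$, $\gamma_2=\infty$ (since $\sum_{a+\ell\ge\sigma t}\le\sum_a\sum_{\ell>\sigma t}$ requires only a cosmetic reindexing) to obtain a bound by a constant multiple of $e^{-t\Phi_+(\sigma)}$. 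Rewriting $-\Phi_+(\sigma) = U_{\logq}(\sigma)-\logq\sigma$ (resp.\ $-\Phi_+(\sigma)-n\logq\sigma= U_{n\logq}(\sigma) - (n+1)\logq\sigma$ with the $q^{-n\sigma t-\sigma t/2}$ prefactor contributing $+n\logq\sigma$ up to a constant) and comparing with the target, the tail contribution reduces to the inequality $U_v(\sigma)\le U_v(\min\{\sigma,x_v^*\})$, which follows directly from \cref{l:propphi}(b) because $x_v^*$ is the global maximizer of $U_v$.

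I expect the proof to be mostly bookkeeping: all the analytic content — the Laplace-type asymptotics of the double Bessel sums — is already encoded in \cref{l:jupbd2,l:jupbd3}. The only subtle point is verifying that the tail term $e^{-t\Phi_+(\sigma)}$ is absorbed by the main term uniformly in the case $\sigma>x_v^*$, which as noted is immediate from the maximizing property of $x_v^*$. No further ingredients are required.
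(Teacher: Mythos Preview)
Your overall strategy---splitting at $\ell=\sigma t$ and invoking \cref{l:jupbd3} with $v=\logq$ (resp.\ $v=n\logq$) on the low-$\ell$ piece---is exactly what the paper does, and that part is fine.

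The gap is in the tail piece. You propose to bound $\sum_{a\in\mathbb{N}'}\sum_{\ell>\sigma t} J_{a+\ell}^2(2t)$ by enlarging the region to $\{a+\ell\ge\sigma t\}$ and applying \cref{l:jupbd2} with $v=0$. This does not work: although the lemma's hypothesis reads $v\ge 0$, the case $v=0$ is vacuous---for each integer $k\in[\gamma_1 t,\gamma_2 t]$ there are infinitely many pairs $(a,\ell)\in\mathbb{N}'\times\mathbb{Z}'$ with $a+\ell=k$, so the left-hand side is infinite. (Indeed the paper's own proof of \cref{l:jupbd2} uses the factor $\tfrac{1}{1-e^{-v}}$.) The inclusion you wrote is also in the wrong direction, but that is secondary.

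The paper handles the tail differently: it applies the Bessel estimate \eqref{e:jest} directly. For \eqref{eq:bound_trace_1} one writes $\sum_{a}\sum_{\ell>\sigma t}J_{a+\ell}^2(2t)=\sum_{\ell>\sigma t}\sum_{m>\ell}J_m^2(2t)$ and bounds the inner sum by $\Con\,e^{-t\Phi_+(\ell/t)}$ via \eqref{e:jest} and convexity of $\Phi_+$; the outer sum is then geometric and contributes $\Con\,e^{-t\Phi_+(\sigma)}$. For \eqref{eq:bound_trace_2} the paper keeps the $q^{\ell+1/2}$ factor rather than discarding it (so the prefactor is $q^{-(n+1)\sigma t}$, not $q^{-n\sigma t}$), and the same Bessel-sum argument yields $\Con\,e^{t(n\logq\sigma-\Phi_+(\sigma))}$. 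Your final comparison $U_v(\sigma)\le U_v(\min\{\sigma,x_v^*\})$ is then exactly right. (Incidentally, your bound $q^{\ell+1/2-(n+1)\sigma t}\le q^{-n\sigma t-\sigma t/2}$ is off: at $\ell\approx\sigma t$ the correct value is $q^{-n\sigma t+1/2}$; the extra $q^{-\sigma t/2}$ would spoil the estimate.)
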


\begin{proof}  We use the fact that $1+q^{\ell-\sigma t+\frac12} \ge  \ind_{\ell>\sigma t} +q^{\ell-\sigma t+\frac12} \cdot \ind_{\ell\le\sigma t}$ to get
\begin{equation}
    \begin{split}
        |\tr(K_{q^{\sigma t},t})| & =\sum_{a\in \N'}\sum_{\ell \in \Z'} \frac{J_{a+\ell}^2(2t)}{1+q^{\ell-\sigma t+\frac12}}  \\ & \le  q^{\sigma t}\sum_{a\in \N'}\sum_{\ell \in \Z', \ell\le \sigma t } q^{-\ell-\frac12}J_{a+\ell}^2(2t)+\sum_{a\in \N'}\sum_{\ell \in \Z',\ell>\sigma t} J_{a+\ell}^2(2t). \label{e:jt}
    \end{split}
\end{equation}
The first term in the right-hand side of \eqref{e:jt} can be bounded using \Cref{l:jupbd3}, while the second term can be estimated using \eqref{e:jest}. Together we have
\begin{align*}
    \mbox{r.h.s.~of \eqref{e:jt}} \le \Con \cdot  \bigg[ e^{t\big[U_{\logq}(\min\{\sigma,x_{\logq}^*\})-{\logq}\sigma\big]}+e^{-t \cdot \Phi_+(\sigma)}\bigg].
\end{align*}
Since $U_{\logq}(\min\{\sigma,x_{\logq}^*\})-{\logq}\sigma \ge U_{\logq}(\sigma)-{\logq}\sigma=-\Phi_+(\sigma)$, the previous inequality implies \eqref{eq:bound_trace_1}. In order to show \eqref{eq:bound_trace_2}, we use the fact that $q^{\sigma t}+q^{\ell+\frac12} \ge q^{\sigma t} \cdot \ind_{\ell>\sigma t} +q^{\ell+\frac12} \cdot \ind_{\ell\le\sigma t}$ to get 
\begin{equation} \label{e:jt1}
\begin{split}
    & |\operatorname{tr}(K_{q^{\sigma t},t}^{(n)})| \\ & =n!\sum_{a\in \mathbb{N}'}\sum_{\ell\in \mathbb{Z}'}\hspace{-0.2cm}\frac{q^{\ell+\frac12}J_{a+\ell}^2(2t)}{(q^{\sigma t}+q^{\ell+\frac12})^{n+1}} \\ & \le  n!\!\cdot\!q^{-\frac{n}2} \sum_{a\in \N'}\sum_{\ell \in \Z', \ell\le \sigma t } q^{-n\ell}J_{a+\ell}^2(2t)+n!\!\cdot\! q^{-\sigma (n+1) t}\sum_{a\in \N'}\sum_{\ell \in \Z',\ell>\sigma t}\hspace{-0.2cm} q^{\ell+\frac12} J_{a+\ell}^2(2t). 
    \end{split}
\end{equation}
Once again, using \Cref{l:jupbd3} and \eqref{e:jest} we arrive the bound
\begin{align*}
    \eqref{e:jt1} \le \Con \cdot \bigg[ e^{ t\cdot U_{n{\logq}}(\min\{\sigma,x_{n{\logq}}^*\}) }+e^{ t [-\Phi_+(\sigma)+n{\logq}\sigma] }\bigg].
\end{align*}
As $U_{n{\logq}}(\min\{\sigma,x_{n{\logq}}^*\}) \ge -\Phi_+(\sigma)+n{\logq}\sigma$, this proves \eqref{eq:bound_trace_2}, completing the proof.
\end{proof}

\subsection{Higher-order terms}\label{sec.ho} The goal of this section is to prove \Cref{p:ho}. To deal with the expression $\det(I-K_{\zeta,t})+\tr(K_{\zeta,t})$, we will use the exterior algebra definition for Fredholm determinants \cite{simon1977notes}, which we recall here for convenience. 

For a trace-class operator $T$ on a Hilbert space $H$, consider the $L$-th exterior power $\wedge_{i=1}^L H$ and the operator $T^{\wedge L}$ defined by $T^{\wedge L}(v_1 \wedge \cdots \wedge v_L) := (Tv_1) \wedge \cdots \wedge (Tv_L)$. The operator $T^{\wedge L}$ is trace-class on $\wedge_{i=1}^L H$ and we have
\begin{align*}
        \det(I-T)=1+\sum_{L=1}^{\infty} (-1)^{L}\tr(T^{\wedge L}).
    \end{align*}
    In light of the above formula, we have
  \begin{align*}
        \det(I-K_{\zeta,t})+\tr(K_{\zeta,t})=1+\sum_{L=2}^{\infty} (-1)^{L}\tr(K_{\zeta,t}^{\wedge L}).
    \end{align*}  
From \eqref{eq:fred} and the calculation in \eqref{eta0}, we know $\det(I-K_{\zeta,t})$ is infinitely differentiable and from \Cref{l:deriK} we have $\tr(K_{\zeta,t})$ is also infinitely differentiable. Thus taking derivatives on both sides of the above equation we get 
\begin{align*}
    \frac{\diff^n}{\diff \zeta^n}[\det(I-K_{\zeta,t})+\tr(K_{\zeta,t})]=\frac{\diff^n}{\diff \zeta^n}\left[\sum_{L=2}^{\infty} (-1)^{L}\tr(K_{\zeta,t}^{\wedge L})\right].
\end{align*}
We claim that the sum and derivative can be interchanged, i.e.,
\begin{align}\label{cd:int}
    \frac{\diff^n}{\diff \zeta^n}\left[\sum_{L=2}^{\infty} (-1)^{L}\tr(K_{\zeta,t}^{\wedge L})\right]=\sum_{L=2}^{\infty} (-1)^{L}\frac{\diff^n}{\diff \zeta^n}\tr(K_{\zeta,t}^{\wedge L}).
\end{align}
We shall justify the interchange in \Cref{l:hotrace}. Assuming this, we insert the above formula in the right-hand side of \eqref{e.ho} to get 
\begin{align}\label{e.interchange}
    \int_0^{\agamma} \zeta^{-\alpha}\left|\frac{\diff^n}{\diff\zeta^n}\big[\det(I-K_{\zeta,t})+\operatorname{tr}(K_{\zeta,t})\big]\right| \diff\zeta \le \sum_{L=2}^{\infty} \int_0^{\agamma} \zeta^{-\alpha}\left|\frac{\diff^n}{\diff \zeta^n}\tr(K_{\zeta,t}^{\wedge L})\right| \diff \zeta.
\end{align}
The derivatives of $\tr(K_{\zeta,t}^{\wedge L})$ can be estimated using terms of the form $\tr(K_{\zeta,t}^{(m)}).$ To state precisely these estimates, we introduce a few pieces of notation. For any $n,L\in \mathbb{N}$ define the set of compositions of $n$ of length $L$
\begin{align*}
    \mc(L,n)\coloneqq\{\vec{m} \in (\Z_{\ge0})^L \mid m_1+m_2+\cdots+m_L=n\}
\end{align*}
and the multinomial coefficient
\begin{equation*}
    \binom{n}{\vec{m}}:=\frac{n!}{m_1! m_2!\cdots m_L!}.
\end{equation*}
We set $r:=(\Upsilon(p)/p-2)/8>0$ so that
\begin{align}\label{e.qgamma}
    q^{(2+4r)t} =e^{-(2+4r)pt/s} > e^{-(\Upsilon(p)-\delta)t/s}=\agamma.
\end{align}
where $\agamma$ is defined in \eqref{def:gamma}. Note that the above relation ensures that the bounds in \Cref{l:trbd} are valid for all $K_{\zeta,t}$ with $\zeta\in [0,\tau]$.

We now extract a convenient expression for the derivatives of $\tr(K_{\zeta,t}^{\wedge L})$ in the following lemma.

\begin{lemma}\label{l:interch1} Fix an orthonormal basis $\{e_i\}_{i\ge 1}$ of $\ell^2(\mathbb{N}')$. For each $t>0$, $\tr(K_{\zeta,t}^{\wedge L})$ is differentiable in $\zeta$ at each $\zeta\in (0,\agamma]$. Furthermore, we have
\begin{align}\label{e.interch1}
    \frac{\diff^n}{\diff\zeta^n}\tr(K_{\zeta,t}^{\wedge L}) = \sum_{i_1<\cdots<i_L}\sum_{\vec{m}\in \mc(L,n)} \binom{n}{\vec{m}} \det\big(\langle e_{i_k}, K_{\zeta,t}^{(m_j)}e_{i_j}\rangle \big)_{j,k=1}^{L}
\end{align}
where $K_{\zeta,t}^{(n)}$ defined in \eqref{e:deriK}.
\end{lemma}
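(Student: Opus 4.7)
The starting point is the standard expansion
\[
\tr(T^{\wedge L}) = \sum_{i_1<\cdots<i_L} \det\big(\langle e_{i_k}, T e_{i_j}\rangle\big)_{j,k=1}^L,
\]
valid for any trace-class operator $T$ on $\ell^2(\mathbb{N}')$, with absolute convergence controlled by $\|T\|_1^L/L!$. The plan is to apply this to $T=K_{\zeta,t}$, differentiate termwise $n$ times, and then justify the interchange of differentiation with the summation over multi-indices using the trace-norm differentiability supplied by Lemma \ref{l:deriK}.

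For the termwise step I would fix a tuple $i_1<\cdots<i_L$ and view the $L\times L$ matrix $M(\zeta):=(\langle e_{i_k},K_{\zeta,t}e_{i_j}\rangle)_{j,k=1}^L$ as a matrix-valued function of $\zeta$. Lemma \ref{l:deriK} ensures that each matrix entry is $n$-times differentiable with $m$-th derivative equal to $\langle e_{i_k},K_{\zeta,t}^{(m)}e_{i_j}\rangle$, since trace-norm convergence forces operator-norm convergence, which in turn controls every individual matrix element. Regarding $\det M(\zeta)$ as a multilinear function of its $L$ columns $c_j(\zeta):=(\langle e_{i_k},K_{\zeta,t}e_{i_j}\rangle)_{k=1}^L$, the generalized Leibniz rule for multilinear maps delivers
\[
\frac{d^n}{d\zeta^n}\det M(\zeta)=\sum_{\vec m\in \mc(L,n)}\binom{n}{\vec m}\det\big(\langle e_{i_k},K_{\zeta,t}^{(m_j)}e_{i_j}\rangle\big)_{j,k=1}^L,
\]
where the multinomial $\binom{n}{\vec m}$ counts the number of orderings in which $m_j$ derivatives are assigned to the $j$-th column. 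This is precisely the integrand on the right-hand side of \eqref{e.interch1}.

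The remaining task is to interchange the derivative with the sum over $i_1<\cdots<i_L$. I would do this by showing that $\zeta\mapsto K_{\zeta,t}^{\wedge L}$ is itself $n$-times differentiable in trace norm on $\wedge^L\ell^2(\mathbb{N}')$, with derivatives given by the corresponding symmetric multilinear expressions in the operators $K_{\zeta,t}^{(m_1)},\ldots,K_{\zeta,t}^{(m_L)}$. The key observation is that the map $K\mapsto K^{\wedge L}$ is the diagonal restriction of the symmetric $L$-linear map $(A_1,\ldots,A_L)\mapsto \frac{1}{L!}\sum_{\sigma\in S_L}A_{\sigma(1)}\wedge\cdots\wedge A_{\sigma(L)}$, which is continuous in each argument in trace norm with bound $\prod_j\|A_j\|_1$. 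Combined with the trace-norm differentiability of $\zeta\mapsto K_{\zeta,t}^{(m-1)}$ established in Lemma \ref{l:deriK}, the ordinary Leibniz rule yields the trace-norm differentiability of the wedge. Because the trace is a continuous linear functional on the trace class, it commutes with differentiation, and expansion in the basis $\{e_{i_1}\wedge\cdots\wedge e_{i_L}:i_1<\cdots<i_L\}$ produces \eqref{e.interch1}.

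The main obstacle is the trace-norm bound $\|A_1\wedge\cdots\wedge A_L\|_1\le\prod_j\|A_j\|_1$ for the multilinear wedge, which simultaneously guarantees absolute convergence of the double sum in \eqref{e.interch1} and legitimizes the derivative-sum interchange via dominated convergence. The cleanest derivation factors the wedge through the tensor product using the antisymmetrization projection $P$ on $(\ell^2(\mathbb{N}'))^{\otimes L}$: the identity $\|A_1\otimes\cdots\otimes A_L\|_1=\prod_j\|A_j\|_1$ on trace classes together with the contractivity of $P$ produces the required estimate, completing the argument.
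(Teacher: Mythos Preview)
Your approach is correct and takes a somewhat different route from the paper. Both arguments begin from the same expansion of $\tr(K_{\zeta,t}^{\wedge L})$ as a sum of $L\times L$ determinants and both compute the termwise $n$-th derivative by the multilinear Leibniz rule, landing on the same expression involving $K_{\zeta,t}^{(m_j)}$. The difference is in how the interchange of $\frac{\diff^n}{\diff\zeta^n}$ with the infinite sum over $i_1<\cdots<i_L$ is justified. The paper works termwise: it invokes an interchange criterion (\cite[Proposition~4.2]{dt21}) that requires absolute and locally uniform convergence of the differentiated series, and verifies this via the determinant bound of \cite[Lemma~4.3]{dt21} together with the explicit trace bounds of Lemma~\ref{l:trbd} on $[0,\agamma]$. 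You instead argue at the operator level: the map $(A_1,\dots,A_L)\mapsto A_1\wedge\cdots\wedge A_L$ is bounded multilinear in trace norm (via $\|A_1\otimes\cdots\otimes A_L\|_1=\prod_j\|A_j\|_1$ and contractivity of the antisymmetrizer), so trace-norm differentiability of $\zeta\mapsto K_{\zeta,t}$ from Lemma~\ref{l:deriK} propagates to trace-norm differentiability of $\zeta\mapsto K_{\zeta,t}^{\wedge L}$, and the continuous linear functional $\tr$ commutes with this derivative. Your route is more abstract and avoids citing the external estimates from \cite{dt21}; the paper's route is more hands-on and produces, as a byproduct, the explicit bound $\sum_{i_1<\cdots<i_L}|\det(\cdot)|\le L!\prod_i|\tr(K_{\zeta,t}^{(m_i)})|$, which is reused in the next lemma. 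Note that the two are not far apart quantitatively: since each $(-1)^{m+1}K_{\zeta,t}^{(m)}$ is positive, your trace-norm bounds $\|K_{\zeta,t}^{(m)}\|_1$ coincide with the paper's $|\tr(K_{\zeta,t}^{(m)})|$.
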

\begin{proof} Let us first note that
\begin{equation}
\label{e.intr01}
    \begin{aligned}
    \tr(K_{\zeta,t}^{\wedge L}) & = \sum_{i_1<\cdots<i_L}\langle e_{i_1} \wedge \cdots \wedge e_{i_L}, K_{\zeta,t} e_{i_1} \wedge \cdots \wedge K_{\zeta,t}e_{i_L}\rangle \\ & = \sum_{i_1<\cdots<i_L}\det\big(\langle e_{i_k},K_{\zeta,t}e_{i_j}\rangle \big)_{j,k=1}^L. 
\end{aligned}
\end{equation}

We wish to take the derivative of the terms inside the above sum. By the product rule for derivatives, we have
\begin{align*}
    \frac{\diff^n}{\diff\zeta^n} \det\big(\langle e_{i_k},K_{\zeta,t}e_{i_\ell}\rangle \big)_{k,\ell=1}^L = \sum_{\vec{m}\in \mc(L,n)} \binom{n}{\vec{m}} \det\bigg(\frac{d^{m_j}}{d\zeta^{m_j}}\langle e_{i_k}, K_{\zeta,t}e_{i_j}\rangle\bigg)_{j,k=1}^{L}.
\end{align*}
Thanks to \Cref{l:deriK}, we may pass the derivative on r.h.s.~of the above equation inside to get 
\begin{align}\label{e.intr02}
    \frac{\diff^n}{\diff\zeta^n} \det\big(\langle e_{i_k},K_{\zeta,t}e_{i_\ell}\rangle \big)_{k,\ell=1}^L = \sum_{\vec{m}\in \mc(L,n)} \binom{n}{\vec{m}} \det\big(\langle e_{i_k}, K_{\zeta,t}^{(m_j)}e_{i_j}\rangle\big)_{j,k=1}^{L}.
\end{align} 
Given the identities in \eqref{e.intr01} and \eqref{e.intr02}, \eqref{e.interch1} follows by taking derivative on both sides of \eqref{e.intr01} and justifying the interchange of derivatives and the infinite sum $\sum_{i_1<\cdots<i_L}$. To establish the interchange, employing \cite[Proposition 4.2]{dt21}, it suffices to show that
\begin{align*}
   \sum_{i_1<\cdots<i_L} \sum_{\vec{m}\in \mc(L,n)} \binom{n}{\vec{m}} \det\big(\langle e_{i_k}, K_{\zeta,t}^{(m_j)}e_{i_j}\rangle\big)_{j,k=1}^{L}
\end{align*}
converges absolutely and uniformly for $\zeta\in [0,\agamma]$. To this end, we note that $K_{\zeta,t}^{(m_j)}$s are self-adjoint Hilbert-Schimdt operators on $\ell^2(\mathbb{N}')$. Thus, appealing to \cite[Lemma 4.3]{dt21} we have
\begin{align*}
    \sum_{i_1<\cdots<i_L} \sum_{\vec{m}\in \mc(L,n)} \binom{n}{\vec{m}} \left|\det\big(\langle e_{i_k}, K_{\zeta,t}^{(m_j)}e_{i_j}\rangle\big)_{j,k=1}^{L}\right| \le L!\sum_{\vec{m}\in \mc(L,n)} \binom{n}{\vec{m}} \prod_{i=1}^L \left|\tr( K_{\zeta,t}^{(m_i)})\right|.
\end{align*}
Note that The bounds from \Cref{l:trbd} ensure that the right-hand side of the above equation converges uniformly for $\zeta\in [0,\agamma]$. This completes the proof.
\end{proof}

The following lemma provides an upper bound for the derivatives of $\tr(K_{\zeta,t}^{\wedge L})$.

\begin{lemma}\label{l:hotrace} For $L\ge 2$, we have
  \begin{align*}
      \left|\frac{\diff^n}{\diff\zeta^n}\tr(K_{\zeta,t}^{\wedge L}) \right| \le \sum_{\vec{m}\in \mc(L,n)} \binom{n}{\vec{m}} \frac{(\#\mathrm{supp}(\vec{m}))!}{(L-\#\mathrm{supp}(\vec{m}))!}\prod_{i=1}^L |\tr(K_{\zeta,t}^{(m_i)})|
  \end{align*}  
  where $\#\mathrm{supp}(\vec{m})=\#\{i\mid m_i>0\}$. Furthermore, we have
  \begin{align}\label{cd:int2}
    \frac{\diff^n}{\diff \zeta^n}\left[\sum_{L=2}^{\infty} (-1)^{L}\tr(K_{\zeta,t}^{\wedge L})\right]=\sum_{L=2}^{\infty} (-1)^{L}\frac{\diff^n}{\diff \zeta^n}\tr(K_{\zeta,t}^{\wedge L}).
\end{align}
\end{lemma}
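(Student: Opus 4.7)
The plan is to prove the pointwise derivative bound via Leibniz expansion combined with Cauchy--Schwarz for positive operators, and to deduce the sum--derivative interchange \eqref{cd:int2} by uniform convergence on $[0,\tau]$. Starting from the formula in \cref{l:interch1}, the triangle inequality gives
\begin{equation*}
\left|\tfrac{d^n}{d\zeta^n}\tr(K_{\zeta,t}^{\wedge L})\right| \le \sum_{\vec m \in \mc(L,n)} \binom{n}{\vec m}\, T_{\vec m}, \quad T_{\vec m} := \sum_{i_1<\ldots<i_L}\left|\det\left(\langle e_{i_k},K_{\zeta,t}^{(m_j)}e_{i_j}\rangle\right)_{j,k=1}^L\right|,
\end{equation*}
and by \cref{l:deriK} each $(-1)^{m+1}K_{\zeta,t}^{(m)}$ is a positive trace-class operator, so the Cauchy--Schwarz inequality for positive operators yields $|\langle e_a, K_{\zeta,t}^{(m)} e_b\rangle|^2 \le |K_{\zeta,t}^{(m)}(a,a)|\,|K_{\zeta,t}^{(m)}(b,b)|$. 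The core task is then to establish $T_{\vec m} \le \frac{s!}{(L-s)!}\prod_{j=1}^L|\tr K_{\zeta,t}^{(m_j)}|$ where $s := \#\mathrm{supp}(\vec m)$. I would expand $\det M$ via Leibniz, $\det M = \sum_{\sigma\in S_L}\mathrm{sgn}(\sigma)\prod_j M_{j,\sigma(j)}$, and for each $\sigma$ sum absolute values along its cycles: iterated Cauchy--Schwarz cycle-by-cycle (as in \cite[Lemma~4.3]{dt21}) bounds the sum of $\prod_{j\in c}|M_{j,\sigma(j)}|$ over unrestricted indices in a cycle $c$ by $\prod_{j\in c}|\tr K_{\zeta,t}^{(m_j)}|$, giving $\prod_j|\tr K_{\zeta,t}^{(m_j)}|$ per permutation.

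The main obstacle is refining the crude factor $L!$ (from summing over all $\sigma\in S_L$) to the sharp $\tfrac{s!}{(L-s)!}$ claimed in the statement. The key observation is that $|\det M|$ is invariant under simultaneous permutation of the $L-s$ rows and columns indexed by $S^c := [L]\setminus\mathrm{supp}(\vec m)$, since those rows all carry the same operator $K_{\zeta,t}$. This symmetry, together with the ordering restriction $i_1<\ldots<i_L$, allows pruning the Leibniz sum to the $s!(L-s)!$ permutations preserving the partition $S\sqcup S^c$; careful bookkeeping of the group action of $S_{S^c}$ on the bijections $[L]\to I$ yields the factor $\tfrac{s!}{(L-s)!}$. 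This factorial decay in $L$ is essential, not cosmetic, for the convergence of the series appearing in \eqref{cd:int2}, and getting the constants exactly right (rather than producing a loose multiplicative constant) is the delicate combinatorial step.

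For the interchange \eqref{cd:int2}, I would establish uniform convergence on $[0,\tau]$ of $\sum_{L\ge 2}(-1)^L\tfrac{d^n}{d\zeta^n}\tr(K_{\zeta,t}^{\wedge L})$. Combining the derivative bound just proven with \cref{l:trbd} (which controls $|\tr K_{\zeta,t}^{(m)}|$ uniformly for $\zeta\in[0,\tau]$ thanks to the choice \eqref{e.qgamma}) yields a $\zeta$-independent upper bound $C_L$ for each summand; the factor $\tfrac{s!}{(L-s)!}$ then ensures $\sum_L C_L<\infty$, and the Weierstrass M-test licenses termwise differentiation, completing the proof of \eqref{cd:int2}.
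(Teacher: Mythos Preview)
Your proposal is correct and follows essentially the same route as the paper: both invoke \cref{l:interch1} for the derivative formula, reduce the pointwise bound to the combinatorial estimate proved in \cite[Proposition~4.5]{dt21} via Cauchy--Schwarz on the positive operators $(-1)^{m+1}K_{\zeta,t}^{(m)}$, and deduce the interchange \eqref{cd:int2} from the uniform trace bounds of \cref{l:trbd} on $[0,\tau]$. One minor caveat: the factor $\tfrac{s!}{(L-s)!}$ does not come from ``pruning the Leibniz sum to the $s!(L-s)!$ block-preserving permutations''; rather, the $\tfrac{1}{(L-s)!}$ arises because the $(L-s)$ rows carrying the \emph{same} positive operator $K_{\zeta,t}^{(0)}$ allow the Hadamard-on-positive-minors bound without factorial loss (equivalently, $\tr(K^{\wedge(L-s)})\le (\tr K)^{L-s}/(L-s)!$), while the $s!$ comes from the crude Leibniz bound on the remaining block with distinct operators---but you correctly flag this as the delicate step and the ingredients you list are the right ones.
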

\begin{proof} The proof of the above lemma can be completed employing  \Cref{l:interch1} and using the argument presented in the proof of \cite[Proposition 4.5]{dt21}. The only thing that we need to check is that there exists $\Con=\Con(n,t)>0$ such that for all $\vec{m}\in \mc({L,n})$ and for all $\zeta\in [0,\agamma]$ we have 
\begin{align}\label{e:trbd3}
    \left|\tr( K_{\zeta,t}^{(m_i)})\right| \le \Con. 
\end{align}
This last inequality is immediate from the trace bounds in \Cref{l:trbd}, completing the proof.
\end{proof}

Given the above lemma, in view of the estimate in \eqref{e.interchange}, we now need to estimate integrals of $\zeta^{-\alpha}$ times products of traces of different derivative kernels. This is achieved in the following lemma.

\begin{lemma}\label{l.keytech} Fix $t>1$ and $L\ge 2$. There exists a constant $\Con=\Con(p,q)>0$, such that for all $\vec{m}\in \mc(L,n)$ we have
    \begin{align*}
        \int_0^{\agamma} \zeta^{-\alpha}\prod_{i=1}^L |\tr(K_{\zeta,t}^{(m_i)})| \diff \zeta \le \Con ^{L} \cdot t \cdot e^{\Upsilon(p)t-\tfrac1{\Con}t}.
    \end{align*}
\end{lemma}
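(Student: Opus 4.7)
The plan is to reduce the integral, through the change of variables $\zeta = q^{\sigma t}$, to a Laplace-type exponential integral and then show that its maximum exponent is strictly below $\Upsilon(p)$. Under this substitution $\sigma$ ranges over $[\sigma_\agamma,\infty)$ with $\sigma_\agamma := (\Upsilon(p)-\delta)/p > 2+4r$ by \eqref{e.qgamma}, and $\zeta^{-\alpha}\diff\zeta = -t\logq\,e^{-(1-\alpha)\logq\sigma t}\diff\sigma$. Applying \cref{l:trbd} to each factor yields
\[
\prod_{i=1}^L \big|\tr(K_{q^{\sigma t},t}^{(m_i)})\big| \le \Con^L\cdot e^{tF(\sigma)},
\]
with $F(\sigma) = k[U_{\logq}(\min\{\sigma,x_{\logq}^*\}) - \logq\sigma] + \sum_{i:m_i\ge 1}U_{m_i\logq}(\min\{\sigma,x_{m_i\logq}^*\})$ and $k = \#\{i:m_i=0\}$. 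Setting $G(\sigma):=F(\sigma)-(1-\alpha)\logq\sigma$ and using $n-1+\alpha = s = p/\logq$, the integral reduces to $t\logq\,\Con^L\int_{\sigma_\agamma}^\infty e^{tG(\sigma)}\diff\sigma$, so it suffices to establish $\sup_\sigma G(\sigma) \le \Upsilon(p)-c$ for some $c=c(p,q)>0$ independent of $L\ge 2$ and $\vec m$.

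The function $G$ is concave, since each term $\sigma\mapsto U_v(\min\{\sigma,x_v^*\})$ is the minimum of two concave pieces, and $G$ decays linearly at rate $-(k+1-\alpha)\logq<0$ as $\sigma\to\infty$; thus its supremum is attained at a unique interior critical point (or at $\sigma_\agamma$). To bound this supremum uniformly in $\vec m$ I would rely on two ingredients. The first is a pointwise merging inequality: for any nonzero $v_1,v_2\in\mathbb{N}$ and $\sigma \ge 2$,
\[
U_{(v_1+v_2)\logq}\!\left(\min\{\sigma,x_{(v_1+v_2)\logq}^*\}\right) \ge U_{v_1\logq}\!\left(\min\{\sigma,x_{v_1\logq}^*\}\right) + U_{v_2\logq}\!\left(\min\{\sigma,x_{v_2\logq}^*\}\right),
\]
obtained by checking the three regimes (both active, one plateaued, both plateaued) separately and invoking $\sinh(a+b)\ge\sinh(a)+\sinh(b)$ for $a,b\ge 0$. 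The second is the strict gap
\[
\Upsilon(p)-k\Upsilon(\logq)-\Upsilon(p-k\logq) \ge c_1(p,q) > 0 \qquad \text{for } k\ge 1, \ k\logq < p,
\]
which follows from $\sinh(p/2) = \sinh((p-k\logq)/2)\cosh(k\logq/2) + \cosh((p-k\logq)/2)\sinh(k\logq/2)$ combined with $\sinh(k\logq/2) \ge k\sinh(\logq/2)$.

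When $k\ge 1$, iterating the merging inequality reduces $\vec m$ to a composition with $k$ zeros and a single nonzero entry $n$, and explicit optimization of $G$ in its two relevant regions gives $G_{\max} = k\Upsilon(\logq) + \Upsilon(p-k\logq)$ (when $(k+1)\logq\le p$) or $G_{\max} = (k+1)\Upsilon(p/(k+1))$ (when $(k+1)\logq\ge p$); in both cases $G_{\max} < \Upsilon(p)$ uniformly, using respectively the strict-gap ingredient above and the fact that $L\mapsto L\Upsilon(p/L) = 4L\sinh(p/(2L))$ is strictly decreasing on $L\ge 1$ (a consequence of $\sinh(y)-y\cosh(y)\le 0$). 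The delicate case is $k=0$ (all $m_i\ge 1$), where merging alone cannot lower $L$ below $1$. Here, in the all-active region $[\sigma_\agamma,x_{m_{\min}\logq}^*]$ one has $G(\sigma) = p\sigma - L\Phi_+(\sigma)$ with interior maximum $L\Upsilon(p/L)\le 2\Upsilon(p/2) = \Upsilon(p)/\cosh(p/4)$; if the critical point $2\cosh(p/(2L))$ lies in this region we are done. Otherwise I would iterate across the plateau regions: after the $n_1$ smallest-mass traces plateau at $x_{m_{\min}\logq}^*$, the next region gives $G_{\max} = n_1\Upsilon(m_{\min}\logq) + (L-n_1)\Upsilon((p-n_1 m_{\min}\logq)/(L-n_1))$, which by $B\Upsilon(A/B)\le\Upsilon(A)$ is at most $n_1\Upsilon(m_{\min}\logq) + \Upsilon(p-n_1 m_{\min}\logq)$, and the strict-gap ingredient (now with $n_1 m_{\min}\logq$ in place of $k\logq$) yields the required uniform bound.

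Once $\sup_\sigma G(\sigma)\le\Upsilon(p)-c$ is established, the concavity of $G$ and its linear decay at infinity provide the Laplace-type estimate $\int_{\sigma_\agamma}^\infty e^{tG(\sigma)}\diff\sigma \le \Con\cdot t\cdot e^{tG_{\max}}$ for $t$ large, and combining with the prefactor $t\logq\,\Con^L$ produces the claimed bound $\Con^L\cdot t\cdot e^{\Upsilon(p)t-t/\Con}$ after absorbing constants. The main obstacle I anticipate is the iterated plateau analysis of the $k=0$ sub-case, where the critical point may lie several regions past $x_{\logq}^*$; maintaining a uniform positive gap $c(p,q)$ across all such regions and all compositions $\vec m$ requires careful joint use of $B\Upsilon(A/B)\le\Upsilon(A)$ and the strict super-additivity of $\Upsilon$ on $\mathbb{R}_+$.
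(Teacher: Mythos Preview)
Your setup is identical to the paper's: the substitution $\zeta=q^{\sigma t}$, the application of \cref{l:trbd}, and your exponent $G(\sigma)$ is exactly the paper's $M(\sigma)$ (with $k=L-w$). The difference is organizational. The paper does not use your merging inequality; instead it splits directly into five cases according to the range of $\sigma$ and whether $w=1$ or $w\ge 2$. Your merging trick for $k\ge 1$ is a clean alternative that reduces everything to the paper's $w=1$ analysis (cases (c)--(e) there), and your two claimed maxima $(k+1)\Upsilon(p/(k+1))$ and $k\Upsilon(\logq)+\Upsilon(p-k\logq)$ are exactly the bounds that emerge from those cases.

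The gap in your argument is the $k=0$ case, which you acknowledge. This is precisely the paper's case (b) ($w\ge 2$, $\sigma\ge x_{\logq}^*$), and the paper avoids your iterated plateau analysis entirely with a one-line trick: for $\sigma\ge x_{\logq}^*$ simply bound each active term by its plateau value, $U_{m_j\logq}(\min\{\sigma,x_{m_j\logq}^*\})\le \Upsilon(m_j\logq)$, and drop the nonpositive zero-entry contributions. This yields $M(\sigma)\le (\alpha-1)\logq\,\sigma+\sum_{j}\Upsilon(m_j\logq)$, and then $\sum_j\Upsilon(m_j\logq)\le \Upsilon((n-1)\logq)+\Upsilon(\logq)$ by pairing off one entry and using superadditivity on the rest. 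The linear term $(\alpha-1)\logq\,\sigma$ is integrable over $[x_{\logq}^*,\infty)$, and the resulting exponent $\Upsilon(\logq)+\Upsilon((n-1)\logq)+(\alpha-1)\logq\, x_{\logq}^*-\Upsilon(p)$ is shown to be at most $\Upsilon(\logq)-\logq\, x_{\logq}^*<0$ by a short monotonicity argument in $\alpha$. No region-by-region iteration is needed.

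Your iteration \emph{can} be made to work, but it requires an observation you did not make: after $j\ge 1$ plateaus the remaining active entries each have value at least $v_{j+1}\ge 2$, which forces $S_j\le n-2$ and hence $p-S_j\logq\ge (1+\alpha)\logq>0$; without this, the bound $\Upsilon(S_j\logq)+\Upsilon(p-S_j\logq)$ degenerates to $\Upsilon(p)$ at $S_j=n-1$, $\alpha=0$. Once noted, your strict-gap argument goes through, but the paper's route is shorter. (A minor point: your Laplace estimate needs no factor of $t$ --- the maximizer lies in $[\sigma_\agamma,x_{n\logq}^*]$, a bounded interval, so $\int e^{tG}\,\diff\sigma\le \Con\, e^{tG_{\max}}$; the single factor of $t$ in the lemma comes only from the Jacobian.)
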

\begin{proof}
    Since $q^{(2+4r)t} \ge \agamma$ from \eqref{e.qgamma}, we shall instead provide a bound for
\begin{align*}
       A & := \int_0^{q^{(2+4r)t}} \zeta^{-\alpha}\prod_{i=1}^L |\tr(K_{\zeta,t}^{(m_i)})| \diff \zeta.
    \end{align*} 
Let us set $w:=\#\mathrm{supp}(\vec{m})$ and assume $m_1,m_2,\ldots,m_w>0$. We use the substitution $\zeta =q^{\sigma t}$ so that $\diff\zeta=tq^{ \sigma t}\log q\,\diff\sigma$. Using the bounds from \Cref{l:trbd} we have 
   \begin{align}\label{e.tech00}
       A & =\logq \int_{2+4r}^{\infty} q^{-\sigma\alpha t+\sigma t}\prod_{i=1}^L |\tr(K_{q^{\sigma t},t}^{(m_i)})| \diff\sigma \le \Con^L \cdot t\int_{2+4r}^{\infty} e^{M(\sigma) t} \diff \sigma
    \end{align} 
   where 
   \begin{align*}
       M(\sigma):=(\alpha-L+w-1)\sigma {\logq} +(L-w)U_{\logq}\big(\min\{\sigma,x_{\logq}^*\}\big)+\sum_{j=1}^w U_{m_j{\logq}}\big(\min\{\sigma,x_{m_j{\logq}}^*\}\big),
   \end{align*}
   and $\logq$ is defined in \eqref{eq:logq}.    We shall now estimate the integral by considering several cases.
\begin{enumerate}[leftmargin=10pt]
\setlength\itemsep{0.8em}
    \item \textbf{When $\sigma\in (2+4r,x_{{\logq}}^*)$}, we have
    \begin{align*}
       M(\sigma) & =(\alpha-L+w-1)\sigma {\logq} +(L-w)U_{\logq}\big(\sigma)+\sum_{j=1}^w U_{m_j{\logq}}\big(\sigma)\\ & =(\alpha-L+w-1)\sigma {\logq} +(L-w)[\logq \sigma-\Phi_+(\sigma)]+\sum_{j=1}^w [m_j\logq \sigma-\Phi_+(\sigma)]  \\ & =p\sigma  -L\Phi_+\big(\sigma) = L \cdot U_{p/L}(\sigma), 
   \end{align*}
   where we used the fact $m_1+\cdots+m_w=n=s-\alpha+1$ and $p=s\logq$.
   By properties of the $U_v$ function (see \Cref{l:propphi}) we have $U_{p/L}(\sigma)  \le 4\sinh(p/2L)$.   Thus we have
   \begin{align*}
       \int_{2+4r}^{x_{\logq}^*} e^{M(\sigma)t}\diff\sigma \le x_{\logq}^* \cdot e^{4L\sinh(p/2L)t}.
   \end{align*}
   Set $g_1(L):=4L\sinh(p/2L)$. Observe that $g_1'(L)=4\cosh(p/L)(\tanh(p/L)-p/L)<0$. This implies $g_1(L)$ is strictly decreasing. As $L\ge 2$, we have $g_1(L)<g_1(1)=4\sinh(p/2)=\Upsilon(p)$. Thus one can find a constant $\Con>0$ free of $L$, such that $x_{\logq}^* \cdot e^{4L\sinh(p/2L)t} \le \Con e^{\Upsilon(p)t-\frac1\Con t}$, which is precisely the bound we are looking for. This completes our work for this range of $\sigma$.
   
   \item  \textbf{When $w\ge 2$, $\sigma \ge x_{\logq}^*$}, we have $n\ge 2$ and
   \begin{equation}
       \label{e.tech01}
        \begin{aligned}
       M(\sigma) & \le (\alpha-1)\sigma {\logq} -(L-w)\Phi_+(x_{\logq}^*)+\sum_{j=1}^w U_{m_j{\logq}}(x_{m_j{\logq}}^*) \\ & \le  (\alpha-1)\sigma {\logq} +\sum_{j=1}^w 4\sinh(m_j{\logq}/2).  
   \end{aligned}
   \end{equation}
{We claim that for $a_1,\ldots,a_k>0$ we have
\begin{align}\label{e.clsinh}
    \sum_{i=1}^k\sinh(a_i) \le \sinh\bigg(\sum_{i=1}^k a_i\bigg).
\end{align}
Let us quickly explain why \eqref{e.clsinh} is true. Suppose $a\ge 0$. Set $P(x):=\sinh(x+a)-\sinh(x)-\sinh(a)$. Note that $P'(x)=\cosh(x+a)-\cosh(x) \ge 0$ for all $x \ge 0$. Thus $P(\cdot)$ is increasing on $[0,\infty)$ and hence $P(x)\ge P(0)=0$ for all $x\ge 0$. Hence for $x,a\ge 0$, we have $\sinh(a)+\sinh(x)\le \sinh(x+a)$. Iterating this inequality $k$ times we get \eqref{e.clsinh}.} Now using \eqref{e.clsinh}  we get that
   \begin{align*}
       \mbox{r.h.s.~of \eqref{e.tech01}} & \le (\alpha-1)\sigma {\logq} +4\sinh((n-m_1){\logq}/2)+4\sinh(m_1{\logq}/2) \\ & \le  (\alpha-1)\sigma {\logq}+4\sinh((n-1){\logq}/2)+4\sinh({\logq}/2) \\ & = (\alpha-1)\sigma {\logq}+\Upsilon((n-1){\logq})+\Upsilon({\logq}),
   \end{align*}
   where the last inequality follows from the fact that for $m_1\in [1,n-1]$, $\sinh((n-m_1){\logq}/2)+\sinh(m_1{\logq}/2)$ is maximized at $m_1=1$ or $n-1$.  Using the above bound,  we get
   \begin{align}
       \nonumber \int_{x_{\logq}^*}^{\infty} e^{M(\sigma)t} \diff\sigma & \le \Con \cdot (1-\alpha)^{-1} \cdot e^{(\alpha-1)x_{\logq}^*t{\logq}} \cdot e^{\Upsilon((n-1){\logq})t+\Upsilon({\logq})t}  \\ & \le \nonumber \Con \cdot (1-\alpha)^{-1} \cdot e^{[\Upsilon((n-1){\logq})+\Upsilon({\logq})+x_{\logq}^*(\alpha-1){\logq}]t} \\ & \le \Con \cdot (1-\alpha)^{-1} \cdot e^{\Upsilon(p)t} \cdot e^{[\Upsilon({\logq})+\Upsilon((n-1){\logq})+x_{\logq}^*(\alpha-1){\logq}-\Upsilon(p)]t}. \label{e.tech02}
   \end{align}
   Recall $p=(n+\alpha-1){\logq}$. Set $g_2(\alpha):=\Upsilon({\logq})+\Upsilon((n-1){\logq})+x_{\logq}^*(\alpha-1){\logq}-\Upsilon((n+\alpha-1){\logq}).$ As $n\ge 2$, we have
   \begin{align*}
       g_2'(\alpha)=x_{\logq}^*{\logq}-\Upsilon'((n+\alpha-1){\logq}){\logq} =2{\logq}[\cosh({\logq}/2)-\cosh((n+\alpha-1){\logq}/2)] \le 0.
   \end{align*}
   This forces $g_2(\alpha)\le g_2(0) =\Upsilon({\logq})-x_{\logq}^*{\logq}<0$ by property (d) in \Cref{l:propphi}. Thus in summary there exists a constant $\Con>0$ such that $e^{g_2(\alpha)t} \le e^{-\frac1\Con t}$. Plugging this back in \eqref{e.tech02} leads to the desired estimate. This completes our work in this step. 
   
   \item \textbf{When $w=1$, $\sigma \ge x_{n{\logq}}^*$,} we have
   \begin{align*}
       M(\sigma)=(\alpha-L)\sigma {\logq} +(L-1)U_{\logq}(x_{\logq}^*)+U_{n{\logq}}(x_{n{\logq}}^*).
   \end{align*}
   We thus have
   \begin{align}\label{e.tech03}
       \int_{x_{n{\logq}}^*}^{\infty} e^{M(\sigma)t} \diff\sigma & \le \Con \cdot (L-\alpha)^{-1} \cdot e^{(\alpha-L)x_{n{\logq}}^*t{\logq}} \cdot e^{((L-1)U_{\logq}(x_{\logq}^*)+U_{n{\logq}}(x_{n{\logq}}^*))t}.
   \end{align}
   Note that the exponent above can be bounded from above as
\begin{align*}
    & \alpha x_{n{\logq}}^*{\logq}+L(x_{\logq}^*-x_{n{\logq}}^*){\logq}-(L-1)\Phi_+(x_{\logq}^*)-{\logq}x_{\logq}^*+U_{n{\logq}}(x_{n{\logq}}^*) \\ & \le \alpha x_{n{\logq}}^*{\logq}+2(x_{\logq}^*-x_{n{\logq}}^*){\logq}-\Phi_+(x_{\logq}^*)-{\logq}x_{\logq}^*+U_{n{\logq}}(x_{n{\logq}}^*) \\ & = \Upsilon(p)+g_3(\alpha),
\end{align*}
  where $g_3(\alpha)=(\alpha -2)x_{n{\logq}}^*{\logq}+\Upsilon({\logq})+\Upsilon(n{\logq})-\Upsilon((n+\alpha-1){\logq})$. We have
   $g_3'(\alpha)>0$. So, $g_3(\alpha) \le g_3(1)=-x_{n{\logq}}^*{\logq}+\Upsilon({\logq})\le \Upsilon({\logq})-x_{\logq}^*{\logq}<0.$ Hence, just as in part (b), we have shown that there exists a constant $\Con>0$ such that $e^{g_3(\alpha)t} \le e^{-\frac1\Con t}$. Plugging this back in \eqref{e.tech03} leads to the desired estimate. This completes our work in this part.

\item \textbf{When $w=1$, $\sigma \in (x_{\logq}^*,x_{n{\logq}}^*)$, and $n+\alpha-1 \le L$,} we have
   \begin{align*}
       M(\sigma)=(\alpha-L)\sigma {\logq} +(L-1)U_{\logq}(x_{\logq}^*)+U_{n{\logq}}(\sigma).
   \end{align*}
   In this range, we have $M'(\sigma)=(n+\alpha -L){\logq}-\Phi_+'(\sigma) \le -\Phi_+'(x_{\logq}^*)+{\logq}=0$, which forces $M$ to be decreasing. Thus, 
   \begin{align*}
       M(\sigma) & \le M(x_{\logq}^*)=(\alpha-L)x_{\logq}^*{\logq} +(L-1)U_{\logq}(x_{\logq}^*)+U_{n{\logq}}(x_{\logq}^*) \\ & = (n+\alpha -1)x_{\logq}^*{\logq} -L\Phi_+(x_{\logq}^*) \le sx_{\logq}^*{\logq} -2\Phi_+(x_{\logq}^*).
   \end{align*}
   Let us set $g_4(s):=\Upsilon(s{\logq})-sx_{\logq}^*{\logq}+2\Phi_+(x_{\logq}^*)$. Note that $g_4'(1)=0$ and $g_4''(s) \ge 0$. Thus, $g_4(s) \ge g_4(1)=\Upsilon({\logq})-x_{\logq}^*{\logq}+2\Phi_+(x_{\logq}^*)=\Phi_+(x_{\logq}^*)$. So, $sx_{\logq}^*{\logq} -2\Phi_+(x_{\logq}^*) \le \Upsilon(p)-\Phi(x_{\logq}^*)$. As $\Phi_+(y)>0$ for all $y>0$, this forces
   \begin{align*}
       \int_{x_{\logq}^*}^{x_{n{\logq}}^*} e^{M(\sigma)t} \diff\sigma \le x_{n{\logq}}^* e^{\Upsilon(p)t}\cdot e^{-\Phi_+(x_{\logq}^*)t} \le \Con e^{\Upsilon(p)t-\frac1\Con t},
   \end{align*}
   for some $\Con>0$ depending only on $q$ (as $\logq=\log q^{-1})$. This completes our work for this part.
   
   \item \textbf{When $w=1$, $\sigma \in (x_{\logq}^*,x_{n{\logq}}^*)$, and $n+\alpha-1 > L$,} we have
   \begin{align*}
       M(\sigma) & =(\alpha-L)\sigma {\logq} +(L-1)U_{\logq}(x_{\logq}^*)+U_{n{\logq}}(\sigma) = U_{(n+\alpha-L){\logq}}(\sigma)+(L-1)U_{\logq}(x_{\logq}^*).
   \end{align*}
   In the above range, $M(\sigma)$ attains a maximium at $x_{(n+\alpha-L){\logq}}^*$. Thus, by setting $x=L-1$, we get
   \begin{align*}
       M(\sigma) & \le M(x_{(n+\alpha-L){\logq}}^*)=\Upsilon((s-x){\logq})+x\Upsilon({\logq}):=g_5(x).
   \end{align*}
   Note that $g_5'(x)=-\Upsilon'((s-x){\logq})+\Upsilon({\logq}) \le -{\logq}\Upsilon'({\logq})+\Upsilon({\logq})<0$ as $\Upsilon'(p)$ is increasing and $s-x=n+\alpha-L\ge 1$. So, $g_5(x) \le g_5(1)=\Upsilon((s-1){\logq})+\Upsilon({\logq})$. Now $\Upsilon((s-1){\logq})-\Upsilon(s{\logq})$ is decreasing in $s$, hence $\Upsilon((s-1){\logq})-\Upsilon(s{\logq}) \le \Upsilon({\logq})-\Upsilon(2{\logq})$. Hence $$g_5(x) \le 2\Upsilon({\logq})-\Upsilon(2{\logq})+\Upsilon(s{\logq})=2\Upsilon({\logq})-\Upsilon(2{\logq})+\Upsilon(p).$$
   Since $2\Upsilon({\logq})-\Upsilon(2{\logq})<0$, we have that
   \begin{align*}
       \int_{x_{\logq}^*}^{x_{n{\logq}}^*} e^{M(\sigma)t} \diff\sigma \le x_{n{\logq}}^* e^{\Upsilon(p)t}\cdot e^{(2\Upsilon({\logq})-\Upsilon(2{\logq}))t} \le \Con e^{\Upsilon(p)t-\frac1\Con t},
   \end{align*}
completing our work for this part.
\end{enumerate}

Combining all the above parts, we have thus shown that
\begin{align*}
    \int_{2+4r}^{\infty} e^{M(\sigma)t} \diff\sigma \le  \Con e^{\Upsilon(p)t-\frac1\Con t}
\end{align*}
for some $\Con>0$ depending on $p$ and $q$. Inserting this bound back in \eqref{e.tech00} and adjusting the constant $\Con$, we get that $A \le \Con^{L}\cdot t\cdot e^{\Upsilon(p)t-\frac1\Con t}$. Again, as $q^{(2+4r)t}\ge \agamma$, this establishes \Cref{l.keytech}. 
\end{proof}

Combining preliminary results enumerated in the lemmas above, we are now ready to prove \Cref{p:ho}.

\begin{proof}[Proof of \Cref{p:ho}]  
In view of the estimates from \Cref{l:hotrace} and \Cref{l.keytech}, we have
\begin{align*}
    \mbox{r.h.s.~of \eqref{e.interchange}} \le \Con \cdot t\cdot e^{\Upsilon(p)t-\frac1\Con t} \sum_{L=2}^{\infty} \sum_{\vec{m}\in \mc(L,n)} \binom{n}{\vec{m}} \frac{(\#\mathrm{supp}(\vec{m}))!\Con^{L}}{(L-\#\mathrm{supp}(\vec{m}))!}, 
\end{align*}
where $C>0$ depends only on $p$. The double sum is computed in proof of Proposition 4.7 in \cite{dt21}. In particular, the double sum is finite and its value depends only on $\Con$ and $n$. Thus. adjusting the constant $\Con$ further, we arrive at the desired estimate in \eqref{e.ho}. This completes the proof.
\end{proof}

\section{Lower-Tail LDP for $q$-PNG} \label{sec:lower_tail}

In this section, we prove the lower-tail Large Deviation Principle for the height function $\h$ of $q$-PNG: \Cref{thm:upper_tail}. In \Cref{subs:prelim_lower_tail}, we introduce continual Young diagrams and several important functionals and discuss a few basic properties of them. In \Cref{sec:cont}, we establish continuity-type results for these functionals. In \Cref{sec:exists} we use probabilistic arguments to derive the precise lower-tail rate function for the largest row of the shifted cylindric Plancherel measure and sharp estimates for the lower-tail of the unshifted ones. We discuss log-concavity properties of Schur polynomials in  \Cref{sec:ltconvex} and prove convexity of $\mathcal{F}$ defined in \eqref{def:f_intro}. We complete the proofs of our main theorems related to the lower-tail in \Cref{sec:4.5}.

\subsection{Preliminaries} \label{subs:prelim_lower_tail}

A key ingredient for the study of the lower-tail rate function of the $q$-PNG is the relation \eqref{eq:S+h=expextation} which matches the probability distribution of a random shift of $\mathfrak{h}$ with a multiplicative expectation of the Poissonized Plancherel measure. For this reason, in this subsection, we recall results concerning the asymptotics of the Plancherel measure.

We introduce the set of continual Young diagrams 
\begin{equation*}
	\mathcal{Y} = \{ \varphi:[0,+\infty) \to [0,+\infty): \varphi \text{ is decreasing and } \| \varphi \|_{L^1} < +\infty \}
\end{equation*}
and its subset of shapes with unit integral
\begin{equation*}
	\mathcal{Y}_1 = \left\{ \phi \in \mathcal{Y} : \| \phi \|_{L^1} =1 \right\}.
\end{equation*}
Given a continual Young diagram $\varphi$ we define its representation in \emph{Russian notation} $\widetilde{\varphi}$, which is the function
\begin{equation*}
	\widetilde{\varphi}(u) = v
	\qquad \Longleftrightarrow \qquad \frac{v-u}{\sqrt{2}} = \varphi\left( \frac{v+u}{\sqrt{2}}  \right).
\end{equation*}
In words, the function $\widetilde{\varphi}$ is obtained rotating by $45^\circ$ counterclockwise the graph of $\varphi$ and as such have $\widetilde{\varphi}(x) \ge |x|$ for all $x\in \mathbb{R}$; see \Cref{fig:young_diagrams}. Motivated by this we further define 
\begin{equation} \label{eq:phi_bar}
	\overline{\varphi}(x) = \widetilde{\varphi}(x) - |x|.
\end{equation}
Translating the properties of the function $\varphi$, the function $\overline{\varphi}$ is easily seen to belong to the set 
\begin{equation*}
	\overline{\mathcal{Y}} = \{ h \in L^1(\mathbb{R}): h \text{ is absolutely continuous, nonnegative}, \, \mathrm{sign}(x) h'(x) \in [-2,0] \text{ a.e.}\}.
\end{equation*}

\begin{figure}%
\centering
\parbox{3.5cm}{
\includegraphics[height=3.2cm]{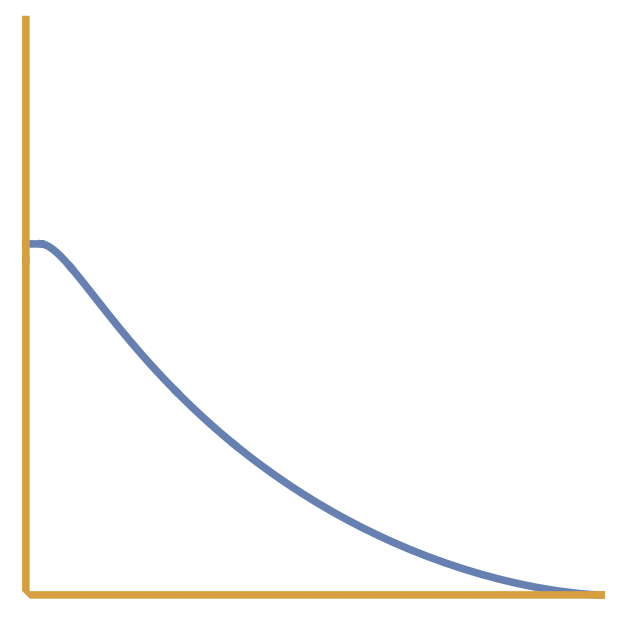}
}%
\parbox{5.5cm}{
\includegraphics[height=2.5cm]{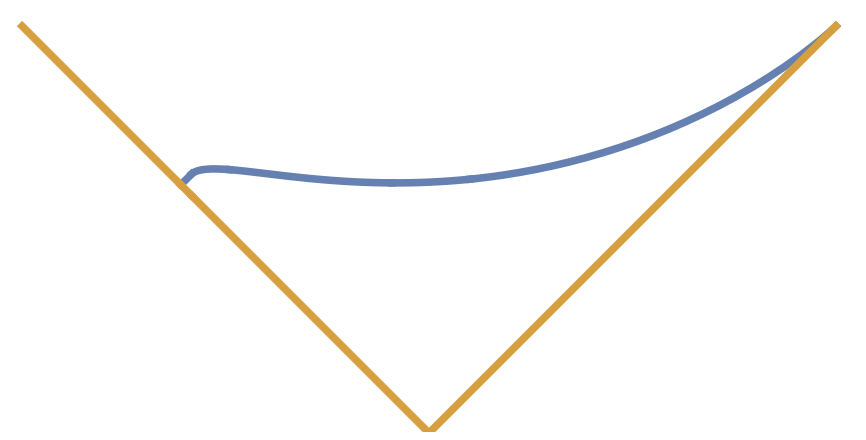}
}%
\parbox{6cm}{
\includegraphics[width=5.6cm]{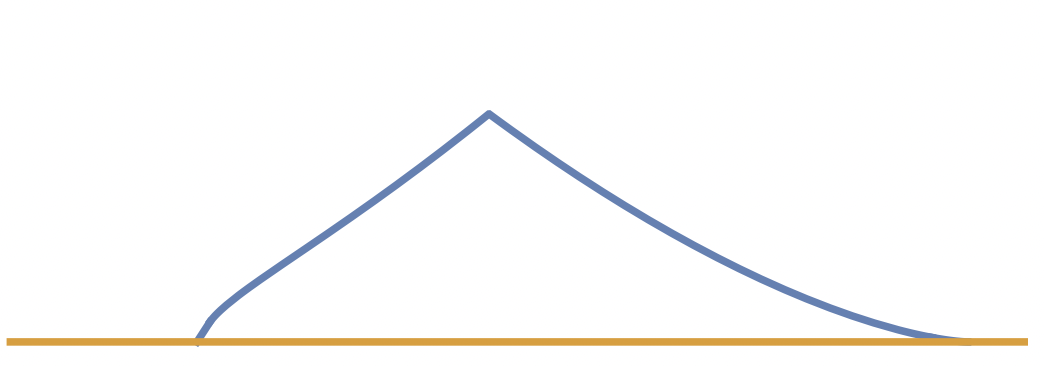}
}
\caption{On the left panel a continual Young diagram $\varphi \in \mathcal{Y}$. On the central panel its representation in Russian notation $\widetilde{\varphi}$. On the right panel the shape $\overline{\varphi} \in \overline{\mathcal{Y}}$.}%
\label{fig:young_diagrams}%
\end{figure}

For a shape $\phi \in \mathcal{Y}_1$ we define its inverse as $\phi^{-1}(y):= \inf\{x\ge 0 \mid \phi(x)\le y \}$. Note that $\phi^{-1} \in \mathcal{Y}_1$. We define the Hook integral of $\phi$ as
\begin{equation} \label{eq:hook_integral}
	I_{\mathrm{hook}}(\phi) := \int_0^{\infty} \diff x \int_0^{\phi(x)} \diff y \log \mathsf{h}_\phi(x,y),
\end{equation}
where
\begin{equation*}
	\mathsf{h}_\phi(x,y) := \phi(x) + \phi^{-1}(y) - x- y.
\end{equation*}
In the seminal paper \cite{VershikKerov_LimShape1077} Vershik and Kerov proved through a series of algebraic manipulations that the hook integral possesses the equivalent representation
\begin{equation} \label{eq:vershik_kerov_expression}
	I_{\mathrm{hook}} (\phi) = - \frac{1}{2} + \frac{1}{2} \| \overline{\phi}_{\mathrm{VKLS}} - \overline{\phi} \|_1^2 + 2 \int_{|y|>\sqrt{2}} \overline{\phi}(y)   \arccosh\left| \frac{y}{\sqrt{2}} \right| \diff y,
\end{equation}
where $\overline{\phi}_{\mathrm{VKLS}}$ is the Logan-Shepp-Vershik-Kerov optimal shape 
\begin{equation} \label{eq:VKLS_shape}
	\overline{\phi}_{\mathrm{VKLS}} (x) = 
	\begin{cases}
		\frac{2}{\pi} \left[\sqrt{2-x^2}+x \arcsin \left(\frac{x}{\sqrt{2}}\right)\right]-| x| \qquad &\text{for } |x|\le\sqrt{2},
		\\
		0  &\text{for } |x|>\sqrt{2},
	\end{cases}
\end{equation}
$\| \cdot \|_1^2$ denotes the {square of the} Sobolev $H^{1/2}$ norm 
\begin{equation}\label{def:sob}
	\| \psi \|_1^2 := \int_\mathbb{R} |\omega| |\hat{\psi}(\omega)|^2 \diff \omega
\end{equation}
where $\hat{\psi}$ is the Fourier transform of $\psi$ defined as {$$\hat{\psi}(\omega)=\int_{\R} e^{-\mathbf{i}\omega x}\psi(x)\diff x .$$} 
An analogous representation to \eqref{eq:vershik_kerov_expression} of the hook functional $I_\mathrm{hook}$ was given in \cite{logan_shepp1977variational}; see also \cite{romik_2015}. {From \eqref{eq:vershik_kerov_expression} it is easy to see that 
\begin{equation}
    I_{\mathrm{hook}} (\phi)\geq -\frac{1}{2},
\end{equation}
for any $\phi\in \mathcal{Y}_1$ with the equality holds if and only if $\overline{\phi}=\overline{\phi}_{\mathrm{VKLS}} $}.

The following proposition states the convergence, at the exponential scale, of the Plancherel measure to the hook functional. Such convergence is uniform with respect to the limit shape. 

\begin{proposition}[\cite{romik_2015}, Theorem 1.14] \label{prop:plancherel_hook_integral} Recall $f^{\lambda}$ from \eqref{eq:flr}. As $n\to\infty$, uniformly over all partitions $\lambda\vdash n$  we have
	\begin{equation*} 
		\frac{1}{n} \log \bigg( \frac{1}{n!}(f^\lambda)^2 \bigg)= -1 - 2 I_\mathrm{hook}(\phi_\lambda)+O\bigg(\frac{\log n}{\sqrt{n}}\bigg),
	\end{equation*}
	where $\phi_\lambda \in \mathcal{Y}_1$ is defined by
	\begin{equation}\label{eq:defphi}
		\phi_\lambda(x) = \frac{1}{\sqrt{n}}\lambda_{\lfloor x \sqrt{n} \rfloor+1}.
	\end{equation}
\end{proposition}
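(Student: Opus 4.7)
The plan is to begin from the Frame-Robinson-Thrall hook length formula \eqref{eq:hook_length_formula}, which gives $f^\lambda = n!/\prod_{c\in\lambda} h_\lambda(c)$. Taking logarithms and dividing by $n$,
\begin{equation*}
\frac{1}{n}\log\bigg(\frac{(f^\lambda)^2}{n!}\bigg) = \frac{1}{n}\log n! - \frac{2}{n}\sum_{c\in\lambda}\log h_\lambda(c).
\end{equation*}
Stirling's formula gives $\tfrac{1}{n}\log n! = \log n - 1 + O(\log n/n)$ uniformly. Hence the proposition reduces to establishing the Riemann-sum asymptotic
\begin{equation}\label{eq:rsum_goal}
\frac{1}{n}\sum_{c\in\lambda}\log h_\lambda(c) = \frac{1}{2}\log n + I_\mathrm{hook}(\phi_\lambda) + O\bigg(\frac{\log n}{\sqrt{n}}\bigg)
\end{equation}
uniformly over $\lambda \vdash n$.

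To motivate \eqref{eq:rsum_goal}, note that under the scaling \eqref{eq:defphi} the cell $c=(i,j)\in\lambda$ is located at $(x,y)=((i-1)/\sqrt{n},(j-1)/\sqrt{n})$, and a direct computation from the definition of the hook length shows $h_\lambda(i,j) = \sqrt{n}\,\mathsf{h}_{\phi_\lambda}(x,y) + O(1)$. Since each cell occupies area $1/n$ in the rescaled picture, a formal Riemann sum yields
\begin{equation*}
\sum_{c\in\lambda}\log h_\lambda(c) \;\approx\; \frac{n}{2}\log n + n\int_0^\infty\!\!\int_0^{\phi_\lambda(x)}\!\log \mathsf{h}_{\phi_\lambda}(x,y)\,\diff y\,\diff x \;=\; \frac{n}{2}\log n + n\,I_\mathrm{hook}(\phi_\lambda).
\end{equation*}

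The main obstacle is controlling the error in \eqref{eq:rsum_goal} uniformly, including for highly degenerate shapes such as $\lambda=(n)$ or $\lambda=(1^n)$. Two difficulties arise. First, near the boundary of the Young diagram hook lengths are of order $1$ while $\mathsf{h}_{\phi_\lambda}$ vanishes, so the naive pointwise approximation $\log h_\lambda \approx \log\sqrt{n}+\log \mathsf{h}_{\phi_\lambda}$ is unreliable. Second, for partitions with rows or columns of linear size the hook lengths can be as large as $O(n)$, and we must show that these contribute in a controlled way. To bypass both issues I would not attempt a cell-by-cell Riemann estimate, but rather invoke the classical row-product identity
\begin{equation*}
\prod_{j=1}^{\lambda_i} h_\lambda(i,j) \;=\; \frac{(\lambda_i+\ell-i)!}{\prod_{k>i}(\lambda_i - \lambda_k + k - i)},
\end{equation*}
where $\ell=\ell(\lambda)$, and its transpose analogue. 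Applying Stirling's expansion to each factorial on the right and summing over $i$ converts $\sum_{c}\log h_\lambda(c)$ into a one-dimensional expression that interpolates smoothly through corner behaviour and is amenable to integral comparison.

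The remaining step is to match this one-dimensional expression to the integral representation \eqref{eq:vershik_kerov_expression} of $I_\mathrm{hook}$. Writing $\lambda_i = \sqrt{n}\,\phi_\lambda((i-1)/\sqrt{n})$ and passing the Stirling sum to a Riemann integral, one obtains
\begin{equation*}
\frac{1}{n}\sum_{i=1}^\ell \log\big((\lambda_i+\ell-i)!\big) - \frac{1}{n}\!\!\sum_{1\le i<k\le \ell}\log(\lambda_i-\lambda_k+k-i)
\end{equation*}
as a Riemann sum, with a uniform error of order $\log n/\sqrt{n}$ that can be tracked because the integrands are logarithmic in variables of size $\sqrt{n}$; the $\log n$ factor compensates the potentially small arguments near the corner. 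The hardest piece will be the off-diagonal double sum, which I would handle by a symmetric rearrangement into a double integral over the region under $\phi_\lambda$ and a change of variables to the Russian coordinates of \eqref{eq:phi_bar}, matching directly the Sobolev-norm and $\arccosh$ terms in \eqref{eq:vershik_kerov_expression}. Combining the three pieces yields \eqref{eq:rsum_goal} and completes the proof.
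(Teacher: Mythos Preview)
The paper does not supply its own proof of this proposition: it is quoted directly from \cite{romik_2015}, Theorem~1.14, and used as a black box. So there is no ``paper's proof'' to compare against; the relevant benchmark is Romik's argument.

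Your outline is broadly the classical one---hook length formula, Stirling, then a Riemann-sum comparison to the hook integral---and the row-product identity you invoke is correct and is essentially the Frobenius determinant form of $\prod_c h_\lambda(c)$. Where your sketch falls short of a proof is exactly where you flag it: the uniform $O(\log n/\sqrt{n})$ control. The line ``a uniform error of order $\log n/\sqrt{n}$ that can be tracked because the integrands are logarithmic'' is not an argument. Concretely, for $\lambda=(1^n)$ you have $\ell(\lambda)=n$ factorials $(\lambda_i+\ell-i)!=(n-i+1)!$, and applying Stirling term-by-term accumulates an absolute error $\sum_{m=1}^n O(\log m)=O(n\log n)$, which after dividing by $n$ is $O(\log n)$, not $O(\log n/\sqrt{n})$. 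The transpose trick you allude to handles $(1^n)$ by switching to columns, but intermediate shapes (say a thin hook with both a long row and a long column) are not obviously covered by either version alone, and you would need to show that the two error budgets can be combined or that some cancellation occurs in the double sum.

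Romik's treatment avoids this by working directly in the rotated (Russian) coordinates from the start and estimating the discrepancy between $\sum_c \log h_\lambda(c)$ and $n I_{\mathrm{hook}}(\phi_\lambda)+\tfrac{n}{2}\log n$ via a direct Riemann-sum bound, using that $\phi_\lambda$ is a step function on the $1/\sqrt{n}$ grid and that the logarithm of the hook is monotone along rows and columns. If you want to push your product-formula route through, you should either (i) restrict to $\ell(\lambda)\le C\sqrt{n}$ by transpose symmetry and then show the Stirling and double-sum errors are each $O(\sqrt{n}\log n)$, or (ii) abandon the factorial decomposition and estimate $\sum_c\log h_\lambda(c)$ against the integral cell-by-cell with a boundary-layer analysis, which is closer to what Romik does.
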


In order to state our theorem we fix some more notation. For a left continuous decreasing function $\phi : \mathbb{R}_+ \to \mathbb{R}_+$ with unit integral and $q \in (0,1)$ we define
\begin{equation}\label{def:Vq}
	\mathcal{V}^{(q)}(x;\phi) = 
	\logq\int_0^\infty  [\phi(y) - y -x]_+ \diff y,
\end{equation}

where $[a]_+=\max\{a,0\}$ and $\logq:=\log q^{-1}$.
Define also the functional 
\begin{equation} \label{eq:W_functional}
	\mathcal{W}^{(q)}(\kappa,\phi;x) =  1+\kappa \log \kappa + 2\kappa I_{\mathrm{hook}} (\phi) + \kappa \mathcal{V}^{(q)}(x/\sqrt{\kappa};\phi), \quad \kappa>0, \phi\in \mathcal{Y}_1, x\in \R,
\end{equation}
and 
\begin{equation} \label{eq:f_Plancherel}
	\mathcal{F}(x) := \inf_{\kappa > 0} \inf_{\phi \in \mathcal{Y}_1}  \left\{ \mathcal{W}^{(q)}(\kappa,\phi;x)  \right\}.
\end{equation}
{Note that the $\mathcal{W}^{(q)}$ functional defined above and the $\mathcal{W}^{(q)}$ functional defined in \eqref{eq:W_intro} are essentially the same; the only difference is that the second coordinate of the one defined in the introduction took shapes in $\overline{\mathcal{Y}}$ with unit integral as input. Thus the function $\mathcal{F}$ defined above is precisely the same as the one defined in \eqref{def:f_intro}.}

We shall show in the next subsection that $\mathcal{F}$ is the lower-tail rate function for the first row of shifted cylindric Plancherel measure. Presently, we end this subsection by discussing a few properties of $\mathcal{W}^{(q)}$ and $\mathcal{F}$.

\begin{proposition} \label{lem:range_kappa}  For each $x\in \R$, the minimizer in the optimization problem in \eqref{eq:f_Plancherel} is attained at some $\kappa_*\in (0,\kappa^*)$, where $\kappa^*$ satisfies $1+\kappa^* \log \kappa^* - \kappa^* = \logq$.
\end{proposition}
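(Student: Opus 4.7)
The plan is to produce matching upper and lower bounds for $\mathcal{W}^{(q)}$. For the \emph{lower bound}, I would invoke the Vershik--Kerov--Logan--Shepp optimality $I_{\mathrm{hook}}(\phi) \ge -\tfrac12$ for every $\phi \in \mathcal{Y}_1$ (with equality only at $\phi_{\mathrm{VKLS}}$) together with the manifest non-negativity of $\mathcal{V}^{(q)}(\,\cdot\,;\phi)$, which by \eqref{def:Vq} is $\logq$ times an integral of $[\,\cdot\,]_+$. Combining these gives the uniform estimate
\begin{equation*}
\mathcal{W}^{(q)}(\kappa,\phi;x) \ge g(\kappa) := 1 + \kappa \log \kappa - \kappa, \qquad \phi \in \mathcal{Y}_1,\ x \in \mathbb{R},\ \kappa > 0.
\end{equation*}
Elementary calculus shows that $g$ is strictly convex on $(0,\infty)$ with $g(0^+)=1$, $g(1)=0$, and $g(\kappa) \to +\infty$ as $\kappa \to \infty$; in particular, $g$ is strictly increasing on $[1,\infty)$ and takes every value in $[0,\infty)$ exactly once there.

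For the \emph{upper bound}, I would estimate $\mathcal{V}^{(q)}$ crudely: since $[\phi(y) - y - z]_+ \le \phi(y)$ for every $z\in\mathbb{R}$, integrating and using $\|\phi\|_{L^1}=1$ gives $\mathcal{V}^{(q)}(z;\phi) \le \logq$ uniformly in $z \in \mathbb{R}$ and $\phi \in \mathcal{Y}_1$. Specialising to $(\kappa,\phi) = (1,\phi_{\mathrm{VKLS}})$ in \eqref{eq:f_Plancherel} and using $I_{\mathrm{hook}}(\phi_{\mathrm{VKLS}}) = -\tfrac12$ then yields
\begin{equation*}
\mathcal{F}(x) \le \mathcal{W}^{(q)}(1,\phi_{\mathrm{VKLS}};x) = 1 + 2 I_{\mathrm{hook}}(\phi_{\mathrm{VKLS}}) + \mathcal{V}^{(q)}(x;\phi_{\mathrm{VKLS}}) \le \logq,
\end{equation*}
for every $x \in \mathbb{R}$. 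I then define $\kappa^*$ to be the unique root of $g(\kappa^*)=\logq$ lying in $(1,\infty)$, whose existence and uniqueness follow from the monotonicity of $g$ on $[1,\infty)$ recorded above; this matches the characterisation of $\kappa^*$ in the statement.

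To conclude, I would combine the two bounds: for any $\kappa \ge \kappa^*$, the lower bound forces
\begin{equation*}
\inf_{\phi\in \mathcal{Y}_1} \mathcal{W}^{(q)}(\kappa,\phi;x) \ge g(\kappa) \ge g(\kappa^*) = \logq \ge \mathcal{F}(x),
\end{equation*}
with the middle inequality strict whenever $\kappa > \kappa^*$. Hence no minimising sequence can accumulate at a $\kappa$-coordinate $\ge \kappa^*$, so any minimiser $\kappa_*$ must lie in $(0,\kappa^*)$. The one remaining (and essentially orthogonal) point is the \emph{existence} of a minimiser, which I would handle by the direct method of the calculus of variations: on the now-precompact range $\kappa \in (0,\kappa^*)$ one extracts a convergent subsequence, and on $\mathcal{Y}_1$ one uses a Helly-type selection principle together with the lower semicontinuity of $I_{\mathrm{hook}}$ (most conveniently read off from the Logan--Shepp representation \eqref{eq:vershik_kerov_expression}) and of $\mathcal{V}^{(q)}$ under pointwise/monotone limits. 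The main obstacle is this compactness/lower-semicontinuity step; the range restriction itself, which is the content of the proposition, is a short consequence of VKLS optimality and the trivial bound on $\mathcal{V}^{(q)}$.
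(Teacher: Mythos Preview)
Your argument contains a genuine error in the upper bound. The inequality $[\phi(y)-y-z]_+ \le \phi(y)$ is \emph{false} for $z<0$: take $y=0$ and any $z<0$, and the left side equals $\phi(0)-z>\phi(0)$. Consequently your uniform bound $\mathcal{V}^{(q)}(z;\phi)\le\logq$ only holds for $z\ge 0$, and your conclusion $\mathcal{F}(x)\le\logq$ fails for $x<0$. In fact the paper later proves (Proposition~\ref{prop:fprop}) that $\mathcal{F}(x)=\tfrac{\logq}{2}x^2+(1-q)$ for $x\ll 0$, so $\mathcal{F}$ is unbounded and no $x$-independent upper bound of the type you seek can exist.

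The paper's fix is to keep track of the $x$-dependence on both sides. It proves the two-sided bound
\[
\logq\,\frac{[-x]_+^2}{2}\ \le\ \mathcal{V}^{(q)}(x;\phi)\ \le\ \logq\Big(\frac{[-x]_+^2}{2}+1\Big),
\]
so that the lower bound on $\mathcal{W}^{(q)}(\kappa,\phi;x)$ becomes $1+\kappa\log\kappa-\kappa+\logq\,\tfrac{[-x]_+^2}{2}$ and the comparison value $\mathcal{W}^{(q)}(1,\phi_{\mathrm{VKLS}};x)$ is bounded above by $\logq(\tfrac{[-x]_+^2}{2}+1)$. The $\logq\,\tfrac{[-x]_+^2}{2}$ terms then cancel in the comparison, and the conclusion follows exactly as in your final paragraph. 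Separately, the paper rules out $\kappa_*=0$ by an explicit calculation showing $\mathcal{W}^{(q)}(\kappa,\phi_{\mathrm{VKLS}};x)<\mathcal{W}^{(q)}(0,\phi;x)$ for small $\kappa>0$; your compactness discussion does not address this endpoint, since $(0,\kappa^*)$ is not closed.
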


\begin{proof} Fix any $x\in \R$ and any shape $\phi \in \mathcal{Y}_1$. We first provide upper and lower bounds for the function $\mathcal{V}^{(q)}(x;\phi)$. Note that when $x\ge 0$ we have  $$0 \le \mathcal{V}^{(q)}(x;\phi)= \logq\int_0^{\infty} [\phi(y)-y-x]_+\diff y \le  \logq \int_0^\infty \phi(y)\diff y =\logq.$$
For $x<0$, we split the integral into two parts 
\begin{align}
	\nonumber
	\int_0^{\infty} [\phi(y)-y-x]_+\diff y & = \int_0^{-x} [\phi(y)-y-x]_+\diff y+\int_{-x}^{\infty} [\phi(y)-y-x]_+\diff y \\ & =\frac{x^2}2+\int_0^{-x} \phi(y)\diff y+\int_{-x}^{\infty} [\phi(y)-y-x]_+\diff y. \label{eq:obsbd}
\end{align}
Since $\phi$ is nonnegative, we thus have that $\int_0^{\infty} [\phi(y)-y-x]_+\diff y\ge x^2/2$. On the other hand for the upper bound notice that $\int_{-x}^{\infty} [\phi(y)-y-x]_+\diff y \le \int_{-x}^{\infty} \phi(y)\diff y.$
Since $\phi$ integrates to $1$, we thus have $\int_0^{\infty} [\phi(y)-y-x]_+\diff y \le \frac{x^2}{2}+1.$ Combining the $x\ge 0$ and $x<0$ case, we deduce that
\begin{equation}\label{eq:vqbound}
	\logq \frac{[-x]_+^2}{2} \le \mathcal{V}^{(q)}(x;\phi) \le \logq \left( \frac{[-x]_+^2}{2} + 1 \right).
\end{equation}
	Then we can bound from both sides the functional $\mathcal{W}^{(q)}$ as
	\begin{equation*}
		1 + \kappa \log \kappa - \kappa + \logq \frac{[-x]_+^2}{2} \le \mathcal{W}^{(q)}(\kappa , \phi ;x) \le 1 + \kappa \log \kappa + 2 \kappa I_\mathrm{hook}(\phi) + \logq \left( \frac{[-x]_+^2}{2} + \kappa \right),
	\end{equation*}
	where in the lower bound we also used the fact that $I_\mathrm{hook}(\phi) \ge -1/2$. For reference let us also evaluate the functional $\mathcal{W}^{(q)}$ at the special values $\kappa =1$ and $\phi = \phi_\mathrm{VKLS}$ as
	\begin{equation*}
		\mathcal{W}^{(q)}(1,\phi_\mathrm{VKLS};x) \le  \logq \left( \frac{[-x]_+^2}{2} + 1 \right).
	\end{equation*}
	Then, for any $\kappa$ such that $1+\kappa \log \kappa - \kappa \ge \logq$ we have
	\begin{equation*}
		\mathcal{W}^{(q)}(1,\phi_\mathrm{VKLS};x) \le 1+\kappa \log \kappa - \kappa +  \logq \frac{[-x]_+^2}{2} \le \mathcal{W}^{(q)}(\kappa,\phi;x).
	\end{equation*}
	This proves that in order for $\kappa$ to be a minimizer, we must have $\kappa \le \kappa^*$. Let us also show that $\kappa = 0$ cannot be a minimizer. For any shape $\phi$ we have
	\begin{equation} \label{eq:W_kappa=0}
		\mathcal{W}^{(q)}(0,\phi;x) = 1 + \logq \frac{[-x]_+^2}{2}.
	\end{equation}
	On the other hand, we have
	\begin{equation*}
		\mathcal{W}^{(q)}(\kappa,\phi_\mathrm{VKLS};x) \le 1 + \kappa \log \kappa + \kappa (\logq-1) + \logq \frac{[-x]_+^2}{2} 
	\end{equation*}
	and the right-hand side can easily be checked to be a decreasing function for $\kappa \in (0,q)$, which at $\kappa = 0$ equals the right-hand side of \eqref{eq:W_kappa=0}. This implies that for $\kappa$ to minimize the functional $\mathcal{W}^{(q)}$ it needs to be $\kappa >0$, completing the proof. 
\end{proof}
\begin{remark} If $\phi$ is bounded with compact support,  then for $x\ll 0$ (large negative values), the two integrals in \eqref{eq:obsbd} are $1$ and $0$ respectively. This forces
	\begin{equation*}
		\mathcal{W}^{(q)}(\kappa,\phi;x) = 1 + \kappa \log \kappa + 2 \kappa I_\mathrm{hook}(\phi) + \logq \left( \frac{x^2}{2} + \kappa \right).
	\end{equation*}
Since $\phi_{\mathrm{VKLS}}$ is bounded with compact support, we thus have 
\begin{equation}\label{eq:wbd}
	\mathcal{W}^{(q)}(\kappa,\phi_{\mathrm{VKLS}};x) = 1 + \kappa \log \kappa + \kappa (\logq-1) + \logq \frac{x^2}{2}
\end{equation}
for $x\ll 0$.
\end{remark}
We now discuss a few properties of $\mathcal{F}$ that can be deduced from the definition.

\begin{proposition}\label{prop:fprop} $\mathcal{F}$ is non-negative and  decreasing. We have $\mathcal{F}(x)>0$ if and only if $x< 2$. There exists $x_q < 0$ such that
		\begin{equation} \label{eq:f_parabola}
				\mathcal{F}(x) = \frac{\logq}{2} x^2 + (1-q)\quad\mbox{ for all } \ \ x\le x_q.
		\end{equation}
\end{proposition}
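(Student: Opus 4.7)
The four claims follow from direct manipulation of the definition of $\mathcal{W}^{(q)}$ combined with the key inequalities $I_{\mathrm{hook}}(\phi)\ge -1/2$ (immediate from the Vershik-Kerov expression \eqref{eq:vershik_kerov_expression}, since the other two summands there are non-negative, with equality iff $\phi=\phi_{\mathrm{VKLS}}$) and $\mathcal{V}^{(q)}(\cdot;\phi)\ge 0$ (with equality iff the argument is at least $\sup_{y\ge 0}(\phi(y)-y)$). The combination gives $\mathcal{W}^{(q)}(\kappa,\phi;x)\ge 1+\kappa\log\kappa-\kappa\ge 0$, whence $\mathcal{F}\ge 0$. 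For the monotonicity, $[\phi(y)-y-c]_+$ is pointwise weakly decreasing in $c$, hence $\mathcal{V}^{(q)}(x/\sqrt{\kappa};\phi)$ and $\mathcal{W}^{(q)}(\kappa,\phi;x)$ are weakly decreasing in $x$ for fixed $(\kappa,\phi)$, and this is preserved by infimum.

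For the zero-set claim, evaluating at $(\kappa,\phi)=(1,\phi_{\mathrm{VKLS}})$ with $x\ge 2$ gives $\mathcal{W}^{(q)}=0$ since $I_{\mathrm{hook}}(\phi_{\mathrm{VKLS}})=-1/2$ and $\phi_{\mathrm{VKLS}}(0)=2\le x$. For the converse at $x<2$, any minimising sequence $(\kappa_n,\phi_n)$ forces each of the three non-negative summands of $\mathcal{W}^{(q)}$ to vanish: $\kappa_n\to 1$, $I_{\mathrm{hook}}(\phi_n)\to -1/2$, and $\mathcal{V}^{(q)}(x/\sqrt{\kappa_n};\phi_n)\to 0$. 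By \eqref{eq:vershik_kerov_expression}, $\|\overline{\phi}_{\mathrm{VKLS}}-\overline{\phi}_n\|_1\to 0$ in the $H^{1/2}$ norm together with vanishing tails outside $[-\sqrt{2},\sqrt{2}]$; since elements of $\overline{\mathcal{Y}}$ are uniformly $2$-Lipschitz and bounded, Arzel{\`a}-Ascoli extracts a subsequence $\overline{\phi}_n\to\overline{\phi}_{\mathrm{VKLS}}$ uniformly, and the continuity of $\mathcal{V}^{(q)}$ (from \cref{sec:cont}) forces $\mathcal{V}^{(q)}(x;\phi_{\mathrm{VKLS}})=0$, contradicting $x<2$.

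For the parabolic formula, the upper bound $\mathcal{F}(x)\le (1-q)+\logq\frac{x^2}{2}$ for $x\le -2\sqrt{q}$ follows from \eqref{eq:wbd} by optimising $1+\kappa\log\kappa+\kappa(\logq-1)+\logq\frac{x^2}{2}$ over $\kappa$ to obtain $\kappa_*=q$. For the lower bound I would first derive the exact identity
\begin{equation*}
\mathcal{V}^{(q)}(-a;\phi)=\logq\!\left[1+\tfrac{a^2}{2}-\Delta(\phi,a)\right],\qquad \Delta(\phi,a):=\tfrac{(z_0-a)^2}{2}+\int_{z_0}^{\infty}\phi(y)\,dy\ge 0,
\end{equation*}
for $a>0$, where $z_0\ge a$ is the unique crossing point $\phi(z_0)=z_0-a$. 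Substituting yields
\begin{equation*}
\mathcal{W}^{(q)}(\kappa,\phi;x)-\big[(1-q)+\logq\tfrac{x^2}{2}\big] = g(\kappa)+\kappa\big(E(\phi)-\logq\,\Delta(\phi,-x/\sqrt{\kappa})\big),
\end{equation*}
where $g(\kappa):=\kappa\log\kappa+\kappa(\logq-1)+q\ge 0$ (minimum $0$ at $\kappa=q$) and $E(\phi):=1+2I_{\mathrm{hook}}(\phi)\ge 0$. By \cref{lem:range_kappa} the relevant $\kappa$ lie in $(0,\kappa^*)$, so it suffices to establish the uniform pointwise inequality $E(\phi)\ge \logq\,\Delta(\phi,a)$ for every $\phi\in \mathcal{Y}_1$ and every $a\ge a_q$, with $a_q$ sufficiently large; setting $x_q:=-a_q\sqrt{\kappa^*}$ then gives the claim. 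Any $\phi$ with $\Delta(\phi,a)>0$ places mass beyond $a$, which in Russian coordinates pushes mass of $\overline{\phi}$ outside $[-\sqrt{2},\sqrt{2}]$ and feeds the $\arccosh$-weighted integral in $E(\phi)$; for $a$ large, this weight dominates $\logq\,\Delta$.

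The main obstacle I anticipate is the uniform inequality $E(\phi)\ge \logq\,\Delta(\phi,a)$: configurations with long heavy tails beyond $a$ are controlled by the $\arccosh$-integral, while configurations that look like small horizontal translates of $\phi_{\mathrm{VKLS}}$ (producing small but non-trivial $\Delta$ with only a small $\arccosh$-contribution) require the $H^{1/2}$ piece of $E$ to do the work. Interpolating uniformly between these regimes in $a\ge a_q$ and $\phi\in \mathcal{Y}_1$ will be the delicate quantitative step.
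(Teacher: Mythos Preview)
Your approach is essentially the paper's, and the first three parts (non-negativity, monotonicity, zero set) are fine. For the zero set at $x<2$ you run a compactness argument on minimising sequences; the paper dispatches this more tersely by noting that vanishing of the three non-negative summands forces $(\kappa,\phi)=(1,\phi_{\mathrm{VKLS}})$, and then checking $\mathcal{V}^{(q)}(x;\phi_{\mathrm{VKLS}})>0$ for $x<2$.

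For the parabolic formula your setup is exactly right and matches the paper's: your $\Delta(\phi,a)$ is precisely the quantity $\mathsf{r}(a/\sqrt{2},\phi):=\int_{a/\sqrt{2}}^\infty\overline{\phi}(y)\,\diff y$ (the ``area $B$'' in the paper's picture), and your reduction to the inequality $E(\phi)\ge \logq\,\Delta(\phi,a)$ for all $a\ge a_q$ is the correct target. But you are over-anticipating the difficulty. The inequality follows in one line from the $\arccosh$ term in the Vershik--Kerov expression \eqref{eq:vershik_kerov_expression} alone, throwing away the $H^{1/2}$ piece entirely:
\[
E(\phi)=1+2I_{\mathrm{hook}}(\phi)\;\ge\;4\int_{y>a/\sqrt{2}}\overline{\phi}(y)\,\arccosh\!\Big(\tfrac{y}{\sqrt{2}}\Big)\diff y\;\ge\;4\,\arccosh\!\Big(\tfrac{a}{2}\Big)\,\Delta(\phi,a),
\]
valid for $a>2$. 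Since $\arccosh(a/2)\to\infty$, the factor $4\arccosh(a/2)$ exceeds $\logq$ once $a\ge a_q:=2\cosh(\logq/4)$, and you are done. Your worry about ``small horizontal translates of $\phi_{\mathrm{VKLS}}$'' is unfounded: for such shapes either the support does not reach $a/\sqrt{2}$ (so $\Delta=0$ trivially), or whatever mass lies past $a/\sqrt{2}$ is weighted by $\arccosh(a/2)\gg\logq$ in $E(\phi)$. No interplay with the $H^{1/2}$ norm and no interpolation between regimes is needed; the paper's argument is exactly this one-line estimate followed by the observation that $\kappa\le\kappa^*$ keeps $a=-x/\sqrt{\kappa}\ge -x/\sqrt{\kappa^*}$ bounded below, so $x_q=-a_q\sqrt{\kappa^*}$ works.
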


\begin{proof} The fact that $\mathcal{F}$ is decreasing follows from the definition. Note that
$$\mathcal{W}^{(q)}(\kappa,\phi;x)=(1+\kappa \log \kappa-\kappa)+\kappa(1+2I_{\operatorname{hook}}(\phi))+\kappa \mathcal{V}^{(q)}(x/\sqrt{\kappa};\phi).$$
The three terms above on the right-hand side are non-negative. Thus $\mathcal{F}$ is non-negative. The first two terms are zero if and only if $\kappa=1$ and $\phi=\phi_{\mathrm{VKLS}}$. From \eqref{eq:VKLS_shape}, we get that $\overline{\phi}_\mathrm{VKLS}(-\sqrt{2})=0$ which forces $\phi_\mathrm{VKLS}(0)=2$. Thus, we have {$\max_{y\geq 0}\{\phi_{\mathrm{VKLS}}(y)-y\}=2$}. Hence $\mathcal{V}^{(q)}(x;\phi_{\mathrm{VKLS}})=0$ if and only if $x\ge 2$.  Consequently, $\mathcal{F}(x)=0$ if and only if $x\ge 2$.

   \begin{figure}[t]
		\centering
        \begin{tikzpicture}
            \node[right] at (0.12,-.18) {\includegraphics[trim={0 0 1cm 3cm},clip,width=.4\linewidth]{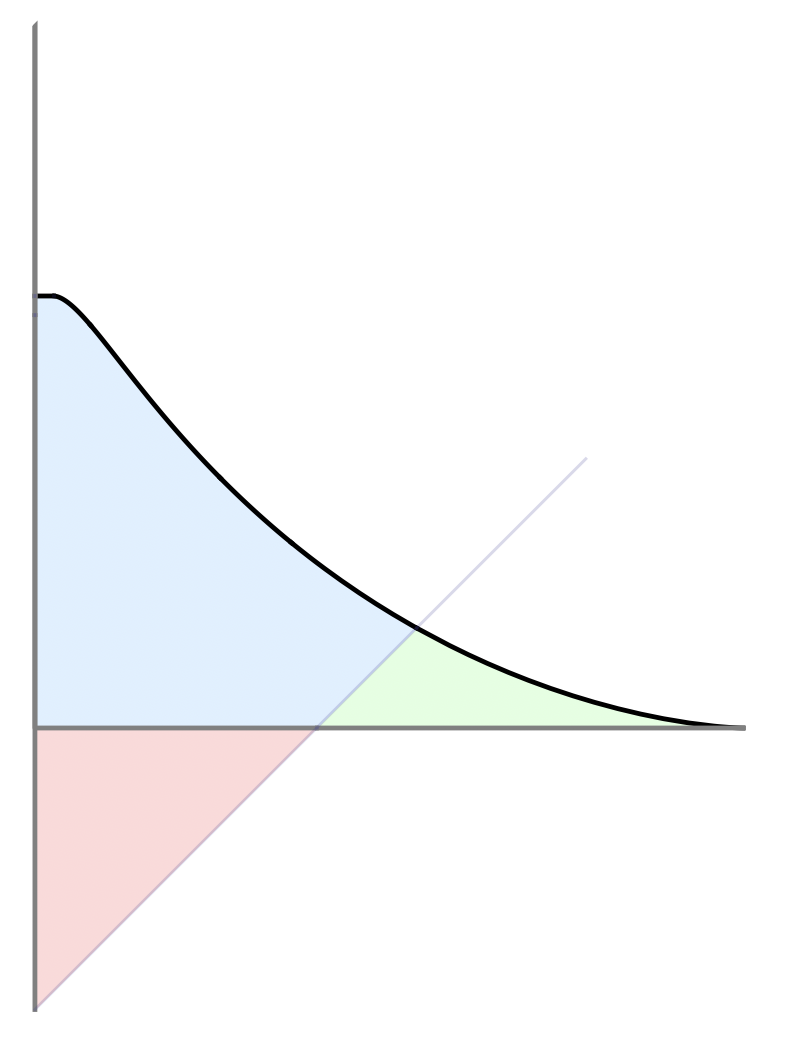}};
            \draw[ultra thick,->,gray] (.5,-3.7) -- (.5,4.5);
            \draw[ultra thick,->,gray] (-.5,-1.1) -- (7,-1.1);
            \node[right] at (1,-2) {$C$};
            \node[right] at (1.5,0) {$A$};
            \node[right] at (3.5,-0.8) {$B$};
            \draw[thick] (.35,-3.32) node[left] {$x$} -- (.75,-3.32);
            \node[above] at (1.8,1.8) {\large $\phi$};
        \end{tikzpicture}
		\caption{The functional $\mathcal{V}^{(q)}(\phi;x)$ is the sum of areas $A$ and $C$.}
		\label{fig:Vfunctional_area}
	\end{figure}

   We now turn toward the proof of parabolic behavior on the left. For a given $\phi$, let $A,B,C$ be the area of the blue, green, and red regions in \Cref{fig:Vfunctional_area}, respectively. Then 
   \begin{equation}\label{eq:relation_area}
       \begin{aligned}
     &  B=\int_{-\frac{x}{\sqrt{2}}}^{\infty}\overline{\phi}(y)\diff y, \quad C  = \frac{x^2}{2}, \\
           & \int_{0}^{\infty}[\phi(y)-y-x]_+\diff y = A+C,\quad 1=\int_{0}^{\infty}\phi(y)\diff y=A+B.
       \end{aligned}
   \end{equation}
   We see that for $x \ll 0$ we have
			\begin{equation}\label{eq:V_largex}
				\begin{split}
					\mathcal{V}^{(q)}(\phi;x) &=
					\logq (A+C)=\logq[C+(A+B)-B]= \logq\left[\tfrac{x^2}{2}  + 1 - \mathsf{r}(-x/\sqrt{2},\phi) \right],
				\end{split}
			\end{equation}
   where we define
			\begin{equation*}
				\mathsf{r}(M,\phi) := \int_{M}^\infty \overline{\phi}(y) \diff y .
			\end{equation*}
   %Then, \eqref{eq:V_largex} is  an easy consequence of the relations \eqref{eq:relation_area}.
   From \eqref{eq:vershik_kerov_expression} we see that
			\begin{equation*}
				I_{\mathrm{hook}} (\phi) \ge - \tfrac{1}{2} + 2 \int_{y>M}  \overline{\phi}(y) \arccosh \left( {y}/{\sqrt{2}} \right) \diff y \ge -\tfrac{1}{2} + \mathsf{r}(M,\phi) \arccosh \left( {M}/{\sqrt{2}} \right).
			\end{equation*}
			Setting $M=-x/\sqrt{2\kappa}$, which we can assume to grow linearly with $x$ since a minimizing $\kappa$ remains bounded by \Cref{lem:range_kappa}, we have
			\begin{equation*}
				\begin{split}
					\mathcal{W}^{(q)}(\kappa,\phi;x) &= 1+\kappa \log \kappa + 2\kappa I_{\mathrm{hook}} (\phi) + \kappa \logq \left(\frac{x^2}{2 \kappa }  + 1 - \mathsf{r}(-x/\sqrt{2\kappa},\phi) \right).
					\\
					& \ge 1+\kappa \log \kappa + \kappa \big(-1 +2 \mathsf{r}(-x/\sqrt{2\kappa},\phi) \arccosh(-x/2\sqrt{\kappa}) \big) 
					\\
					& \qquad \qquad + \kappa \logq \left(\frac{x^2}{2 \kappa }  + 1 - \mathsf{r}(-x/\sqrt{2\kappa},\phi) \right)
					\\
					& = 1 + (\logq -1) \kappa + \kappa \log \kappa + \frac{\logq}{2} x^2
					\\&
					\qquad \qquad + \kappa \mathsf{r}(-x/\sqrt{2\kappa},\phi) \big( 2 \arccosh(-x/2\sqrt{\kappa}) - \logq
					\big).
				\end{split}
			\end{equation*}
			It is clear that the above expression grows when $\kappa$ becomes large and therefore to minimize it we have to keep $\kappa$ bounded. Then, when $-x$ becomes large enough so that
				$2 \arccosh(-x/2\sqrt{\kappa}) \ge \logq$,
			we can further write
			\begin{equation*}
				\mathcal{W}^{(q)}(\kappa,\phi;x) \ge 1 + (\logq -1) \kappa + \kappa \log \kappa + \frac{\logq}{2} x^2=\mathcal{W}^{(q)}(\kappa,\phi_{\mathrm{VKLS}};x),
			\end{equation*}
			for $x\ll0$ sufficiently large. Here the second equality is due to \eqref{eq:wbd}. This implies that $\phi_{\mathrm{VKLS}}$ minimizes $\mathcal{W}^{(q)}(\kappa , \cdot \, ; x)$ for $x$ negative large enough. Minimizing $\mathcal{W}^{(q)}(\kappa , \phi_{\mathrm{VKLS}} ; x)$ in the parameter $\kappa$ we obtain (the minimum is attained at $\kappa=q$)
			\begin{equation*}
				\min_{\kappa>0} \big\{ \mathcal{W}^{(q)}(\kappa , \phi_{\mathrm{VKLS}} ; x) \big\} =  \frac{\logq}{2} x^2 + (1-q),
			\end{equation*}
			which completes the proof. 
\end{proof}

\subsection{Continuity of different functionals} \label{sec:cont} In this subsection, we state various continuity properties of different functions introduced in the previous subsection. We first state a continuity property of the Sobolev norm $\| \, \cdot \, \|_1$ defined in \eqref{def:sob} when restricted to the subspace of functions in $\overline{\mathcal{Y}}_1$ defined in \eqref{def:y1bar}.

\begin{proposition} \label{prop:L1_continuity}
	Let $\{h_n\}_{n\in \mathbb{N}} \in \overline{\mathcal{Y}}_1$ such that $h_n \xrightarrow[n\to +\infty]{}h \in \overline{\mathcal{Y}}_1$ in $L^1$ norm. Let $g\in \overline{\mathcal{Y}}_1$ and assume also that $\| g \|_1^2, \| h \|_1^2, \| h_n \|_1^2 < +\infty$ for all $n$. Then we have
	\begin{equation*}
		\lim_{n \to +\infty}\| g- h_n \|_1^2 = \| g- h \|_1^2.
	\end{equation*}
\end{proposition}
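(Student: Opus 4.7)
The plan is to show $\|g - h_n\|_1 \to \|g - h\|_1$, from which squaring gives the claim since the norms are uniformly bounded. By the semi-norm triangle inequality, this reduces to proving $\|h_n - h\|_1 \to 0$. The key observation is that every $\phi \in \overline{\mathcal{Y}}_1$ actually sits in $H^1(\mathbb{R})$ with a uniform bound, so the $H^{1/2}$-convergence will follow from $L^2$-convergence via a simple interpolation.

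For the uniform $H^1$-bound, let $\phi \in \overline{\mathcal{Y}}_1$. Being $2$-Lipschitz and unimodal with peak at $0$, the triangular region $\{(x,y):|x|\le\phi(0)/2,\; 0\le y\le \phi(0)-2|x|\}$ lies under the graph of $\phi$, so its area $\phi(0)^2/2$ is at most $\|\phi\|_{L^1}=1$, giving $\|\phi\|_{L^\infty} = \phi(0) \le \sqrt{2}$. Monotonicity and integrability on each half-line force $\phi(\pm\infty)=0$, hence $\|\phi'\|_{L^1} = 2\phi(0) \le 2\sqrt{2}$, while the Lipschitz constraint gives $\|\phi'\|_{L^\infty}\le 2$. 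Applying the elementary bound $\|f\|_{L^2}^2\le \|f\|_{L^1}\|f\|_{L^\infty}$ to both $\phi$ and $\phi'$ yields $\|\phi\|_{L^2}^2\le\sqrt{2}$ and $\|\phi'\|_{L^2}^2\le 4\sqrt{2}$, uniformly over $\overline{\mathcal{Y}}_1$. In particular, the hypotheses $\|g\|_1^2,\|h\|_1^2,\|h_n\|_1^2<\infty$ in the statement are automatic.

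Next I apply Cauchy--Schwarz to the Fourier integral in \eqref{def:sob}:
\begin{equation*}
\|f\|_1^2 = \int_{\mathbb{R}}|\omega||\hat f(\omega)|^2\diff\omega \le \Big(\int_{\mathbb{R}}|\hat f|^2\diff\omega\Big)^{1/2}\Big(\int_{\mathbb{R}}\omega^2|\hat f|^2\diff\omega\Big)^{1/2} = C\,\|f\|_{L^2}\|f'\|_{L^2},
\end{equation*}
with an absolute constant $C$ from Plancherel and the identity $\widehat{f'}(\omega)=i\omega\hat f(\omega)$. Taking $f=h_n-h$, the factor $\|h_n'-h'\|_{L^2}$ is uniformly bounded by the previous paragraph, while
\begin{equation*}
\|h_n-h\|_{L^2}^2 \le \|h_n-h\|_{L^\infty}\,\|h_n-h\|_{L^1} \le 2\sqrt{2}\,\|h_n-h\|_{L^1}\xrightarrow[n\to\infty]{}0.
\end{equation*}
Hence $\|h_n-h\|_1\to 0$, and the reverse triangle inequality gives $|\|g-h_n\|_1-\|g-h\|_1|\le\|h_n-h\|_1\to 0$; squaring (using uniform boundedness) concludes the proof. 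The main conceptual work is the uniform $H^1$-control on $\overline{\mathcal{Y}}_1$ — purely a geometric consequence of unimodality, the Lipschitz constraint, and the unit mass — and everything else is a standard interpolation-plus-triangle-inequality argument.
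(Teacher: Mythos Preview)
Your proof is correct and follows essentially the same approach as the paper: both establish the uniform bound $\|\phi\|_{L^\infty}\le\sqrt 2$ and the uniform $L^2$-derivative bound on $\overline{\mathcal Y}_1$, then use Plancherel and the factorization $|\omega|\,|\hat f|^2=|\hat f|\cdot|\widehat{f'}|$ together with $\|h_n-h\|_{L^2}^2\le\|h_n-h\|_{L^\infty}\|h_n-h\|_{L^1}$. The only cosmetic difference is that you package the Fourier estimate as the interpolation inequality $\|f\|_1^2\le C\|f\|_{L^2}\|f'\|_{L^2}$ and then invoke the triangle inequality for the semi-norm, whereas the paper expands $\|g-h\|_1^2-\|g-h_n\|_1^2$ directly in Fourier space.
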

\begin{proof} Fix any $u\in\overline{\mathcal{Y}}_1$. We first claim that 
\begin{equation}
    \label{eq:twobds}
    \norm{u}_{L^\infty} \le \sqrt{2}, \qquad \norm{u'}_{L^2}\le 4\sqrt2.
\end{equation}
We prove the above two inequalities in the two bullet points below.
\begin{itemize}[leftmargin=10pt]
    \item Note that if $f \in {\mathcal{Y}}_1$, then the area under $f$ contains the square with corner points $(0,0)$ and $(\widetilde{f}(0)/\sqrt{2},\widetilde{f}(0)/\sqrt{2})$. This forces $\widetilde{f}(0) \le \sqrt{2}$. Thus for any $u\in \overline{\mathcal{Y}}_1$, we have $|u(y)| \le \sqrt{2}$.
    \item For the derivative, we have $u'(y)^2 \le 2|u'(y)|$ and hence
\begin{align}\label{eq:onebd}
    \norm{u'}_{L^2} \le 2\norm{u'}_{L^1} =-2\int_{-\infty}^0 u'(x)\diff x+2\int_{0}^\infty u'(x)\diff x =4u(0) \le 4\sqrt{2}.
\end{align}
\end{itemize}

Let us now turn towards the proof of \Cref{prop:L1_continuity}. Set $\Delta h_n:= h - h_n$ and $v:=g-h$. Then we have
	\begin{equation*}
		\begin{split}
			\left| \| g-h \|_1^2 - \| g-h_n \|_1^2 \right| &= \left| \| v \|_1^2 - \| \Delta h_n + v \|_1^2 \right|
			\\
			& = \left| \int_\mathbb{R} |\omega| \left( |\widehat{v} (\omega)|^2 - ( \widehat{\Delta h_n}(\omega) + \widehat{v}(\omega) )\overline{(\widehat{\Delta h_n}(\omega) + \widehat{v}(\omega))} \right) \diff \omega \right|
			\\
			& \le \int_{\mathbb{R}} \left| \widehat{\Delta h_n}(\omega) \right| \left(  \left| \omega \widehat{\Delta h_n}(\omega) \right| + 2\left| \omega \widehat{v}(\omega) \right| \right) \diff \omega
			\\
			& \le \| \Delta h_n \|_{L^2} \left( \| \Delta h_n' \|_{L^2} + 2 \| v' \|_{L^2} \right)
			\\
			& \le \| \Delta h_n \|_{L^2} \left( \| h_n' \|_{L^2} + 3\| h' \|_{L^2}+ \| g' \|_{L^2} \right).
		\end{split}
	\end{equation*}
By the first bound in \eqref{eq:twobds}, we have $\| \Delta h_n \|_{L^2} \le \big(\| h \|_{L^\infty} +\| h_n \|_{L^\infty}\big) \cdot \| \Delta h_n \|_{L^1} \le 2\sqrt{2}\| \Delta h_n \|_{L^1}$. Invoking the second bound in \eqref{eq:twobds} we thus obtain
	\begin{equation*}
		\left| \| g-h \|_1^2 - \| g-h_n \|_1^2 \right| \le 80 \cdot \| \Delta h_n \|_{L^1} ,
	\end{equation*}
	which tends to zero by the hypothesis, completing the proof.
\end{proof}

We now provide a continuity-type result for the hook functional defined in \eqref{eq:vershik_kerov_expression}.

\begin{proposition}\label{p:W_cont}
	Fix $\phi \in \mathcal{Y}_1$ such that $I_{\operatorname{hook}}(\phi) < \infty$. Then, there exists a sequence of partitions $\lambda^{(n)}$ with $\lambda^{(n)} \vdash n$ such that
	\begin{equation}\label{eq:hookconv}
	I_{\operatorname{hook}}(\phi_{\lambda^{(n)}})	  \xrightarrow[n\to +\infty]{} I_{\operatorname{hook}}(\phi).
	\end{equation}
	Furthermore, $\phi_{\lambda^{(n)}}$ converges to $\phi$ in $L^1$.
		\end{proposition}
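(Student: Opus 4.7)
My plan is a direct discretization: set $\lambda^{(n)}_i := \lfloor \sqrt{n}\, \phi((i-1)/\sqrt{n})\rfloor$ for $i\ge 1$, with a preliminary truncation of $\phi$ at height $\sqrt{n}$ in case $\phi(0)=+\infty$, and then adjust by at most $O(\sqrt{n})$ boxes added to or removed from the first row so that $|\lambda^{(n)}|=n$. A Riemann-sum argument together with $\int\phi=1$ justifies the $O(\sqrt{n})$ size discrepancy. For the $L^1$ convergence of $\phi_n:=\phi_{\lambda^{(n)}}$ to $\phi$, monotonicity of $\phi$ gives $\phi(x)-1/\sqrt{n}\le \phi_n(x)\le \phi(x-1/\sqrt{n})$ at every continuity point $x>0$, hence pointwise a.e.\ convergence; since both $\phi_n$ and $\phi$ are nonnegative with unit $L^1$ norm, Scheff\'e's lemma yields $\phi_n\to\phi$ in $L^1$.

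For the convergence of the hook functional I would use the Vershik--Kerov identity \eqref{eq:vershik_kerov_expression} and treat the two nontrivial summands separately. The $45^\circ$ rotation underlying the Russian coordinates is area-preserving, so $\|\overline{\phi_n}-\overline\phi\|_{L^1}=\|\phi_n-\phi\|_{L^1}\to 0$. Each $\overline{\phi_n}$ has compact support and is $2$-Lipschitz, and $\|\overline\phi\|_1<\infty$ follows from $I_{\mathrm{hook}}(\phi)<\infty$, so \cref{prop:L1_continuity} yields $\tfrac12\|\overline{\phi}_{\mathrm{VKLS}}-\overline{\phi_n}\|_1^2\to \tfrac12\|\overline{\phi}_{\mathrm{VKLS}}-\overline\phi\|_1^2$. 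For the $\arccosh$ integral, the natural tool is dominated convergence: a pointwise bound of the form $\overline{\phi_n}(y)\le \overline\phi(y-C/\sqrt{n})$ on $\{|y|>\sqrt{2}\}$ (stemming from the discretization) combined with the assumed finiteness of $\int_{|y|>\sqrt{2}}\overline\phi(y)\arccosh|y/\sqrt{2}|\,dy$ supplies an integrable majorant.

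The main obstacle is this last step, since a uniform-in-$n$ pointwise domination of $\overline{\phi_n}$ by a shifted version of $\overline\phi$ in the tails is somewhat delicate (the discretization can distort the Russian profile near the corners). A clean workaround is to first approximate $\phi$ in the ``$L^1$-plus-hook'' topology by bounded, compactly supported shapes $\phi^{(m)}\in\mathcal{Y}_1$---the existence of such an approximation follows from truncation together with a DCT argument that uses $I_{\mathrm{hook}}(\phi)<\infty$ to control the $\arccosh$ contribution---apply the above discretization to each $\phi^{(m)}$, for which both $L^1$ convergence and hook convergence are immediate from Riemann-sum estimates since $\overline{\phi^{(m)}}$ has compact support, and then extract a diagonal subsequence $\lambda^{(n)}:=\lambda^{(n_k,k)}$. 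This reduction sidesteps any direct tail estimate at the expense of an extra approximation layer.
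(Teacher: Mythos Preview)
Your workaround---first approximate $\phi$ by bounded, compactly supported shapes $\phi^{(m)}\in\mathcal{Y}_1$, discretize those, and pass to a diagonal---is exactly the route the paper takes: it truncates to $\phi_K$, rescales to land back in $\mathcal{Y}_1$, discretizes using the compact-support case, and closes with a triangle-inequality argument. So the overall strategy matches the paper's.

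Two small points worth tightening. First, in the compactly supported case your adjustment ``add or remove $O(\sqrt n)$ boxes from the first row'' is fragile on the removal side: with your indexing $\mu_i=\lfloor\sqrt n\,\phi((i-1)/\sqrt n)\rfloor$ one can have $|\mu|>n$, and stripping boxes only from $\mu_1$ may force $\mu_1<\mu_2$. The paper avoids this by using $\mu_i=\lfloor\sqrt n\,\phi(i/\sqrt n)\rfloor$, which guarantees $|\mu|\le n$ so one only ever \emph{adds} boxes (via a simple corner-filling scheme). Second, your direct attempt claims an $O(\sqrt n)$ size discrepancy for general $\phi$ after truncating only at height $\sqrt n$; this does not control the case $\phi^{-1}(0)=+\infty$, where the number of nonzero parts can exceed $\sqrt n$. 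You correctly flag the tail issue and route around it, but it is worth noting that the $O(\sqrt n)$ bound itself already needs compact support, not just bounded height---which is another reason the paper, like your workaround, reduces to the compactly supported case first.
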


		\begin{proof}
			Fix $\varepsilon>0$. 
			Assume first that the function $\phi$ is such that $\phi(0),\phi^{-1}(0)<+\infty$ and set $a=\phi(0), b= \phi^{-1}(0)$. This implies that the transformed function $\overline{\phi}$ defined as in \eqref{eq:phi_bar} is compactly supported. Define the partition
			\begin{equation*}
				\mu_i = \left\lfloor \sqrt{n} \phi( i/\sqrt{n} ) \right\rfloor,
			\end{equation*}
			which, in words is the largest partition to fit below the graph of $x\to \sqrt{n} \phi(x/\sqrt{n})$. Let $m = |\mu|$ and since $\int_0^{+\infty} \sqrt{n}\phi(x/\sqrt{n}) \diff x =n$ we have $0 \le n-m \le \sqrt{2n} (a+b).$
			The second inequality follows from the fact that the length of the graph of the partition $\mu$ is contained within a strip of width $\sqrt{2}$ from the graph of $\sqrt{n}\phi(x/\sqrt{n})$ and that the length of the latter is at most $(a+b)\sqrt{n}$. We can at this point define the partition $\lambda \vdash n$ by adding $n-m$ boxes to the partition $\mu$. There are many ways to do so. One way is to define the sequence $\mu^{(i)}$ as $\mu^{(0)}=\mu$ and
			\begin{equation*}
				\begin{split}
					&\mu^{(2i+1)}=\mu^{(2i)} + \mathbb{e}_{r_i} \quad \text{for } i\ge 0,
					\qquad
	\left(\mu^{(2i)}\right)'=\left(\mu^{(2i-1)}\right)' + \mathbb{e}_{c_i} \quad \text{for } i\ge 1
				\end{split}
			\end{equation*}
			where $\mathbb{e}_k$ are standard basis vectors and indices $r_i,c_i$ are defined by
			\begin{equation*}
				r_i = \min\{ r \ge i : \mu^{(2i)} + \mathbb{e}_{r} \text{ is a partition} \},
				\quad
				c_i = \min\{ c \ge i : \left(\mu^{(2i-1)}\right)' + \mathbb{e}_{c} \text{ is a partition} \}.
			\end{equation*}
			This simple sequential construction is explained by the example below
			\begin{equation*}
				\begin{ytableau}
					*(white) & *(white) \\
					*(white) & *(white) & *(white) \\
					*(white) & *(white) & *(white) \\
					*(white) & *(white) & *(white) & *(white) \\
					*(white) & *(white) & *(white) & *(white) & *(white)
				\end{ytableau}
				\qquad
				\begin{ytableau}
					*(white) & *(white) \\
					*(white) & *(white) & *(white) \\
					*(white) & *(white) & *(white) \\
					*(white) & *(white) & *(white) & *(white) \\
					*(white) & *(white) & *(white) & *(white) & *(white) & *(red)
				\end{ytableau}
				\qquad
				\begin{ytableau}
					*(red) \\
					*(white) & *(white) \\
					*(white) & *(white) & *(white) \\
					*(white) & *(white) & *(white) \\
					*(white) & *(white) & *(white) & *(white) \\
					*(white) & *(white) & *(white) & *(white) & *(white) & *(red)
				\end{ytableau}
				\qquad
				\begin{ytableau}
					*(red) \\
					*(white) & *(white) \\
					*(white) & *(white) & *(white) \\
					*(white) & *(white) & *(white) \\
					*(white) & *(white) & *(white) & *(white) & *(red) \\
					*(white) & *(white) & *(white) & *(white) & *(white) & *(red)
				\end{ytableau}
				\qquad
				\begin{ytableau}
					*(red) & *(red) \\
					*(white) & *(white) \\
					*(white) & *(white) & *(white) \\
					*(white) & *(white) & *(white) \\
					*(white) & *(white) & *(white) & *(white) & *(red) \\
					*(white) & *(white) & *(white) & *(white) & *(white) & *(red)
				\end{ytableau}
				\qquad
				\cdots,
			\end{equation*}
			where the first partition should be $\mu$ and red cells represent cells that are added as we move on with the sequence $\mu^{(i)}$. Then we define $\lambda:= \mu^{(n-m)}$ and it is clear by construction that
			\begin{equation*}
				\| \phi_\lambda - \phi \|_{L^1} \le \frac{2\sqrt{2}(a+b)}{\sqrt{n}}.
			\end{equation*}
			This is because the graph of the partition $\lambda$ stays within a strip of length $2\sqrt{2}$ around the graph of $\sqrt{n}\phi(x/\sqrt{n})$. Then, we have
			\begin{equation*} 
				\lim_{n \to + \infty} I_\mathrm{hook}(\phi_\lambda) = I_\mathrm{hook}(\phi),
			\end{equation*}
			which follows since
			\begin{equation*}
				\lim_{n \to + \infty} \| \Omega - \overline{\phi}_\lambda \|_1^2 = \| \Omega - \overline{\phi} \|_1^2,
				\quad
				\lim_{n \to + \infty} \int\limits_{|y|>1} \overline{\phi}_\lambda(y) \arccosh|y| \diff y = \int\limits_{|y|>1} \overline{\phi}(y) \arccosh|y| \diff y,
			\end{equation*}
			where the first limit follows from \Cref{prop:L1_continuity}, while the second uses the fact that $\overline{\phi}, \overline{\phi}_\lambda$ are compactly supported.

			We now would like to remove the assumption that $\phi(0),\phi^{-1}(0)$ are bounded to allow shapes $\phi$ with possibly infinite tails. Let therefore $\phi$ be an arbitrary shape in $\mathcal{Y}_1$. For any $K>0$ we define the truncation $\phi_K(x) := (\phi(x)\wedge K)\mathbf{1}_{x<K}$. It is clear that $\phi_K \in \mathcal{Y}$ and $\phi_K \xrightarrow[K\to +\infty]{} \phi$ in $L^1$. Moreover, since the convergence of $\overline{\phi}_K$ to $\overline{\phi}$ is monotone we have that
			\begin{equation*}
				\lim_{K \to + \infty} I_\mathrm{hook}(\phi_K) = I_\mathrm{hook}(\phi).
			\end{equation*}
			Here the monotone convergence was needed to control the convergence of the integral of $\phi_K(y)$ against $\arccosh|y|$. Let $\theta_K=\|\phi_K\|$ and since $\phi_K$ converges in $L^1$ norm to $\phi$ we also have $\theta_K \to 1$ as $K \to +\infty$. Defining 
			\begin{equation*}
				\psi_K (x) = \frac{1}{\sqrt{\theta_K}} \phi_K( \sqrt{\theta_K} x),
			\end{equation*}
			we have $\| \psi_K \|=1$ and $I_\mathrm{hook}(\psi_K) = \theta_K I_\mathrm{hook}(\phi_k).$
			Since $\psi_K$ satisfies $\psi_K(0),\psi_K^{-1}(0)<+\infty$ we can now use the previous part of the proof to find an $n$ large enough and a partition $\lambda^{(K)}\vdash n$ such that $I_\mathrm{hook}(\phi_{\lambda^{(K)}})$ is arbitrarily close to $I_\mathrm{hook}(\psi_K)$. Then we have
			\begin{equation*}
				\begin{split}
					& \left| I_\mathrm{hook}(\phi_{\lambda^{(K)}}) -  I_\mathrm{hook}(\phi) \right|
					\\ & \le \left| I_\mathrm{hook}(\phi_{\lambda^{(K)}}) - I_\mathrm{hook}(\psi_K)  \right| + \left| I_\mathrm{hook}(\psi_K) - I_\mathrm{hook}(\phi_K) \right| + \left| I_\mathrm{hook}(\phi_K) - I_\mathrm{hook}(\phi) \right|
					\\
					&
					= \left| I_\mathrm{hook}(\phi_{\lambda^{(K)}}) - I_\mathrm{hook}(\psi_K)  \right|+ (1-\theta_K) \left| I_\mathrm{hook}(\phi_K) \right| + \left| I_\mathrm{hook}(\phi_K) - I_\mathrm{hook}(\phi) \right|.
				\end{split}
			\end{equation*}
			Finally, choosing $K$ large enough we can make the last two terms smaller than $\varepsilon/3$ and later letting $n$ grow large we can find $n_\varepsilon$ such that the first term is smaller than $\varepsilon/3$. This  completes the proof of the proposition.
		\end{proof}

  The following proposition discusses continuity for the functional $\mathcal{V}^{(q)}$ defined in  \eqref{def:Vq}.
		\begin{proposition}\label{p:vqconv}
			Fix {$y\in \R$}, and $\phi \in \mathcal{Y}_1$.  Take any sequence $\{y_n\}_{n\ge1}$ such that $y_n\to y$. Take any sequence $\{\phi_n\}_{n\ge1}$ in $\mathcal{Y}_1$  such that $\phi_n\to \phi$ in $L^1$. Then, 
			\begin{equation}\label{eq:vqconv}
				\mathcal{V}^{(q)}(y_n;\phi_{n}) \xrightarrow[n\to +\infty]{} \mathcal{V}^{(q)}(y;\phi)
			\end{equation}
		\end{proposition}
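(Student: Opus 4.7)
The plan is to leverage the $1$-Lipschitz property of the positive-part function to reduce the claim to controlling $\|\phi_n-\phi\|_{L^1}$ and $|y_n-y|$. Setting $\Delta_n(x) := [\phi_n(x)-x-y_n]_+ - [\phi(x)-x-y]_+$, the elementary bound $|[a]_+-[b]_+|\le |a-b|$ gives the pointwise estimate
\begin{equation*}
|\Delta_n(x)| \;\le\; |\phi_n(x)-\phi(x)| \;+\; |y_n-y|
\end{equation*}
for every $x\in[0,\infty)$. Moreover, $\Delta_n(x)=0$ whenever both $\phi_n(x)\le x+y_n$ and $\phi(x)\le x+y$, so the difference $\mathcal{V}^{(q)}(y_n;\phi_n)-\mathcal{V}^{(q)}(y;\phi)$ is supported on
\begin{equation*}
B_n \;:=\; \{x>0:\phi_n(x)>x+y_n\}\;\cup\;\{x>0:\phi(x)>x+y\}.
\end{equation*}
The first term in the pointwise bound integrates to $\|\phi_n-\phi\|_{L^1}$, which vanishes by hypothesis; the second term is constant in $x$ and therefore only makes sense to integrate if $|B_n|$ is uniformly bounded.

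The key technical step is precisely this uniform bound on $|B_n|$. For any $\psi\in\mathcal{Y}_1$, monotonicity together with the unit-integral normalization yields the a priori estimate $x\,\psi(x)\le \int_0^x \psi(u)\diff u\le 1$, so $\psi(x)\le 1/x$ for all $x>0$. Consequently,
\begin{equation*}
\{x>0 : \psi(x) > x+y\}\;\subseteq\; \{x>0 : x(x+y)<1\},
\end{equation*}
which is a bounded interval whose right endpoint $\tfrac{1}{2}(-y+\sqrt{y^{2}+4})$ depends continuously on $y$. Applying this to $\psi=\phi_n$ with parameter $y_n$ (and to $\psi=\phi$ with parameter $y$) and using $y_n\to y$, we obtain $|B_n|\le M$ for some $M=M(y)<\infty$ and all $n$ sufficiently large.

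Putting the two ingredients together yields
\begin{equation*}
|\mathcal{V}^{(q)}(y_n;\phi_{n})-\mathcal{V}^{(q)}(y;\phi)|
\;\le\; \logq\!\int_{B_n}|\Delta_n(x)|\,\diff x
\;\le\; \logq\,\bigl(\|\phi_n-\phi\|_{L^1}+M\,|y_n-y|\bigr)\xrightarrow[n\to\infty]{}0,
\end{equation*}
which proves \eqref{eq:vqconv}. The main obstacle, namely the unbounded domain of integration $[0,\infty)$, is overcome entirely by the elementary a priori bound $\psi(x)\le 1/x$ available for any $\psi\in\mathcal{Y}_1$; once the effective support is cut off to a bounded set, the argument reduces to a standard Lipschitz estimate and requires no further input.
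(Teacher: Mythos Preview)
Your proof is correct. It takes a genuinely different route from the paper's argument, although both start from the same Lipschitz estimate $|[a]_+-[b]_+|\le|a-b|$.

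The paper splits the difference via the triangle inequality into a ``$\phi_n\to\phi$'' piece and a ``$y_n\to y$'' piece. The first piece is bounded by $\|\phi_n-\phi\|_{L^1}$ exactly as you do. For the second piece the paper does \emph{not} localize the support; instead it invokes dominated convergence with the dominating function $\phi(z)+[-z-w]_+$, where $w=y\wedge\inf_n y_n$. Your approach avoids DCT entirely: you observe that $\psi\in\mathcal{Y}_1$ forces $\psi(x)\le 1/x$, which confines the effective support of the integrand to an interval whose length is bounded uniformly in $n$, and then simply integrate the pointwise bound over that interval. This yields the explicit quantitative estimate $\logq(\|\phi_n-\phi\|_{L^1}+M|y_n-y|)$, which the paper's DCT argument does not provide. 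Both arguments are elementary; yours is a bit sharper and more self-contained, while the paper's is perhaps the more routine way to handle the unbounded domain.
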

		
		\begin{proof} Using the fact that $|[a]_+-[b]_+|\le |a-b|$, we deduce that
			$$\int_0^\infty \big|[\phi_n(z)-z-y_n]_+-[\phi(z)-z-y_n]_+\big|dz\le \int_0^\infty |\phi_n(z)-\phi(z)|dz \to 0.$$
			Thus it suffices to show
			\begin{align}\label{e:vqpart}
				\int_0^\infty \big|[\phi(z)-z-y]_+-[\phi(z)-z-y_n]_+\big|\diff z \to 0.
			\end{align}
			Note that $|[\phi(z)-z-y]_+-[\phi(z)-z-y_n]_+| \to 0$
			pointwise. Set $w:=y \wedge \min \{y_n \mid n\ge 1\}$. Using the fact that $|[a]_+-[b]_+|\le [a]_+\vee [b]_+$ we deduce that  $$|[\phi(z)-z-y]_+-[\phi(z)-z-y_n]_+| \le \phi(z)+[-z-w]_+,$$ which is integrable. Thus by dominated convergence theorem, we arrive at \eqref{e:vqpart}.
		\end{proof}

\subsection{Existence of lower-tail rate function $\mathcal{F}$} \label{sec:exists} In this section we use probabilistic arguments to show the existence of a lower-tail rate function for the first row of shifted cylindric Plancherel measure (\Cref{thm:limit_q_Laplace}). The rate function is given by $\mathcal{F}$ defined in \eqref{eq:f_Plancherel}. For the unshifted measure, we shall provide sharp lower-tail estimates for the first row in \Cref{prop:expapprox}. The existence of the rate function for the unshifted ones requires further argument involving convex analysis which we postpone to later subsections.
  
		\begin{theorem} \label{thm:limit_q_Laplace} Let $\h$ be the height function of the $q$-PNG with intensity $\Lambda=2(1-q)$ and droplet initial condition. Let $\lambda/\rho\sim \Pr_{\mathsf{cPlan}(t(1-q))}$, $\chi\sim q$-$\mathrm{Geo}(q)$, and $S\sim \mathrm{Theta}(q,1)$ all independent from $\h$. For all $x \in \R$ we have
			\begin{equation} \label{eq:limit_q_Laplace}
				-\lim_{t\to \infty}   \frac{1}{t^2} \log \mathbb{P}(\h(0,t)+\chi+S\le xt)=-\lim_{t\to \infty}   \frac{1}{t^2} \log \mathbb{P}_{\mathsf{cPlan}(t(1-q))}(\lambda_1+S\le xt)=\mathcal{F}(x).
			\end{equation}
		\end{theorem}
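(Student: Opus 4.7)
The plan is to handle the two equalities in \eqref{eq:limit_q_Laplace} sequentially. The first is immediate from \cref{thm:matching_qPNG_cylindric_plancherel} applied at $x=0$ with $\theta=\sqrt{2}(1-q)$: the intensity $\Lambda=\theta^2/(1-q)=2(1-q)$ and the cylindric Plancherel parameter is $\gamma=\theta t/\sqrt{2}=(1-q)t$, so $\mathfrak{h}(0,t)+\chi\stackrel{d}{=}\lambda_1$ on the nose. For the second equality I would open \cref{cor:iden} with $\zeta=1$, $s=\lfloor xt\rfloor$, writing
\begin{equation*}
\mathbb{P}(\mathfrak{h}(0,t)+\chi+S\le\lfloor xt\rfloor)=\sum_{n\ge 0}e^{-t^2}\frac{t^{2n}}{n!}\sum_{\mu\vdash n}\frac{(f^\mu)^2}{n!}\prod_{i\ge 1}\frac{1}{1+q^{\lfloor xt\rfloor+i-\mu_i}},
\end{equation*}
and run a Laplace-type analysis in the two parameters $\kappa=n/t^2\in(0,\infty)$ and $\phi_\mu\in\mathcal{Y}_1$ defined by \eqref{eq:defphi}.

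The heart of the argument is to show that each summand equals $\exp\bigl(-t^2\,\mathcal{W}^{(q)}(\kappa,\phi_\mu;x)+o(t^2)\bigr)$ on \emph{tame} partitions, obtained by combining three uniform estimates. Stirling gives $e^{-t^2}t^{2n}/n!=\exp\bigl(-t^2(1+\kappa\log\kappa-\kappa)+O(\log t)\bigr)$ uniformly for $\kappa$ in compact subsets of $(0,\infty)$. \cref{prop:plancherel_hook_integral} gives $(f^\mu)^2/n!=\exp\bigl(-\kappa t^2(1+2I_{\mathrm{hook}}(\phi_\mu))+o(t^2)\bigr)$ uniformly over $\mu\vdash n$. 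Finally, the elementary decomposition $\log(1+q^k)=[-k]_+\logq+\delta(k)$ with $\delta(k)\in[0,\log 2]$, together with the Riemann-sum approximation
\begin{equation*}
\sum_{i\ge 1}[\mu_i-\lfloor xt\rfloor-i]_+=n\int_0^\infty[\phi_\mu(y)-y-x/\sqrt{\kappa}]_+\,dy+O(\mu_1+\ell(\mu)+t),
\end{equation*}
yields $\sum_{i\ge 1}\log\bigl(1+q^{\lfloor xt\rfloor+i-\mu_i}\bigr)=\kappa t^2\,\mathcal{V}^{(q)}(x/\sqrt{\kappa};\phi_\mu)+o(t^2)$ whenever $\mu_1+\ell(\mu)=o(t^2)$. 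Summing the three contributions reproduces $\mathcal{W}^{(q)}(\kappa,\phi_\mu;x)$ on this class.

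For the upper bound, I would truncate $n\le Kt^2$ via the Poisson tail of $|\lambda|\sim\mathrm{Poi}(t^2)$ and restrict $\kappa\in(0,\kappa^*]$ using \cref{lem:range_kappa}. The combinatorial count $\mathsf{p}_n\le e^{O(\sqrt n)}=e^{o(t^2)}$ from \eqref{eq:pnbd} is absorbed into the $o(t^2)$ error, so on tame partitions the sum is bounded by $\exp(-t^2\inf_{\kappa,\phi}\mathcal{W}^{(q)}+o(t^2))=\exp(-t^2\mathcal{F}(x)+o(t^2))$. Non-tame partitions (with $\mu_1$ or $\ell(\mu)$ of order $t^2$) are handled by the trivial bound $\prod(1+q^\bullet)^{-1}\le 1$ together with the observation that such shapes force $I_{\mathrm{hook}}(\phi_\mu)\to+\infty$ through the $\arccosh$ term in \eqref{eq:vershik_kerov_expression}, making their contribution $\le e^{-Ct^2}$ for any prescribed $C$. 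For the lower bound, fix $\varepsilon>0$ and pick $\kappa_*\in(0,\kappa^*)$ together with $\phi_*\in\mathcal{Y}_1$ of finite hook integral such that $\mathcal{W}^{(q)}(\kappa_*,\phi_*;x)<\mathcal{F}(x)+\varepsilon$ (finiteness of $I_{\mathrm{hook}}(\phi_*)$ already forces $\phi_*$ to have $\arccosh$-integrable tail, hence to be tame). Apply \cref{p:W_cont} to produce $\mu^{(n)}\vdash n$ with $\phi_{\mu^{(n)}}\to\phi_*$ in $L^1$ and $I_{\mathrm{hook}}(\phi_{\mu^{(n)}})\to I_{\mathrm{hook}}(\phi_*)$, specialize $n=\lfloor\kappa_*t^2\rfloor$, and invoke \cref{p:vqconv} for the continuity of $\mathcal{V}^{(q)}$. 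Lower-bounding the full sum by this single term and letting $\varepsilon\downarrow 0$ closes the loop.

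The main obstacle I anticipate is the uniformity of the Riemann-sum step for pathological shapes: a single enormous row or column sends $\mu_1$ or $\ell(\mu)$ to $n$, and the resulting $O(n)=O(t^2)$ error swamps the main term. The resolution is to abandon the precise decomposition on such shapes and rely instead on $\prod(1+q^\bullet)^{-1}\le 1$ paired with the hook suppression: since $\phi_\mu$ then departs from any $\arccosh$-integrable limit, \cref{prop:plancherel_hook_integral} dooms its contribution to be $\le e^{-Ct^2}$ with $C$ arbitrarily large. Calibrating the tameness threshold $M(t)\to\infty$ with $M(t)=o(t)$ so that both the Riemann error on the tame side and the hook penalty on the non-tame side beat $t^2$ is the technical core of the argument, but it uses only classical Plancherel estimates.
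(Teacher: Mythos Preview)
Your overall strategy coincides with the paper's: reduce via \cref{cor:iden} to the Poissonized Plancherel expectation, then run a Laplace-type analysis combining Stirling, \cref{prop:plancherel_hook_integral}, and a Riemann-sum approximation for the product, with the lower bound handled exactly as you describe through \cref{p:W_cont} and \cref{p:vqconv} applied to a single near-optimal term.

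The one substantive difference lies in the Riemann-sum step. You bound the error by $O(\mu_1+\ell(\mu)+t)$ and then split into tame versus non-tame shapes, proposing to kill the latter via hook suppression. The paper instead proves that the error is $O(t)$ \emph{uniformly} over all $\mu$ with $|\mu|\le Mt^2$, the constant depending only on $M$ and $x$; this is the content of its Step~2, specifically \eqref{e:approxf1}--\eqref{e:approxf2}. The mechanism is the crude bound $\mu_i\le n/i\le Mt^2/i$, which after splitting the sum $\sum_i q^{xt+i-\mu_i}/(1+q^{xt+i-\mu_i})$ at $k=\lceil t(\sqrt{x^2+4M}-x)\rceil=O(t)$ turns the tail into a convergent geometric series. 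Thus your anticipated ``main obstacle'' evaporates: no tame/non-tame dichotomy is needed, and the hook-suppression argument---which you correctly flag as delicate---is never invoked. Your route is not wrong, but the detour is both unnecessary and genuinely harder to close: for a shape with a single protruding row $\mu_1=M(t)t$ and the rest small, the $\arccosh$ penalty in $I_{\mathrm{hook}}$ scales only like $(\mu_1/n)\log(\mu_1/\sqrt{n})\sim M(t)\log M(t)/(\kappa t)$, so suppression at level $e^{-Ct^2}$ forces $M(t)\log M(t)\gtrsim t$, while the tame side needs $M(t)t=o(t^2)$. The window $M(t)\sim t/\log t$ exists, but verifying the $I_{\mathrm{hook}}$ lower bound uniformly over \emph{all} non-tame shapes (not just the single-row extreme) would require additional work that the paper's sharper uniform estimate sidesteps entirely.
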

		
		\begin{proof} The first equality in \eqref{eq:limit_q_Laplace} is a consequence of \Cref{thm:matching_qPNG_cylindric_plancherel}. We focus on proving that the limit exists and is given by $\mathcal{F}(x)$. From \Cref{cor:iden} (with $\theta=\sqrt{2}(1-q)$, $\zeta=1$, and $s=\lfloor xt \rfloor$), we have
  $$\mathbb{P}(\h(0,t)+\chi+S\le xt)=\mathbb{E}_{\mathsf{Plan}(t)} \left( \prod_{i=1}^{\infty} \frac{1}{1+q^{\lfloor xt \rfloor+i-\lambda_i}} \right).$$
  Hence it suffices to analyze the right-hand side of the above equation. {This analysis essentially follows the idea of Varadhan's lemma.} For clarity, we divide the proof into two steps. In \textbf{Step 1} we prove the theorem assuming a technical estimate \eqref{def:approxf}, which in turn is proven in \textbf{Step 2}.

			\noindent\textbf{Step 1.} Fix any $x\in \R$ and $\e>0$. Since $\mathcal{F}$ is obtained by minimizing $\mathcal{W}^{(q)}$ (see \eqref{eq:f_Plancherel}), get ${\kappa}_*, {\phi}_*$ (depending on $\e$) such that
   $$\mathcal{W}^{(q)}(\kappa_*,\phi_*;x) \le \mathcal{F}(x)+\e.$$
   Due to \Cref{lem:range_kappa} we may choose $\kappa_*$ so that $\kappa_*\le \kappa^*$ defined in \Cref{lem:range_kappa}.
   %   as the minimizers of the functional $\mathcal{W}(\, \cdot, \cdot \,;x)$.  Since the set $\overline{Y}_1$ is closed and $I_{\mathrm{hook}}$ and $\mathcal{V}^{(q)}$ are continuous via  \Cref{p:W_cont} and \Cref{p:vqconv}, the minimizer $\phi_*$ exists.

             \noindent\textbf{Lower Bound.} Fix an $M>\max\{{\kappa}^*,\mathcal{F}(x),e^2\}$. Using tail estimates for Poisson random variable $X\sim \mathsf{Poi}(t^2)$ we have
			\begin{align}\label{eq:poi}
				\mathbb{P}_{\mathsf{Plan}(t)} \left(|\lambda| >Mt^2\right)=\mathbb{P}_{\mathsf{Poi}(t^2)} \left(X > Mt^2\right) \le e^{-Mt^2}.
			\end{align}
			We claim that 
			\begin{align}\label{eq:trunc}
				\mathbb{E}_{\mathsf{Plan}(t)} \left( \prod_{i=1}^{\infty} \frac{1}{1+q^{\lfloor xt \rfloor+i-\lambda_i}}\ind_{|\lambda|\le Mt^2} \right) \le e^{-t^2 \mathcal{F}(x) + O(t)}.
			\end{align}
			Combining the above claim with \eqref{eq:poi}, by the choice of our $M$ we see that
			\begin{align*}
				\liminf_{t\to\infty} -\frac1{t^2}\log \mathbb{E}_{\mathsf{Plan}(t)} \left( \prod_{i=1}^{\infty} \frac{1}{1+q^{\lfloor xt \rfloor+i-\lambda_i}} \right)  \ge \mathcal{F}(x),
			\end{align*}
			which verifies the lower bound. Let us now focus on proving \eqref{eq:trunc}.  We expand the expectation of the $q$-product over the Plancherel measure
			\begin{equation} \label{eq:expectation_qL_expanded}
			\begin{aligned}	\mathbb{E}_{\mathsf{Plan}(t)} \left( \prod_{i=1}^{\infty} \frac{1}{1+q^{\lfloor xt \rfloor+i-\lambda_i}}\ind_{|\lambda|\le Mt^2} \right) & \le \mathbb{E}_{\mathsf{Plan}(t)} \left( \prod_{i=1}^{\infty} \frac{1}{1+q^{xt +i-\lambda_i}}\ind_{|\lambda|\le Mt^2} \right)
				\\ & = \sum_{n\in \Z\cap[0,Mt^2]} e^{-t^2} \frac{t^{2n}}{n!} \sum_{\lambda \vdash n }  \frac{(f^\lambda)^2}{n!} \prod_{i=1}^{\infty} \frac{1}{1+q^{x t+i-\lambda_i}}.
    \end{aligned}
			\end{equation}
			We will produce estimates of the various factors appearing in the summation on the right-hand side of \eqref{eq:expectation_qL_expanded}. Fix a partition $\lambda$ with $|\lambda|\le Mt^2$.
			 We claim that 		\begin{equation}\label{def:approxf}
				\prod_{i=1}^{\infty} \frac{1}{1+q^{x t+i-\lambda_i}} = e^{-n \mathcal{V}^{(q)}(xt/\sqrt{n}; \phi_\lambda) + O(t)}, \quad \prod_{i=1}^{\infty} \frac{1}{1+q^{\lfloor x t\rfloor+i-\lambda_i}} = e^{-n \mathcal{V}^{(q)}(\lfloor xt \rfloor/\sqrt{n}; \phi_\lambda) + O(t)}
			\end{equation}
			where $n=|\lambda|$ the size of the partition $\lambda$ and $\mathcal{V}^{(q)}$ and $\phi_\lambda$ are defined in \eqref{def:Vq} and \eqref{eq:defphi} respectively. The error term $O(t)$ appearing above depends only on $M$ and $x$. We assume \eqref{def:approxf} for the moment and complete the proof of the theorem. Setting $n=\kappa t^2$, from \Cref{prop:plancherel_hook_integral} and the approximation of the density of the Poisson distribution, we have
			\begin{equation}\label{eq:hookappr}
				\frac{(f^\lambda)^2}{n!} =  e^{-t^2 \kappa( 1+ 2 I_{\mathrm{hook}} (\phi_\lambda)) + O(t\log t)}, \quad  e^{-t^2}\frac{t^{2n}}{n!} = e^{- t^2 (1-\kappa + \kappa \log \kappa) + O(\log t)}
			\end{equation}
			respectively.
			Combining the above two estimates with \eqref{def:approxf}, we are able to write 
			\begin{equation*} 
				\begin{split}
					\mbox{r.h.s.~of \eqref{eq:expectation_qL_expanded}} 
					& \le \sum_{\kappa \in \frac1{t^2}\Z \cap [0,Mt^2]} \sum_{\lambda \vdash \kappa t^2 } e^{-t^2\mathcal{W}^{(q)}(\kappa,\phi_{\lambda};x)+O(t)}  \\ & \le e^{-t^2 \mathcal{F}(x)+O(t)} \sum_{n\in \Z\cap[0,Mt^2]} \mathsf{p}_n \le e^{-t^2 \mathcal{F}(x) + O(t)}.
				\end{split}
			\end{equation*}
			Above, $\mathsf{p}_n$ denotes the number of partitions of $n$ defined in \eqref{eq:p_n}. The second inequality uses the fact that $\mathcal{F}$ is the minimizer of the $\mathcal{W}^{(q)}$ functional and the third inequality uses the bound $\mathsf{p}_n \le e^{\Con \sqrt{n}}$ from \eqref{eq:pnbd}. This proves \eqref{eq:trunc}.

			\noindent\textbf{Upper Bound.}  For the upper bound we set $n= \lfloor \kappa_* t^2 \rfloor$ and take $\lambda^{(t)}\vdash \lfloor \kappa_* t^2 \rfloor$ so that \Cref{p:W_cont} holds for $\phi=\phi_*$. We use the immediate lower bound:
			\begin{align*}
				\mathbb{E}_{\mathsf{Plan}(t)} \left( \prod_{i=1}^{\infty} \frac{1}{1+q^{\lfloor x t\rfloor+i-\lambda_i}} \right) & \ge e^{-t^2} \frac{t^{2n}}{n!} \frac{ (f^{\lambda^{(t)}})^2}{n!} \prod_{i=1}^{\infty} \frac{1}{1+q^{\lfloor x t\rfloor+i-\lambda_i^{(t)}}}.
			\end{align*}
			Taking logarithms on both sides, dividing by $-t^2$, and using the asymptotics from \eqref{def:approxf} and the second equality from \eqref{eq:hookappr} we get
			\begin{align*}
				-\frac1{t^2}\log \mathbb{E}_{\mathsf{Plan}(t)} \left( \prod_{i=1}^{\infty} \frac{1}{1+q^{\lfloor x t\rfloor+i-\lambda_i}} \right) & \le (1-\tfrac{n}{t^2}+\tfrac{n}{t^2}\log\tfrac{n}{t^2})-\tfrac1{t^2}\log\big(\tfrac1{n!} (f^{\lambda^{(t)}})^2\big)\\ & \hspace{1cm}+ \tfrac{n}{t^2}\mathcal{V}^{(q)}(\lfloor x t\rfloor/\sqrt{n};\phi_{\lambda^{(t)}}))+O(1/t).
			\end{align*}
			Since $n/t^2\to \kappa_*$ and $\lfloor x t\rfloor/t\to x$, combining \Cref{p:W_cont,p:vqconv} we have that 
			\begin{align*}
				& \limsup_{t\to\infty} -\frac1{t^2}\log \mathbb{E}_{\mathsf{Plan}(t)} \left( \prod_{i=1}^{\infty} \frac{1}{1+q^{\lfloor x t\rfloor+i-\lambda_i}} \right) \\ & \le (1-\kappa_*+\kappa_*\log\kappa_*)-\kappa_*(-1-2I_{\operatorname{hook}}({\phi}_*))+ \kappa_*\mathcal{V}^{(q)}(x/\sqrt{\kappa_*};{\phi}_*) \\ & = \mathcal{W}^{(q)}(\kappa_*,\phi_*;x) \le \mathcal{F}(x)+\e.
			\end{align*}
			Since $\e>0$ is arbitrary, taking $\e\downarrow 0$ produces a matching upper bound.

			\noindent\textbf{Step 2.} In this step we prove \eqref{def:approxf}. We shall only prove the first estimate in \eqref{def:approxf}, the argument for the second one is exactly the same. Let us write
			\begin{equation*}
				\prod_{i=1}^{\infty} \frac{1}{1+q^{x t+i-\lambda_i}} = \exp \left\{ - \sum_{i=1}^{\infty} f(i/\sqrt{n}) \right\}, \mbox{ where }f(y):=\log\big(1+q^{xt+y\sqrt{n}-\lambda_{\lceil\sqrt{n} y\rceil}}\big).
			\end{equation*}
   We rely on the following two estimates:
		\begin{align}\label{e:approxf1}
				& \sqrt{n}\int_0^{\infty} f(y)\d y -\sum_{i=1}^\infty f(i/\sqrt{n})=O(t), \\
				\label{e:approxf2}
				& \sqrt{n}\int_0^{\infty} f(y)\d y -n \mathcal{V}^{(q)}(xt/\sqrt{n}; \phi_\lambda)=O(1),
			\end{align}
		where the error term $O(t)$ depends on $M,x$ as well. Recall that $n=|\lambda|\le Mt^2$.     \eqref{def:approxf} follows from the above two estimates.

			\noindent\textbf{Proof of \eqref{e:approxf1}.} Observe that $f$ is decreasing. Hence the left-hand side of \eqref{e:approxf1} is positive. It thus suffices to look for an upper bound for the same. Observe that
			\begin{align} \nonumber
				\sqrt{n}\int_0^\infty f(y)\diff y -\sum_{i=1}^\infty f(i/\sqrt{n}) &  = \sum_{i=1}^n\sqrt{n}\int_{(i-1)/\sqrt{n}}^{i/\sqrt{n}} (f(y)-f(i/\sqrt{n}))\diff y \\ \nonumber &  = \sum_{i=1}^{\infty} \sqrt{n}\int_{(i-1)/\sqrt{n}}^{i/\sqrt{n}} \log \left[1+\frac{q^{xt+i-\lambda_i}(q^{y\sqrt{n}-i}-1)}{1+q^{xt+i-\lambda_i}}\right]\diff y  \\ \nonumber &  = \sum_{i=1}^{\infty}\int_{0}^1 \log \left[1+\frac{q^{xt+i-\lambda_i}(q^{-z}-1)}{1+q^{xt+i-\lambda_i}}\right]\diff z \\ & \le q^{-1}\sum_{i=1}^\infty \frac{q^{xt+i-\lambda_i}}{1+q^{xt+i-\lambda_i}}, \label{e:v22}
			\end{align}
			where in the last line we use the inequality $\log(1+z)\le z$ for each term in the summand. We now split the above sum into two parts:
			\begin{align*}
				\sum_{i=1}^\infty \frac{q^{xt+i-\lambda_i}}{1+q^{xt+i-\lambda_i}} =\left[\sum_{i=1}^{k}+\sum_{i=k}^\infty \right] \frac{q^{xt+i-\lambda_i}}{1+q^{xt+i-\lambda_i}}.
			\end{align*}
			For the first sum, we bound each term by $1$ and get that the sum is at most $k$. Note that for each $i$, we have $\lambda_i\le \frac1i(\lambda_1+\cdots+\lambda_i) \le n/i$. Thus, for each term in the second sum, we have $\frac{q^{xt+i-\lambda_i}}{1+q^{xt+i-\lambda_i}} \le q^{xt+i-n/i} \le q^{i-k}q^{xt+k-Mt^2/k}$. Now if we choose $k=\lceil t(\sqrt{x^2+4M}-x) \rceil$, this ensures $xt+k-Mt^2/k \ge 0$. Then the right-hand side is at most $O(t)$.

			\noindent\textbf{Proof of \eqref{e:approxf2}.} Using the elementary inequality: $[v]_+ \le \log(1+e^v)$ we obtain
			\begin{align*}
				\sqrt{n}\int_0^{\infty}f(y)\d y \ge n\logq\int_0^{\infty} [\phi_\lambda(y)-y-xt/\sqrt{n}]_{+}\d y.
			\end{align*}
We thus focus on proving
   \begin{align}\label{e:apr4}
				\sqrt{n}\int_0^{\infty}f(y)\d y \le n\logq\int_0^{\infty} [\phi_\lambda(y)-y-xt/\sqrt{n}]_{+}\d y+O(1).
			\end{align}
For each $k\ge 1$, define $a_k:=\log(1+q^{xt+k-\lambda_k})$ and $b_k:=\log(1+q^{xt+k-1-\lambda_k})$. Note that 
$$b_1\ge a_1\ge b_2 \ge a_2 \ge b_3 \ge a_3 \ge \cdots.$$
Making the change of variable $u=\log(1+q^{xt+y-\lambda_k})$ we get
\begin{equation}
    \label{e:apr5}
     \begin{aligned}
				\sqrt{n}\int_{(k-1)/\sqrt{n}}^{k/\sqrt{n}}f(y)\d y & = \int_{k-1}^k \log(1+q^{xt+y-\lambda_k})\d y \\ & = \frac1{\logq} \int_{a_k}^{b_k} \frac{ue^u}{e^u-1}\d u   = \frac1{\logq}\frac{b_k^2-a_k^2}{2}+\frac1{\logq}\int_{a_k}^{b_k} \frac{u}{e^u-1}\d u.
\end{aligned}
\end{equation}
 Note that
   $0\le b_k-a_k \le \logq$, and $\sum_{k\ge 1}\int_{a_k}^{b_k} \frac{u}{e^u-1}\d u \le \int_0^{\infty} \frac{u}{e^u-1}\d u=\frac{\pi^2}{6}$. Summing over $k\ge 1$ on both sides of \eqref{e:apr5} and applying these inequalities we get 
   \begin{align}
       \label{e:apr6}
				\sqrt{n}\int_{0}^{\infty} f(y)\d y \le \frac{\pi^2}{6\logq} +\frac1{2}\sum_{k\ge 1} (b_k+a_k).
   \end{align}
 We now partition the index set $\mathbb{N}$ into three sets:
   \begin{align*}
       I_1:=\{k\in \mathbb{N} : xt+k-\lambda_k\le 0\}, \ \ I_2:=\{k\in \mathbb{N} : xt+k-1-\lambda_k\ge 0\}, \ \  I_3:=\mathbb{N} \setminus (I_1\cup I_2).
   \end{align*}
 Note that the cardinality of $I_3$ is at most $1$, and if $k\in I_3$, we have $a_k,b_k \le \log(1+q^{-1})$. For $I_2$, using the $\log(1+x)\le x$ inequality, we observe that
 \begin{align*}
       \sum_{k\in I_2} a_k & = \sum_{k\in I_2} \log(1+q^{xt+k-\lambda_k})  \le \sum_{k\in I_2} q^{xt+k-\lambda_k} \le \frac{1}{1-q},
   \end{align*}
   and $\sum_{k\in I_2} b_k \le \frac1{1-q}$ similarly. For $I_1$, using the $\log(1+x)\le x$ inequality, we note that
   \begin{align*}
       \sum_{k\in I_1} [a_k-\logq(\lambda_k-k-xt)] & = \sum_{k\in I_1} \log(1+q^{\lambda_k-k-xt})  \le \sum_{k\in I_1} q^{\lambda_k-k-xt} \le \frac{1}{1-q},
   \end{align*}
   and $\sum_{k\in I_2} b_k-\logq(\lambda_k-k+1-xt) \le \frac1{1-q}$ similarly. Inserting these bounds back in the right-hand side of \eqref{e:apr6} we get
   \begin{align*}
       \sqrt{n}\int_0^{\infty} f(y)\d y & \le O(1)+\eta\sum_{k\in I_1}\frac12\big((\lambda_k-k+1-xt)+(\lambda_k-k-xt)\big) \\ & =O(1)+\eta\sum_{k\in I_1}\frac12\big((\lambda_k-k+1-xt)^2-(\lambda_k-k-xt)^2\big) \\ & =O(1)+\eta\sum_{k\in I_1}\sqrt{n}\int_{(k-1)/\sqrt{n}}^{k/\sqrt{n}} (\lambda_{\lfloor y\sqrt{n}\rfloor +1}-y\sqrt{n}-xt)\d z \\ & \le O(1)+n\eta\int_0^\infty [\phi_{\lambda}(y)-y-xt/\sqrt{n}]_+\d y. 
   \end{align*}
   This proves \eqref{e:apr4}, completing the proof.
		\end{proof}
		
	Next, we provide an approximation for the lower-tail rate function of the cylindric Plancherel measure that will be useful throughout. For $t>0$ and $a\ge 0$ we define
			\begin{equation}\label{def:tta}
				\mathcal{T}_t(a) := -\frac{1}{t^2} \log\left\{ \sup_{ \substack{\rho \subset \lambda \\ \lambda_1 \le a t} } \mathbb{P}_{\mathsf{cPlan}(t(1-q))}(\lambda/\rho) \right\}.
			\end{equation}
{As a convention we set $\mathcal{T}_t(a):=\infty$ for $a<0$.}
\begin{proposition}\label{proptt0} For each $t>0$, $\mathcal{T}_t(a)$ is nonnegative and decreasing in $a$. Moreover, for all $t>0$ we have $\mathcal{T}_t(0)=1-q$.
\end{proposition}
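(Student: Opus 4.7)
The proposition has two essentially independent parts: the qualitative monotonicity/positivity of $\mathcal{T}_t$, and the explicit value $\mathcal{T}_t(0)=1-q$. The former is immediate from the definition, so the real content lies in identifying the configurations that contribute to the supremum at $a=0$ and evaluating the resulting probability explicitly.

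For the qualitative part, note that $\mathbb{P}_{\mathsf{cPlan}(t(1-q))}(\lambda/\rho)\in(0,1]$ for every admissible skew partition, so the supremum in the definition of $\mathcal{T}_t(a)$ lies in $(0,1]$ as well, giving $\mathcal{T}_t(a)\ge 0$. For weak monotonicity in $a$, observe that the index set $\{(\rho,\lambda):\rho\subset\lambda,\ \lambda_1\le at\}$ is monotone non-decreasing in $a$, so the supremum is monotone non-decreasing in $a$; negating the logarithm reverses the inequality and proves that $\mathcal{T}_t$ is weakly decreasing. These two facts do not use any structural property of the cylindric Plancherel measure beyond being a probability measure.

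For the evaluation at $a=0$, the key observation is that the constraint $\lambda_1\le 0$ forces $\lambda=\varnothing$, and then $\rho\subset\lambda=\varnothing$ forces $\rho=\varnothing$. Hence the supremum in the definition collapses to the single value $\mathbb{P}_{\mathsf{cPlan}(t(1-q))}(\varnothing/\varnothing)$, so no optimization is actually needed. Substituting $\lambda=\rho=\varnothing$ (so $|\rho|=0$, $|\lambda/\rho|=0$, $f^{\varnothing/\varnothing}=1$) into the explicit formula for the cylindric Plancherel measure with $\gamma=t(1-q)$ collapses all combinatorial factors to one, yielding
\begin{equation*}
\mathbb{P}_{\mathsf{cPlan}(t(1-q))}(\varnothing/\varnothing)=e^{-\gamma^2/(1-q)}\cdot(q;q)_\infty=(q;q)_\infty\,e^{-t^2(1-q)}.
\end{equation*}
Taking $-t^{-2}\log$ produces $(1-q)-t^{-2}\log(q;q)_\infty$.

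The main obstacle is reconciling this computation with the stated exact equality $\mathcal{T}_t(0)=1-q$: since $(q;q)_\infty\in(0,1)$ for $q\in(0,1)$, the correction $-t^{-2}\log(q;q)_\infty$ is strictly positive at every finite $t$. The structural argument (unique configuration $\varnothing/\varnothing$, exponent $-t^2(1-q)$) is robust and gives the claimed leading term directly; the discrepancy amounts to the normalization constant $(q;q)_\infty$ of the underlying measure. Either one interprets $\mathcal{T}_t(0)=1-q$ as referring to the $t$-independent part, or one uses an alternative normalization convention for the cylindric Plancherel measure in which the $(q;q)_\infty$ factor is absent at the $\varnothing/\varnothing$ configuration; in either case the downstream uses of the bound $\mathcal{T}_t(0)=1-q$ are unaffected since the $O(t^{-2})$ correction is negligible.
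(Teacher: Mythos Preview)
Your approach is identical to the paper's: reduce to the single configuration $\varnothing/\varnothing$ and read off the probability. The paper's proof simply writes $\mathbb{P}_{\mathsf{cPlan}(t(1-q))}(\varnothing/\varnothing)=e^{-t^2(1-q)}$ and concludes $\mathcal{T}_t(0)=1-q$, silently dropping the $(q;q)_\infty$ normalization factor that appears in its own definition of the cylindric Plancherel measure.

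You have correctly identified this discrepancy: with the stated definition one gets $\mathcal{T}_t(0)=1-q-\tfrac{1}{t^2}\log(q;q)_\infty$, not exactly $1-q$. This is a genuine slip in the paper rather than a gap in your reasoning. Your assessment of its harmlessness is also right: the only downstream uses are (i) the bound $\mathcal{T}_t(a)\le \mathcal{T}_t(0)$ to argue $e^{-t^2\mathcal{T}_t(a)}$ dominates $e^{-t^2}$ in the proof of the two-sided estimate for $\mathbb{P}(\lambda_1\le at)$, and (ii) the identity $\mathcal{F}(x_q)=\mathcal{T}_t(0)+g(x_q)$ inside a $t\to\infty$ limit in the equicontinuity argument. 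Both are unaffected by an $O(t^{-2})$ additive correction.
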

\begin{proof} The fact that $\mathcal{T}_t$ is nonnegative and decreasing follows immediately from the definition of $\mathcal{T}_t(a)$. Notice that 
				\begin{equation*}
					 \sup_{ \substack{\rho \subset \lambda \\ \lambda_1=0} } \mathbb{P}_{\mathsf{cPlan}(t(1-q))}(\lambda/\rho) = \mathbb{P}_{\mathsf{cPlan}(t(1-q))}(\varnothing/\varnothing)=e^{-t^2(1-q)}.
				\end{equation*}
				$-\frac{1}{t^2}\log$ of the left- and right-hand sides of the above equality yields $\mathcal{T}_t(0)=1-q$.
\end{proof}

			\begin{proposition}\label{prop:expapprox}
				There exists a constant $\Con=\Con(q)>0$ such that for all $a\ge 0$ we have
				\begin{equation} \label{eq:exp_approximation_lower_tail_lambda1}
					e^{-t^2\mathcal{T}_t(a)} \le \mathbb{P}_{\mathsf{cPlan}(t(1-q))} (\lambda_1 \le at) \le e^{-t^2 \mathcal{T}_t(a) + \Con \cdot t}.
				\end{equation}
			\end{proposition}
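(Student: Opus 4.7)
The lower bound is immediate from the definition: since
\[
    \mathbb{P}_{\mathsf{cPlan}(t(1-q))}(\lambda_1\le at) \;=\; \sum_{\substack{\rho\subset \lambda \\ \lambda_1\le at}} \mathbb{P}_{\mathsf{cPlan}(t(1-q))}(\lambda/\rho),
\]
this sum is bounded below by its largest single term, which equals $e^{-t^2\mathcal{T}_t(a)}$ by definition \eqref{def:tta}.

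For the upper bound, the plan is a standard bulk/tail decomposition. Fix a constant $M>0$ to be chosen (depending only on $q$) and split
\[
    \mathbb{P}_{\mathsf{cPlan}(t(1-q))}(\lambda_1\le at) \;\le\;
     \sum_{\substack{\rho\subset\lambda,\,\lambda_1\le at \\ |\lambda|\le Mt^2}} \mathbb{P}_{\mathsf{cPlan}(t(1-q))}(\lambda/\rho) \;+\; \mathbb{P}_{\mathsf{cPlan}(t(1-q))}(|\lambda|> Mt^2).
\]
For the bulk term, every summand is at most $e^{-t^2\mathcal{T}_t(a)}$ by the very definition of $\mathcal{T}_t(a)$. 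I count the number of summands: a pair $(\lambda,\rho)$ with $\rho\subset\lambda$ and $|\lambda|\le Mt^2$ is determined by two partitions of size at most $Mt^2$, so using \eqref{eq:pnbd} the number is at most $(Mt^2+1)\,\mathsf{p}_{\lfloor Mt^2\rfloor}^2 \le \Con \,e^{\Con'\sqrt{M}\,t}$. Combining these gives the bulk bound $\le e^{-t^2\mathcal{T}_t(a)+\Con'' t}$.

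For the tail term, I use the sampling description in \cref{sec:2.3}. Since $|\lambda|=|\lambda/\rho|+|\rho|$, a union bound gives
\[
    \mathbb{P}(|\lambda|>Mt^2) \;\le\; \mathbb{P}(|\lambda/\rho|>Mt^2/2) + \mathbb{P}(|\rho|>Mt^2/2).
\]
By \eqref{eq:law_size_skew_shape}, $|\lambda/\rho|\sim\mathrm{Poi}(t^2(1-q))$, so a Chernoff bound for the Poisson distribution gives $\mathbb{P}(|\lambda/\rho|>Mt^2/2)\le e^{-c_1(M) t^2}$ with $c_1(M)\to\infty$ as $M\to\infty$. For $|\rho|$, the exponential moment bound \eqref{eq:expmombd} with, say, $\theta=\tfrac12\log q^{-1}$ combined with Markov's inequality gives $\mathbb{P}(|\rho|>Mt^2/2)\le \Con\, e^{-c_2(M) t^2}$ with $c_2(M)\to\infty$ as well. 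Choosing $M$ large enough so that $\min(c_1(M),c_2(M)) > 1-q$, and recalling that by \cref{proptt0} we have $\mathcal{T}_t(a)\le \mathcal{T}_t(0)=1-q$, we obtain
\[
    \mathbb{P}(|\lambda|>Mt^2) \;\le\; e^{-(1-q)t^2} \;\le\; e^{-t^2\mathcal{T}_t(a)}.
\]
Adding the bulk and tail bounds and absorbing the $+1$ factor into the constant in the exponent yields the desired bound $e^{-t^2\mathcal{T}_t(a)+\Con\cdot t}$.

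The only mildly subtle step is the tail estimate: $|\lambda/\rho|$ and $|\rho|$ are not independent under $\mathbb{P}_{\mathsf{cPlan}}$, but the union bound sidesteps this issue, and both marginal moment controls are already available from \cref{sec:2.3}. Everything else is bookkeeping. The constant $\Con$ produced is independent of $a$, as required.
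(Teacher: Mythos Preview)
Your proof is correct and follows essentially the same approach as the paper: a bulk/tail split, with the bulk bounded by the maximal single term times a partition-count of order $e^{O(t)}$, and the tail controlled via the Poisson law \eqref{eq:law_size_skew_shape} for $|\lambda/\rho|$ and the exponential moment bound \eqref{eq:expmombd} for $|\rho|$, using $\mathcal{T}_t(a)\le 1-q$ to absorb the tail into the main term. The only cosmetic difference is that the paper splits the union bound directly as $\{|\lambda/\rho|>Mt^2\}\cup\{|\rho|>Mt^2\}\cup(\text{bulk})$ rather than first conditioning on $|\lambda|$; also, your pair count should read $(Mt^2+1)^2\,\mathsf{p}_{\lfloor Mt^2\rfloor}^2$ rather than $(Mt^2+1)\,\mathsf{p}_{\lfloor Mt^2\rfloor}^2$, but this does not affect the $e^{O(t)}$ conclusion.
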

			
			\begin{proof} The lower bound in \eqref{eq:exp_approximation_lower_tail_lambda1} is obvious from the definition of $\mathcal{T}_t(a)$.  We focus on the upper bound. Let us set $\theta:=\frac12\logq$ so that $e^{\theta}=q^{-1/2}$. Set $M:=\max\{3,3\theta^{-1}\}$.   By a union bound we have
   \begin{equation}
       \label{eq:3terms}
        \begin{aligned}
       \mathbb{P}_{\mathsf{cPlan}(t(1-q))} (\lambda_1 \le at) & \le \mathbb{P}_{\mathsf{cPlan}(t(1-q))} (|\lambda / \rho| > Mt^2)+ \mathbb{P}_{\mathsf{cPlan}(t(1-q))} (|\rho| > Mt^2) \\ & \hspace{1cm}+\mathbb{P}_{\mathsf{cPlan}(t(1-q))} (\lambda_1 \le at, |\lambda/\rho|<Mt^2,|\rho|<Mt^2).
   \end{aligned}
   \end{equation}
We shall now bound each of the three terms above separately. Recall that by \eqref{eq:law_size_skew_shape} 
, $|\lambda/\rho|\sim \mathsf{Poi}(t^2(1-q))$, hence Markov inequality yields
\begin{equation}\label{eq:1bdlr}
\begin{aligned}
     \mathbb{P}_{\mathsf{cPlan}(t(1-q))} (|\lambda / \rho| > Mt^2) & \le e^{-Mt^2}\Ex_{\mathsf{Poi}(t^2(1-q))}[e^{|\lambda/\rho|}]\\ & =e^{-Mt^2+t^2(1-q)(e-1)} \le e^{-t^2}.
\end{aligned}
\end{equation}
Furthermore by \eqref{eq:expmombd} we have
\begin{align}\label{eq:2bdlr}
    \mathbb{P}_{\mathsf{cPlan}(t(1-q))} (|\rho| > Mt^2) \le e^{-M\theta t^2}\Ex[e^{\theta |\rho|}] \le \Con \exp\left(-M\theta t^2+\tfrac{t^2(1-q)^2\sqrt{q}}{1-\sqrt{q}}\right) \le \Con e^{-t^2}
\end{align}
For the last term in \eqref{eq:3terms} observe that
				\begin{align*}
				& \mathbb{P}_{\mathsf{cPlan}(t(1-q))} (\lambda_1 \le at, |\lambda/\rho|<Mt^2,|\rho|<Mt^2) \\ & \le \mathbb{P}_{\mathsf{cPlan}(t(1-q))} (\lambda_1 \le at, |\lambda|,|\rho|<2Mt^2)  \\ & = \sum_{\substack{\rho \subset \lambda \\ \lambda_1 \le at, |\lambda|,|\rho|\le 2Mt^2}} \mathbb{P}_{\mathsf{cPlan}(t(1-q))} (\lambda / \rho) \\ & \le e^{-t^2\mathcal{T}_t(a)}\bigg(\sum_{n\in \Z\cap[0,2Mt^2]}\mathsf{p}_n\bigg)^2 \le e^{-t^2\mathcal{T}_t(a)+\Con t}.
				\end{align*}
 where in the last line we used the well known $\mathsf{p}_n\le e^{\Con\sqrt{n}}$ bound. Inserting the above bound along with the bounds in \eqref{eq:1bdlr} and \eqref{eq:2bdlr} back in \eqref{eq:3terms} yields
 \begin{align*}
    \mathbb{P}_{\mathsf{cPlan}(t(1-q))} (\lambda_1 \le at) \le e^{-t^2\mathcal{T}_t(a)+\Con t}+(\Con +1)e^{-t^2}. 
 \end{align*}
 Since $\mathcal{T}_t(a) \le 1-q$ by \Cref{proptt0}, $e^{-t^2\mathcal{T}_t(a)}$ dominates $e^{-t^2}$. Thus adjusting the constant $\Con$ we derive the upper bound in \eqref{eq:exp_approximation_lower_tail_lambda1}. This completes the proof.
 \end{proof}
			
\begin{proposition}\label{prop:tt2} {We have $\lim_{t\to\infty} \mathcal{T}_t(x)=0$ for all $x\geq 2$.}
\end{proposition}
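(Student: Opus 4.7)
The plan is to reduce the claim to an easy consequence of the Tracy-Widom fluctuation result \eqref{eq:Tracy_Widom} via the upper bound in \cref{prop:expapprox}. Since $\mathcal{T}_t(x)$ is nonnegative and weakly decreasing in $x$ by \cref{proptt0}, it is enough to treat the single boundary case $x=2$: then $0\le\mathcal{T}_t(x)\le\mathcal{T}_t(2)$ for all $x\ge 2$, and the two-sided squeeze yields the limit for every such $x$ simultaneously.

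For $x=2$, I would feed the upper bound of \eqref{eq:exp_approximation_lower_tail_lambda1} into $-\tfrac{1}{t^2}\log(\cdot)$ to obtain
\begin{equation*}
0\le\mathcal{T}_t(2)\le -\frac{1}{t^2}\log\mathbb{P}_{\mathsf{cPlan}(t(1-q))}(\lambda_1\le 2t)+\frac{\Con}{t}.
\end{equation*}
Thus it suffices to show that $\mathbb{P}_{\mathsf{cPlan}(t(1-q))}(\lambda_1\le 2t)$ stays bounded below by a positive constant as $t\to\infty$, since then the right-hand side tends to zero.

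To produce this lower bound, I would invoke \cref{thm:matching_qPNG_cylindric_plancherel} at $x=0$ with $\theta=\sqrt{2}(1-q)$, giving the distributional identity $\lambda_1\stackrel{d}{=}\mathfrak{h}(0,t)+\chi$, where $\chi\sim q$-$\mathrm{Geo}(q)$ is independent of $\mathfrak{h}$ and satisfies $\mathbb{P}(\chi=0)=1-q>0$. By independence and nonnegativity of $\chi$,
\begin{equation*}
\mathbb{P}_{\mathsf{cPlan}(t(1-q))}(\lambda_1\le 2t)\ge (1-q)\cdot\mathbb{P}(\mathfrak{h}(0,t)\le 2t).
\end{equation*}
With our intensity $\Lambda=2(1-q)$ we have $v_{\Lambda,q}=\Lambda/(1-q)=2$, so \eqref{eq:Tracy_Widom} applies at $x=0$ (for which $t^2-x^2=t^2\to\infty$) and gives $\mathbb{P}(\mathfrak{h}(0,t)\le 2t)\to F_{\operatorname{GUE}}(0)\in(0,1)$. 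Combining the displays yields $\liminf_{t\to\infty}\mathbb{P}_{\mathsf{cPlan}(t(1-q))}(\lambda_1\le 2t)\ge (1-q)F_{\operatorname{GUE}}(0)>0$, which is precisely the required positive lower bound.

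I do not anticipate a real obstacle: the argument is a direct chain \cref{prop:expapprox}$\;\to\;$\cref{thm:matching_qPNG_cylindric_plancherel}$\;\to\;$\eqref{eq:Tracy_Widom}. The mildest care is in making sure Tracy-Widom convergence is used only at $x=0$, where it has already been quoted, and in matching the velocity $v_{\Lambda,q}$ to the intensity $\Lambda=2(1-q)$. If one wished to avoid invoking \eqref{eq:Tracy_Widom} altogether, the law of large numbers $\mathfrak{h}(0,t)/t\to 2$ mentioned in the introduction would directly give $\mathbb{P}(\mathfrak{h}(0,t)\le xt)\to 1$ for every $x>2$, and combined with the monotonicity of $\mathcal{T}_t$ this would suffice to cover the strict inequality range, with the boundary value $x=2$ following by monotonicity in the limit.
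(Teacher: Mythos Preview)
Your proof is correct and follows essentially the same route as the paper: reduce to $x=2$ by monotonicity, use the upper bound in \cref{prop:expapprox}, pass through the distributional identity $\lambda_1\stackrel{d}{=}\mathfrak{h}(0,t)+\chi$, and conclude via the Tracy--Widom fluctuation result. One small slip: for $\chi\sim q$-$\mathrm{Geo}(q)$ one has $\mathbb{P}(\chi=0)=(q;q)_\infty$, not $1-q$; this is harmless since both are positive constants. Also, your closing remark about handling $x=2$ ``by monotonicity in the limit'' from the strict range $x>2$ does not quite work as stated (monotonicity gives $\mathcal{T}_t(2)\ge\mathcal{T}_t(x)$, the wrong direction), so the Tracy--Widom input at $x=2$ is genuinely needed.
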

\begin{proof} 
Utilizing the upper bound in \eqref{eq:exp_approximation_lower_tail_lambda1} and the relation \eqref{eq:matching_height_qPNG_cylindric_Plancherel} ($\theta=\sqrt{2}(1-q)$ and $x=0$) we deduce
    $$e^{-t^2\mathcal{T}_t(2)+\Con t} \ge \Pr(\lambda_1\le 2t) \ge \Pr(\chi=0)\Pr(\h(0,t)\le 2t)=(q;q)_{\infty} \cdot \Pr(\h(0,t)\le 2t).$$
    Taking $-\frac1{t^2}\log$ both sides and then taking $\limsup_{t\to\infty}$ we obtain
    $$\limsup_{t\to \infty} \mathcal{T}_t(2) \le  \limsup_{t\to\infty} -\frac1{t^2}\log \Pr(\h(0,t)\le 2t).$$
From the fluctuation result of $q$-PNG height function \cite{aggarwal_borodin_wheeler_tPNG}, we have that $$\lim_{t\to \infty} \Pr(\h(0,t)\le 2t) =\Pr(\operatorname{TW}_{\operatorname{GUE}}\le 0)>0.$$ 
   As $\mathcal{T}_t$ is nonnegative, this implies $\lim_{t\to\infty} \mathcal{T}_t(2)=0$. {The conclusion for general $x\geq 2$ follows from monotonicity.}
\end{proof}

\subsection{Convexity of rate function $\mathcal{F}$} \label{sec:ltconvex} The scope of this subsection is to prove convexity of the lower-tail rate function $\mathcal{F}$ for the edge of the shift-mixed cylindric Plancherel measure. The starting point of our argument is the celebrated Okounkov inequality. Indeed, Schur functions enjoy remarkable log-concavity properties, a version of which was first conjectured by Okounkov in \cite{Okounkov1997}. This conjecture was later proved and refined in \cite{Lam_Postnikov_Pylyavskyy_concavity} by Lam, Postnikov, and Pylyavskyy. We report this result next, and then use it to establish a certain asymptotic midpoint convexity of the family of functions $\mathcal{T}_t$ in \Cref{prop:midpoint_convexity_T}.

\begin{proposition}{\cite[Theorem 12]{Lam_Postnikov_Pylyavskyy_concavity}} \label{prop:Okounkov_ineq} 
	For any pair of skew-partitions $\lambda/\mu,\nu/\rho$ we have 
	\begin{equation*}
		s_{\lceil \frac{\lambda+\nu}{2} \rceil / \lceil \frac{\mu+\rho}{2} \rceil} s_{\lfloor \frac{\lambda+\nu}{2} \rfloor / \lfloor \frac{\mu+\rho}{2} \rfloor} \ge_{\mathrm{s}} s_{\lambda/\mu} s_{\nu/\rho}.
	\end{equation*}
	where for any two symmetric functions $A, B$, $A \ge_{\mathrm{s}} B$ means that the difference $A-B$ possesses an expansion in the basis of Schur functions with positive coefficients. Here the operations $+,/2,\lfloor\cdot\rfloor,\lceil\cdot\rceil$ on partitions are performed coordinate-wise. 
\end{proposition}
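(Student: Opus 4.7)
The plan is to establish Schur-positivity of the difference
\[
s_{\lceil (\lambda+\nu)/2 \rceil / \lceil (\mu+\rho)/2 \rceil}\, s_{\lfloor (\lambda+\nu)/2 \rfloor / \lfloor (\mu+\rho)/2 \rfloor} - s_{\lambda/\mu}\, s_{\nu/\rho}
\]
by constructing an explicit weight-preserving injection between the underlying sets of pairs of semistandard Young tableaux. Recall that $s_{\lambda/\mu} s_{\nu/\rho} = \sum_{(T,U)} x^T x^U$, summed over pairs of SSYT of the indicated shapes, and similarly for the other product. A classical criterion (Remmel--Whitney, or the RSK-insertion approach) reduces Schur-positivity to exhibiting an injection from pairs $(T,U)$ with shapes $(\lambda/\mu,\nu/\rho)$ into pairs with the averaged shapes, such that the map preserves not merely the content but also a full set of Schur-distinguishing invariants, for instance the insertion tableau class under a version of RSK.

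The main step is to describe this injection. I would pass through the Lindstr\"om--Gessel--Viennot interpretation, realizing each pair of SSYT as a pair of non-intersecting lattice-path families. The averaging operation on shapes corresponds to a specific redistribution of endpoints between the two families; it can be implemented by a sequence of growth-diagram (Fomin) moves that exchange path segments between the two families whenever such a swap improves the componentwise ``balance'' between them. The key structural observation is that, for fixed componentwise sum $\lambda+\nu$, the pair $(\lceil (\lambda+\nu)/2 \rceil, \lfloor (\lambda+\nu)/2 \rfloor)$ is the unique pair of partitions with componentwise difference at most one and the prescribed sum, which makes the algorithm terminate at a deterministic fixed point and suggests that iterating the elementary moves yields the desired injective map.

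The hard part is verifying that this procedure works at the Schur-positive level and not just at the level of monomial weights. In the Lam--Postnikov--Pylyavskyy proof this is overcome by lifting the problem to cylindric Schur functions on an infinite cylinder, where the averaging operation becomes a natural translation on cylindric shapes and ``uncrossing'' moves on cylindric loop systems produce the bijection; Schur-positivity of the cylindric difference then descends to the ordinary skew setting via McNamara's positivity expansion. The most delicate verification is that the map respects descent structure (equivalently, commutes with jeu de taquin / RSK insertion), which is what upgrades monomial positivity to Schur positivity. I would regard this final step as the main obstacle, and the cylindric framework of Lam--Postnikov--Pylyavskyy as essentially indispensable for carrying it out rigorously; a direct non-cylindric injection on pairs of tableaux seems very hard to define consistently across all choices of $\lambda,\mu,\nu,\rho$.
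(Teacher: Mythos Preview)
The paper does not prove this proposition at all: it is quoted verbatim as Theorem~12 of Lam--Postnikov--Pylyavskyy and used as a black box in the subsequent arguments. So there is nothing in the paper to compare your proposal against; simply citing the reference is already the paper's entire ``proof.''

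As for your sketch on its own merits: it is not a proof but a discussion of what a proof might look like, and you yourself flag the crucial step (upgrading a content-preserving map on pairs of tableaux to one that respects Schur expansion) as unresolved and then defer to the cited paper for it. That is fine as commentary, but be aware that the actual Lam--Postnikov--Pylyavskyy argument does not proceed via growth diagrams or an explicit RSK-compatible injection on tableau pairs; it goes through a general ``cell transfer'' operation combined with a reduction that handles Schur positivity directly. If you want to internalize the result rather than just cite it, it would be more productive to read their argument than to pursue the lattice-path injection you outline, which, as you note, is hard to make rigorous at the Schur-positive level.
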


			The following proposition establishes a certain asymptotic midpoint convexity of the family of functions $\mathcal{T}_t$.
            
			\begin{proposition} \label{prop:midpoint_convexity_T}
				Let $a, a' \in \R$. Then, for all $\varepsilon>0$, there exists $t_\varepsilon$ such that for all $t>t_\varepsilon$ we have
				\begin{equation}\label{eq:conT}
					\mathcal{T}_t\left( \frac{a+a'}{2} +\frac1t\right) \le \frac{1}{2} \left( \mathcal{T}_t(a) + \mathcal{T}_t(a') \right) + \varepsilon.
				\end{equation}
			\end{proposition}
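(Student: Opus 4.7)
The overall strategy is to apply Okounkov's log-concavity (\cref{prop:Okounkov_ineq}) to convert a coordinatewise midpoint of two near-optimal skew shapes into a probability inequality that realises the asymptotic midpoint-convexity of $\mathcal{T}_t$. Let $\gamma=t(1-q)$. Fix $\delta>0$ and, by definition of the supremum in \eqref{def:tta}, pick skew shapes $\lambda/\rho$ and $\lambda'/\rho'$ with $\lambda_1\le at$, $\lambda'_1\le a't$ such that
\begin{align*}
    \mathbb{P}_{\mathsf{cPlan}(\gamma)}(\lambda/\rho)\ge (1-\delta)\,e^{-t^2\mathcal{T}_t(a)}, \qquad \mathbb{P}_{\mathsf{cPlan}(\gamma)}(\lambda'/\rho')\ge (1-\delta)\,e^{-t^2\mathcal{T}_t(a')}.
\end{align*}
Introduce the coordinatewise midpoint averages
$$\hat{\lambda}=\left\lceil\tfrac{\lambda+\lambda'}{2}\right\rceil,\ \check{\lambda}=\left\lfloor\tfrac{\lambda+\lambda'}{2}\right\rfloor,\ \hat{\rho}=\left\lceil\tfrac{\rho+\rho'}{2}\right\rceil,\ \check{\rho}=\left\lfloor\tfrac{\rho+\rho'}{2}\right\rfloor,$$
so that $\hat{\rho}\subset\hat{\lambda}$, $\check{\rho}\subset\check{\lambda}$ (since the inequalities $\rho_i\le\lambda_i$, $\rho'_i\le\lambda'_i$ pass through $\lceil\cdot\rceil,\lfloor\cdot\rfloor$). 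Moreover $\hat{\lambda}_1\le\lceil(a+a')t/2\rceil\le ((a+a')/2+1/t)\,t$ and likewise $\check{\lambda}_1\le ((a+a')/2+1/t)\,t$, so both pairs $(\hat\lambda,\hat\rho),(\check\lambda,\check\rho)$ are feasible in the supremum defining $\mathcal{T}_t((a+a')/2+1/t)$.

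The key step is now to promote Okounkov's Schur-positive inequality to the product of two independent specialisations. Applied to $(\lambda,\rho),(\lambda',\rho')$, \cref{prop:Okounkov_ineq} gives
$$s_{\hat{\lambda}/\hat{\rho}}\,s_{\check{\lambda}/\check{\rho}}-s_{\lambda/\rho}\,s_{\lambda'/\rho'}\ge_{\mathrm{s}} 0,$$
so specialising the symmetric-function variables to any nonnegative values $(x_1,\dots,x_N)$ (resp.\ $(y_1,\dots,y_T)$) the resulting numerical inequality holds. Multiplying the two inequalities (one per specialisation) yields
$$s_{\hat{\lambda}/\hat{\rho}}(\vec x)\,s_{\hat{\lambda}/\hat{\rho}}(\vec y)\,s_{\check{\lambda}/\check{\rho}}(\vec x)\,s_{\check{\lambda}/\check{\rho}}(\vec y)\ \ge\ s_{\lambda/\rho}(\vec x)\,s_{\lambda/\rho}(\vec y)\,s_{\lambda'/\rho'}(\vec x)\,s_{\lambda'/\rho'}(\vec y).$$
Taking both specialisations $\vec x=\vec y=(\gamma/n,\dots,\gamma/n)$ and sending $n\to+\infty$ as in \eqref{eq:Schur_exponential}, all powers of $\gamma$ cancel (since $|\hat\lambda/\hat\rho|+|\check\lambda/\check\rho|=|\lambda/\rho|+|\lambda'/\rho'|$), leaving
$$\left(\tfrac{f^{\hat{\lambda}/\hat{\rho}}}{|\hat{\lambda}/\hat{\rho}|!}\right)^{\!2}\!\left(\tfrac{f^{\check{\lambda}/\check{\rho}}}{|\check{\lambda}/\check{\rho}|!}\right)^{\!2}\ \ge\ \left(\tfrac{f^{\lambda/\rho}}{|\lambda/\rho|!}\right)^{\!2}\!\left(\tfrac{f^{\lambda'/\rho'}}{|\lambda'/\rho'|!}\right)^{\!2}.$$
Combined with $q^{|\hat\rho|+|\check\rho|}=q^{|\rho|+|\rho'|}$ and the formula for $\mathbb{P}_{\mathsf{cPlan}(\gamma)}$, this becomes the desired probability inequality
$$\mathbb{P}_{\mathsf{cPlan}(\gamma)}(\hat{\lambda}/\hat{\rho})\cdot\mathbb{P}_{\mathsf{cPlan}(\gamma)}(\check{\lambda}/\check{\rho})\ \ge\ \mathbb{P}_{\mathsf{cPlan}(\gamma)}(\lambda/\rho)\cdot\mathbb{P}_{\mathsf{cPlan}(\gamma)}(\lambda'/\rho').$$

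To conclude, bound the left-hand side by $(\sup)^2 = \exp\{-2t^2\mathcal{T}_t((a+a')/2+1/t)\}$ using the feasibility of $(\hat\lambda,\hat\rho)$ and $(\check\lambda,\check\rho)$ established in the first paragraph, and the right-hand side from below by $(1-\delta)^2\exp\{-t^2(\mathcal{T}_t(a)+\mathcal{T}_t(a'))\}$. Applying $-\frac{1}{t^2}\log$ gives
$$2\,\mathcal{T}_t\!\left(\tfrac{a+a'}{2}+\tfrac{1}{t}\right)\ \le\ \mathcal{T}_t(a)+\mathcal{T}_t(a')-\tfrac{2\log(1-\delta)}{t^2},$$
and letting $\delta\downarrow 0$ yields \eqref{eq:conT}; in fact one obtains the stronger estimate with $\varepsilon=0$ for every $t>0$ (the $\varepsilon$ in the statement is harmless slack). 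The cases where $a<0$ or $a'<0$ are covered by the convention $\mathcal{T}_t=+\infty$ on the negative axis, making the bound trivial. The main subtle point in this plan is the passage from Okounkov's Schur-positive identity to its squared specialised form; once one recognises that Schur positivity gives nonnegativity under every specialisation and that the squared form follows by multiplying two independent specialisations, the rest is bookkeeping on sizes and rounding.
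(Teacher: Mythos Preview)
Your proof is correct and rests on the same core idea as the paper---applying the Lam--Postnikov--Pylyavskyy log-concavity (\cref{prop:Okounkov_ineq}) to near-optimal skew shapes---but your execution is somewhat different and in fact cleaner. The paper passes from $s_{\hat\lambda/\hat\rho}\,s_{\check\lambda/\check\rho}\ge_{\mathrm s}s_{\lambda/\rho}\,s_{\lambda'/\rho'}$ to the weaker $\max\{(s_{\hat\lambda/\hat\rho})^2,(s_{\check\lambda/\check\rho})^2\}\ge s_{\lambda/\rho}s_{\lambda'/\rho'}$ and then works with whichever of the two realises the maximum; this introduces a correction factor $q^\Delta$ with $\Delta=|\overline{\rho}|-\tfrac{|\rho|+|\rho'|}{2}$ that must then be controlled. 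You instead keep the full product inequality at the level of specialisations and use \emph{both} $(\hat\lambda,\hat\rho)$ and $(\check\lambda,\check\rho)$ as feasible shapes in the supremum defining $\mathcal{T}_t$. Because $|\hat\rho|+|\check\rho|=|\rho|+|\rho'|$ and $|\hat\lambda/\hat\rho|+|\check\lambda/\check\rho|=|\lambda/\rho|+|\lambda'/\rho'|$ hold exactly, all $q$- and $\gamma$-powers match on the nose and you obtain the clean product inequality $\mathbb{P}_{\mathsf{cPlan}(\gamma)}(\hat\lambda/\hat\rho)\,\mathbb{P}_{\mathsf{cPlan}(\gamma)}(\check\lambda/\check\rho)\ge\mathbb{P}_{\mathsf{cPlan}(\gamma)}(\lambda/\rho)\,\mathbb{P}_{\mathsf{cPlan}(\gamma)}(\lambda'/\rho')$ with no error term. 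Bounding the left side by $e^{-2t^2\mathcal{T}_t((a+a')/2+1/t)}$ and letting $\delta\downarrow0$ (the sup is actually attained, so $\delta$ is dispensable) yields \eqref{eq:conT} with $\varepsilon=0$ for every $t>0$, a slight strengthening of the stated proposition. One cosmetic remark: since for the cylindric Plancherel measure the two specialisations coincide, the ``two independent specialisations'' framing is more than you need---squaring the single-specialisation inequality $s_{\hat\lambda/\hat\rho}(\vec x)s_{\check\lambda/\check\rho}(\vec x)\ge s_{\lambda/\rho}(\vec x)s_{\lambda'/\rho'}(\vec x)$ already gives what you want.
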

			\begin{proof} As $\mathcal{T}_t(x)=+\infty$ for $x<0$, we may assume $a,a'\ge 0$. Without loss of generality assume $0\le a\le a'$. By the Okounkov's inequality (\Cref{prop:Okounkov_ineq}) we have, for any pair of skew partitions $\lambda/\mu, \nu/\rho$ we have 
				\begin{equation} \label{eq:generalized_Okounkov_inequality}
					\max\left\{ \left(s_{\lceil \frac{\lambda+\nu}{2} \rceil / \lceil \frac{\mu+\rho}{2} \rceil} \right)^2, \left(s_{\lfloor \frac{\lambda+\nu}{2} \rfloor / \lfloor \frac{\mu+\rho}{2} \rfloor} \right)^2 \right\} \ge s_{\lambda/\mu} s_{\nu/\rho},
				\end{equation}
                whenever the Schur functions are evaluated at some positive specialization.
				Let  $\overline{\frac{\lambda+\nu}{2}} /  \overline{\frac{\mu+\rho}{2}}$ be the skew partition that maximizes the left-hand side of \eqref{eq:generalized_Okounkov_inequality} and set $\Delta := |\overline{\frac{\mu+\rho}{2}}| - \frac{|\mu|+|\rho|}{2}$. Taking the exponential specialization with parameter $\gamma:=t(1-q)$ in all the Schur functions (see \eqref{eq:periodic_schur} and \eqref{eq:Schur_exponential}) we obtain that 
				\begin{equation*}
					\begin{split}
						\mathbb{P}_{\mathsf{cPlan}(t(1-q))} \left( \overline{\frac{\lambda+\nu}{2}} \bigg/ \overline{ \frac{\mu+\rho}{2} } \right) 
						&= e^{-t^2(1-q)} q^{\frac{|\mu|}{2}+\frac{|\rho|}{2}}  \left(   s_{\overline{\frac{\lambda+\nu}{2}} /  \overline{\frac{\mu+\rho}{2}}} \right)^2  q^{\Delta}
						\\
						&
						\ge 
						e^{-t^2(1-q)} q^{\frac{|\mu|}{2} + \frac{|\rho|}{2}}   s_{\lambda/\mu} s_{\nu/\rho} \,\, q^{\Delta}
						\\
						& 
						= \sqrt{\mathbb{P}_{\mathsf{cPlan}(t(1-q))} \left( \lambda / \mu \right) } 
						\,
						\sqrt{ \mathbb{P}_{\mathsf{cPlan}(t(1-q))} \left( \nu / \rho \right) } \,\, q^{\Delta}.
					\end{split}
				\end{equation*}
			Note that $\lambda_1\leq at$ and $\nu_1\leq a't$ implies that the first row of $\overline{\frac{\lambda+\nu}{2}}$ has length $\leq \frac{(a+a')t}{2}+1$. Taking $-\frac{1}{t^2} \log$ of both sides of the previous inequality and optimizing over the choice of $\lambda/\mu$ with $\lambda_1 \le a t$ and $\nu/\rho$ with $\nu_1 \le a't$ we find 
				\begin{equation*}
					\mathcal{T}_t\left( \frac{a+a'}{2} +\frac{1}{t} \right)  \le -\frac{1}{t^2} \log \mathbb{P}_{\mathsf{cPlan}(t(1-q))} \left( \overline{\frac{\lambda+\nu}{2}} \bigg/ \overline{ \frac{\mu+\rho}{2} } \right)   \le \tfrac{1}{2} \left[\mathcal{T}_t(a) + \mathcal{T}_t(a') \right] + \frac{ \log q^\Delta }{t^2},
				\end{equation*}
				Since $\Delta\le 1$, taking $t$ large enough, we get the desired result.
			\end{proof}

			For the next proposition, we introduce the operation of \emph{infimal convolution} between two real-valued functions $g,h$ as
			\begin{equation*}
				(g \oplus h)(x) := \inf_{y \in \mathbb{R}} \left\{ g(y) + h(x-y) \right\}.
			\end{equation*}
			The infimal convolution is the analog of the integral convolution $g*h$ in the $(\inf,+)$ algebra and it is a common object in convex analysis. We use the notion of infimal convolution in the following result.

			\begin{proposition} \label{prop:f_limit_g+T}
				Let $x \in \mathbb{R}$ and define the function $g(x)=\frac{x^2}{2}\logq$. Then we have
				\begin{equation}
                \label{eq:lim_inf_conv}
					\mathcal{F}(x) = \lim_{t \to \infty}  \left( g \oplus \mathcal{T}_t \right)(x).
				\end{equation}
			\end{proposition}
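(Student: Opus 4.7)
The plan is to sandwich $-t^{-2}\log\mathbb{P}_{\mathsf{cPlan}(t(1-q))}(\lambda_1 + S \le xt)$ between $(g \oplus \mathcal{T}_t)(x) \pm o(1)$ and then invoke \cref{thm:limit_q_Laplace} to conclude that the common limit equals $\mathcal{F}(x)$. Conditioning on $S$ and using the explicit distribution \eqref{eq:S_distribution} together with the identity $q^{k^{2}/2} = e^{-t^{2}g(k/t)}$ yields
\begin{equation*}
   \mathbb{P}_{\mathsf{cPlan}(t(1-q))}(\lambda_1 + S \le xt) = \frac{1}{\vartheta(q,1)}\sum_{k\in \mathbb{Z}} \mathbb{P}_{\mathsf{cPlan}(t(1-q))}(\lambda_1 \le xt - k)\, e^{-t^{2}g(k/t)}.
\end{equation*}
Feeding in the sandwich from \cref{prop:expapprox} at $a = (xt-k)/t$ converts each summand into $\exp(-t^{2}[\mathcal{T}_t((xt-k)/t) + g(k/t)] + O(t))$, with an $O(t)$ error that depends only on $q$ and is therefore uniform in $k$.

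Next I would derive matching bounds for $-t^{-2}\log\mathbb{P}(\lambda_1+S\le xt)$ in terms of the discrete minimum $M_t(x) := \min_{k\in \mathbb{Z}}\{\mathcal{T}_t((xt-k)/t) + g(k/t)\}$. Retaining a single near-minimizing summand gives $-t^{-2}\log\mathbb{P} \le M_t(x) + O(t^{-1})$. For the reverse direction, I would truncate the sum at $|k| \le Rt$ with $R$ large: the discarded tail $\sum_{|k|>Rt} e^{-\frac{\logq}{2}k^{2}}$ is doubly exponentially small in $t$ and therefore negligible next to $e^{-t^{2}\mathcal{F}(x)}$, which itself is finite since $\mathcal{F}(x) \le \mathcal{W}^{(q)}(1,\phi_{\mathrm{VKLS}};x) < \infty$; the retained sum has $O(t)$ terms, so a union bound gives $-t^{-2}\log\mathbb{P} \ge M_t(x) - O(t^{-2}\log t)$. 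Reconciling $M_t(x)$ with the real infimum $(g \oplus \mathcal{T}_t)(x)$ is then routine: the inequality $(g \oplus \mathcal{T}_t)(x) \le M_t(x)$ follows by restricting $y = k/t$, and the converse by choosing $k^\star = \lfloor y^\star t \rfloor$ for a near-minimizer $y^\star$ of the real infimum, using that $g$ is locally Lipschitz (so $|g(k^\star/t) - g(y^\star)| = O(t^{-1})$) together with the monotonicity of $\mathcal{T}_t$ from \cref{proptt0} (so $\mathcal{T}_t((xt-k^\star)/t) \le \mathcal{T}_t(x-y^\star)$). This gives $M_t(x) = (g \oplus \mathcal{T}_t)(x) + O(t^{-1})$.

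Combining the two sandwiches with $-t^{-2}\log\mathbb{P}(\lambda_1+S\le xt) \to \mathcal{F}(x)$ from \cref{thm:limit_q_Laplace} produces \eqref{eq:lim_inf_conv}. The main technical delicacy lies in ensuring uniformity of the error terms: the constant in \cref{prop:expapprox} must be independent of $a$, and the passage from $M_t(x)$ to the continuous infimum must be controlled by a single Lipschitz estimate for $g$ on a bounded range. Both are guaranteed respectively by the $q$-only dependence in \cref{prop:expapprox} and by the quadratic growth of $g$, which automatically confines the search to a compact set; the argument is then a clean discrete Laplace principle with no further probabilistic input.
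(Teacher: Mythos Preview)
Your approach is correct and is essentially the same discrete Laplace principle the paper uses: both proofs condition on $S$, feed in \cref{prop:expapprox}, reduce to a discrete infimum over $y\in\tfrac1t\mathbb{Z}$, reconcile it with the continuous infimal convolution via monotonicity of $\mathcal{T}_t$ and local Lipschitzness of $g$, and then invoke \cref{thm:limit_q_Laplace}; the only cosmetic difference is that the paper restricts $S/t$ to the windows $[x-2,x]$ and $[x-3,x]$ (using \cref{prop:tt2} to dispose of the remainder) whereas you truncate at $|k|\le Rt$. One small correction: the discarded tail $\sum_{|k|>Rt}e^{-\frac{\logq}{2}k^2}$ is of order $e^{-\frac{\logq}{2}R^2t^2}$, not ``doubly exponentially small''; the comparison you need is therefore $\frac{\logq}{2}R^2 > M_t(x)$, which follows directly from the uniform bound $M_t(x)\le g(x)+\mathcal{T}_t(0)=g(x)+1-q$ (via \cref{proptt0}) rather than from $\mathcal{F}(x)<\infty$.
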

			\begin{proof}  By \Cref{lem:range_kappa}, $\mathcal{F}(x)=0$ for $x\ge2$. On the other hand, we have for $x\ge2$,
\begin{equation*}
0\leq \liminf_{t\to \infty}\left(g \oplus \mathcal{T}_t \right)(x)\leq \limsup_{t\to \infty}\left(g \oplus \mathcal{T}_t \right)(x)\le g(0)+\lim_{t\to \infty}\mathcal{T}_{t}(x)=0
\end{equation*}
by \Cref{prop:tt2}. Hence \eqref{eq:lim_inf_conv} holds for $x\ge2$. Let us fix any $x<2$. Suppose $\lambda/\rho \sim \Pr_{\mathsf{cPlan}(t(1-q))}$ and $S\sim \mathrm{Theta}(q;1)$. For each $i\ge 0$, let us set
				\begin{equation*}
					\overline{y}_i := \mathrm{argmax} \left\{ \mathbb{P}(S = yt) \mathbb{P}(\lambda_1 \le (x-y)t) : y\in \tfrac{1}{t} \mathbb{Z}\cap [x-i,x] \right\},
				\end{equation*}
  and  define
    \begin{align*}
        (g{\oplus}_i\mathcal{T}_t)(x):=\inf_{y\in \frac{1}{t} \mathbb{Z} \cap[x-i,x]} \{g(y)+\mathcal{T}_t(x-y)\}.
    \end{align*}
  We shall only consider $\overline{y}_2$ and $\overline{y}_3$ in our proof.  Observe that
   \begin{equation}\label{eq:upl+S}
   \begin{aligned}
       \mathbb{P}(\lambda_1 + S \le xt) & \ge \Pr(S\in [xt-3t,xt],\lambda_1 + S \le xt) \\ & \ge \mathbb{P}(S = \overline{y}_3t) \mathbb{P}(\lambda_1 \le (x-\overline{y}_3)t),
   \end{aligned}   
   \end{equation}
   and 
   \begin{equation}
       \label{eq:dwl+S}
       \begin{aligned}
       \mathbb{P}(\lambda_1 + S \le xt) & \le \Pr(S\in [xt-2t,xt],\lambda_1 + S \le xt) +\mathbb P(S\le (x-2)t) \\ & \le 2t\cdot \mathbb{P}(S = \overline{y}_2t) \mathbb{P}(\lambda_1 \le (x-\overline{y}_2)t)+\mathbb P(S\le (x-2)t).
   \end{aligned}
   \end{equation}
  Using \eqref{eq:exp_approximation_lower_tail_lambda1} and the explicit law \eqref{eq:S_distribution} (with $\zeta=1$) of the random variable $S$ we obtain the estimates
				\begin{equation*}
					\mathbb{P}(S = \overline{y}_i t) \mathbb{P}(\lambda_1 \le (x-\overline{y}_i)t) = e^{-t^2 (g \oplus_i \mathcal{T}_t )(x) + O(t)}, \qquad \mathbb{P}(S \le (x-2)t) = e^{-t^2 g(x-2) + O(t)}.
				\end{equation*}
Inserting these estimates back in \eqref{eq:upl+S} and \eqref{eq:dwl+S}, we obtain
\begin{equation*}
					e^{-t^2 (g \oplus_3 \mathcal{T}_t )(x) + O(t)} \le \mathbb{P}(\lambda_1+S\le xt) \le e^{-t^2 (g \oplus_2 \mathcal{T}_t )(x) + O(t)} +  e^{-t^2 g(x-2) + O(t)}.
				\end{equation*}    
   From \Cref{thm:limit_q_Laplace}  we know that $\lambda_1+S$ satisfies a lower-tail LDP with speed $t^2$ and rate function $\mathcal{F}$. Taking the $-\frac{1}{t^2} \log$ of all terms in the previous chain of inequality and letting $t$ tend to $+\infty$, we thus obtain
				\begin{equation}\label{eq:limconv2}
					\limsup_{t\to \infty} \min\left\{ \left( g \oplus_2 \mathcal{T}_t \right)(x), g(x-2) \right\} \le \mathcal{F}(x) \le \liminf_{t\to\infty} \left( g \oplus_3 \mathcal{T}_t \right)(x). 
				\end{equation}
Note that $(g\oplus \mathcal{T}_t)(x)\le (g\oplus_2 \mathcal{T}_t)(x)$. Since $(g\oplus \mathcal{T}_t)(x) \le g(x-2)+\mathcal{T}_t(2)$ and $\mathcal{T}_t(2) \to 0$ as $t$ tends to $\infty$ via \Cref{prop:tt2}, from the first inequality in \eqref{eq:limconv2} we deduce that
\begin{align}\label{eq:limconv3}
    \limsup_{t\to \infty} \left( g \oplus \mathcal{T}_t \right)(x)  \le \mathcal{F}(x).
\end{align}
For $x\le 2$, we claim that
    \begin{align}\label{eq:tiloplus}
        \lim_{t\to \infty} |(g\oplus_3 \mathcal{T}_t)(x)-(g\oplus \mathcal{T}_t)(x)|=0.
    \end{align}
Owing to the second inequality in \eqref{eq:limconv2}, the above claim forces $\mathcal{F}(x) \le \liminf_{t\to\infty} (g\oplus \mathcal{T}_t)(x)$ for $x\le 2$. Combining this with \eqref{eq:limconv3} verifies \eqref{eq:lim_inf_conv} for $x\le 2$. Thus we are left to show \eqref{eq:tiloplus} for $x\le 2$.

Fix any $\varepsilon>0$ and $x\le 2$. Since $g(y)\to \infty$ as $|y|\to \infty$ and $\sup_{t>0, y\ge 0} |\mathcal{T}_t(y)| \le 1-q$, we may find a sequence $\{z_t\}_t$ such that $\sup_t |z_t|<\infty$ and 
$$g(z_t)+\mathcal{T}_t(x-z_t)-(g\oplus \mathcal{T}_t)(x) \le \e.$$ 
Clearly, $z_t\le x$ for all $t$. Let $z$ be any limit point of the sequence $\{z_t\}_t$. We have 
$$g(z_t)+\mathcal{T}_t(x-z_t)-\varepsilon \le (g\oplus \mathcal{T}_t)(x) \le g(x-2)+\mathcal{T}_t(2).$$ 
Taking subsequential limit and using \Cref{prop:tt2} we obtain $g(z) \le g(x-2)$. Since $x<2$, we have $z\ge x-2$. Note that $\mathcal{T}_t(t^{-1}\lfloor tz_t\rfloor)=\mathcal{T}_t(z_t)$. Since all limit points of $\{z_t\}$ are in $[x-2,x]$ for all enough $t$ we can ensure $t^{-1}\lfloor tz_t\rfloor \in \frac1t\Z\cap[x-3,x]$. Thus,
$$(g\oplus_3\mathcal{T}_t)(x)-g(\oplus \mathcal{T}_t)(x) \le |g(z_t)-g(t^{-1}\lfloor tz_t\rfloor)|+\varepsilon.$$
Taking $\limsup_{t\to\infty}$ on both sides and noticing that $\e$ is arbitrary, we arrive at \eqref{eq:tiloplus}. \end{proof}

Armed with the asymptotic midpoint convexity of $\mathcal{T}_t$ from \Cref{prop:midpoint_convexity_T} and the pointwise convergence result from \Cref{prop:f_limit_g+T}, we can now prove certain properties of $\mathcal{F}$. %For completeness, we remind the reader of the notion of proper and closed functions.

%\begin{definition}[Proper and closed functions]\label{def:pc} 
 %   We call a  function $f:\R\to [-\infty,+\infty]$ proper if $f(x)>-\infty$ for all $x\in \R$ and $f(x)<\infty$ for some $x\in \R$. We call  $f:\R\to [-\infty,+\infty]$ to be closed  if $\{x :f(x)\le \alpha\}$ is a closed set for all $\alpha\in \R$.
%\end{definition}
			
\begin{theorem}\label{prop:fcon}
	The lower-tail rate function $\mathcal{F}$ is  convex,  continuous on the entire real line and takes finite real values.
\end{theorem}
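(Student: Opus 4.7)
The plan is to deduce \cref{prop:fcon} in three stages: first real-valuedness (together with local boundedness), then midpoint convexity via \cref{prop:f_limit_g+T} and \cref{prop:midpoint_convexity_T}, and finally invocation of Sierpi\'nski's theorem to upgrade midpoint convexity to full convexity. Continuity then follows for free from convexity of a real-valued function on $\mathbb{R}$.

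\textit{Step 1 (real-valuedness and local boundedness).} Taking $z = [x]_+$ in the infimal convolution and using the uniform bound $\mathcal{T}_t(y) \le \mathcal{T}_t(0) = 1 - q$ for all $y \ge 0$ from \cref{proptt0}, I obtain $(g \oplus \mathcal{T}_t)(x) \le g([x]_-) + (1-q)$ for every $x \in \mathbb{R}$ and every $t > 0$. Passing to the limit via \cref{prop:f_limit_g+T} and combining with $\mathcal{F} \ge 0$ from \cref{prop:fprop} yields $0 \le \mathcal{F}(x) \le g([x]_-) + (1-q)$, which is finite and locally bounded above.

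\textit{Step 2 (midpoint convexity).} Fix $y_1, y_2 \in \mathbb{R}$ and $\delta, \varepsilon > 0$. For each large $t$, pick $z_i = z_i(t) \le y_i$ (required for $\mathcal{T}_t(y_i - z_i) < \infty$) such that $g(z_i) + \mathcal{T}_t(y_i - z_i) \le (g \oplus \mathcal{T}_t)(y_i) + \delta$. Nonnegativity of $\mathcal{T}_t$ gives $g(z_i) \le g(y_i) + (1-q) + \delta$, and coercivity of $g$ then forces $|z_i| \le C$ uniformly in $t$, with $C = C(y_1, y_2, \delta)$. Plugging $z = \tfrac{z_1+z_2}{2} - \tfrac{1}{t}$ into the infimal convolution at $\tfrac{y_1+y_2}{2}$,
\begin{equation*}
\left(g \oplus \mathcal{T}_t\right)\!\left(\tfrac{y_1+y_2}{2}\right) \le g\!\left(\tfrac{z_1+z_2}{2} - \tfrac{1}{t}\right) + \mathcal{T}_t\!\left(\tfrac{(y_1-z_1)+(y_2-z_2)}{2} + \tfrac{1}{t}\right).
\end{equation*}
The argument of $\mathcal{T}_t$ is now exactly the point at which \cref{prop:midpoint_convexity_T} applies, so
\begin{equation*}
\mathcal{T}_t\!\left(\tfrac{(y_1-z_1)+(y_2-z_2)}{2} + \tfrac{1}{t}\right) \le \tfrac{1}{2}\bigl(\mathcal{T}_t(y_1-z_1) + \mathcal{T}_t(y_2-z_2)\bigr) + \varepsilon.
\end{equation*}
Convexity of $g$ together with $\bigl|g(\tfrac{z_1+z_2}{2} - \tfrac{1}{t}) - g(\tfrac{z_1+z_2}{2})\bigr| = O(1/t)$ (uniform since $|z_i| \le C$) and the $\delta$-optimality of $z_i$ then gives
\begin{equation*}
\left(g \oplus \mathcal{T}_t\right)\!\left(\tfrac{y_1+y_2}{2}\right) \le \tfrac{1}{2}\bigl((g \oplus \mathcal{T}_t)(y_1) + (g \oplus \mathcal{T}_t)(y_2)\bigr) + \varepsilon + \delta + O(1/t).
\end{equation*}
Sending $t \to \infty$ via \cref{prop:f_limit_g+T}, then $\varepsilon, \delta \downarrow 0$, establishes $\mathcal{F}(\tfrac{y_1+y_2}{2}) \le \tfrac{1}{2}(\mathcal{F}(y_1) + \mathcal{F}(y_2))$.

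\textit{Step 3 (convexity and continuity).} A classical result of Sierpi\'nski states that a midpoint convex function on $\mathbb{R}$ that is locally bounded above is convex. Step 1 supplies local boundedness and Step 2 midpoint convexity, so $\mathcal{F}$ is convex. A real-valued convex function on all of $\mathbb{R}$ is automatically continuous, completing the proof.

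The main obstacle is the $\tfrac{1}{t}$ shift in \cref{prop:midpoint_convexity_T}: the natural midpoint-convexity bound is available only at $\tfrac{a+a'}{2} + \tfrac{1}{t}$, not at the exact midpoint, and the monotonicity of $\mathcal{T}_t$ runs in the wrong direction to absorb this shift directly. The key device is to translate the $z$-variable of the infimal convolution by $-\tfrac{1}{t}$, routing the shift into $g$, where it becomes a harmless $O(1/t)$ perturbation thanks to uniform-in-$t$ boundedness of the almost-optimal minimizers $z_i(t)$.
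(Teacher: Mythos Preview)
Your proposal is correct and follows essentially the same route as the paper: both establish midpoint convexity of $\mathcal{F}$ by choosing near-optimal splittings in $g\oplus\mathcal{T}_t$, absorbing the $\tfrac{1}{t}$ shift from \cref{prop:midpoint_convexity_T} into the smooth factor $g$, and then passing to the limit via \cref{prop:f_limit_g+T}. The only cosmetic differences are that (i) you explicitly verify uniform boundedness of the near-minimizers $z_i(t)$ to control the $g$-perturbation (the paper invokes ``continuity of $g$'' at this step, which tacitly relies on the same boundedness), and (ii) you upgrade midpoint convexity to convexity via Sierpi\'nski's theorem using local boundedness from Step~1, whereas the paper invokes the weak monotonicity of $\mathcal{F}$ from \cref{prop:fprop} for the same purpose.
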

			
\begin{proof}
	The rate function $\mathcal{F}$ is decreasing and to prove its convexity it is sufficient to show that $\mathcal{F}$ is midpoint convex. For this we take $x,x' \in \mathbb{R}$ and we will show that
	\begin{equation*}
		\mathcal{F}\left( \tfrac{x+x'}{2} \right) \le \tfrac{1}{2}(\mathcal{F}(x)+\mathcal{F}(x')).
	\end{equation*}
	We are going to use the characterization \eqref{eq:lim_inf_conv} of $\mathcal{F}$ as a limit of $g \oplus \mathcal{T}_t$. By the definition of infimal convolution, fixed $x,x'$ and an arbitrary small number $\varepsilon>0$ we can find $u,v,u',v'$ such that $u+v=x$, $u'+v'=x'$ and
	\begin{equation*}
		g(u)+\mathcal{T}_t(v) \le \left(g \oplus \mathcal{T}_t \right) (x) + \varepsilon,
		\qquad
		g(u')+\mathcal{T}_t(v') \le \left(g \oplus \mathcal{T}_t \right) (x') + \varepsilon.
	\end{equation*}
	Let $u'' = (u+u')/2$ and $v''=(v+v')/2$. By \Cref{prop:midpoint_convexity_T} we get, for $t$ large enough 
	\begin{equation*}
		\mathcal{T}_t(v''+\tfrac1t) \le \tfrac{1}{2} \left( \mathcal{T}_t(v) + \mathcal{T}_t(v') \right) + \varepsilon.
	\end{equation*}
	Clearly we also have $g(u'') \le \frac{1}{2}(g(u) + g(u'))$ and noticing that $u''+v'' = \frac{1}{2}(x+x')$ we have
	\begin{equation*}
		\begin{split}
			\left( g \oplus \mathcal{T}_t \right)  \left(\frac{x+x'}{2} \right) &\le g(u''-\tfrac1t) + \mathcal{T}_t(v''+\tfrac1t) 
			\\ & = g(u''-\tfrac1t)-g(u'')+g(u'') + \mathcal{T}_t(v''+\tfrac1t)
			\\
			&
			\le g(u''-\tfrac1t)-g(u'')+\frac{1}{2}(g(u) + g(u')) + \frac{1}{2} \left( \mathcal{T}_t(v) + \mathcal{T}_t(v') \right) + \varepsilon
			\\
			&
			\le g(u''-\tfrac1t)-g(u'')+\frac{1}{2} \bigg( \left(g \oplus \mathcal{T}_t \right) (x) + \varepsilon + \left(g \oplus \mathcal{T}_t \right) (x') + \varepsilon \bigg) + 2\varepsilon.
		\end{split}
	\end{equation*}
	Now, using \eqref{eq:lim_inf_conv} and the fact that $g$ is continuous we obtain that
    $$\mathcal{F}\left(\tfrac{x+x'}{2}\right) \le \tfrac12(\mathcal{F}(x)+\mathcal{F}(x'))+3\e.$$
    Since $\varepsilon$ is arbitrary, this proves the midpoint convexity of $\mathcal{F}$. In addition to convexity, since  $\mathcal{F}$ is non-negative, decreasing, and has parabolic behavior for $x\le x_q$ via \Cref{prop:fprop}, it follows that $\mathcal{F}(x)\in \R$ for all $x\in \R$. Finally, it is well known that a real-valued convex function is continuous.
			\end{proof}

    \subsection{Proof of \Cref{thm:lower_tail_intro} and \Cref{prop:fall}}\label{sec:4.5} In this section we prove \Cref{thm:lower_tail_intro}. In \Cref{thm:limit_q_Laplace} we derived the lower-tail rate function of $\h(0,t)+\chi+S$ and in \Cref{prop:expapprox} we obtain sharp estimates for the lower-tail probability of $\h(0,t)+\chi\stackrel{d}{=}\lambda_1$ in terms of $\mathcal{T}_t$. We would first like to show that $\mathcal{T}_t$ converges pointwise. This will enable us to show the existence of the lower-tail rate function of $\h(0,t)+\chi$. Using a deconvolution lemma (\Cref{l:9.1}), we can then obtain the lower-tail rate function of $\h(0,t)$.

    To show the convergence of $\mathcal{T}_t$, the following equicontinuity-type result is crucial.

    \begin{proposition}\label{prop:equicont} For any $\varepsilon>0$, there exists $\delta>0$ and $t_\varepsilon>0$ such that 
    $$
    |\mathcal{T}_t(x)-\mathcal{T}_t(y)|\le \varepsilon,
    $$ 
    for all $t\ge t_\varepsilon$ and for all $x,y\in[0,2]$ with $|x-y|\le \delta$.
    \end{proposition}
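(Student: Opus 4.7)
The plan is to combine three ingredients: the asymptotic midpoint convexity of $\mathcal{T}_t$ established in \cref{prop:midpoint_convexity_T}; the boundary identity $\mathcal{T}_t(0)=1-q$ from \cref{proptt0} together with the weak decrease of $\mathcal{T}_t$; and the exact parabolic formula $\mathcal{F}(x)=g(x)+(1-q)$ valid for $x\le x_q$ from \cref{prop:fprop}, combined with the pointwise convergence $(g\oplus\mathcal{T}_t)(x)\to\mathcal{F}(x)$ from \cref{prop:f_limit_g+T}. The guiding idea is that approximate convexity plus monotonicity easily propagate control away from the left endpoint, but at $x=0$ there is no room to iterate further left, so a separate input is required to keep $\mathcal{T}_t(z)$ near $1-q$ for $z$ close to $0$; the parabolic formula supplies exactly this missing rigidity through an inf-convolution argument.

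Step 1 (control near $0$). Fix any $x_0\le x_q$. The trivial inequality
\begin{equation*}
(g\oplus\mathcal{T}_t)(x_0)\le g(x_0-z)+\mathcal{T}_t(z),\qquad z\ge 0,
\end{equation*}
together with \cref{prop:f_limit_g+T} and $\mathcal{F}(x_0)=g(x_0)+(1-q)$, yields
\begin{equation*}
\liminf_{t\to\infty}\mathcal{T}_t(z)\ge (1-q)-\bigl[g(x_0-z)-g(x_0)\bigr]=(1-q)-\logq\bigl(|x_0|z+z^2/2\bigr).
\end{equation*}
Given $\varepsilon>0$, I choose $\delta_0\in(0,1)$ so small that the right-hand side at $z=\delta_0$ exceeds $1-q-\varepsilon/2$. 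Then there exists $t_\varepsilon^{(1)}$ such that $\mathcal{T}_t(\delta_0)\ge 1-q-\varepsilon$ for all $t\ge t_\varepsilon^{(1)}$. Monotonicity together with $\mathcal{T}_t(z)\le\mathcal{T}_t(0)=1-q$ then gives $|\mathcal{T}_t(x)-\mathcal{T}_t(y)|\le\varepsilon$ for all $x,y\in[0,\delta_0]$ and $t\ge t_\varepsilon^{(1)}$.

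Step 2 (propagation to $[\delta_0,2]$ via approximate convexity). Set $a_k:=k\delta_0$ for $k=0,1,\ldots,\lfloor 2/\delta_0\rfloor$. Applying \cref{prop:midpoint_convexity_T} with $(a_{k-1},a_{k+1})$ and rearranging, I obtain (after using monotonicity of $\mathcal{T}_t$ to control the $1/t$-shift, which is negligible once $t\gg1/\delta_0$) the approximate weak decrease of consecutive drops:
\begin{equation*}
\mathcal{T}_t(a_k)-\mathcal{T}_t(a_{k+1})\le \mathcal{T}_t(a_{k-1})-\mathcal{T}_t(a_k)+2\varepsilon'.
\end{equation*}
Iterating along the grid starting from the first drop $\mathcal{T}_t(a_0)-\mathcal{T}_t(a_1)\le\varepsilon$ (from Step 1), every consecutive drop is bounded by $\varepsilon+C\varepsilon'/\delta_0$. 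Choosing $\varepsilon'$ small enough relative to $\varepsilon\delta_0$, all grid drops are $\le 2\varepsilon$. Since any $x,y\in[0,2]$ with $|x-y|\le\delta_0$ are separated by at most two consecutive grid cells, monotonicity gives $|\mathcal{T}_t(x)-\mathcal{T}_t(y)|\le 4\varepsilon$; rescaling $\varepsilon$ completes the proof.

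The main obstacle is Step 1. Approximate convexity alone is insufficient to control $\mathcal{T}_t$ near $0$, because the three-point convexity inequality cannot be iterated to the left of the endpoint $x=0$, and the value $\mathcal{T}_t(0)=1-q$ on its own does not rule out a steep drop immediately to the right. The explicit parabolic behaviour of $\mathcal{F}$ for $x\le x_q$ — obtained in \cref{prop:fprop} through the exact minimization of $\mathcal{W}^{(q)}$ by the Vershik-Kerov-Logan-Shepp shape — is precisely the non-trivial input that forces the optimizer in the inf-convolution to sit at $y=x$, which in turn pins $\mathcal{T}_t(z)$ to $1-q$ for small $z$. The secondary technical point, the $1/t$-translation appearing in \cref{prop:midpoint_convexity_T}, is harmless: it is much smaller than any fixed $\delta_0$ once $t$ is taken large, and the monotonicity of $\mathcal{T}_t$ makes the sign of the implied corrections favorable for the drop-comparison argument.
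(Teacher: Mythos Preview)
Your approach is essentially the paper's: both combine the parabolic identity $\mathcal{F}(x_q)=g(x_q)+(1-q)$ (to pin $\mathcal{T}_t$ near $1-q$ for small arguments) with iterated approximate midpoint convexity (to propagate control across $[0,2]$). The paper iterates the drop inequality leftward from a generic $x$ down to $0$ and then invokes the parabolic bound; you reverse the order, bounding the first grid drop and then pushing rightward, but the content is identical.

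One technical claim needs correction. You assert that monotonicity of $\mathcal{T}_t$ makes the $1/t$ shift in \cref{prop:midpoint_convexity_T} ``favorable''. It does not: the inequality reads $2\mathcal{T}_t(a_k+1/t)\le\mathcal{T}_t(a_{k-1})+\mathcal{T}_t(a_{k+1})+2\varepsilon'$, and since $\mathcal{T}_t(a_k)\ge\mathcal{T}_t(a_k+1/t)$, monotonicity goes the wrong way to upgrade this to $2\mathcal{T}_t(a_k)\le\cdots$; attempting to absorb the discrepancy produces an extra term $2(\mathcal{T}_t(a_k)-\mathcal{T}_t(a_k+1/t))$, which is exactly the kind of oscillation you are trying to control. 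The paper avoids this circularity by building the shift into the iteration itself: it writes $\mathcal{T}_t(x)-\mathcal{T}_t(x+\delta)\le\mathcal{T}_t(x-\delta-1/t)-\mathcal{T}_t(x)+\varepsilon'$ and iterates with progressively shifted abscissae $x-j\delta-j(j+1)/(2t)$. After $k\le 2/\delta$ steps the accumulated shift is $O(k^2/t)$, bounded uniformly in $t$, and a single final use of monotonicity lands the argument in the interval $[0,2\delta+O(1/t)]$ where your Step~1 applies. Your scheme is repaired in the same way.
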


\begin{proof} 
    Let $x<y$ and assume $y-x=\delta$. By the midpoint convexity stated in \Cref{prop:midpoint_convexity_T}, for any fixed $\varepsilon'$, there exists $t_{\varepsilon'}$ such that
   \begin{equation*}
        2 \mathcal{T}_t(x) \le \mathcal{T}_t(x-\delta-\tfrac1t) + \mathcal{T}_t(x+\delta) + \varepsilon',
    \end{equation*}
    for any $t>t_{\varepsilon'}$, which implies
    \begin{equation} \label{eq:T_t_continuity_1}
        \mathcal{T}_t(x) - \mathcal{T}_t(x+\delta) \le \mathcal{T}_t\big(x-\delta-\tfrac1t\big) - \mathcal{T}_t(x) + \varepsilon'. 
    \end{equation}
    Consider a non-negative integer $k$ such that
    \begin{equation*}
        k\delta+\frac{k(k+1)}{2t} \le x < (k+1)\delta+\frac{(k+1)(k+2)}{2t}.
    \end{equation*}
    Then, iterating \eqref{eq:T_t_continuity_1} we obtain
    \begin{equation} \label{eq:T_t_continuity_2}
        \begin{split}
            \mathcal{T}_t(x)-\mathcal{T}_t(y) &\le \mathcal{T}_t\left(x-k\delta-\frac{k(k+1)}{2t}\right)-\mathcal{T}_t\left(x-(k-1)\delta-\frac{k(k-1)}{2t}\right) + k\varepsilon'
            \\
            &\le \mathcal{T}_t(0) - \mathcal{T}_t(2\delta+\tfrac{2k+1}{t}) + k\varepsilon'.
        \end{split}
    \end{equation}
    Next, we estimate the term $\mathcal{T}_t(0) - \mathcal{T}_t\big(2\delta+\frac{2k+1}{t}\big)$. Combining \eqref{eq:f_parabola}, \Cref{proptt0} and \Cref{prop:f_limit_g+T}, we know that there exists $x_q$ such that 
    \begin{equation*}
    \begin{split}
        \mathcal{F}(x_q) = \mathcal{T}_t(0) + g(x_q) = \lim_{t \to +\infty} \inf_{y\in [0,2]} \left\{ \mathcal{T}_t(y) + g(x_q-y) \right\},
    \end{split}
    \end{equation*}
    where $g(y)= \logq y^2/2 $.
    This implies that for any fixed $\varepsilon''$ we can pick $t_{\varepsilon''}$ such that
    \begin{equation*}
        -\varepsilon'' \le \inf_{y\in [0,2]} \left\{ \mathcal{T}_t(y) + g(x_q-y) \right\} - \mathcal{T}_t(0) - g(x_q) \le \varepsilon''
    \end{equation*}
     for all $t>t_{\varepsilon''}$. Then, for any $y\in[0,2]$ we have
    \begin{equation}\label{eq:T_t_continuity_3}
        0\le \mathcal{T}_t(0)- \mathcal{T}_t(y) \le \varepsilon'' + g(x_q-y)-g(x_q) = \varepsilon'' + y( y-2x_q ) \le \varepsilon'' + M y,
    \end{equation}
    where $M=-2x_q+2$. Combining the estimates \eqref{eq:T_t_continuity_2}, \eqref{eq:T_t_continuity_3} we arrive at the bound
    \begin{equation} \label{eq:T_t_continuity_4}
       0\le \mathcal{T}_t(x)-\mathcal{T}_t(y) \le \varepsilon'' + 2M\delta+\frac{M(2k+1)}{t} + k \varepsilon',
    \end{equation}
    which holds for any $t>\max\{ t_{\varepsilon'}, t_{\varepsilon''} \}$. It is now clear that the right-hand side of \eqref{eq:T_t_continuity_4} can be made arbitrarily small, since $k<2/\delta$ and $\varepsilon', \varepsilon''$ are independent of $\delta$. Moreover, we can also allow $|x-y|<\delta$ using the fact that $\mathcal{T}_t$ is decreasing. This completes the proof. 
\end{proof}

 We now state two real analysis lemmas that will allow us to deduce the lower-tail rate functions first for $\h(0,t)+\chi$ and then for $\h(0,t)$. Their proofs are deferred to the supplement material \cite{supp}.

    \begin{lemma} \label{lem:deconv1}
        Let $h_n:\mathbb{R} \to [0,\infty]$ be a family of decreasing functions such that
        \begin{itemize}[leftmargin=20pt]
            \item $h_n(x)=+\infty$ for $x<0$, and there exists $M>$ such that $h_n(x)\in[0,M]$ for $x\ge 0$ and $\sup_{x\ge 2} h_n(x) \to 0$ as $n\to \infty$.
            \item For all $\varepsilon>0$, there exists $\delta>0$ and $n_\e>0$ such that for all $n\ge n_\e$ and for all $x,y \in [0,2]$ with $|x-y|\le \delta$  we have $$|h_n(x)-h_n(y)|\le \varepsilon.$$
            \item Every subsequential limit of $\{h_n\}$ is convex.
        \end{itemize}
          Let $g(x)=\frac{x^2}{2}\logq$. Assume that $(h_n \oplus g)(x)$ converges pointwise to a proper, lower-semicontinuous convex function $f(x)$.  Then $h_n(x)$ converges pointwise to  $$h(x)=(f\ominus g)(x) := \sup_{y\in\R} \{ f(x-y) - g(y) \}.$$ Moreover we have $f=g\oplus h$,  the function $h$ is continuous on $[0,\infty)$, and the function $f$ is differentiable with derivative $f'$ being $\logq$-Lipschitz.
    \end{lemma}

    \begin{lemma}\label{l:9.1} Let $G:\R\to [0,1]$ be a decreasing function with the property that
					$$\lim_{x\to \infty}\frac1{x^2}\log G(-x)=0,\qquad \lim_{x\to \infty}\frac1{x^2}\log G(x)= -\infty.$$
					Fix an open set $O\in \R$. Suppose  $\mathpzc{g}: O\to \R$ is a continuous function. Let $\{X_t\}_{t\ge 1}$ be a sequence of random variables satisfying
					\begin{align}\label{e:9.1c}
						\lim_{t\to \infty} \frac1{t^2}\log\Ex\big[G(X_t-xt)\big]=\mathpzc{g}(x)
					\end{align}
					for all $x\in O$. Then for all $x \in O$ we have
					\begin{align*}
						\lim_{t\to \infty} \frac1{t^2}\log\Pr\big(X_t \le xt\big)=\mathpzc{g}(x).
					\end{align*}
				\end{lemma}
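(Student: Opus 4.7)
The idea is that under the hypotheses, the function $G$ behaves like a smoothed indicator at the $t^2$--exponential scale: $G(-\eta t)$ is subexponentially close to a positive constant (by $\log G(-x)=o(x^2)$), while $G(\eta t)$ is superexponentially small (by $\log G(x)/x^2\to-\infty$). I shall therefore sandwich $\Pr(X_t\le xt)$ between two slightly shifted copies of $\Ex[G(X_t-yt)]$ and use continuity of $\mathpzc g$ to close the gap.

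\textbf{Upper bound.} Fix $x\in O$ and a small $\eta>0$ with $x+\eta\in O$. On the event $\{X_t\le xt\}$ one has $X_t-(x+\eta)t\le-\eta t$, so by monotonicity of $G$,
\begin{equation*}
\Ex\!\left[G(X_t-(x+\eta)t)\right]\;\ge\; G(-\eta t)\,\Pr(X_t\le xt).
\end{equation*}
Taking $\tfrac1{t^2}\log$ of both sides, applying the assumption \eqref{e:9.1c} on the left and using $\tfrac1{t^2}\log G(-\eta t)\to 0$, I obtain $\limsup_{t\to\infty}\tfrac{1}{t^2}\log \Pr(X_t\le xt)\le \mathpzc g(x+\eta)$. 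Continuity of $\mathpzc g$ then lets me send $\eta\downarrow 0$.

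\textbf{Lower bound.} Fix $x\in O$ and $\delta>0$ with $x-\delta\in O$. Splitting the expectation according to whether $X_t\le xt$ or $X_t>xt$, using $G\le 1$ on the first piece and the monotonicity estimate $G(X_t-(x-\delta)t)\le G(\delta t)$ on the second piece gives
\begin{equation*}
\Ex\!\left[G(X_t-(x-\delta)t)\right]\;\le\; \Pr(X_t\le xt)+G(\delta t).
\end{equation*}
Because $\tfrac{1}{x^2}\log G(x)\to-\infty$, the term $G(\delta t)$ is superexponentially small and is therefore dominated by the left-hand side, which is of order $e^{t^2\mathpzc g(x-\delta)+o(t^2)}$; hence $\Pr(X_t\le xt)\ge \tfrac12\Ex[G(X_t-(x-\delta)t)]$ for all large $t$. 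Taking $\tfrac1{t^2}\log$ and $\liminf$ yields $\liminf_{t\to\infty}\tfrac{1}{t^2}\log\Pr(X_t\le xt)\ge \mathpzc g(x-\delta)$, and continuity of $\mathpzc g$ at $x$ allows $\delta\downarrow 0$.

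\textbf{Main difficulty.} No step is genuinely delicate; no additional moment estimates or probabilistic tools beyond the hypotheses are required. The only care needed is to pair the two tail conditions on $G$ with the correct bound: the slow decay of $G$ at $-\infty$ enters the upper bound (otherwise the factor $G(-\eta t)$ could swallow the estimate), while the fast decay at $+\infty$ enters the lower bound (to ensure the stray $G(\delta t)$ is negligible). Because $\mathpzc g$ is only assumed continuous, one must work at the shifted points $x\pm\eta$ and pass to the limit, rather than trying to equate things at $x$ directly; this is where openness of $O$ is silently used.
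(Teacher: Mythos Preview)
Your proposal is correct and follows essentially the same approach as the paper: both sandwich $\Pr(X_t\le xt)$ using the two inequalities $\Ex[G(X_t-(x+\eta)t)]\ge G(-\eta t)\Pr(X_t\le xt)$ and $\Ex[G(X_t-(x-\delta)t)]\le \Pr(X_t\le xt)+G(\delta t)$, then invoke the respective tail hypotheses on $G$ and continuity of $\mathpzc g$ to close the gap. Your write-up is in fact slightly more explicit than the paper's about which tail condition on $G$ is paired with which bound.
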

    \begin{proof}[Proof of \Cref{thm:lower_tail_intro}] Fix $\mu\in (0,2)$. Recall $\mathcal{T}_t$ from \eqref{def:tta}. We would like to apply \Cref{lem:deconv1} with $h_t = \mathcal{T}_t$ to deduce that $\lim_{t\to\infty} \mathcal{T}_t(\mu)$ exists. Note that $\{\mathcal{T}_t\}$ satisfies the three conditions of \Cref{lem:deconv1}. Indeed, the first condition follows from \Cref{proptt0} and \Cref{prop:tt2}, whereas the second one follows from \Cref{prop:equicont}. The third one is a consequence of \Cref{prop:midpoint_convexity_T}. Since by \Cref{prop:f_limit_g+T} we have $g\oplus\mathcal{T}_t\to \mathcal{F}$ pointwise and $\mathcal{F}$ is proper, lower semicontinuous and convex by \Cref{prop:fcon}, we thus have that
    \begin{align}
        \label{e:des}
        \mathcal{T}_t(\mu)\xrightarrow[t\to\infty]{} \Phi_-(\mu):=\sup_{y\in \R} \{\mathcal{F}(y)-g(\mu-y)\}
    \end{align}
and $\Phi_-$ is continuous on $[0,\infty)$. Due to the properties of $\mathcal{T}_t$ established in  \Cref{proptt0,prop:tt2,prop:midpoint_convexity_T}, we readily have that $\Phi_-$ is decreasing, non-negative and convex with $\Phi_-(\mu)=+\infty$ for $\mu<0$, $\Phi_-(0)=1-q$, and $\Phi_-(\mu)=0$ for $\mu\ge 2$. But by the lower-tail estimate of $\lambda_1$ in \eqref{eq:exp_approximation_lower_tail_lambda1} we have thus shown that
    \begin{align*}
         -\lim_{t\to+\infty}\frac1{t^2}\log \Pr_{\mathsf{cPlan}(t(1-q))}(\lambda_1\le \mu t)=\Phi_-(\mu).
    \end{align*}
    Recall that $\h(0,t)+\chi\stackrel{d}{=}\lambda_1$ from \Cref{thm:matching_qPNG_cylindric_plancherel}. To remove $\chi$, we appeal to \Cref{l:9.1}.  For every fixed $q\in (0,1)$ consider the function $G_q:\R\to [0,1]$ defined as
					\begin{align*}
     G_q(y):=\Pr(\chi \le -y),
					\end{align*}
					where $\chi\sim q$-$\mathrm{Geo}(q)$. We claim  that $G_q$ satisfies the conditions in  \Cref{l:9.1}. Clearly, $G_q$ is decreasing. Since $G_q(y)=0$ for all $y>0$, the right tail condition in \Cref{l:9.1} is satisfied trivially. Since $\lim_{x\to \infty} G_q(-x)=1$, the left tail condition also holds. Thus $G_q$ satisfies the conditions in \Cref{l:9.1}. The utility of $G_q$ function is that it allows us to write
					\begin{align*}
						\Pr(\h(0,t)+\chi \le \mu t)=\Ex[G_q(\h(0,t)-\mu t)].
					\end{align*}
				Since $\Phi_-$ is continuous on $[0,\infty)$,	invoking \Cref{l:9.1} with $G=G_q$ and $\mathpzc{g}=\Phi_-$, we see that $\h(0,t)$ satisfies a lower-tail LDP with the same rate function $\Phi_-$ and speed $t^2$. This completes the proof. 
    \end{proof}

\begin{proof}[Proof of \Cref{prop:fall}] A few of the properties of $\mathcal{F}$ are already proven in \Cref{prop:fprop} and \Cref{prop:fcon}. The remaining properties of $\mathcal{F}$ claimed in \Cref{prop:fall} follow from \Cref{lem:deconv1}. 
\end{proof}
   
\section{Further approaches to the lower-tail} \label{sec:further_approaches}

In this section, we outline two possible approaches to the explicit characterization of the lower-tail rate function.

\subsection{Non rigorous approach to characterization of $\mathcal{F}$: nonlinear differential equations} \label{subs:eq_diff}

In \cite{cafasso_ruzza_2022}, starting from a discrete Riemann-Hilbert characterization of the Fredholm determinant \eqref{eq:fred}, the authors derived a  differential-difference equation for the quantity
\begin{equation*}
    Q(t,s) := \mathbb{P}_{\mathsf{cPlan}(t(1-q))} (\lambda_1+S \le s),
\end{equation*}
which reads
\begin{equation} \label{eq:discrete_Toda}
    \partial_t^2 \log Q(t,s) + \frac{1}{t} \partial_t \log Q(t,s) + 4 = 4 \frac{Q(t,s+1) Q(t,s-1)}{Q(t,s)^2}.
\end{equation}
When $s$ is of order $t$, from the Large Deviation Principle \eqref{eq:limit_q_Laplace} we have approximation
\begin{equation} \label{eq:rate_function_f}
    \log Q_\sigma(t,s) = -t^2 \mathcal{F}(s/t) + o(t^2),
\end{equation}
where we recall that the function $\mathcal{F}$ is given by the variational problem \eqref{eq:f_Plancherel}. Plugging this approximation in \eqref{eq:discrete_Toda}, assuming that the function $\mathcal{F}$ is twice differentiable (which we did not prove), we find a closed equation for $\mathcal{F}$ as 
\begin{equation} \label{eq:differental_equation}
    -4 \mathcal{F} (x) + 3 x \mathcal{F}'(x) - x^2 \mathcal{F}''(x) + 4 = 4 e^{-\mathcal{F}''(x)},
\end{equation}
where $x=s/t$. 

Motivated by the fluctuation result \eqref{eq:Tracy_Widom} we expect that at $x=2$, the behavior of $\mathcal{F}$ is given by $\mathcal{F}(2)=\mathcal{F}'(2)=\mathcal{F}''(2)=0$, since $F_{\operatorname{GUE}}(s) \sim e^{-\frac{|s|^3}{12}}$ for $s \to -\infty$. A quick check shows that these conditions also determine $\mathcal{F}'''(2)$. In fact deriving twice both sides of \eqref{eq:differental_equation} we get
\begin{equation*}
    -x  \mathcal{F}''' (x )+\mathcal{F}''''(x ) \left(4 e^{-\mathcal{F}''(x )}-x ^2\right)-4 \mathcal{F}'''(x )^2 e^{-\mathcal{F}''(x )} = 0,
\end{equation*}
which evaluated at $x=2$ and assuming $\mathcal{F}(2)=\mathcal{F}'(2)=\mathcal{F}''(2)=0$ leaves us with
$\mathcal{F}''' (2 )(1+2 \mathcal{F}'''(2 )) = 0.$
This forces, once we exclude constant solutions, $\mathcal{F}'''(2)=-\frac{1}{2}$. These considerations motivate imposing the boundary conditions 
\begin{equation*}
    \mathcal{F}(2)=\mathcal{F}'(2)=\mathcal{F}''(2)=0 \qquad \text{and} \qquad \mathcal{F}'''(2)=- \tfrac{1}{2}.
\end{equation*}
In general, these boundary conditions do not guarantee uniqueness. We look for a one-parameter family of solutions to this problem of the following particular form. Let us write
\begin{equation*}
    \mathcal{F}(x) = \int_2^x \diff y \int_2^y \diff z \log \mathcal{G}(z),
\end{equation*}
in which case $\mathcal{G}(x)$, which needs to be positive for $x>0$, solves the differential equation
\begin{equation*}
    \mathcal{G} \left(4 \mathcal{G}''+x ^2 (\mathcal{G}')^2\right)=8 (\mathcal{G}')^2+x  \mathcal{G}^2 \left(x  \mathcal{G}''+\mathcal{G}'\right)
    \end{equation*}
with boundary conditions $\mathcal{G}(2)=1,  \mathcal{G}'(2)=-\tfrac{1}{2}.$
In the following proposition, we introduce a one-parameter family of solutions $\mathcal{G}$.
\begin{proposition}
For any $c\in (0,+\infty]$ and $x \in (0,2)$ let $\mathcal{G}=\mathcal{G}_c(x)>1$ be solution of the equation
\begin{equation} \label{eq:g_implicit}
        \log (\mathcal{G})= 
        \begin{cases}
        \frac{2c}{\sqrt{c^2-4}} \mathrm{arctanh} \left(\frac{c x \sqrt{\left(c^2-4\right) \mathcal{G} }-\sqrt{\left(c^2-4\right)^2 \mathcal{G} x^2+16 \left(c^2-4\right)}}{\left(c^2-4\right) x \sqrt{\mathcal{G}}-\sqrt{\left(c^2-4\right) c^2 \mathcal{G} x ^2+16 c^2}}\right)
        \qquad &\text{if } c \neq 2,+\infty
        \\
        2-x \sqrt{\mathcal{G}} \qquad &\text{if } c = 2
        \\
        \log \left(\frac{4}{\mathcal{G} x ^2}\right) \qquad &\text{if } c = +\infty,
        \end{cases}
    \end{equation}
    where $\sqrt{\,\,\,\,}$ is the principal branch of the square root.
    Then, $\mathcal{G}_c(x)$ solves the system
    \begin{equation} \label{eq:diff_system_G}
        \begin{cases}
            \mathcal{G} \left(4 \mathcal{G}''+x ^2 (\mathcal{G}')^2\right)=8 (\mathcal{G}')^2+x  \mathcal{G}^2 \left(x  \mathcal{G}''+\mathcal{G}'\right) \qquad x \in (0,2)
            \\
            \mathcal{G}(2)=1, \qquad \mathcal{G}'(2)=-\frac{1}{2}, \qquad \mathcal{G}''(2)=\frac{1}{2}-\frac{1}{4 c^2}.
        \end{cases}
    \end{equation}
\end{proposition}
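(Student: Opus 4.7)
The strategy is to recognize the second-order ODE in \eqref{eq:diff_system_G} as a first-order equation in disguise. Specifically, I would first exhibit the quantity
$$
\Phi(x,\mathcal{G},\mathcal{G}'):=\frac{(\mathcal{G}')^{2}(4-\mathcal{G}x^{2})}{\mathcal{G}^{2}(\mathcal{G}+\mathcal{G}'x)}
$$
as a first integral, namely $\frac{d}{dx}\Phi\bigl(x,\mathcal{G}(x),\mathcal{G}'(x)\bigr)=0$ for every smooth solution of the ODE. The discovery of $\Phi$ is motivated by the derivation of \eqref{eq:diff_system_G}: that ODE was obtained by differentiating the relation \eqref{eq:differental_equation} for $\mathcal{F}$ twice, so $\Phi=c^{2}$ should encode the algebraic content of a \emph{single} differentiation, with $c$ playing the role of the lost integration constant. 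The plan is then to (i) verify $d\Phi/dx=0$ by a direct computation eliminating $\mathcal{G}''$ via the ODE, (ii) integrate the resulting first-order equation $\Phi=c^{2}$ explicitly and match the primitive against \eqref{eq:g_implicit}, and (iii) read off the prescribed behavior at $x=2$.

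For step (i), one differentiates $\mathcal{G}^{2}(\mathcal{G}+\mathcal{G}'x)\,c^{2}=(\mathcal{G}')^{2}(4-\mathcal{G}x^{2})$ in $x$ and uses the quasilinear form $\mathcal{G}\mathcal{G}''(4-\mathcal{G}x^{2})=(\mathcal{G}')^{2}(8-\mathcal{G}x^{2})+x\mathcal{G}^{2}\mathcal{G}'$ of the ODE to eliminate $\mathcal{G}''$; the resulting identity simplifies to $0=0$. For step (ii), solving $\Phi=c^{2}$ for $\mathcal{G}'$ (choosing the branch dictated by the boundary value $\mathcal{G}'(2)=-1/2$) yields
$$
\mathcal{G}'=-\frac{2c\,\mathcal{G}^{3/2}}{\sqrt{(c^{2}-4)\mathcal{G}x^{2}+16}-\sqrt{c^{2}-4}\,x\sqrt{\mathcal{G}}},
$$
which, after the substitution $u=\sqrt{\mathcal{G}}$, becomes separable. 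A standard hyperbolic primitive produces the three-branch formula \eqref{eq:g_implicit}, the case split being forced by the sign of $c^{2}-4$ (with $c=2$ and $c=+\infty$ as degenerate limits obtained by L'Hôpital's rule in the general primitive). The integration constant is pinned down by $\mathcal{G}(2)=1$, which one verifies by substitution into \eqref{eq:g_implicit}.

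For step (iii), the condition $\mathcal{G}'(2)=-1/2$ follows from implicit differentiation of \eqref{eq:g_implicit} at $x=2$. The value $\mathcal{G}''(2)=\tfrac12-\tfrac1{4c^{2}}$ is not an independent requirement but a consequence of the first integral: at the point $(x,\mathcal{G},\mathcal{G}')=(2,1,-1/2)$ both the numerator and denominator of $\Phi$ vanish, and L'Hôpital's rule gives
$$
c^{2}=\lim_{x\to 2}\Phi=\frac{-1/2}{-1+2\mathcal{G}''(2)},
$$
which rearranges to the prescribed value of $\mathcal{G}''(2)$. The main obstacle is step (i): verifying the first integral is a single-pass but rather lengthy algebraic calculation, and some care is needed because $x=2$ is a singular point of the reduced equation $\Phi=c^{2}$ (both $4-\mathcal{G}x^{2}$ and $\mathcal{G}+\mathcal{G}'x$ vanish there), so existence/uniqueness for the first-order problem with the prescribed Cauchy data has to be argued by choosing the correct branch of the square roots in a one-sided neighborhood of $x=2$ inside $(0,2)$, and by parameterizing the one-parameter family of local solutions—which the singularity permits—precisely by $c$.
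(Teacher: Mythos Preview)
Your approach is correct but genuinely different from the paper's. The paper simply says ``Using the implicit function theorem we can verify that the implicitly defined function $\mathcal{G}_c$ is in fact the solution of the desired differential system'' --- i.e.\ it treats \eqref{eq:g_implicit} as given, differentiates implicitly to extract $\mathcal{G}'$ and $\mathcal{G}''$ as functions of $x$ and $\mathcal{G}$, substitutes into the ODE, and checks the boundary values by direct evaluation. Your route is constructive rather than verificational: you exhibit the first integral $\Phi=(\mathcal{G}')^{2}(4-\mathcal{G}x^{2})/[\mathcal{G}^{2}(\mathcal{G}+\mathcal{G}'x)]$, reduce the second-order ODE to the one-parameter family of first-order equations $\Phi=c^{2}$, and integrate in closed form to \emph{derive} \eqref{eq:g_implicit}. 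This buys an explanation of where both the implicit formula and the parameter $c$ come from (the latter as the constant of motion lost in passing from \eqref{eq:differental_equation} to \eqref{eq:diff_system_G}), and it makes the role of $\mathcal{G}''(2)=\tfrac12-\tfrac{1}{4c^{2}}$ transparent as the L'H\^opital value of $\Phi$ at the singular point $x=2$. The paper's approach is shorter and mechanical; yours is longer but illuminates the structure. One small correction: solving the quadratic $\Phi=c^{2}$ for $\mathcal{G}'$ and rationalizing gives $\mathcal{G}'=-2c\,\mathcal{G}^{3/2}/\bigl(\sqrt{(c^{2}-4)\mathcal{G}x^{2}+16}+c\,x\sqrt{\mathcal{G}}\bigr)$, with $c$ rather than $\sqrt{c^{2}-4}$ multiplying $x\sqrt{\mathcal{G}}$; this does not affect the strategy, only the subsequent primitive.
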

\begin{proof}
    Using the implicit function theorem we can verify that the implicitly defined function $\mathcal{G}_c$ is in fact the solution of the desired differential system.
\end{proof}
 
\begin{remark}
    When $0<c<+\infty$, the function $\mathcal{G}$ of \eqref{eq:g_implicit} is defined on a larger interval $(\widetilde{x}_c,0)$ for $\widetilde{x}_c<0$ and hence it solves the differential system \eqref{eq:diff_system_G} on a larger interval. 
\end{remark}

Leveraging these explicit solutions for the differential system for the function $\mathcal{G}$ we are able to obtain solutions of the system \eqref{eq:differental_equation} with the condition on the fourth derivative of $\mathcal{F}$.

\begin{corollary}
    Fix $c > 0$ or $c=+\infty$ and consider the function $\mathcal{G}_c$ defined in \Cref{eq:g_implicit}. Then, the function 
    \begin{equation} \label{eq:f_c}
        \mathcal{F}_c(x) := \int_2^x \diff y \int_2^y \diff z \log \mathcal{G}_c(z),
    \end{equation}
solves the differential system
\begin{equation}
    \begin{cases} \label{eq:differential_system}
    4 \mathcal{F} (x) - 3 x \mathcal{F}'(x) + x^2 \mathcal{F}''(x) - 4 + 4 e^{-\mathcal{F}''(x)} =0
    \qquad x \in (0,2),
    \\
    \mathcal{F}(2)=\mathcal{F}'(2)=\mathcal{F}''(2) = 0, \qquad \mathcal{F}'''(2) = -\frac{1}{2},\qquad \mathcal{F}''''(2)= \frac{1}{4}-\frac{1}{4 c^2} .
    \end{cases}
\end{equation}
\end{corollary}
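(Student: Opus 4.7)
The plan is to reduce the corollary to the preceding proposition by the key observation that, from the definition \eqref{eq:f_c}, one has $\mathcal{F}''_c(x)=\log\mathcal{G}_c(x)$, so every quantity appearing in \eqref{eq:differential_system} can be written in terms of $\mathcal{G}_c,\mathcal{G}'_c,\mathcal{G}''_c$; in particular $e^{-\mathcal{F}''_c}=1/\mathcal{G}_c$, $\mathcal{F}'''_c=\mathcal{G}'_c/\mathcal{G}_c$, and $\mathcal{F}''''_c=\mathcal{G}''_c/\mathcal{G}_c-(\mathcal{G}'_c/\mathcal{G}_c)^2$. Under this substitution I expect the ODE of \eqref{eq:differential_system} and the ODE of \eqref{eq:diff_system_G} to be two integrated forms of the same relation, so the proof reduces to bookkeeping and matching integration constants.

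Concretely, I would introduce
\begin{equation*}
H(x):=-4\mathcal{F}_c(x)+3x\mathcal{F}'_c(x)-x^2\mathcal{F}''_c(x)+4-4e^{-\mathcal{F}''_c(x)},
\end{equation*}
and aim to show $H\equiv 0$ on $(0,2)$ by checking $H''\equiv 0$ together with two initial conditions $H(2)=H'(2)=0$. The initial data will follow from the boundary behavior of $\mathcal{F}_c$ at $x=2$, which in turn reduce to the values $\mathcal{G}_c(2)=1$, $\mathcal{G}'_c(2)=-\tfrac12$, $\mathcal{G}''_c(2)=\tfrac12-\tfrac{1}{4c^2}$ supplied by the preceding proposition: from \eqref{eq:f_c} one has $\mathcal{F}_c(2)=\mathcal{F}'_c(2)=0$, and then $\mathcal{F}''_c(2)=\log\mathcal{G}_c(2)=0$, $\mathcal{F}'''_c(2)=-\tfrac12$, $\mathcal{F}''''_c(2)=\tfrac14-\tfrac{1}{4c^2}$, after which $H(2)=4-4=0$ and $H'(2)=-\mathcal{F}'_c(2)+2\mathcal{F}''_c(2)-4\mathcal{F}'''_c(2)+4\mathcal{F}'''_c(2)e^{-\mathcal{F}''_c(2)}=0$ are one-line checks.

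The substantive step is establishing $H''\equiv 0$. Differentiating twice gives
\begin{equation*}
H''(x)=-x\mathcal{F}'''_c(x)-\bigl(x^2-4e^{-\mathcal{F}''_c(x)}\bigr)\mathcal{F}''''_c(x)-4(\mathcal{F}'''_c(x))^2 e^{-\mathcal{F}''_c(x)},
\end{equation*}
and substituting the expressions for $\mathcal{F}'''_c$, $\mathcal{F}''''_c$, $e^{-\mathcal{F}''_c}$ and multiplying through by $\mathcal{G}_c^3>0$ should rearrange exactly into
\begin{equation*}
\mathcal{G}_c\bigl(4\mathcal{G}''_c+x^2(\mathcal{G}'_c)^2\bigr)-8(\mathcal{G}'_c)^2-x\mathcal{G}_c^2\bigl(x\mathcal{G}''_c+\mathcal{G}'_c\bigr)=0,
\end{equation*}
which is precisely the ODE from \eqref{eq:diff_system_G} that $\mathcal{G}_c$ is known to satisfy. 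Combined with the initial data $H(2)=H'(2)=0$ this yields $H\equiv 0$ on $(0,2)$, completing the proof. The main obstacle is entirely algebraic: one must carefully track signs and regroup the monomials in $\mathcal{G}_c,\mathcal{G}'_c,\mathcal{G}''_c$ arising from $H''$ so they reassemble into the symmetric form of the $\mathcal{G}_c$-equation. There is no analytic subtlety, since the proposition guarantees $\mathcal{G}_c>1$ on $(0,2)$, which keeps $\mathcal{F}_c$ smooth there and justifies clearing denominators by $\mathcal{G}_c^3$.
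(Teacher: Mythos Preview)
Your proposal is correct and follows exactly the route the paper intends. The paper gives no explicit proof of the corollary, but the surrounding discussion already computes $H''$ (in the display immediately before the proposition) and derives the $\mathcal{G}$-equation precisely by writing $\mathcal{F}=\int_2^x\int_2^y\log\mathcal{G}$; your argument simply reverses that derivation and checks the two integration constants at $x=2$, which is the implicit content of the corollary.
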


\begin{remark}
    We can check that the function $f(x)=\frac{\logq}{2} x^2 + (1-q)$, which by virtue of \eqref{eq:f_parabola} coincides with the lower-tail rate function $\mathcal{F}(x)$ for $x\ll 0$ is in fact a solution of the differential equation \eqref{eq:differental_equation}.
\end{remark}

\begin{remark}
    In the particular case where $c=+\infty$, the function $\mathcal{G}_{\infty}$ is explicit and given by $\mathcal{G}_{\infty}(x) = 2/x$. Then, integrating twice as in \eqref{eq:f_c} we obtain
    \begin{equation*}
        \mathcal{F}_{\infty}(x) = 1-2 x + \frac{3 x^2}{4}+\frac{1}{2} x^2 \log \left(\frac{2}{x}\right),
    \end{equation*}
    which coincides with the lower-tail rate function of the PNG \cite{logan_shepp1977variational,seppalainen_98_increasing,deuschel_zeitouni_1999}.
\end{remark}

\begin{remark}
    In the study of the lower-tail rate function of the KPZ equation, an analogous idea to that discussed in this subsection was developed by Le Doussal in \cite{Le_Doussal_KP_large_deviations}. The probability distribution of the KPZ equation with droplet initial condition obeys a variant of the Kadomtsev–Petviashvili equation, as found in \cite{quastel_remenik_2022_KP}, which when scaled as in \eqref{eq:rate_function_f} gives rise to a first order non-linear differential equation whose solution matches the lower-tail rate function mathematically derived in \cite{tsai_lower_tail,cafasso_claeys_KPZ}.
\end{remark}

We end this subsection considering the case where $c=2$ in the relation \eqref{eq:g_implicit}. Here the function $\mathcal{G}$ can be written in terms of Lambert's $W$ function as
    \begin{equation*}
        \mathcal{G}(x) = \frac{4 }{x^2} W\left(\frac{e x }{2}\right)^2,
    \end{equation*}
    so that, integrating twice the logarithm of $\mathcal{G}$ we obtain the explicit expression
    \begin{equation*}
        1- x^2 W\left(\frac{e x}{2}\right)-\frac{6 x^2}{4 W\left(\frac{e x}{2}\right)}-\frac{x^2}{4 W\left(\frac{e x}{2}\right)^2}+\frac{5}{2} x^2.
    \end{equation*}
    As a function of $x$ the above expression can be connected in $C^1$ manner to a parabola of the form $1-q + \frac{\logq}{2} x^2$ as
    \begin{equation}\label{eq:F_2}
        \mathcal{F}_2(x)=\begin{cases}
            1-q_2 +\frac{\log q_2^{-1}}{2} x^2 \qquad &x < x_{q_2}
            \\
            1- x^2 W\left(\frac{e x}{2}\right)-\frac{6 x^2}{4 W\left(\frac{e x}{2}\right)}-\frac{x^2}{4 W\left(\frac{e x}{2}\right)^2}+\frac{5}{2} x^2
            \qquad & x_{q_2} \le x \le 2
            \\
            0 \qquad &x >  2,
        \end{cases}
    \end{equation}    
    where values $q_2,x_{q_2}$ can be found numerically and they are $q_2 \approx 0.00003724$, $x_{q_2} \approx -0.1867$. A plot of the function $\mathcal{F}_2$ is given in \Cref{fig:F_2}. We conjecture this behavior to be general, i.e. that the lower-tail rate function $\mathcal{F}$, for any $q\in(0,1)$, possesses three behaviors: one where it is identically 0 for $x>2$, one where it assumes the form prescribed by \eqref{eq:f_c} for $x \in [x_q,2]$ and finally, for $x<x_q$ it is equal to $1-q + \frac{\logq}{2} x^2$. {The values of $c,x_q$ are determined by $q$ and they are unique if we require that $\mathcal{F}$ is $C^1$. At the moment we cannot prove this statement as we are not able to prove that the rate function $\Phi_-$ is $C^2$ in its domain of definition.}
    \begin{figure}
        \centering
        \includegraphics[scale=.7]{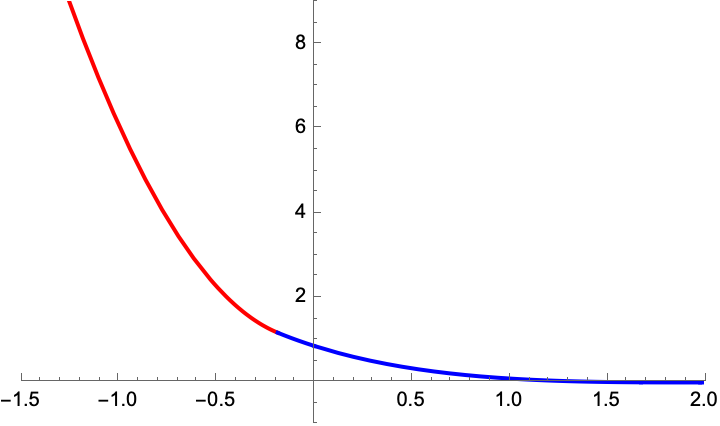}
        \caption{A plot of the piecewise defined function $\mathcal{F}_2(x)$ of \eqref{eq:F_2}. The red curve corresponds to the parabolic region of the curve, while the blue curve represents the region where $x_{q_2} \le x \le 2$.}
        \label{fig:F_2}
    \end{figure}

\subsection{Potential theoretic approach to the minimizer of $\mathcal{W}^{(q)}$} \label{subsub:logan_shepp_generalization}

We propose another possible way of finding the exact form of the lower-tail rate function $\mathcal{F}(x)$, by solving an energy minimization problem, closely related to the variational characterization \eqref{eq:f_Plancherel}. 
Recall the functional $\mathcal{W}^{(q)}(\kappa,\phi;x)$ defined in \eqref{eq:W_functional}. Consider first a ``de-Poissonized" variant of the functional $\mathcal{W}^{(q)}$ and the associate variational problem as
\begin{equation} \label{eq:W_depoissonized}
    \mathcal{W}^{(q)}(\phi;x) := 1+ 2 I_{\mathrm{hook}} (\phi) + \mathcal{V}^{(q)}(x;\phi),\quad \mathcal{U}(x):= \inf_{\phi} \,\{\mathcal{W}^{(q)}(\phi;x)\}.
\end{equation}
The function $\mathcal{F}(x)$ can be deduced from $\mathcal{U}(x)$ through a further minimization problem:
\begin{equation*}
    \mathcal{F}(x)=\inf_{\kappa>0} \{\kappa\,\mathcal{U}(x/\sqrt{\kappa})+\kappa\log \kappa +1-\kappa\}.
\end{equation*}
A promising observation is that the functional $\mathcal{W}^{(q)}(\phi;x)$ takes a special form, known as the logarithmic energy (associated with certain external fields). To elaborate on this, 
first, we use the following alternative representation due to Logan and Shepp \cite[eqs. (2.9)-(2.11)]{logan_shepp1977variational} of the hook functional $I_\mathrm{hook}$:
\begin{equation} \label{eq:logan_shepp_expression}
    2 I_\mathrm{hook}(\phi) = \log 2 - \frac{1}{2} \int_\mathbb{R} \int_\mathbb{R} \log| s-t | \overline{\phi}'(s) \overline{\phi}'(t) \diff s \diff t - 2 \int_\mathbb{R} \overline{\phi}'(t) (t \log|t|-t) \diff t.
\end{equation}
Recall that the notation $\overline{\phi}$ was defined in \eqref{eq:phi_bar}. On the other hand, a simple integration by parts implies that the functional $\mathcal{V}^{(q)}$ can be expressed as
\begin{equation} \label{eq:V_derivative}
    \mathcal{V}^{(q)}(\phi;x) = - \logq \, \int_\mathbb{R} [t-x]_+ \overline{\phi}'(t) \diff t.
\end{equation}
Combining \eqref{eq:logan_shepp_expression} \eqref{eq:V_derivative} with \eqref{eq:W_depoissonized} we recast the problem of minimizing the $q$-deformed hook functional $W$ as that of minimizing the functional $\mathsf{J}_{\eta,x} : \overline{\mathcal{Y}}_1 \longrightarrow \mathbb{R}$ defined by
\begin{equation} \label{eq:J_potential}
    \mathsf{J}_{\eta,x}(h) = - \tfrac{1}{2} \int_\mathbb{R} \int_\mathbb{R} \hspace{-0.1cm}\log| s-t | h'(s) h'(t) \diff s \diff t - 2 \hspace{-0.1cm}\int_\mathbb{R} \hspace{-0.2cm} h'(t) \left(t \log|t|-t + \eta [t-x]_+ \right) \diff t,
\end{equation}
with $\eta>0$ and $x\in \mathbb{R}$. The functional $\mathsf{J}_{\eta,x}(h)$ is, up to a scaling factor $\frac{1}{2}$, the logarithmic energy \cite{saff1997logarithmic} associated to the measure $\mathrm{d}h:=h'(t)\mathrm{d}t$, with the external field given by 
\begin{equation*}
    V_{\mathrm{ext}}(t;\eta,x):= -4(t\log|t|-t+\eta[t-x]_{+}).
\end{equation*}
Finding the minimizer (known as the equilibrium measure) of certain logarithmic energy functionals is a well-studied problem in potential theory literature and has
 applications to random matrix theory and many other mathematical physics problems. One has the following standard necessary condition for a function $h_0\in \overline{\mathcal{Y}}_1$ to be the minimizer (see e.g. \cite[Theorem 1.16]{romik_2015} and \cite{logan_shepp1977variational}): Defining
\begin{equation*}
    p(t) := -\int_\mathbb{R} h_0'(s) \log|s-t| \diff s - 2(t \log|t| - t + \eta[t-x]_+) +\lambda t,
\end{equation*}
where the parameter $\lambda\in \mathbb{R}$ is the Lagrange multiplier. Then for a minimizer $h_0$, the function $p$ must satisfy the following Euler-Lagrange type equations:
\begin{equation*}
    p(u) \, \text{is} \, 
    \begin{cases}
         =0, \qquad &\text{if }  \mathrm{sign}(u) \cdot h_0'(u) \in (-2,0) 
        \\
        \le 0, \qquad & \text{if } h_0'(u) + \mathrm{sign}(u) = 1
        \\
        \ge 0, \qquad & \text{if } h_0'(u) + \mathrm{sign}(u) = -1.
    \end{cases}
\end{equation*}
Taking a (weak) derivative of $p$ we see that on $\{u: \mathrm{sign}(u)h_0'(u)\in (-2,0)\}$, one must have 
\begin{equation}\label{eq:Cauchy_integral_eq_1}
    \frac{1}{\pi}\,\mathrm{P.V.}\int \frac{h_0'(y)}{y-u} \diff y = -\frac{2}{\pi} \log|u| - \frac{2 \eta}{\pi} \mathbf{1}_{[x,+\infty)}(u) -  \frac{\lambda}{\pi},
\end{equation}
where P.V. stands for the Cauchy principal value. We are not able to solve the Cauchy integral equation \eqref{eq:Cauchy_integral_eq_1} explicitly. The main difficulty is due to the fact that we do not know a priori the form of the support of $h_0'$. An ansatz of the form $\mathrm{supp}(h_0')=\cup_{i=1}^k (a_i,b_i)$ leads to highly transcendental relations on the endpoints which are not straightforward to solve (see \cite{zinn2000sixvertex,bleher2011uniform} for solutions to simpler versions of the above problem). We hope to explore this direction in a future work.

We end this subsection by remarking that solving certain energy-minimization problems similar to that of \eqref{eq:J_potential} is also the starting point for a nonlinear steepest descent analysis of the Riemann-Hilbert problem (as mentioned earlier, the relevant RHP for $q$-PNG was introduced in \cite{cafasso_ruzza_2022}). Indeed, such analysis usually starts with finding a so-called $g$-function as a conjugating factor such that the transformed RHP behaves properly at $\infty$. The construction of such $g$-functions is a potential-theoretic problem that we expect to be rather involved for the $q$-PNG case. 

Riemann-Hilbert methods have long been used as a powerful tool for studying the tail behaviors of Fredholm determinants, especially in the oscillating regimes, see e.g.~\cite{baik2001optimal,deift2011asymptotics}. More recently, a similar but more involved analysis has been implemented for finite-temperature models, see \cite{cafasso_claeys_KPZ,cafasso2021airy,charlier2022uniform}. The discrete nature of the RHP associated with $q$-PNG seems to lead to additional technical difficulty for a suitable nonlinear steepest descent analysis.

\begin{acks}[Acknowledgments] We thank Ivan Corwin. Promit Ghosal, and Guilherme Silva for their feedback on an earlier draft of the paper. MM thanks Mattia Cafasso and Giulio Ruzza for several comments about the results of this paper and Vadim Gorin for suggesting references about the problem of equilibrium measure with singular potentials. We thank the anonymous referees for
their careful reading and useful comments on improving our manuscript.
\end{acks}

\begin{funding}
  SD's research was partially supported by Ivan Corwin's NSF grant DMS-1811143, the Fernholz Foundation's ``Summer Minerva Fellows'' program, and also the W.M. Keck Foundation Science and Engineering Grant on ``Extreme diffusion''.  The work of MM has been supported by the European Union’s Horizon 2020 research and innovation programme under the Marie Skłodowska-Curie grant agreement No. 101030938. The work of YL was supported by the EPSRC grant EP/R024456/1.\end{funding}

\begin{supplement}

\stitle{Supplement to ``Large deviations for the $\lowercase{q}$-deformed polynuclear growth''.}
\sdescription{The supplementary material contains the proofs of the two lemmas, \Cref{lem:deconv1,l:9.1}, that are skipped in \Cref{sec:4.5}.}

\section{Technical lemmas}

We first remind the readers of the notion of proper and closed functions.

\begin{definition}[Proper and closed functions]\label{def:pc} 
	We call a  function $f:\R\to [-\infty,+\infty]$ proper if $f(x)>-\infty$ for all $x\in \R$ and $f(x)<\infty$ for some $x\in \R$. We call  $f:\R\to [-\infty,+\infty]$ to be closed  if $\{x :f(x)\le \alpha\}$ is a closed set for all $\alpha\in \R$.
\end{definition}

For convenience, we recall the statement of the lemmas here.

\begin{lemma}[Lemma 4.17 in the main text] \label{lem:deconv1}
	Let $h_n:\mathbb{R} \to [0,\infty]$ be a family of weakly decreasing functions such that
	\begin{itemize}[leftmargin=20pt]
		\item $h_n(x)=+\infty$ for $x<0$, and there exists $M>$ such that $h_n(x)\in[0,M]$ for $x\ge 0$ and $\sup_{x\ge 2} h_n(x) \to 0$ as $n\to \infty$.
		\item For all $\varepsilon>0$, there exists $\delta>0$ and $n_\e>0$ such that for all $n\ge n_\e$ and for all $x,y \in [0,2]$ with $|x-y|\le \delta$  we have $$|h_n(x)-h_n(y)|\le \varepsilon.$$
		\item Every subsequential limit of $\{h_n\}$ is convex.
	\end{itemize}
	Let $g(x)=\frac{x^2}{2}\logq$. Assume that $(h_n \oplus g)(x)$ converges pointwise to a proper, lower-semicontinuous convex function $f(x)$.  Then $h_n(x)$ converges pointwise to  $$h(x)=(f\ominus g)(x) := \sup_{y\in\R} \{ f(x-y) - g(y) \}.$$ Moreover we have $f=g\oplus h$,  the function $h$ is continuous on $[0,\infty)$, and the function $f$ is differentiable with derivative $f'$ being $\logq$-Lipschitz.
\end{lemma}
\begin{proof} Fix any subsequence $(n_k)_{k\in \N}$. Although $h_n$'s are not given to be continuous, one can use the first two conditions on $\{h_n\}$ and follow the proof of Arzela-Ascoli theorem verbatim to  
	extract a (uniformly) converging subsequence 
	\begin{equation*}
		h_{n_{k_{\ell}}} \xrightarrow[\ell\to +\infty]{} h \ \mbox{ on }[0,2].
	\end{equation*}
	We define $h(x)=+\infty$ for $x<0$ and $h(x)=0$ for $x>2$. Clearly, $h$ is continuous by the equicontinuity type hypothesis on $\{h_n\}_{n\ge 1}$. We claim that $(g\oplus  h_{n_{k_{\ell}}}) \to (g\oplus h)$ pointwise.  
	Fix any $x, v\in \R$. Observe that
	\begin{align*}
		g(v)+h(x-v)=\lim_{\ell \to \infty} g(v)+h_{n_{k_{\ell}}}(x-v) \ge \limsup_{\ell \to \infty} (g\oplus  h_{n_{k_{\ell}}})(x).
	\end{align*} Taking infimum over $v$ above on both sides, we get $(g\oplus h)(x) \ge \limsup_{\ell\to\infty}(g\oplus  h_{n_{k_{\ell}}})(x)$. For the other way, suppose $v_\ell:=\arg\inf g\oplus h_{n_{k_{\ell}}}(x)$. By the conditions on $h_n$, it is clear that $\sup_{\ell} |v_{\ell}|< \infty$. Passing through a subsequence we may assume $v_\ell \to v$ for some $v \in \R$. Then utilizing the uniform convergence for $h_{n_{k_{\ell}}}$ we obtain
	\begin{align*}
		\liminf_{\ell\to\infty} (g\oplus  h_{n_{k_{\ell}}})(x) & =  \liminf_{\ell\to\infty} [g(v_\ell)+h_{n_{k_{\ell}}}(x-v_{\ell})]= g(v)+h(x-v) \ge (g\oplus h)(x).
	\end{align*}
	This verifies our claim. Consequently, we obtain that $g\oplus h=f$. {By \cite[p.57]{borwein2006convex}}, taking Legendre transform of both sides we find that $g^*+h^*=f^*$. Since $g$ is quadratic, we know $g^*(x)\in \R$ for all $x\in \R$. Thus we have $h^*=f^*-g^*$. Since $h$ is continuous on $[0,\infty)$ and $h(x)=+\infty$ for $x<0$, we see that $h$ is a closed function (see \cref{def:pc}). Since $h$ is closed and convex, using the Fenchel biconjugation theorem we get that
	$$h=h^{**}=(f^*-g^*)^*.$$
	Thus $h$ is uniquely determined by $f,g$. Since every subsequence has a further subsequence that converges to the \textit{same} quantity, we have thus shown $h_n$ converges uniformly to $(f^*-g^*)^*$. Since both $f,g$ are proper, lower-semicontinuous, convex functions, by \cite{hiriart1986general} we have
	$(f^*-g^*)^*=f\ominus g$. Finally, since $f=g\oplus h$, the properties of $f$ claimed in the lemma follow from \cite[Theorem 3.24, Proposition 3.56]{attouch}.
\end{proof}

\begin{lemma}[Lemma 4.18 in the main text] \label{l:9.1} Let $G:\R\to [0,1]$ be a decreasing function with the property that
	$$\lim_{x\to \infty}\frac1{x^2}\log G(-x)=0,\qquad \lim_{x\to \infty}\frac1{x^2}\log G(x)= -\infty.$$
	Fix an open set $O\in \R$. Suppose  $\mathpzc{g}: O\to \R$ is a continuous function. Let $\{X_t\}_{t\ge 1}$ be a sequence of random variables satisfying
	\begin{align}\label{e:9.1c}
		\lim_{t\to \infty} \frac1{t^2}\log\Ex\big[G(X_t-xt)\big]=\mathpzc{g}(x)
	\end{align}
	for all $x\in O$. Then for all $x \in O$ we have
	\begin{align*}
		\lim_{t\to \infty} \frac1{t^2}\log\Pr\big(X_t \le xt\big)=\mathpzc{g}(x).
	\end{align*}
\end{lemma}

\begin{proof}[Proof of \cref{l:9.1}] Fix $x\in O$. Choose $\delta$ small enough so that $(x-\delta,x+\delta)\subset O$. Note that for all $y\in \R$ we have
	\begin{align*}
		& G(y+\delta t) \le \ind_{y<0}+G(\delta t), \qquad
		G(y-\delta t) \ge G(-\delta t)\cdot\ind_{y<0}. 
	\end{align*}
	Setting $y=X_t-xt$ in the above relations and taking expectation we get
	\begin{align}\label{e:9.1}
		& \Ex[G(X_t-xt+\delta t)] \le \Pr(X_t<xt)+G(\delta t), \quad
		\Ex[G(X_t-xt-\delta t)] \ge G(-\delta t)\cdot\Pr(X_t<xt). 
	\end{align}
	In the second inequality above, we use the left tail decay of $G$, so that taking $\log$ and dividing by $t^2$, and then taking $t\uparrow \infty$ we get
	\begin{align*}
		\limsup_{t\to \infty} \frac1{t^2}\log\Pr\big(X_t \le xt\big)\le \mathpzc{g}(x+\delta).
	\end{align*}
	Taking $\delta\downarrow 0$ and using the fact $\mathpzc{g}$ is continuous we get
	\begin{align}\label{e:9.1up}
		\limsup_{t\to \infty} \frac1{t^2}\log\Pr\big(X_t \le xt\big)\le \mathpzc{g}(x).
	\end{align}
	Fix $\rho>0$. Using \eqref{e:9.1c} we can ensure from the first inequality in \eqref{e:9.1} that
	\begin{align*}
		\Pr(X_t<xt) \ge \exp(t^2[\mathpzc{g}(x-\delta)-\rho])-G(\delta t).
	\end{align*}
	Due to the right tail decay conditions on $G$, the first term dominates the second term as $t\to \infty$. Thus,
	\begin{align*}
		\liminf_{t\to \infty} \frac1{t^2}\log\Pr\big(X_t \le xt\big)\le \mathpzc{g}(x-\delta)-\rho.
	\end{align*}
	Taking $\delta\downarrow 0$ and $\rho \downarrow 0$, and again using the fact $\mathpzc{g}$ is continuous we get
	\begin{align*}
		\liminf_{t\to \infty} \frac1{t^2}\log\Pr\big(X_t \le xt\big)\le \mathpzc{g}(x).
	\end{align*}
	Combining this with \eqref{e:9.1up} we get the desired limit. 
\end{proof}
\end{supplement}

%%%%%%%%%%%%%%%%%%%%%%%%%%%%%%%%%%%%%%%%%%%%%%%%%%%%%%%%%%%%%
%%                  The Bibliography                       %%
%%                                                         %%
%%  imsart-???.bst  will be used to                        %%
%%  create a .BBL file for submission.                     %%
%%                                                         %%
%%  Note that the displayed Bibliography will not          %%
%%  necessarily be rendered by Latex exactly as specified  %%
%%  in the online Instructions for Authors.                %%
%%                                                         %%
%%  MR numbers will be added by VTeX.                      %%
%%                                                         %%
%%  Use \cite{...} to cite references in text.             %%
%%                                                         %%
%%%%%%%%%%%%%%%%%%%%%%%%%%%%%%%%%%%%%%%%%%%%%%%%%%%%%%%%%%%%%

%% if your bibliography is in bibtex format, uncomment commands:
\bibliographystyle{imsart-number} % Style BST file (imsart-number.bst or imsart-nameyear.bst)
\bibliography{name.bib}       % Bibliography file (usually '*.bib')

\end{document}